\documentclass[12pt,twoside]{amsart}

\usepackage{amsmath,amsthm,amscd,amssymb,mathrsfs,graphicx,amsfonts,mathrsfs}
\usepackage{amsfonts}
\usepackage{amssymb,enumerate}
\usepackage{amsthm}
\usepackage[all]{xy}
\usepackage{hyperref}
\allowdisplaybreaks[4] \footskip=12pt
\renewcommand{\uppercasenonmath}[1]{}

\numberwithin{equation}{section} \theoremstyle{plain}
\newtheorem*{thm*}{Main Theorem}
\newtheorem{thm}{Theorem}[section]

\newtheorem*{cor*}{Corollary}
\newtheorem{lem}[thm]{Lemma}
\newtheorem*{lem*}{Lemma}

\newtheorem*{fact*}{Fact}

\newtheorem*{nota*}{Notation}
\newtheorem{prop}[thm]{Proposition}
\newtheorem*{prop*}{Proposition}
\newtheorem{rem}[thm]{Remark}
\newtheorem*{rem*}{Remark}

\newtheorem*{observation*}{Observation}
\newtheorem{exa}[thm]{Example}
\newtheorem*{exa*}{Example}
\newtheorem{df}[thm]{Definition}
\newtheorem*{df*}{Definition}

\newtheorem*{conj*}{Conjecture}

\newcommand{\D}{\mathcal{D}}

\begin{document}
\footnote[0]{$^{\ast}$Corresponding author}
\begin{center}
{\large  \bf  Construction of $n$-cotorsion pairs in some abelian categories}

\vspace{0.5cm}  Wenjing Chen$^{\ast,1,2}$ and Junjie Zhang$^{1}$\\
1 College of Mathematics and Statistics, Northwest Normal University, Lanzhou 730070,
Gansu, China\\
2 Gansu Provincial Research Center for Basic Disciplines of Mathematics and Statistics, Lanzhou 730070, Gansu, China\\
E-mail addresses: chenwj@nwnu.edu.cn, 17797575425@163.com
\end{center}

\bigskip
\centerline { \textsc{Abstract}}
\leftskip10truemm \rightskip10truemm \noindent In this paper, we mainly investigate $n$-cotorsion pairs in two types of abelian categories. Firstly, we construct left (right) $n$-cotorsion pairs in comma categories by improving some isomorphisms between homology groups and combining some classes of objects in comma categories. Secondly, based on known homological formulas over trivial ring extensions, we study some classes of modules satisfying certain conditions over such rings, and we establish left (right) $n$-cotorsion pairs over trivial ring extensions through these classes. Then corresponding conclusions over Morita rings with zero bimodule homomorphisms are directly presented. The hereditary property of left (right) $n$-cotorsion pairs is investigated in both cases. Finally, we apply main results obtained in comma categories and over trivial ring extensions to formal triangular matrix rings. At the same time, via these applications, we found that relevant conditions in known results have been improved and the known results have been elevated.
\\
{\it Keywords:} $n$-cotorsion pair; functor; comma category; trivial ring extension; Morita ring; formal triangular matrix ring.\\
{\it 2020 Mathematics Subject Classification:} 16E30, 18A25, 18E10, 18G25.

\leftskip0truemm \rightskip0truemm
\bigskip

\section{Introduction}
The notion of cotorsion pairs was introduced by Salce in the category of abelian groups in \cite{L1979}. Cotorsion pairs, defined by replacing the \textbf{Hom} functor with the \textbf{Ext} functor, are the analogue of classical torsion pairs. On the one hand, cotorsion pairs are closely related to precovers and preenvelopes. On the other, cotorsion pairs play a crucial role in the study of relative homological algebra, model structures and recollements. Regarding cotorsion pairs, please refer to \cite{EEE-2000}, \cite{RJ2012}, \cite{M2002}, \cite{L2020}, \cite{L2024}, \cite{YJD2024} for more details.

Motivated by some properties satisfied by Gorenstein projective and Gorenstein injective modules over an Iwanaga-Gorenstein ring, Huerta, Mendoza and P\'{e}rez introduced the concepts of left $n$-cotorsion pairs and right $n$-cotorsion pairs in abelian categories and described a series of properties of left (resp. right) $n$-cotorsion pairs in \cite{MOM2021}. A 1-cotorsion pair is actually consistent with a complete cotorsion pair, defined by Salce in \cite{L1979}. By left or right $n$-cotorsion pairs, we can directly give some approximation classes. As a generalization of cotorsion pairs, $n$-cotorsion pairs soon attracted extensive attention from researchers. He and Zhou introduced the notion of $n$-cotorsion pairs in extriangulated categories with enough projective and injective objects in \cite{JP2022} and showed that there exists a one-to-one correspondence between $n$-cotorsion pairs and $(n+1)$-cluster tilting subcategories, which generalizes the work in \cite{MOM2021} since the notion of extriangulated categories is a common generalization of exact categories and triangulated categories. Recently, Long and Zhang have studied some special classes of modules over formal triangular matrix rings in \cite{TX2025} and constructed left (resp. right) $n$-cotorsion pairs over such rings by means of these classes of modules together with left (resp. right) $n$-cotorsion pairs over internal rings of formal triangular matrix rings. Analogous to cotorsion pairs, using recollement as a tool, one can also investigate $n$-cotorsion pairs, for example, see \cite{WJK2024} and \cite{JJ2025}.

In recent years, cotorsion pairs in various categories have become a significant and widely studied subject for scholars, refer to \cite{HP2019}, \cite{JH2022}, \cite{L2020}, \cite{L2024}, \cite{YJD2024}. These studies motivate us to further explore related topics concerning $n$-cotorsion pairs. Note that based on the study of cotorsion pairs, the study of $n$-cotorsion pairs can be carried out very well. In fact, a category of modules over a formal triangular matrix ring is a common example of both a comma category and a category of modules over a trivial ring extension, or to be more precise, a category of modules over a Morita ring with zero bimodule homomorphisms. Therefore, in this paper, we will conduct the study on $n$-cotorsion pairs from both the perspectives of comma categories and trivial ring extensions and provide the description of $n$-cotorsion pairs in the categories of modules over Morita rings with zero bimodule homomorphisms and formal triangular matrix rings, respectively.

Let $\mathcal{A}$ and $\mathcal{B}$ be abelian categories, $T:\mathcal{B}\rightarrow\mathcal{A}$ a right exact covariant functor, and $G:\mathcal{A}\rightarrow\mathcal{B}$ a left exact covariant functor. According to the work in \cite{RPI1975} and \cite{N1983}, there exist two abelian comma categories $(T\downarrow\mathcal{A})$ and $(\mathcal{B}\downarrow G)$. Examples of comma categories include but are not limited to: categories of modules or complexes over triangular matrix rings, morphism categories of abelian categories, and so on (see \cite[\text{Example}~2.2]{JH2022}). It should be noted that comma categories not only give rise to adjoint functors for constructing recollements in abelian categories and triangulated categories, but also are used in the study of Auslander-Reiten quivers and tilting modules, refer to \cite{XJ2022}, \cite{YDR2024}, \cite{N1983}, \cite{C2014}, \cite{WZ2022}. Via numerous favorable homological properties of comma categories and existing results on cotorsion pairs therein, we will also consider $n$-cotorsion pairs in comma categories.

The notion of trivial extension of a ring by a bimodule is an important extension of a ring. Let $R$ be an associative ring and $M$ an $R$-$R$-bimodule. The Cartesian product $R\times M$, with the natural addition and multiplication given by $(r_1,m_1)(r_2,m_2)=(r_1r_2,r_1m_2+m_1r_2)$, becomes a ring, where $r_1,r_2\in R$ and $m_1,m_2\in M$. This ring is called the trivial extension of the ring $R$ by the bimodule $M$, trivial ring extension for short, and denoted by $R \ltimes M$ in \cite{RPI1975}. In fact, there exists a direct containment relationship as follows:
$$
\aligned
\{\text{formal~triangular~matrix~rings}\}
&\subseteq \{\text{Morita~rings~with~zero~bimodule~homomorphisms}\}\\
&\subseteq\{\text{trivial~ring~extensions}\}.
\endaligned
$$
Thus, trivial ring extensions play a crucial role in ring theory. Many scholars have focused on trivial ring extensions and modules over such rings, and studied them from different aspects, such as homological dimension theory, silting theory, cotorsion theory and model structure theory, refer to \cite{RPI1975}, \cite{L2023}, \cite{L2024}, \cite{HK2020}, \cite{IJ1973}. More specifically, Mao investigated how to construct cotorsion pairs and Hovey triples over trivial ring extensions in detail in \cite{L2024}.
Naturally, we will also consider $n$-cotorsion pairs in categories of modules over trivial ring extensions.

The main aim of the paper is to investigate how to construct left (resp. right) $n$-cotorsion pairs in comma categories and categories of modules over trivial ring extensions. Under this motivation, we investigate some special classes of objects and describe left (resp. right) $n$-cotorsion pairs in comma categories and categories of modules over trivial ring extensions, respectively. We also apply the foregoing results obtained to formal triangular matrix rings. However, we are aware that our results have improved to some extent.

The paper is organized as follows:

In Section 2, we recall some notions and basic facts.

In Section 3, we first discuss precovers and preenvelopes in the comma categories $(T\downarrow\mathcal{A})$ and $(\mathcal{B}\downarrow G)$ (see Propositions 3.1, 3.2 and 3.3). Next, we improve some isomorphisms between homology groups (see Lemmas 3.5 and 3.6). Then, using these improved isomorphisms together with some classes of objects in the comma categories, we construct left (resp. right) $n$-cotorsion pairs in the comma categories (see Theorems 3.11 and 3.12). Finally, we further investigate the heredity of left (resp. right) $n$-cotorsion pairs in the comma categories (see Propositions 3.16 and 3.17).

In Section 4, based on the existing homology formulas over trivial ring extensions, we study classes of modules satisfying certain conditions over such rings, and construct left (resp. right) $n$-cotorsion pairs over trivial ring extensions by means of these classes of  modules (see Theorems 4.4 and 4.6). In this case, we investigate the heredity of left (resp. right) $n$-cotorsion pairs (see Propositions 4.9 and 4.10). Since Morita rings with zero bimodule homomorphisms are a special case of trivial ring extensions, as an application, we apply the previous results obtained to these rings (see Propositions 4.11, 4.12, 4.13 and 4.14).

In Section 5, since the category of modules over a formal triangular matrix ring is a special case of comma categories and categories of modules over trivial ring extensions, we apply the conclusions from Sections 3 and 4 to formal triangular matrix rings. As a consequence of these applications, we improve the relevant conditions in the existing results and the existing results have been elevated.

\section{Preliminaries}
Throughout the paper, we always assume that $\mathcal{A}$ and $\mathcal{B}$ are abelian categories (not necessarily with enough projective and injective objects), and that all classes of objects in $\mathcal{A}$ and $\mathcal{B}$ contain the zero object and are closed under isomorphisms. In this paper, all rings are associative rings with identity and all modules are unitary. For any ring $R$, we use $R$-Mod (Mod-$R$) to denote the category of all left (right) $R$-modules. Let $\mathcal{C}$ and $\mathcal{D}$ be classes of objects in $\mathcal{A}$, and $i\geqslant 1$ an integer. The notation $\mathrm{Ext}
^i_{\mathcal{A}}(\mathcal{C},\mathcal{D})=0$ means that $\mathrm{Ext}^i_{\mathcal{A}}(C,D)=0$ for any $C\in\mathcal{C}$ and any $D\in\mathcal{D}$. The right $i$-th orthogonal class of $\mathcal{C}$ is defined by $\mathcal{C}^{\bot_{i}}:=\{N\in\mathcal{A}\mid\mathrm{Ext}_{\mathcal{A}}
^{i}(C,N)=0,~\forall~C\in\mathcal{C}\}$. Dually, the left $i$-th orthogonal class of $\mathcal{C}$ is given by ${^{\bot_{i}}\mathcal{C}}:=\{M\in\mathcal{A}\mid
\mathrm{Ext}_{\mathcal{A}}^{i}(M,C)=0,~\forall~C\in\mathcal{C}\}$. In particular, we set $\mathcal{C}^{\bot_{1}}:=\mathcal{C}^{\bot}$ and ${^{\bot_{1}}\mathcal{C}}
=:{^{\bot}\mathcal{C}}$ when $i=1$. Moreover, for any positive integer $m$, we define $\mathcal{C}^{\bot_{[1,m]}}
:=\bigcap\limits_{i=1}\limits^{m}\mathcal{C}^{\bot_{i}}$ and ${^{\bot_{[1,m]}}\mathcal{C}}:=\bigcap\limits
_{i=1}\limits^{m}{^{\bot_{i}}\mathcal{C}}$.
Let $\mathcal{X}$ be a class of right $R$-modules and $\mathcal{Y}$ a class of left $R$-modules. Similarly, the notation $\mathrm{Tor}_i^R(\mathcal{X},\mathcal{Y})=0$ means that $\mathrm{Tor}_{i}^R(X,Y)=0$ for any $X\in\mathcal{X}$ and any $Y\in\mathcal{Y}$.

{\bf 2.1 Resolving class and coresolving class}

Let $\mathcal{C}$ and $\mathcal{D}$ be classes of objects in $\mathcal{A}$. We say that $\mathcal{C}$ is a resolving class if it is closed under extensions and kernels of epimorphisms. Dually, $\mathcal{D}$ is called a coresolving class if it is closed under extensions and cokernels of monomorphisms. Furthermore, assume that $\mathcal{A}$ has enough projective objects. A resolving class $\mathcal{C}$ is said to be a projectively resolving class if it contains all projective objects in $\mathcal{A}$. Assume that $\mathcal{A}$ has enough injective objects. A coresolving class $\mathcal{D}$ is said to be an injectively coresolving class if it contains all injective objects in $\mathcal{A}$.

{\bf 2.2 Resolution dimension and coresolution dimension}

Let $\mathcal{C}$ be a class of objects of $\mathcal{A}$. Given an object $A\in\mathcal{A}$ and a nonnegative integer $m$, a $\mathcal{C}$-coresolution of $A$ of length $m$ is an exact sequence
$$0\rightarrow A\rightarrow C^0\rightarrow C^1\rightarrow\cdots\rightarrow C^{m-1}\rightarrow C^{m}\rightarrow 0$$
in $\mathcal{A}$, where $C_k\in\mathcal{C}$ for every $0\leqslant k\leqslant m$. The coresolution dimension of $A$ with respect to $\mathcal{C}$ (or the $\mathcal{C}$-coresolution dimension of $A$), denoted by $\mathrm{coresdim}_{\mathcal{C}}(A)$, is defined as the smallest nonnegative integer $m$ such that $A$ has a $\mathcal{C}$-coresolution of length m. If such $m$ does not exist, we set $\mathrm{coresdim}_{\mathcal{C}}(A):=\infty$. Dually, we have the concepts of the $\mathcal{C}$-resolution of $A$ of length $m$ and the resolution dimension of $A$ with respect to $\mathcal{C}$ (or the $\mathcal{C}$-resolution dimension of $A$), denoted by $\mathrm{resdim}_{\mathcal{C}}(A)$. Based on these two homological dimensions, we shall subsequently consider the following classes of objects in $\mathcal{A}$:
$$\mathcal{C}_{m}^{\wedge}
:=\{A\in\mathcal{A}\mid\mathrm{resdim}_{\mathcal{C}}(A)\leqslant m\}\quad\text{and}\quad\mathcal{C}_{m}^{\vee}:=\{A\in \mathcal{A}\mid\mathrm{coresdim}_{\mathcal{C}}(A)\leqslant m\}.$$

{\bf 2.3 $n$-Cotorsion pair}

\begin{df}\label{prop:2.4}{\rm{(\cite[\text{Definition}~2.1]{MOM2021})
Let $\mathcal{C}$ and $\mathcal{D}$ be classes of objects in $\mathcal{A}$, and $n$ a positive integer. We say that $(\mathcal{C}, \mathcal{D})$ is a \emph{left $n$-cotorsion pair} in $\mathcal{A}$ if the following conditions are satisfied:

$\mathrm{(1)}$ $\mathcal{C}$ is closed under direct summands.

$\mathrm{(2)}$ $\mathrm{Ext}^{i}_{\mathcal{A}}(\mathcal{C},\mathcal{D})=0$ for every $1\leqslant i\leqslant n$.

$\mathrm{(3)}$ For every object $A\in\mathcal{A}$, there exists a short exact sequence $0\rightarrow K\rightarrow C \rightarrow A\rightarrow 0$, where $C\in\mathcal{C}$ and $K\in\mathcal{D}_{n-1}^{\wedge}$.

\noindent Dually, we say that $(\mathcal{C},\mathcal{D})$ is a \emph{right $n$-cotorsion pair} in $\mathcal{A}$ if condition (2) above is satisfied, with $\mathcal{D}$ closed under direct summands, and if every object of $\mathcal{A}$ can be embedded into an object of $\mathcal{D}$ with cokernel in $\mathcal{C}_{n-1}^{\vee}$. Finally, $\mathcal{C}$ and $\mathcal{D}$ form an \emph{$n$-cotorsion pair} $(\mathcal{C},\mathcal{D})$ in $\mathcal{A}$ if $(\mathcal{C}, \mathcal{D})$ is both a left and right $n$-cotorsion pair in $\mathcal{A}$.
}}
\end{df}

\begin{exa}\label{prop:2.4}{\rm{(\cite[\text{Example}~5.1]{MOM2021})
Assume that $R$ is an $n$-Iwanaga-Gorenstein ring with some integer $n\geqslant0$, that is, $R$ is both left and right noetherian and has self-injective dimension at most $n$ on both the left and the right. Then $($the class of all Gorenstein projective left $R$-modules, the class of all projective left $R$-modules$)$ is a left $n$-cotorsion pair and $($the class of all injective left $R$-modules, the class of all Gorenstein injective left $R$-modules$)$ is a right $n$-cotorsion pair in $R$-Mod. In particular, when $n=0$, a $0$-Iwanaga-Gorenstein ring is just a quasi-Frobenius ring. In this case, the left $n$-cotorsion pair above coincides with the classical projective cotorsion pair, and the right $n$-cotorsion pair above coincides with the classical injective cotorsion pair.
}}
\end{exa}

{\bf 2.4 Comma category}

Now, we recall some notions and facts in comma categories, which can be found in \cite{RPI1975} and \cite{N1983}.

\begin{df}\label{prop:2.4}{\rm{(See \cite{RPI1975}, \cite{N1983})
Let $T:\mathcal{B}\rightarrow\mathcal{A}$ be a right exact covariant functor. Then the \emph{comma category} $(T\downarrow\mathcal{A})$ is defined as follows:

$\mathrm{(1)}$ The objects are triplets $\left(\begin{smallmatrix}A\\B\end{smallmatrix}\right)_{\varphi}$ with $A\in\mathcal{A}$, $B\in\mathcal{B}$ and $\varphi:T(B)\rightarrow A$ being a morphism in $\mathcal{A}$.

$\mathrm{(2)}$ A morphism $\left(\begin{smallmatrix}a\\b\end{smallmatrix}\right):\left(\begin{smallmatrix}A\\B \end{smallmatrix}\right)_{\varphi}\rightarrow
\left(\begin{smallmatrix}A'\\B'\end{smallmatrix}\right)_{\varphi'}$ is given by two morphisms $a:A\rightarrow A'$ in $\mathcal{A}$ and $b:B\rightarrow B'$ in $\mathcal{B}$ such that the diagram
$$\small\xymatrix{
      & T(B) \ar[d]_{\varphi} \ar[r]^{T(b)} & T(B') \ar[d]^{\varphi'}  & \\
      & A  \ar[r]_{a} &  A'  &  \\   }\vspace*{2mm}$$
is commutative, that is, $\varphi'T(b)=a\varphi$.
}}
\end{df}

\begin{df}\label{prop:2.4}{\rm{(See \cite{RPI1975}, \cite{N1983})
Let $G:\mathcal{A}\rightarrow\mathcal{B}$ be a left exact covariant functor. Then the \emph{comma category} $(\mathcal{B}\downarrow G)$ is defined as follows:

$\mathrm{(1)}$ The objects are triplets $\left(\begin{smallmatrix}A\\B\end{smallmatrix}\right)_{\phi}$ with $A\in\mathcal{A}$, $B\in\mathcal{B}$ and $\phi:B\rightarrow G(A)$ being a morphism in $\mathcal{B}$.

$\mathrm{(2)}$ A morphism $\left(\begin{smallmatrix}a\\b\end{smallmatrix}\right):\left(\begin{smallmatrix}A\\B \end{smallmatrix}\right)_{\phi}\rightarrow
\left(\begin{smallmatrix}A'\\B'\end{smallmatrix}\right)_{\phi'}$ is given by two morphisms $a:A\rightarrow A'$ in $\mathcal{A}$ and $b:B\rightarrow B'$ in $\mathcal{B}$ such that the diagram
$$\small\xymatrix{
      & B \ar[d]_{\phi} \ar[r]^{b} & B' \ar[d]^{\phi'}  & \\
      & G(A)  \ar[r]_{G(a)} &  G(A')  &  \\   }\vspace*{2mm}$$
is commutative, that is, $\phi'b=G(a)\phi$.
}}
\end{df}

If there is no possible confusion, we sometimes omit the morphism $\varphi$ (resp. $\phi$). Recall from \cite{RPI1975} that the comma category $(T\downarrow\mathcal{A})$ is indeed an abelian category since the functor $T$ is assumed to be right exact. Similarly, the comma category $(\mathcal{B}\downarrow G)$ is also an abelian category.
A sequence
$$0\longrightarrow\left(\begin{smallmatrix}A'\\B'\end{smallmatrix}\right)_{\varphi^{'}}\overset{\left(
\begin{smallmatrix}a'\\b'\end{smallmatrix}\right)}\longrightarrow\left(\begin{smallmatrix}A\\B
\end{smallmatrix}\right)_{\varphi}\overset{\left(\begin{smallmatrix}a''\\b''\end{smallmatrix}\right)}
\longrightarrow\left(\begin{smallmatrix}A''\\B''\end{smallmatrix}\right)_{\varphi^{''}}\longrightarrow0$$
in $(T\downarrow\mathcal{A})$ is exact if and only if both the sequence
$0\rightarrow A'\overset{a'}\rightarrow A\overset{a''}\rightarrow A''\rightarrow0$
in $\mathcal{A}$ and the sequence
$0\rightarrow B'\overset{b'}\rightarrow B\overset{b''}\rightarrow B''\rightarrow0$
in $\mathcal{B}$ are exact. Of course, an analogous fact holds for the comma category $(\mathcal{B}\downarrow G)$.
Next, we give two typical examples of comma categories, which can also be found in some references.

\begin{exa}\label{prop:2.4}{\rm{Let $R$ and $S$ be two rings, $M$ an $R$-$S$-bimodule, and $\Lambda=\left(
\begin{smallmatrix}R&M\\0&S\end{smallmatrix}\right)$ a formal triangular matrix ring.

$\mathrm{(1)}$ If we define $T\cong M\otimes_{S}-:S\text{-}\mathrm{Mod}\rightarrow R\text{-}\mathrm{Mod}$, then we get that $\Lambda\text{-}\mathrm{Mod}$ is equivalent to the comma category $(T\downarrow R\text{-}\mathrm{Mod})$.

$\mathrm{(2)}$ If we define $G\cong \mathrm{Hom}_{R}(M,-):R\text{-}\mathrm{Mod}\rightarrow S\text{-}\mathrm{Mod}$, then we get that $\Lambda\text{-}\mathrm{Mod}$ is equivalent to the comma category $(S\text{-}\mathrm{Mod}\downarrow G)$.
}}
\end{exa}

In the sense of category equivalences, the above example demonstrates that the categories $(T\downarrow R\text{-}\mathrm{Mod})$, $\Lambda\text{-}\mathrm{Mod}$ and $(S\text{-}\mathrm{Mod}\downarrow G)$ are mutually equivalent.

\begin{rem}\label{prop:2.4}{\rm{For the comma category $(T\downarrow\mathcal{A})$ and the comma category $(\mathcal{B}\downarrow G)$, we give the following functors and facts.

$\mathrm{(1)}$ We have a functor $\mathbf{p}:\mathcal{A}\times\mathcal{B}\rightarrow(T\downarrow\mathcal{A})$ with $\mathbf{p}(A,B)
=\left(\begin{smallmatrix}A\oplus T(B)\\B\end{smallmatrix}\right)_{\left(\begin
{smallmatrix}0\\1\end{smallmatrix}\right)}$ and $\mathbf{p}(a,b)=\left(\begin{smallmatrix}a\oplus T(b)\\b\end{smallmatrix}\right)$, where $(A,B)$ is an object in $\mathcal{A}\times\mathcal{B}$, $(a,b):(A,B)\rightarrow (A',B')$ is a morphism in $\mathcal{A}\times\mathcal{B}$, and $1$ is the identity morphism on $T(B)$. It follows from \cite{JH2022} that $\mathbf{p}(A,B)=\mathbf{p}(A,0)\oplus\mathbf{p}(0,B)$. Moreover, $\mathbf{p}$ preserves projective objects if $\mathcal{A}$ and $\mathcal{B}$ have enough projective objects.

$\mathrm{(2)}$ We have a functor $\mathbf{h}:\mathcal{A}\times\mathcal{B}\rightarrow(\mathcal{B}\downarrow G)$ with $\mathbf{h}(A,B)
=\left(\begin{smallmatrix}A\\B\oplus G(A)\end{smallmatrix}\right)_{(0,1)}$ and $\mathbf{h}(a,b)=\left(\begin{smallmatrix}a\\b\oplus G(a)\end{smallmatrix}\right)$, where $(A,B)$ is an object in $\mathcal{A}\times\mathcal{B}$, $(a,b):(A,B)\rightarrow (A',B')$ is a morphism in $\mathcal{A}\times\mathcal{B}$, and $1$ is the identity morphism on $G(A)$. It follows from \cite{YJD2024} that $\mathbf{h}(A,B)=\mathbf{h}(A,0)\oplus
\mathbf{h}(0,B)$. Moreover, $\mathbf{h}$ preserves injective objects if $\mathcal{A}$ and $\mathcal{B}$ have enough injective objects.

$\mathrm{(3)}$ If the abelian categories $\mathcal{A}$ and $\mathcal{B}$ have enough projective objects, then the comma category $(T\downarrow\mathcal{A})$ has enough projective objects. If the abelian categories $\mathcal{A}$ and $\mathcal{B}$ have enough injective objects, then the comma category $(\mathcal{B}\downarrow G)$ has enough injective objects. As a matter of fact, for any $\left(\begin{smallmatrix}A\\B\end{smallmatrix}\right)_{\varphi}
\in(T\downarrow\mathcal{A})$, since $\mathcal{B}$ has enough projective objects, there exists an epimorphism $Q\overset{b}\rightarrow B\rightarrow0$ in $\mathcal{B}$, where $Q$ is a projective object in $\mathcal{B}$. Since $T:\mathcal{B}\rightarrow\mathcal{A}$ is a right exact functor, there exists an epimorphism $T(Q)\overset{T(b)}\rightarrow T(B)\rightarrow0$ in $\mathcal{A}$. Because $\mathcal{A}$ has enough projective objects, there exists an epimorphism $P\overset{a}\rightarrow A\rightarrow0$ in $\mathcal{A}$, where $P$ is a projective object in $\mathcal{A}$. Thus, there exists an epimorphism
$$\left(\begin{smallmatrix}P\oplus T(Q)\\Q\end{smallmatrix}\right)_{\left(\begin{smallmatrix}0\\1
\end{smallmatrix}\right)}\overset{\left(\begin{smallmatrix}(a,{\varphi} T(b))\\b\end{smallmatrix}\right)}\longrightarrow\left(\begin{smallmatrix}A\\B\end{smallmatrix}\right)_{\varphi}
\longrightarrow0$$
in $(T\downarrow\mathcal{A})$. By $\mathrm{(1)}$, $\mathbf{p}(P,Q)
=\left(\begin{smallmatrix}P\oplus T(Q)\\Q\end{smallmatrix}\right)_{\left(\begin{smallmatrix}0\\1\end{smallmatrix}\right)}$ is a projective object in $(T\downarrow\mathcal{A})$. Therefore, $(T\downarrow\mathcal{A})$ has enough projective objects. Similarly, we can get that $(\mathcal{B}\downarrow G)$ has enough injective objects.

$\mathrm{(4)}$ If we define $\mathbf{q}:(T\downarrow\mathcal{A})\rightarrow\mathcal{A}\times\mathcal{B}$ with $\mathbf{q}\left(\begin{smallmatrix}A\\B\end{smallmatrix}\right)_{\varphi}=(A,B)$ and $\mathbf{q}\left(\begin{smallmatrix}a\\b\end{smallmatrix}\right)=(a,b)$, for any object $\left(\begin{smallmatrix}A\\B\end{smallmatrix}\right)_{\varphi}$ and any morphism $\left(\begin{smallmatrix}a\\b\end{smallmatrix}\right):\left(\begin{smallmatrix}A\\B \end{smallmatrix}\right)_{\varphi}\rightarrow
\left(\begin{smallmatrix}A'\\B'\end{smallmatrix}\right)_{\varphi'}$ in $(T\downarrow\mathcal{A})$, then by \cite[Remark~2.4~(2)]{JH2022}, $(\mathbf{p},\mathbf{q})$ is an adjoint pair.

$\mathrm{(5)}$ If we define $\mathbf{g}:(\mathcal{B}\downarrow G)\rightarrow\mathcal{A}\times\mathcal{B}$ with $\mathbf{g}\left(\begin{smallmatrix}A\\B\end{smallmatrix}\right)_{\phi}=(A,B)$ and $\mathbf{g}\left(\begin{smallmatrix}a\\b\end{smallmatrix}\right)=(a,b)$, for any object $\left(\begin{smallmatrix}A\\B\end{smallmatrix}\right)_{\phi}$ and any morphism $\left(\begin{smallmatrix}a\\b\end{smallmatrix}\right):\left(\begin{smallmatrix}A\\B \end{smallmatrix}\right)_{\phi}\rightarrow
\left(\begin{smallmatrix}A'\\B'\end{smallmatrix}\right)_{\phi'}$ in $(\mathcal{B}\downarrow G)$, then by \cite[Remark~3.2~(2)]{YJD2024}, $(\mathbf{g},\mathbf{h})$ is an adjoint pair.
}}
\end{rem}

{\bf 2.5 Trivial ring extension}

Let $R\ltimes M$ be the trivial extension of a ring $R$ by an $R$-$R$-bimodule $M$. Recall from \cite{RPI1975} that the category $R\ltimes M$-Mod is isomorphic to the category $\Xi$, whose objects are pairs $(X,f)$ with $X$ a left $R$-module and $f\in\mathrm{Hom}_{R}(M\otimes_{R}X,X)$ such that the composition $M\otimes_{R}M\otimes_{R}X\overset{M\otimes_{R}f}\longrightarrow M\otimes_{R}X\overset{f}\longrightarrow X$ is 0, and morphisms $(X,f)\rightarrow(Y,g)$ are morphisms $\gamma$ with $\gamma\in\mathrm{Hom}_{R}(X,Y)$ such that the following diagram
$$\small\xymatrix{
      & M\otimes_{R}X \ar[d]_{f} \ar[r]^{M\otimes_{R}\gamma} & M\otimes_{R}Y \ar[d]_{g}  & \\
      & X  \ar[r]^{\gamma} &  Y  &  \\   }\vspace*{2mm}$$
is commutative, that is, $g(M\otimes_{R}\gamma)=\gamma f$.
A sequence
$$0\rightarrow(X_{1},f_{1})\overset{\gamma_{1}
}\rightarrow(X_{2},f_{2})\overset{\gamma_{2}}
\rightarrow(X_{3},f_{3})\rightarrow0$$
in $\Xi$ is exact if and only if the sequence $0\rightarrow X_{1}\overset{\gamma_{1}}\rightarrow X_{2}\overset{\gamma_{2}}\rightarrow X_{3}\rightarrow0$ in $R$-Mod is exact.
Dually, the category $R\ltimes M$-Mod is also isomorphic to the category $\Upsilon$, whose objects are pairs $[X,\alpha]$ with $X$ a left $R$-module and $\alpha\in\mathrm{Hom}_{R}(X,\mathrm{Hom}_{R}(M,X))$ such that the composition $X\overset{\alpha}\longrightarrow\mathrm{Hom}_{R}(M,X)\overset{\mathrm{Hom}_{R}(M,\alpha)}
\longrightarrow\mathrm{Hom}_{R}(M,\mathrm{Hom}_{R}(M,X))$ is 0, and morphisms $[X,\alpha]\rightarrow
[Y,\beta]$ are morphisms $\varphi$ with $\varphi\in\mathrm{Hom}_{R}(X,Y)$ such that the following diagram
$$\small\xymatrix@C=2cm@R=0.9cm{
      & X \ar[d]_{\alpha} \ar[r]^{\varphi} & Y \ar[d]_{\beta}  & \\
      & \mathrm{Hom}_{R}(M,X)  \ar[r]^{\mathrm{Hom}_{R}(M,\varphi)} &  \mathrm{Hom}_{R}(M,Y)  &  \\   }\vspace*{2mm}$$
is commutative, that is, $\beta\varphi=\mathrm{Hom}_{R}(M,\varphi)\alpha$.
A sequence
$$0\rightarrow[X_{1},\alpha_{1}]\overset{\varphi_{1}}\rightarrow[X_{2},\alpha_{2}]
\overset{\varphi_{2}}\rightarrow[X_{3},\alpha_{3}]\rightarrow0$$
in $\Upsilon$ is exact if and only if the sequence $0\rightarrow X_{1}\overset{\varphi_{1}}\rightarrow X_{2}\overset{\varphi_{2}}\rightarrow X_{3}\rightarrow0$ in $R$-Mod is exact.
We will identify the category $R\ltimes M\text{-}\mathrm{Mod}$ with the category $\Xi$ and the category $\Upsilon$.

\begin{rem}\label{prop:2.4}{\rm{For the category $\Xi$ and the category $\Upsilon$, we give the following functors.

$(1)$ The functor $\mathbf{T}:R$-Mod~$\rightarrow\Xi$ is defined by $\mathbf{T}(X)=(X\oplus(M\otimes_{R}X),\mu)$ with $$\mu=\left(\begin{smallmatrix}0&0\\1&0\end{smallmatrix}\right):(M\otimes_{R}X)\oplus(M\otimes_{R}M\otimes_{R}X)
\rightarrow X\oplus(M\otimes_{R}X)$$
for every object $X\in R\text{-}\mathrm{Mod}$,
and $\mathbf{T}(f)=\left(\begin{smallmatrix}f&0\\0&M\otimes_{R}f\end{smallmatrix}\right)$
for every morphism $f:X\rightarrow Y$ in $R$-Mod.

$(2)$ The functor $\mathbf{U}:\Xi\rightarrow R\text{-}\mathrm{Mod}$ is defined by $\mathbf{U}(X,f)=X$ for every object $(X, f)\in\Xi$, and $\mathbf{U}(\alpha)=\alpha$ for every morphism $\alpha:(X,f)\rightarrow(Y,g)$ in $\Xi$.

$(3)$ The functor $\mathbf{Z}:R$-Mod~$\rightarrow\Xi$ is defined by $\mathbf{Z}(X)=(X,0)$
for every object $X\in R\text{-}\mathrm{Mod}$, and $\mathbf{Z}(\alpha)=\alpha$ for every morphism $\alpha:X\rightarrow Y$ in $R$-Mod.

$(4)$ The functor $\mathbf{H}:R$-Mod~$\rightarrow\Upsilon$ is defined by $\mathbf{H}(X)=[\mathrm{Hom}_{R}(M,X)\oplus X,\vartheta]$ with
$$\vartheta=\left(\begin{smallmatrix}0&0\\1&0\end{smallmatrix}\right):\mathrm{Hom}_{R}(M,X)\oplus X\rightarrow
\mathrm{Hom}_{R}(M,\mathrm{Hom}_{R}(M,X))\oplus\mathrm{Hom}_{R}(M,X)$$
for every object $X\in R\text{-}\mathrm{Mod}$,
and $\mathbf{H}(\beta)=\left(\begin{smallmatrix}\mathrm{Hom}_{R}(M,\beta)
&0\\0&\beta\end{smallmatrix}\right)$ for every morphism $\beta:X\rightarrow Y$ in $R$-Mod.

$(5)$ The functor $\mathbf{U}':\Upsilon\rightarrow R$-Mod is defined by $\mathbf{U}'[X,\beta]=X$ for every object $[X,\beta]\in\Upsilon$, and $\mathbf{U}'(\alpha)=\alpha$ for every morphism $\alpha:[X,\beta]\rightarrow[Y,\gamma]$ in $\Upsilon$.

$(6)$ The functor $\mathbf{Z}':R$-Mod~$\rightarrow\Upsilon$ is defined by $\mathbf{Z}'(X)=[X,0]$ for every object $X\in R\text{-}\mathrm{Mod}$, and $\mathbf{Z}'(\alpha)=\alpha$ for every morphism $\alpha:X\rightarrow Y$ in $R$-Mod.
}}
\end{rem}

Note that the functor $\mathbf{T}$ is right exact, the functor $\mathbf{H}$ is left exact, and the functors $\mathbf{U}$, $\mathbf{U}'$, $\mathbf{Z}$ and $\mathbf{Z}'$ are exact. By \cite[\text{Proposition}~1.3]{RPI1975}, $(\mathbf{T},\mathbf{U})$ and $(\mathbf{U}', \mathbf{H})$ are adjoint pairs with $\mathbf{UZ}=\mathrm{id}_{R\text{-}\mathrm{Mod}}$.

\section{$n$-Cotorsion pairs over Comma categories}

The goal of this section is to construct left $n$-cotorsion pairs and right $n$-cotorsion pairs in comma categories by improving some isomorphisms between homology groups and combining some classes of objects in comma categories.

Throughout this section, let abelian categories $\mathcal{A}$ and $\mathcal{B}$ have enough projective and injective objects, $\mathcal{X}$ and $\mathcal{M}$ be classes of objects in $\mathcal{A}$, and $\mathcal{Y}$ and $\mathcal{N}$ be classes of objects in $\mathcal{B}$. Now, we define two special classes in the comma categories $(T\downarrow\mathcal{A})$ and $(\mathcal{B}\downarrow G)$, respectively.

(1)~$\left(\begin{smallmatrix}\mathcal{X}\\ \mathcal{Y}\end{smallmatrix}\right):=\{\left(\begin{smallmatrix}X\\Y\end{smallmatrix}\right)_{\varphi}\in
(T\downarrow\mathcal{A})\mid X\in\mathcal{X},~Y\in\mathcal{Y}\}$.

(2)~$\left(\begin{smallmatrix}\mathcal{M}\\ \mathcal{N}\end{smallmatrix}\right):=\{\left(\begin{smallmatrix}M\\N
\end{smallmatrix}\right)_{\phi}\in(\mathcal{B}\downarrow G)\mid M\in\mathcal{M},~N\in\mathcal{N}\}$.

(3)~$\mathfrak{B}_{\mathcal{Y}}^{\mathcal{X}}:=\{\left(\begin{smallmatrix}X\\Y\end{smallmatrix}\right)_{\varphi}\in
(T\downarrow\mathcal{A})\mid Y\in\mathcal{Y},~\varphi~\text{is a monomorphism and}~\mathrm{Coker}\varphi\in\mathcal{X}\}$.

(4)~$\mathfrak{D}_{\mathcal{N}}^{\mathcal{M}}:=\{\left(\begin{smallmatrix}M\\N\end{smallmatrix}\right)_{\phi}\in
(\mathcal{B}\downarrow G)\mid M\in\mathcal{M},~\phi~\text{is an epimorphism and}~\mathrm{Ker}\phi\in\mathcal{N}\}$.

{\bf 3.1 Approximation theory}

In this subsection, we discuss the connections between precovers and preenvelopes in the abelian categories $\mathcal{A}$, $\mathcal{B}$ and those in the comma categories $(T\downarrow\mathcal{A})$ and $(\mathcal{B}\downarrow G)$, respectively. The first three conclusions are an independent interest. The fourth conclusion is a simple generalization of the existing result.

Let $\mathcal{C}$ be a class of objects of $\mathcal{A}$. A morphism $f:C\rightarrow A$ is said to be a $\mathcal{C}$-precover (or a right $\mathcal{C}$-approximation) of $A$ if $C\in\mathcal{C}$ and if for any morphism $g:C'\rightarrow A$ with $C'\in\mathcal{C}$, there exists a morphism $h:C'\rightarrow C$ such that the following diagram
$$\xymatrix{
                &         C'  \ar[ld]_{h}\ar[d]^{g}     \\
  C \ar[r]_{f} & A }$$
is commutative, that is, $g=fh$. If, in addition, $f:C\rightarrow A$ is an epimorphism and $\mathrm{Ker}f\in\mathcal{C}^{\perp}$, then $f$ is called a special $\mathcal{C}$-precover of $A$. The class $\mathcal{C}$ is said to be
(special) precovering if every object of $\mathcal{A}$ has a (special) $\mathcal{C}$-precover. Dually, we have the notions of $\mathcal{C}$-preenvelopes (or left $\mathcal{C}$-approximations), special $\mathcal{C}$-preenvelopes and (special) preenveloping classes.

\begin{prop}\label{prop:2.4}{\it{$(1)$ If $\left(\begin{smallmatrix}f_1\\f_2\end{smallmatrix}\right):\left(\begin{smallmatrix}X\\Y\end{smallmatrix}
\right)_{\alpha}\rightarrow\left(\begin{smallmatrix}A\\B\end{smallmatrix}\right)_{\varphi}$ is an $\left(\begin
{smallmatrix}\mathcal{X}\\ \mathcal{Y}\end{smallmatrix}\right)$-precover of $\left(\begin{smallmatrix}A\\B\end{smallmatrix}\right)_{\varphi}$, then $f_{1}:X\rightarrow A$ is an $\mathcal{X}$-precover of $A$, and $f_{2}:Y\rightarrow B$ is a $\mathcal{Y}$-precover of $B$ in the case $T(\mathcal{Y})\subseteq
\mathcal{X}$.

$(2)$ If $\left(\begin{smallmatrix}f_1\\f_2\end{smallmatrix}\right):\left(\begin{smallmatrix}X\\Y\end{smallmatrix}
\right)_{\alpha}\rightarrow\left(\begin{smallmatrix}A\\B\end{smallmatrix}\right)_{\varphi}$ is a $\mathfrak{B}
_{\mathcal{Y}}^{\mathcal{X}}$-precover of $\left(\begin{smallmatrix}A\\B\end{smallmatrix}\right)_{\varphi}$, then $f_{2}:Y\rightarrow B$ is a $\mathcal{Y}$-precover of $B$, and $f_{1}:X\rightarrow A$ is an $\mathcal{X}$-precover of $A$ in the case $X\in\mathcal{X}$.
}}
\end{prop}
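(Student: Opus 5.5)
The approach is to test each component map against objects of the comma category that are concentrated in a single coordinate, so that the precover property in $(T\downarrow\mathcal{A})$ can be read off coordinatewise. Two kinds of test objects suffice, for $X'\in\mathcal{A}$ and $Y'\in\mathcal{B}$:
\begin{itemize}
\item $\left(\begin{smallmatrix}X'\\0\end{smallmatrix}\right)_{0}$, whose structure map is $0=T(0)\rightarrow X'$ (note $T(0)=0$ because $T$ is additive);
\item $\mathbf{p}(0,Y')=\left(\begin{smallmatrix}T(Y')\\Y'\end{smallmatrix}\right)_{1}$, from Remark 2.6(1).
\end{itemize}
For each, I check that it lies in the relevant class, build a morphism to $\left(\begin{smallmatrix}A\\B\end{smallmatrix}\right)_{\varphi}$, factor it through the given precover, and read off the needed component.

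\textbf{Part (1).} Since $X\in\mathcal{X}$ and $Y\in\mathcal{Y}$ by definition of $\left(\begin{smallmatrix}\mathcal{X}\\ \mathcal{Y}\end{smallmatrix}\right)$, the domains of $f_1$ and $f_2$ already lie in the right classes.
\begin{itemize}
\item \emph{$f_1$ is an $\mathcal{X}$-precover.} Take $g:X'\rightarrow A$ with $X'\in\mathcal{X}$. Since every class contains $0$, the object $\left(\begin{smallmatrix}X'\\0\end{smallmatrix}\right)_{0}$ lies in $\left(\begin{smallmatrix}\mathcal{X}\\ \mathcal{Y}\end{smallmatrix}\right)$. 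The pair $\left(\begin{smallmatrix}g\\0\end{smallmatrix}\right)$ is a morphism to $\left(\begin{smallmatrix}A\\B\end{smallmatrix}\right)_{\varphi}$, because the required square has source $T(0)=0$ and so commutes trivially. By the precover property it factors as $\left(\begin{smallmatrix}f_1\\f_2\end{smallmatrix}\right)\left(\begin{smallmatrix}h_1\\h_2\end{smallmatrix}\right)$. Comparing first coordinates gives $g=f_1h_1$.
\item \emph{$f_2$ is a $\mathcal{Y}$-precover when $T(\mathcal{Y})\subseteq\mathcal{X}$.} Take $g':Y'\rightarrow B$ with $Y'\in\mathcal{Y}$. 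By hypothesis $T(Y')\in\mathcal{X}$, so $\left(\begin{smallmatrix}T(Y')\\Y'\end{smallmatrix}\right)_{1}\in\left(\begin{smallmatrix}\mathcal{X}\\ \mathcal{Y}\end{smallmatrix}\right)$. Define $\left(\begin{smallmatrix}\varphi T(g')\\g'\end{smallmatrix}\right)$ into $\left(\begin{smallmatrix}A\\B\end{smallmatrix}\right)_{\varphi}$. This is a morphism, since the condition reads $\varphi T(g')=\varphi T(g')\circ 1$. Factoring through the precover and comparing second coordinates gives $g'=f_2h_2$.
\end{itemize}

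\textbf{Part (2).} By definition of $\mathfrak{B}_{\mathcal{Y}}^{\mathcal{X}}$ we have $Y\in\mathcal{Y}$, but $X$ need not lie in $\mathcal{X}$. That is exactly why the extra hypothesis $X\in\mathcal{X}$ is imposed for $f_1$.
\begin{itemize}
\item \emph{$f_2$ is a $\mathcal{Y}$-precover.} Use the same test object $\left(\begin{smallmatrix}T(Y')\\Y'\end{smallmatrix}\right)_{1}$. It lies in $\mathfrak{B}_{\mathcal{Y}}^{\mathcal{X}}$ with no assumption on $T(\mathcal{Y})$: the structure map $1$ is a monomorphism with cokernel $0\in\mathcal{X}$. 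The argument from part (1) then gives $g'=f_2h_2$.
\item \emph{$f_1$ is an $\mathcal{X}$-precover when $X\in\mathcal{X}$.} Use the test object $\left(\begin{smallmatrix}X'\\0\end{smallmatrix}\right)_{0}$. It lies in $\mathfrak{B}_{\mathcal{Y}}^{\mathcal{X}}$ because $0\in\mathcal{Y}$, the map $0\rightarrow X'$ is a monomorphism, and its cokernel is $X'\in\mathcal{X}$. Factoring $\left(\begin{smallmatrix}g\\0\end{smallmatrix}\right)$ gives $g=f_1h_1$.
\end{itemize}

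No step is a real obstacle. The only points needing care are confirming that each test object lies in the relevant class, and that the chosen pair of maps satisfies the commutativity condition $\varphi'T(b)=a\varphi$. Both checks rely only on $T(0)=0$ and on the standing convention that every class contains the zero object.
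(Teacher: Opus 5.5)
Your proof is correct and is essentially the paper's argument. The paper uses the same two test objects $\left(\begin{smallmatrix}C\\0\end{smallmatrix}\right)_{0}$ and $\left(\begin{smallmatrix}T(D)\\D\end{smallmatrix}\right)_{1}$, with the same morphisms $\left(\begin{smallmatrix}f\\0\end{smallmatrix}\right)$ and $\left(\begin{smallmatrix}\varphi T(h)\\h\end{smallmatrix}\right)$, and reads off the relevant coordinate after factoring through the precover. Your explicit checks are details the paper leaves implicit: that the test objects lie in $\mathfrak{B}_{\mathcal{Y}}^{\mathcal{X}}$, and that the domains $X$, $Y$ lie in the required classes, which is why $X\in\mathcal{X}$ is needed in (2).
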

\begin{proof} $(1)$ Let $f:C\rightarrow A$ be any morphism in $\mathcal{A}$ with $C\in\mathcal{X}$. Then we get the morphism $\left(\begin{smallmatrix}f\\0\end{smallmatrix}\right):
\left(\begin{smallmatrix}C\\0\end{smallmatrix}\right)_{0}
\rightarrow\left(\begin{smallmatrix}A\\B\end{smallmatrix}\right)_{\varphi}$ in $(T\downarrow\mathcal{A})$ with $\left(\begin{smallmatrix}C\\0\end{smallmatrix}\right)_{0}\in\left(\begin
{smallmatrix}\mathcal{X}\\ \mathcal{Y}\end{smallmatrix}\right)$. There exists a morphism $\left(\begin{smallmatrix}g\\0\end{smallmatrix}\right):\left(\begin{smallmatrix}C\\0\end{smallmatrix}
\right)_{0}\rightarrow\left(\begin{smallmatrix}X\\Y\end{smallmatrix}\right)_{\alpha}$ such that $\left(
\begin{smallmatrix}f_1\\f_2\end{smallmatrix}\right)\left(\begin{smallmatrix}g\\0\end{smallmatrix}\right)=
\left(\begin{smallmatrix}f\\0\end{smallmatrix}\right)$. Hence $f_{1}g=f$. So $f_{1}:X\rightarrow A$ is an $\mathcal{X}$-precover of $A$ in $\mathcal{A}$.

Let $h:D\rightarrow B$ be any morphism in $\mathcal{B}$ with $D\in\mathcal{Y}$. Then we get the morphism $\left(
\begin{smallmatrix}\varphi T(h)\\h\end{smallmatrix}\right):\left(\begin{smallmatrix}T(D)\\D\end{smallmatrix}
\right)_{1}\rightarrow\left(\begin{smallmatrix}A\\B\end{smallmatrix}\right)_{\varphi}$ in $(T\downarrow
\mathcal{A})$. Since $T(\mathcal{Y})\subseteq
\mathcal{X}$, $\left(\begin{smallmatrix}T(D)\\D\end{smallmatrix}\right)_{1}\in\left(\begin
{smallmatrix}\mathcal{X}\\ \mathcal{Y}\end{smallmatrix}\right)$. There exists a morphism $\left(\begin{smallmatrix}k
\\l\end{smallmatrix}\right):\left(\begin{smallmatrix}T(D)\\D\end{smallmatrix}\right)_{1}\rightarrow
\left(\begin{smallmatrix}X\\Y\end{smallmatrix}\right)_{\alpha}$ such that $\left(\begin{smallmatrix}f_1\\f_2
\end{smallmatrix}\right)\left(\begin{smallmatrix}k\\l\end{smallmatrix}\right)=\left(\begin{smallmatrix}
\varphi T(h)\\h\end{smallmatrix}\right)$. Thus $f_{2}l=h$. So $f_{2}:Y\rightarrow B$ is a $\mathcal{Y}$-precover of $B$ in $\mathcal{B}$.

$(2)$ Let $b:F\rightarrow B$ be any morphism in $\mathcal{B}$ with $F\in\mathcal{Y}$. Then we get the morphism $\left(\begin{smallmatrix}\varphi T(b)\\b\end{smallmatrix}\right):\left(\begin{smallmatrix}T(F)\\F
\end{smallmatrix}\right)_{1}\rightarrow\left(\begin{smallmatrix}A\\B\end{smallmatrix}\right)_{\varphi}$ in $(T\downarrow\mathcal{A})$. Since $\left(\begin{smallmatrix}T(F)\\F\end{smallmatrix}\right)_{1}\in\mathfrak{B}
_{\mathcal{Y}}^{\mathcal{X}}$, there exists a morphism $\left(\begin{smallmatrix}c
\\d\end{smallmatrix}\right):\left(\begin{smallmatrix}T(F)\\F\end{smallmatrix}\right)_{1}\rightarrow
\left(\begin{smallmatrix}X\\Y\end{smallmatrix}\right)_{\alpha}$ such that $\left(\begin{smallmatrix}f_1\\f_2
\end{smallmatrix}\right)\left(\begin{smallmatrix}c\\d\end{smallmatrix}\right)=\left(\begin{smallmatrix}
\varphi T(b)\\b\end{smallmatrix}\right)$. Thus $f_{2}d=b$. So $f_{2}:Y\rightarrow B$ is a $\mathcal{Y}$-precover of $B$ in $\mathcal{B}$.

Let $a:E\rightarrow A$ be any morphism in $\mathcal{A}$ with $E\in\mathcal{X}$. Then we get the morphism $\left(\begin{smallmatrix}a\\0\end{smallmatrix}\right):
\left(\begin{smallmatrix}E\\0\end{smallmatrix}\right)_{0}
\rightarrow\left(\begin{smallmatrix}A\\B\end{smallmatrix}\right)_{\varphi}$ in $(T\downarrow\mathcal{A})$ with $\left(\begin{smallmatrix}E\\0\end{smallmatrix}\right)_{0}\in\mathfrak{B}
_{\mathcal{Y}}^{\mathcal{X}}$. There exists a morphism $\left(\begin{smallmatrix}e\\0\end{smallmatrix}\right):\left(\begin{smallmatrix}E\\0\end{smallmatrix}
\right)_{0}\rightarrow\left(\begin{smallmatrix}X\\Y\end{smallmatrix}\right)_{\alpha}$ such that $\left(
\begin{smallmatrix}f_1\\f_2\end{smallmatrix}\right)\left(\begin{smallmatrix}e\\0\end{smallmatrix}\right)=
\left(\begin{smallmatrix}a\\0\end{smallmatrix}\right)$. Hence $f_{1}e=a$. So $f_{1}:X\rightarrow A$ is an $\mathcal{X}$-precover of $A$ in $\mathcal{A}$.
\end{proof}

Dually, we have the following lemma.

\begin{prop}\label{prop:2.4}{\it{
$(1)$ If $\left(\begin{smallmatrix}g_1\\g_2\end{smallmatrix}\right):\left(\begin{smallmatrix}A\\B
\end{smallmatrix}\right)_{\varphi}\rightarrow\left(\begin{smallmatrix}M\\N\end{smallmatrix}\right)_{\phi}$ is an $\left(\begin{smallmatrix}\mathcal{M}\\ \mathcal{N}\end{smallmatrix}
\right)$-preenvelope of $\left(\begin{smallmatrix}A\\B
\end{smallmatrix}\right)_{\varphi}$, then $g_{2}:B\rightarrow N$ is an $\mathcal{N}$-preenvelope of $B$, and $g_{1}:A\rightarrow M$ is an $\mathcal{M}$-preenvelope of $A$ in case the $G(\mathcal{M})\subseteq\mathcal{N}$.

$(2)$ If $\left(\begin{smallmatrix}g_1\\g_2\end{smallmatrix}\right):\left(\begin{smallmatrix}A\\B
\end{smallmatrix}\right)_{\varphi}\rightarrow\left(\begin{smallmatrix}M\\N\end{smallmatrix}\right)_{\phi}$ is a $\mathfrak{D}_{\mathcal{N}}^{\mathcal{M}}$-preenvelope of $\left(\begin{smallmatrix}A\\B
\end{smallmatrix}\right)_{\varphi}$, then $g_{1}:A\rightarrow M$ is an $\mathcal{M}$-preenvelope of $A$, and $g_{2}:B\rightarrow N$ is an $\mathcal{N}$-preenvelope of $B$ in the case $N\in\mathcal{N}$.
}}
\end{prop}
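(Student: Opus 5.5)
The plan is to dualize the proof of Proposition 3.1 by testing the preenvelope property against two kinds of ``test objects'' in $(\mathcal{B}\downarrow G)$. The first kind is $\left(\begin{smallmatrix}0\\N'\end{smallmatrix}\right)_{0}$ with $N'\in\mathcal{N}$; this is an object because $G(0)=0$. The second kind is $\left(\begin{smallmatrix}M'\\G(M')\end{smallmatrix}\right)_{1}$ with $M'\in\mathcal{M}$. Here the object $\left(\begin{smallmatrix}A\\B\end{smallmatrix}\right)_{\varphi}$ of the statement is understood as an object of $(\mathcal{B}\downarrow G)$, so $\varphi:B\rightarrow G(A)$.

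For $(1)$, we first show that $g_2$ is an $\mathcal{N}$-preenvelope. Let $h:B\rightarrow N'$ with $N'\in\mathcal{N}$. The pair $\left(\begin{smallmatrix}0\\h\end{smallmatrix}\right):\left(\begin{smallmatrix}A\\B\end{smallmatrix}\right)_{\varphi}\rightarrow\left(\begin{smallmatrix}0\\N'\end{smallmatrix}\right)_{0}$ is a morphism, since the compatibility condition reads $0\circ h=G(0)\varphi=0$. Because every class contains the zero object, the target lies in $\left(\begin{smallmatrix}\mathcal{M}\\ \mathcal{N}\end{smallmatrix}\right)$. The preenvelope property then gives $\left(\begin{smallmatrix}0\\l\end{smallmatrix}\right)$ with $\left(\begin{smallmatrix}0\\l\end{smallmatrix}\right)\left(\begin{smallmatrix}g_1\\g_2\end{smallmatrix}\right)=\left(\begin{smallmatrix}0\\h\end{smallmatrix}\right)$, and hence $lg_2=h$.

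Next we show that $g_1$ is an $\mathcal{M}$-preenvelope when $G(\mathcal{M})\subseteq\mathcal{N}$. Let $a:A\rightarrow M'$ with $M'\in\mathcal{M}$, and take the morphism $\left(\begin{smallmatrix}a\\G(a)\varphi\end{smallmatrix}\right):\left(\begin{smallmatrix}A\\B\end{smallmatrix}\right)_{\varphi}\rightarrow\left(\begin{smallmatrix}M'\\G(M')\end{smallmatrix}\right)_{1}$. The compatibility condition is $1\circ G(a)\varphi=G(a)\varphi$, which holds trivially. The hypothesis $G(\mathcal{M})\subseteq\mathcal{N}$ places the target in $\left(\begin{smallmatrix}\mathcal{M}\\ \mathcal{N}\end{smallmatrix}\right)$. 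Factoring through $\left(\begin{smallmatrix}g_1\\g_2\end{smallmatrix}\right)$ gives some $\left(\begin{smallmatrix}k\\l\end{smallmatrix}\right)$ with $kg_1=a$.

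For $(2)$ we use the same two test objects and check that both lie in $\mathfrak{D}_{\mathcal{N}}^{\mathcal{M}}$. The structure map of $\left(\begin{smallmatrix}M'\\G(M')\end{smallmatrix}\right)_{1}$ is the identity, which is an epimorphism with kernel $0\in\mathcal{N}$. So this object is in $\mathfrak{D}_{\mathcal{N}}^{\mathcal{M}}$ with no extra hypothesis, and the argument above shows that $g_1$ is an $\mathcal{M}$-preenvelope. The structure map of $\left(\begin{smallmatrix}0\\N'\end{smallmatrix}\right)_{0}$ is $N'\rightarrow 0$, which is an epimorphism with kernel $N'\in\mathcal{N}$, and $0\in\mathcal{M}$. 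So this object is also in $\mathfrak{D}_{\mathcal{N}}^{\mathcal{M}}$, and the first argument shows that $g_2$ is an $\mathcal{N}$-preenvelope. The hypothesis $N\in\mathcal{N}$ is only needed so that the codomain of $g_2$ lies in $\mathcal{N}$, which the definition of a preenvelope requires.

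There is no real obstacle here. The only points to check are that the test morphisms satisfy the commutativity condition $\phi'b=G(a)\phi$ of the comma category, and that the test objects lie in the relevant classes. Both checks use $G(0)=0$ and the convention that all classes contain the zero object.
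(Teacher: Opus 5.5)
Your proposal is correct and is the dualization the paper intends. The paper gives no separate proof and says only ``Dually''; your test objects $\left(\begin{smallmatrix}0\\N'\end{smallmatrix}\right)_{0}$ and $\left(\begin{smallmatrix}M'\\G(M')\end{smallmatrix}\right)_{1}$ are exactly the duals of $\left(\begin{smallmatrix}C\\0\end{smallmatrix}\right)_{0}$ and $\left(\begin{smallmatrix}T(D)\\D\end{smallmatrix}\right)_{1}$ from the proof of Proposition 3.1, and your membership checks in $\mathfrak{D}_{\mathcal{N}}^{\mathcal{M}}$ and your reason for needing $N\in\mathcal{N}$ in $(2)$ are also right.
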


\begin{prop}\label{prop:2.4}{\it{
$(1)$ $\left(\begin{smallmatrix}\mathcal{M}\\ \mathcal{N}\end{smallmatrix}\right)$ is a precovering class in $(\mathcal{B}\downarrow G)$ if and only if $\mathcal{M}$ is a precovering class in $\mathcal{A}$ and $\mathcal{N}$ is a precovering class in $\mathcal{B}$.

$(2)$ $\left(\begin{smallmatrix}\mathcal{X}\\ \mathcal{Y}\end{smallmatrix}\right)$ is a preenveloping class in $(T\downarrow\mathcal{A})$ if and only if $\mathcal{X}$ is a preenveloping class in $\mathcal{A}$ and $\mathcal{Y}$ is a preenveloping class in $\mathcal{B}$.
}}
\end{prop}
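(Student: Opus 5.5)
The plan is to prove both directions of $(1)$ by hand, working directly with the definition of morphisms in $(\mathcal{B}\downarrow G)$. Part $(2)$ is then the formal dual in $(T\downarrow\mathcal{A})$: reverse all arrows and replace the pullback used below by a pushout along $\varphi$ and $T(f)$. Throughout I use that $\mathcal{M}$ and $\mathcal{N}$ contain $0$, so that objects such as $\left(\begin{smallmatrix}M'\\0\end{smallmatrix}\right)_{0}$ and $\left(\begin{smallmatrix}0\\N'\end{smallmatrix}\right)_{0}$ lie in $\left(\begin{smallmatrix}\mathcal{M}\\ \mathcal{N}\end{smallmatrix}\right)$.

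\emph{Necessity.} I would proceed in the spirit of Proposition 3.1, but choose the test objects carefully so that the compatibility condition becomes harmless.
\begin{itemize}
\item To show $\mathcal{M}$ is precovering, take $A\in\mathcal{A}$ and an $\left(\begin{smallmatrix}\mathcal{M}\\ \mathcal{N}\end{smallmatrix}\right)$-precover $\left(\begin{smallmatrix}g_1\\g_2\end{smallmatrix}\right)$ of $\left(\begin{smallmatrix}A\\G(A)\end{smallmatrix}\right)_{1}$. Given $f:M'\to A$ with $M'\in\mathcal{M}$, the pair $\left(\begin{smallmatrix}f\\0\end{smallmatrix}\right):\left(\begin{smallmatrix}M'\\0\end{smallmatrix}\right)_{0}\to\left(\begin{smallmatrix}A\\G(A)\end{smallmatrix}\right)_{1}$ is a morphism, because both sides of the commutativity condition are $0$. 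Factoring it through the precover gives $g_1t=f$, so $g_1$ is an $\mathcal{M}$-precover of $A$.
\item To show $\mathcal{N}$ is precovering, take $B\in\mathcal{B}$ and a precover $\left(\begin{smallmatrix}g_1\\g_2\end{smallmatrix}\right)$ of $\left(\begin{smallmatrix}0\\B\end{smallmatrix}\right)_{0}$. Any $h:N'\to B$ with $N'\in\mathcal{N}$ gives a morphism $\left(\begin{smallmatrix}0\\h\end{smallmatrix}\right)$ from $\left(\begin{smallmatrix}0\\N'\end{smallmatrix}\right)_{0}$. Factoring it shows $g_2$ is an $\mathcal{N}$-precover of $B$.
\end{itemize}

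\emph{Sufficiency.} This is the main obstacle. For $\left(\begin{smallmatrix}A\\B\end{smallmatrix}\right)_{\phi}$, one cannot simply pair an $\mathcal{M}$-precover $f:M\to A$ with an $\mathcal{N}$-precover of $B$. A morphism $\left(\begin{smallmatrix}f\\b\end{smallmatrix}\right):\left(\begin{smallmatrix}M\\N\end{smallmatrix}\right)_{\phi'}\to\left(\begin{smallmatrix}A\\B\end{smallmatrix}\right)_{\phi}$ requires $\phi b=G(f)\phi'$, and no structure map $\phi'$ is available a priori. My fix is to absorb this condition into a pullback.
\begin{itemize}
\item Let $P$ be the pullback of $\phi:B\to G(A)$ and $G(f):G(M)\to G(A)$ in $\mathcal{B}$, with projections $p_1:P\to B$ and $p_2:P\to G(M)$.
\item Take an $\mathcal{N}$-precover $g:N\to P$.
\item Set $\phi'=p_2g$ and $b=p_1g$. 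Then $\phi b=\phi p_1g=G(f)p_2g=G(f)\phi'$, so $\left(\begin{smallmatrix}f\\b\end{smallmatrix}\right):\left(\begin{smallmatrix}M\\N\end{smallmatrix}\right)_{\phi'}\to\left(\begin{smallmatrix}A\\B\end{smallmatrix}\right)_{\phi}$ is a morphism from an object of $\left(\begin{smallmatrix}\mathcal{M}\\ \mathcal{N}\end{smallmatrix}\right)$.
\end{itemize}

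It remains to verify the precover property. Let $\left(\begin{smallmatrix}a'\\b'\end{smallmatrix}\right):\left(\begin{smallmatrix}M'\\N'\end{smallmatrix}\right)_{\psi}\to\left(\begin{smallmatrix}A\\B\end{smallmatrix}\right)_{\phi}$ be a morphism with $M'\in\mathcal{M}$ and $N'\in\mathcal{N}$.
\begin{itemize}
\item First factor $a'=ft$ with $t:M'\to M$, using that $f$ is an $\mathcal{M}$-precover.
\item The morphism condition gives $\phi b'=G(a')\psi=G(f)G(t)\psi$. Hence $b'$ and $G(t)\psi$ induce a map $u:N'\to P$ with $p_1u=b'$ and $p_2u=G(t)\psi$.
\item Since $g$ is an $\mathcal{N}$-precover, write $u=gs$.
\item Then $\phi's=p_2gs=G(t)\psi$, so $\left(\begin{smallmatrix}t\\s\end{smallmatrix}\right)$ is a morphism in $(\mathcal{B}\downarrow G)$.
\item Finally, $\left(\begin{smallmatrix}f\\b\end{smallmatrix}\right)\left(\begin{smallmatrix}t\\s\end{smallmatrix}\right)=\left(\begin{smallmatrix}a'\\p_1gs\end{smallmatrix}\right)=\left(\begin{smallmatrix}a'\\b'\end{smallmatrix}\right)$.
\end{itemize}
This completes $(1)$.
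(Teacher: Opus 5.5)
Your proof is correct and follows the paper's argument essentially verbatim. For sufficiency you pull back $\phi$ along $G(f)$, take an $\mathcal{N}$-precover of the pullback, and lift test morphisms via the universal property, which is exactly the paper's construction; for necessity the only cosmetic difference is that the paper tests against $\left(\begin{smallmatrix}A\\0\end{smallmatrix}\right)_{0}$ instead of $\left(\begin{smallmatrix}A\\G(A)\end{smallmatrix}\right)_{1}$, and both work since the compatibility condition is trivially $0=0$.
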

\begin{proof} The proof of $(1)$ is given below, and the proof of $(2)$ is dual.

``~$\Rightarrow$~'' Let $A$ be any object in $\mathcal{A}$, $\left(\begin{smallmatrix}f_1\\0
\end{smallmatrix}\right):\left(\begin{smallmatrix}E\\F\end{smallmatrix}\right)_{\varphi}
\rightarrow
\left(\begin{smallmatrix}A\\0\end{smallmatrix}\right)_{0}$ be an $\left(\begin{smallmatrix}\mathcal{M}\\ \mathcal{N}\end{smallmatrix}\right)$-precover of $\left(\begin{smallmatrix}A\\0\end{smallmatrix}\right)_{0}$ in $(\mathcal{B}\downarrow G)$, and $f:H\rightarrow A$ be any morphism in $\mathcal{A}$ with $H\in\mathcal{M}$. Then we get the morphism $\left(\begin{smallmatrix}f\\0
\end{smallmatrix}\right):\left(\begin{smallmatrix}H\\0\end{smallmatrix}\right)_{0}\rightarrow
\left(\begin{smallmatrix}A\\0\end{smallmatrix}\right)_{0}$ in $(\mathcal{B}\downarrow G)$ with $\left(\begin{smallmatrix}H\\0\end{smallmatrix}\right)_{0}\in
\left(\begin{smallmatrix}\mathcal{M}\\ \mathcal{N}\end{smallmatrix}\right)$. So there exists a morphism $\left(\begin{smallmatrix}h\\0\end{smallmatrix}\right):\left(\begin{smallmatrix}H\\0\end{smallmatrix}\right)
_{0}\rightarrow\left(\begin{smallmatrix}E\\F\end{smallmatrix}\right)_{\varphi}$ such that $\left(\begin
{smallmatrix}f_1\\0\end{smallmatrix}\right)\left(\begin{smallmatrix}h\\0\end{smallmatrix}\right)=
\left(\begin{smallmatrix}f\\0\end{smallmatrix}\right)$. Thus $f_{1}h=f$. Then $f_{1}:E\rightarrow A$ is an $\mathcal{M}$-precover of $A$ in $\mathcal{A}$, and hence $\mathcal{M}$ is a precovering class in $\mathcal{A}$.

Let $B$ be any object in $\mathcal{B}$, $\left(\begin{smallmatrix}0\\f_{2}
\end{smallmatrix}\right):\left(\begin{smallmatrix}C\\D\end{smallmatrix}\right)_{\phi}
\rightarrow
\left(\begin{smallmatrix}0\\B\end{smallmatrix}\right)_{0}$ be an $\left(\begin{smallmatrix}\mathcal{M}\\ \mathcal{N}\end{smallmatrix}\right)$-precover of $\left(\begin{smallmatrix}0\\B\end{smallmatrix}\right)_{0}$ in $(\mathcal{B}\downarrow G)$, and $\alpha:J\rightarrow B$ be any morphism in $\mathcal{B}$ with $J\in\mathcal{N}$. Then we get the morphism $\left(\begin{smallmatrix}0\\ \alpha
\end{smallmatrix}\right):\left(\begin{smallmatrix}0\\J\end{smallmatrix}\right)_{0}\rightarrow
\left(\begin{smallmatrix}0\\B\end{smallmatrix}\right)_{0}$ in $(\mathcal{B}\downarrow G)$ with $\left(\begin{smallmatrix}0\\J\end{smallmatrix}\right)_{0}\in\left(\begin{smallmatrix}\mathcal{M}\\ \mathcal{N}\end{smallmatrix}\right)$. So there is a morphism $\left(
\begin{smallmatrix}0\\ \eta\end{smallmatrix}\right):\left(\begin{smallmatrix}0\\J\end{smallmatrix}\right)
_{0}\rightarrow\left(\begin{smallmatrix}C\\D\end{smallmatrix}\right)_{\phi}$ such that $\left(\begin{smallmatrix}
0\\f_{2}\end{smallmatrix}\right)\left(\begin{smallmatrix}0\\ \eta\end{smallmatrix}\right)=\left(
\begin{smallmatrix}0\\ \alpha\end{smallmatrix}\right)$. Thus $f_{2}\eta=\alpha$. Then $f_{2}:D\rightarrow B$ is an $\mathcal{N}$-precover of $B$ in $\mathcal{B}$, and hence $\mathcal{N}$ is a precovering class in $\mathcal{B}$.

~``~$\Leftarrow$~'' Let $\left(\begin{smallmatrix}A\\B\end{smallmatrix}\right)_{\psi}$ be any object in $(\mathcal{B}\downarrow G)$ and $f_{1}:E\rightarrow A$ be an $\mathcal{M}$-precover of $A$ in $\mathcal{A}$. Then we obtain the following pullback:
$$\small\xymatrix{
      & H \ar[d]_{\beta} \ar[r]^{\alpha} & B \ar[d]^{\psi}  & \\
      & G(E)  \ar[r]_{G(f_{1})} &  G(A).  &  \\   }\vspace*{2mm}$$
Let $\gamma:K\rightarrow H$ be an $\mathcal{N}$-precover of $H$ in $\mathcal{B}$. We obtain the following commutative diagram:
$$\small\xymatrix{
      & K \ar[d]_{\beta\gamma} \ar[r]^{\alpha\gamma} & B \ar[d]^{\psi}  & \\
      & G(E)  \ar[r]_{G(f_{1})} &  G(A).  &  \\   }\vspace*{2mm}$$
Next, we prove that $\left(\begin{smallmatrix}f_{1}\\ \alpha\gamma\end{smallmatrix}\right):\left(
\begin{smallmatrix}E\\K\end{smallmatrix}\right)_{\beta\gamma}\rightarrow
\left(\begin{smallmatrix}
A\\B\end{smallmatrix}\right)_{\psi}$ is an $\left(\begin{smallmatrix}\mathcal{M}\\ \mathcal{N}\end{smallmatrix}\right)$-precover of $\left(\begin{smallmatrix}
A\\B\end{smallmatrix}\right)_{\psi}$ in $(\mathcal{B}\downarrow G)$. Let $\left(\begin{smallmatrix}
\xi\\ \eta\end{smallmatrix}\right):\left(\begin{smallmatrix}I\\J\end{smallmatrix}\right)_{\phi}
\rightarrow\left(\begin{smallmatrix}A\\B\end{smallmatrix}\right)_{\psi}$ be any morphism in $(\mathcal{B}\downarrow G)$ with $\left(\begin{smallmatrix}
I\\J\end{smallmatrix}\right)_{\phi}\in\left(\begin{smallmatrix}\mathcal{M}\\ \mathcal{N}\end{smallmatrix}
\right)$. There exists a morphism $h:I\rightarrow E$ such that $f_{1}h=
\xi$. Note that
$$\psi\eta=G(\xi)\phi=G(f_{1})G(h)\phi.$$
By the universal property of pullback, there exists a morphism $\theta:J\rightarrow H$ such that $\alpha\theta=\eta$ and $G(h)\phi=\beta\theta$. So there exists a morphism $\delta:
J\rightarrow K$ such that $\gamma\delta=\theta$. Thus we get the following commutative diagram:
$$\small\xymatrix{
      & J \ar[d]_{\phi} \ar[r]^{\delta} & K \ar[d]^{\beta\gamma}  & \\
      & G(I)  \ar[r]_{G(h)} &  G(E).  &  \\   }\vspace*{2mm}$$
Note that $\left(\begin{smallmatrix}f_{1}\\ \alpha\gamma\end{smallmatrix}\right)\left(\begin{smallmatrix}h\\ \delta\end{smallmatrix}\right)=\left(\begin{smallmatrix}\xi\\ \eta\end{smallmatrix}\right)$. Then $\left(
\begin{smallmatrix}f_{1}\\ \alpha\gamma\end{smallmatrix}\right):\left(\begin{smallmatrix}E\\K
\end{smallmatrix}\right)_{\beta\gamma}\rightarrow
\left(\begin{smallmatrix}A\\B\end{smallmatrix}\right)
_{\psi}$ is an $\left(\begin{smallmatrix}\mathcal{M}\\ \mathcal{N}\end{smallmatrix}\right)$-precover of $\left(\begin{smallmatrix}A\\B\end{smallmatrix}\right)
_{\psi}$ in $(\mathcal{B}\downarrow G)$, and hence $\left(\begin{smallmatrix}\mathcal{M}\\ \mathcal{N}\end{smallmatrix}\right)$ is a precovering class in $(\mathcal{B}\downarrow G)$.
\end{proof}

The following proposition is the version of \cite[\text{Lemma}~5.4]{L2020} in abelian categories, and it will play a crucial role in the proof of the main result. Since the proof of this proposition is similar to that of \cite[\text{Lemma}~5.4]{L2020}, we do not prove it.

\begin{prop}\label{prop:2.4}{\it{Let $0\rightarrow A_1\rightarrow A_2\rightarrow A_3\rightarrow0$ be a short exact sequence in $\mathcal{A}$.

$(1)$ Suppose that $\mathcal{H}$ is an injectively coresolving class in $\mathcal{A}$. If $A_1$ and $A_3$ admit special $\mathcal{H}$-preenvelopes, then $A_2$ admits a special $\mathcal{H}$-preenvelope. Moreover, there exists a commutative diagram with all rows and columns exact:
$$\small\xymatrix{
                   &0 \ar[d]_{}                       &0 \ar[d]_{}                         &0 \ar[d]_{}                 &  \\
  0  \ar[r]^{}     &A_1 \ar[d]_{} \ar[r]^{}           &A_2 \ar[d]_{} \ar[r]^{}             &A_3 \ar[d]_{} \ar[r]^{}        &0  \\
  0 \ar[r]^{}      &H_1 \ar[d]_{} \ar[r]^{}           &H_2 \ar[d]_{} \ar[r]^{}             &H_3 \ar[d]_{} \ar[r]^{}        &0  \\
  0  \ar[r]^{}     &B_1 \ar[d]_{} \ar[r]^{}           &B_2 \ar[d]_{} \ar[r]^{}              &B_3 \ar[d]_{} \ar[r]^{}       &0  \\
                   &0                                 &0                                   &0                           &
  &      \\   }\vspace*{2mm}$$
where $H_{i}\in\mathcal{H}$ and $B_{i}\in{^{\bot}\mathcal{H}}$ for $i=1,2,3$.

$(2)$ Suppose that $\mathcal{G}$ is a projectively resolving class in $\mathcal{A}$. If $A_1$ and $A_3$ admit special $\mathcal{G}$-precovers, then $A_2$ admits a special $\mathcal{G}$-precover. Moreover, there exists a commutative diagram with all rows and columns exact:
$$\small\xymatrix{
                   &0 \ar[d]_{}                       &0 \ar[d]_{}                         &0 \ar[d]_{}                 &  \\
  0  \ar[r]^{}     &E_1 \ar[d]_{} \ar[r]^{}           &E_2 \ar[d]_{} \ar[r]^{}             &E_3 \ar[d]_{} \ar[r]^{}        &0  \\
  0 \ar[r]^{}      &G_1 \ar[d]_{} \ar[r]^{}           &G_2 \ar[d]_{} \ar[r]^{}             &G_3 \ar[d]_{} \ar[r]^{}        &0  \\
  0  \ar[r]^{}     &A_1 \ar[d]_{} \ar[r]^{}           &A_2 \ar[d]_{} \ar[r]^{}             &A_3 \ar[d]_{} \ar[r]^{}       &0  \\
                   &0                                 &0                                   &0                           &
  &      \\   }\vspace*{2mm}$$
where $G_i\in\mathcal{G}$ and $E_i\in\mathcal{G}^{\bot}$ for $i=1,2,3$.
}}
\end{prop}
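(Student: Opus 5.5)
I will prove (2) in detail; (1) is the dual argument, with monomorphisms and cokernels in place of epimorphisms and kernels. The approach is a horseshoe-type construction: take special $\mathcal{G}$-precovers of the two ends, assemble a middle object as an extension of them, and control the kernel with the long exact Ext sequence.

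Start with special $\mathcal{G}$-precovers $0\to E_1\to G_1\overset{g_1}\to A_1\to0$ and $0\to E_3\to G_3\overset{g_3}\to A_3\to 0$, where $E_1,E_3\in\mathcal{G}^{\bot}$.

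\textbf{Step 1: pull back along $g_3$.} Pull back $0\to A_1\to A_2\to A_3\to0$ along $g_3:G_3\to A_3$. This gives an exact sequence $0\to A_1\to P\to G_3\to0$ and an epimorphism $P\to A_2$ whose kernel is $E_3$.

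\textbf{Step 2: lift to $G_1$.} We want the extension class of $0\to A_1\to P\to G_3\to 0$ in $\mathrm{Ext}^1(G_3,A_1)$ to come from $\mathrm{Ext}^1(G_3,G_1)$. Apply $\mathrm{Hom}(G_3,-)$ to $0\to E_1\to G_1\to A_1\to0$. The resulting map $\mathrm{Ext}^1(G_3,G_1)\to \mathrm{Ext}^1(G_3,A_1)$ is surjective, because the next term $\mathrm{Ext}^2(G_3,E_1)$ vanishes. This vanishing holds since $\mathcal{G}$ is projectively resolving: take $0\to K\to Q\to G_3\to 0$ with $Q$ projective. Then $K\in\mathcal{G}$, being the kernel of an epimorphism between objects of $\mathcal{G}$, so $\mathrm{Ext}^2(G_3,E_1)\cong\mathrm{Ext}^1(K,E_1)=0$.

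Hence there is an extension $0\to G_1\to G_2\to G_3\to0$ together with a morphism of short exact sequences to $0\to A_1\to P\to G_3\to0$ that restricts to $g_1$ on the left and to the identity on $G_3$. Then $G_2\in\mathcal{G}$ because $\mathcal{G}$ is closed under extensions. By the snake lemma, $G_2\to P$ is epic with kernel $E_1$.

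\textbf{Step 3: assemble the $3\times3$ diagram.} Compose $G_2\to P\to A_2$. This is an epimorphism $G_2\to A_2$ compatible with $G_1\to A_1$ and $G_3\to A_3$, so the middle two rows of the diagram commute. Applying the snake lemma to the map between the rows $0\to G_1\to G_2\to G_3\to0$ and $0\to A_1\to A_2\to A_3\to0$ gives an exact row of kernels $0\to E_1\to E_2\to E_3\to0$, because the cokernel term vanishes ($g_1$ is epic). Finally $E_2\in\mathcal{G}^{\bot}$, since $\mathcal{G}^{\bot}$ is closed under extensions by the long exact $\mathrm{Ext}$ sequence. So $G_2\to A_2$ is a special $\mathcal{G}$-precover of $A_2$ and the full diagram exists.

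\textbf{Main obstacle.} The key step is Step 2: the vanishing of $\mathrm{Ext}^2(G_3,E_1)$. This is exactly where the projectively resolving hypothesis is used, and the dimension shift needs $\mathcal{A}$ to have enough projectives, which is assumed throughout this section. For (1), dually, $\mathrm{Ext}^2(B_3,H_1)$ vanishes by shifting through an injective copresentation of $H_1$, using that $\mathcal{H}$ is injectively coresolving.
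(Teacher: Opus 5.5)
Your proof is correct and is essentially the standard argument behind Mao's Lemma 5.4, which the paper cites instead of giving a proof: pull back along $G_3\to A_3$, then use $\mathrm{Ext}^2_{\mathcal{A}}(G_3,E_1)\cong\mathrm{Ext}^1_{\mathcal{A}}(K,E_1)=0$ (the point where projective resolvingness enters) to lift the extension class to $\mathrm{Ext}^1_{\mathcal{A}}(G_3,G_1)$, and finish with the snake lemma and the extension-closure of $\mathcal{G}$ and $\mathcal{G}^{\bot}$. You leave one point implicit, namely that $G_2\to A_2$ really is a $\mathcal{G}$-precover; this follows at once from $\mathrm{Ext}^1_{\mathcal{A}}(\mathcal{G},E_2)=0$.
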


{\bf 3.2 Improvement of some isomorphisms between homology groups}

In this subsection, we give two fundamental and important lemmas, which not only refine the existing conclusions but also make a pivotal contribution to investigating the relationships among some classes of objects in comma categories.

Let $C:\mathcal{B}\rightarrow\mathcal{A}$ be a right exact covariant functor and $n\geqslant1$. For any object $B\in\mathcal{B}$, take the projective resolution
$$\cdots\rightarrow P_{n+1}\overset{d_{n+1}}\rightarrow P_{n}\overset{d_{n}}\rightarrow P_{n-1}\rightarrow\cdots\overset{d_{2}}\rightarrow P_{1}\overset{d_{1}}\rightarrow P_{0}\overset{\varepsilon}\rightarrow B\rightarrow0$$
 of $B$.
Applying the functor $C$ to the deleted projective resolution of $B$, one yields a complex
$$\cdots\rightarrow C(P_{n+1})\overset{C(d_{n+1})}\rightarrow C(P_{n})\overset{C(d_{n})}\rightarrow C(P_{n-1})\rightarrow\cdots\overset{C(d_{2})}\rightarrow C(P_{1})\overset{C(d_{1})}\rightarrow C(P_{0})\rightarrow0.$$
One defines the $n$-th cohomology group $(\mathrm{L}_{n}C)B=\mathrm{Ker}C(d_{n})/\mathrm{Im}C(d_{n+1})$.

Similarly, let $D:\mathcal{A}\rightarrow\mathcal{B}$ be a left exact covariant functor and $n\geqslant1$. For any object $A\in\mathcal{A}$, take the injective resolution
$$0\rightarrow A\overset{\upsilon}\rightarrow I^{0}\overset{d^{0}}\rightarrow I^{1}\overset{d^{1}}\rightarrow\cdots\rightarrow I^{n-1}\overset{d^{n-1}}\rightarrow I^{n}\overset{d^{n}}\rightarrow I^{n+1}\rightarrow\cdots$$
of $A$.
Applying the functor $D$ to the deleted injective resolution of $A$, one yields a complex
$$0\rightarrow D(I^{0})\overset{D(d^{0})}\rightarrow D(I^{1})\overset{D(d^{1})}\rightarrow\cdots\rightarrow D(I^{n-1})\overset{D(d^{n-1})}\rightarrow D(I^{n})\overset{D(d^{n})}\rightarrow D(I^{n+1})\rightarrow\cdots.$$
One defines the $n$-th cohomology group $(\mathrm{R}^{n}D)A=\mathrm{Ker}D(d^{n})/\mathrm{Im}D(d^{n-1})$.

In the comma categories $(T\downarrow\mathcal{A})$ and $(\mathcal{B}\downarrow G)$, we improve \cite[\text{Lemma}~3.1~(1)]{YDR2024} and \cite[\text{Lemma}~3.6~(1)]{YJDY2025} by means of dimension shifting and the adjoint pairs $(\mathbf{p},\mathbf{q})$ and $(\mathbf{g},\mathbf{h})$ in Remark 2.6 (4) and (5). Only the proof of Lemma 3.5 (2) is given below, and the proof of Lemma 3.6 (2) is similar.

\begin{lem}\label{prop:2.4}{\it{Let $n\geqslant1$ be an integer. Then the following statements hold.

$\mathrm{(1)}$ $\mathrm{Ext}_{(T\downarrow\mathcal{A})}^{n}(\left(\begin{smallmatrix}X\\0\end{smallmatrix}\right)_
{0},\left(\begin{smallmatrix}N_1\\N_2\end{smallmatrix}\right)_{\varphi})
\cong\mathrm{Ext}_{\mathcal{A}}^{n}(X,N_1)$.

$\mathrm{(2)}$ If $(\mathrm{L}_{j}T)Y=0$ for each $1\leqslant j\leqslant n$, then
$$\mathrm{Ext}_{(T\downarrow\mathcal{A})}^{j}(\left(\begin{smallmatrix}T(Y)\\Y\end{smallmatrix}\right)_{1},
\left(\begin{smallmatrix}X_1\\X_2\end{smallmatrix}\right)_{\psi})\cong\mathrm{Ext}_{\mathcal{B}}^{j}(Y,X_2).$$
}}
\end{lem}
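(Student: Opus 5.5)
The plan is to prove (2) by dimension shifting along a projective resolution of $Y$, using the adjoint pair $(\mathbf{p},\mathbf{q})$ of Remark 2.6 (4). Note that $\left(\begin{smallmatrix}T(Y)\\Y\end{smallmatrix}\right)_{1}=\mathbf{p}(0,Y)$. Statement (1) is analogous, with $\mathbf{p}(X,0)=\left(\begin{smallmatrix}X\\0\end{smallmatrix}\right)_{0}$ and no Tor-type hypothesis needed, since the $\mathcal{A}$-component of $\mathbf{p}(P,0)$ is just $P$.

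\emph{Step 1: the base case $j=0$ and projectivity.} By the adjunction,
$$\mathrm{Hom}_{(T\downarrow\mathcal{A})}(\mathbf{p}(0,B),\left(\begin{smallmatrix}X_1\\X_2\end{smallmatrix}\right)_{\psi})\cong\mathrm{Hom}_{\mathcal{A}\times\mathcal{B}}((0,B),(X_1,X_2))=\mathrm{Hom}_{\mathcal{B}}(B,X_2),$$
naturally in $B$. By Remark 2.6 (1), $\mathbf{p}(0,Q)$ is projective in $(T\downarrow\mathcal{A})$ whenever $Q$ is projective in $\mathcal{B}$.

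\emph{Step 2: a projective resolution in the comma category.} Take a projective resolution $\cdots\to P_1\overset{d_1}\to P_0\to Y\to 0$ in $\mathcal{B}$. Apply $\mathbf{p}(0,-)$ to get the complex
$$\cdots\to\left(\begin{smallmatrix}T(P_1)\\P_1\end{smallmatrix}\right)_{1}\to\left(\begin{smallmatrix}T(P_0)\\P_0\end{smallmatrix}\right)_{1}\to\left(\begin{smallmatrix}T(Y)\\Y\end{smallmatrix}\right)_{1}\to0 .$$
A sequence in the comma category is exact iff both components are exact. The $\mathcal{B}$-components form the original resolution, which is exact. The $\mathcal{A}$-components form $T$ applied to the resolution: this is exact at $T(Y)$ and $T(P_0)$ because $T$ is right exact, and exact at $T(P_j)$ for $1\le j$ exactly when $(\mathrm{L}_jT)Y=0$.

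\emph{Step 3: computing Ext in low degrees.} With only $(\mathrm{L}_jT)Y=0$ for $1\le j\le n$, the complex is exact through degree $n$. Set $K=\mathrm{Ker}d_n$ and truncate: let $\Omega$ be the kernel of $\mathbf{p}(0,P_n)\to\mathbf{p}(0,P_{n-1})$ in the comma category. Then
$$0\to\Omega\to\mathbf{p}(0,P_n)\to\cdots\to\mathbf{p}(0,P_0)\to\mathbf{p}(0,Y)\to0$$
is exact with projective middle terms. For $j\le n$, $\mathrm{Ext}^j$ can be computed from a projective resolution agreeing with this one up to degree $j$: continue by a projective resolution of $\Omega$, or compare directly via dimension shifting. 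The Hom-complex $\mathrm{Hom}(\mathbf{p}(0,P_\bullet),X)$ is naturally isomorphic to $\mathrm{Hom}_{\mathcal{B}}(P_\bullet,X_2)$ in degrees $\le n$. One must also check that the degree-$j$ cocycles agree: a map out of the kernel at stage $j\le n$ is determined by maps from $\mathbf{p}(0,P_{j+1})$ when $j+1\le n$. At $j=n$ the cocycle condition involves $\mathbf{p}(0,P_{n+1})\to\mathbf{p}(0,P_n)$, which factors through $\Omega$, and under the adjunction this matches $\mathrm{Hom}_{\mathcal{B}}(P_{n+1},X_2)$ regardless of exactness there. Hence $\mathrm{Ext}^j$ of the two sides agree for $1\le j\le n$.

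\emph{Main obstacle.} The main obstacle is this top degree $j=n$, where exactness at $T(P_n)$ is used but exactness at $T(P_{n+1})$ is not available. It is cleanest to argue by induction via dimension shifting instead. Put $\Omega^1Y=\mathrm{Ker}(P_0\to Y)$. The sequence $0\to\mathbf{p}(0,\Omega^1Y)\to\mathbf{p}(0,P_0)\to\mathbf{p}(0,Y)\to0$ is exact because $(\mathrm{L}_1T)Y=0$. Moreover $(\mathrm{L}_jT)\Omega^1Y\cong(\mathrm{L}_{j+1}T)Y$, so $\Omega^1Y$ satisfies the hypothesis up to $n-1$.

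The long exact sequences of $\mathrm{Ext}_{(T\downarrow\mathcal{A})}(-,X)$ and $\mathrm{Ext}_{\mathcal{B}}(-,X_2)$ are compatible through the natural Hom-isomorphism of Step 1. This gives $j=1$ via cokernels of $\mathrm{Hom}$, and $\mathrm{Ext}^j(\mathbf{p}(0,Y),X)\cong\mathrm{Ext}^{j-1}(\mathbf{p}(0,\Omega^1Y),X)$ for $j\ge2$. Induction on $n$ then completes the proof.
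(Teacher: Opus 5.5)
Your proof is correct and follows essentially the paper's route. The hypothesis $(\mathrm{L}_jT)Y=0$ makes $\mathbf{p}(0,-)$ exact on the syzygy sequences of a projective resolution of $Y$, the adjunction $(\mathbf{p},\mathbf{q})$ naturally identifies $\mathrm{Hom}(\mathbf{p}(0,B),X)$ with $\mathrm{Hom}_{\mathcal{B}}(B,X_2)$, and $\mathrm{Ext}^1$ is compared as a cokernel of Hom and then dimension-shifted. You package this as an induction on $n$ via $(\mathrm{L}_jT)\Omega^1Y\cong(\mathrm{L}_{j+1}T)Y$, where the paper iterates explicitly along $K_1,\dots,K_n$ with the five lemma.

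The worry in your Step 3 is unfounded. Exactness of $\mathbf{p}(0,P_\bullet)$ through degree $n$ already computes $\mathrm{Ext}^j$ for $j\leqslant n$, without needing exactness at $T(P_{n+1})$, so that detour is harmless but unnecessary.
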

\begin{proof} $\mathrm{(1)}$ This follows from \cite[\text{Lemma}~3.1~(2)]{YDR2024}.

$\mathrm{(2)}$ There is an exact sequence
$$0\rightarrow K_{n}\rightarrow P_{n-1}\rightarrow P_{n-2}\rightarrow\cdots\rightarrow P_{1}\rightarrow P_{0}\rightarrow Y\rightarrow0$$
in $\mathcal{B}$ with each $P_{i}$ projective object. Let $K_{n-1}=\mathrm{Ker}(P_{n-2}\rightarrow P_{n-3})$, $K_{1}=\mathrm{Ker}(P_{0}\rightarrow Y)$. Consider the short exact sequence $0\rightarrow K_{1}\rightarrow P_{0}\rightarrow Y\rightarrow0$. We get the induced exact sequence
$$(\mathrm{L}_{1}T)Y\rightarrow T(K_{1})\rightarrow T(P_{0})\rightarrow T(Y)\rightarrow0.$$
Since $(\mathrm{L}_{1}T)Y=0$, we obtain the short exact sequence $0\rightarrow T(K_{1})\rightarrow T(P_{0})\rightarrow T(Y)\rightarrow0$. Consider the short exact sequence $0\rightarrow K_{2}\rightarrow P_{1}\rightarrow K_{1}\rightarrow0$. We get the induced exact sequence
$$(\mathrm{L}_{1}T)K_{1}\rightarrow T(K_{2})\rightarrow T(P_{1})\rightarrow T(K_{1})\rightarrow0.$$
By dimension shifting, we have $(\mathrm{L}_{1}T)K_{1}\cong(\mathrm{L}_{2}T)Y=0$. So we get the short exact sequence
$$0\rightarrow T(K_{2})\rightarrow T(P_{1})\rightarrow T(K_{1})\rightarrow0.$$
Continuing this process, for the short exact sequence $0\rightarrow K_{n}\rightarrow P_{n-1}\rightarrow K_{n-1}\rightarrow0$, it induces an exact sequence
$$(\mathrm{L}_{1}T)K_{n-1}\rightarrow T(K_{n})\rightarrow T(P_{n-1})\rightarrow T(K_{n-1})\rightarrow0.$$
By dimension shifting, we have
$$(\mathrm{L}_{1}T)K_{n-1}\cong(\mathrm{L}_{2}T)K_{n-2}\cong(\mathrm{L}_{3}T)K_{n-3}\cong\cdots\cong
(\mathrm{L}_{n-1}T)K_{1}\cong(\mathrm{L}_{n}T)Y=0.$$
Thus, we obtain the short exact sequence $0\rightarrow T(K_{n})\rightarrow T(P_{n-1})\rightarrow T(K_{n-1})\rightarrow0$. Then there is an exact sequence
$$0\rightarrow T(K_{n})\rightarrow T(P_{n-1})\rightarrow T(P_{n-2})\rightarrow\cdots\rightarrow T(P_{1})\rightarrow T(P_{0})\rightarrow T(Y)\rightarrow0$$
in $\mathcal{A}$. Hence there is an exact sequence
$$0\rightarrow\textbf{p}(0,K_{n})\rightarrow\textbf{p}(0,P_{n-1})\rightarrow\textbf{p}(0,P_{n-2})\rightarrow\cdots
\rightarrow\textbf{p}(0,P_{1})\rightarrow\textbf{p}(0,P_{0})\rightarrow\textbf{p}(0,Y)\rightarrow0.$$
in $(T\downarrow\mathcal{A})$. Each $\textbf{p}(0,P_{i})$ is projective by Remark 2.6 (1). Let $X=\left(\begin{smallmatrix}X_1\\X_2\end{smallmatrix}\right)_{\psi}$ in $(T\downarrow\mathcal{A})$.
Note that
$$\mathrm{Hom}_{(T\downarrow\mathcal{A})}(\textbf{p}(0,P_{n-1}),
\left(\begin{smallmatrix}X_1\\X_2\end{smallmatrix}\right)_{\psi})\cong\mathrm{Hom}_{\mathcal{A}\times
\mathcal{B}}((0,P_{n-1}),(X_1,X_2)),$$
$$\mathrm{Hom}_{(T\downarrow\mathcal{A})}(\textbf{p}(0,K_{n}),\left
(\begin{smallmatrix}X_1\\X_2\end{smallmatrix}\right)_{\psi})\cong\mathrm{Hom}_{\mathcal{A}\times
\mathcal{B}}((0,K_{n}),(X_1,X_2)).$$
And, it is easy to prove that
$$\mathrm{Hom}_{\mathcal{A}\times
\mathcal{B}}((0,P_{n-1}),(X_1,X_2))\cong\mathrm{Hom}_{\mathcal{B}}(P_{n-1},X_2),$$
$$\mathrm{Hom}_{\mathcal{A}\times\mathcal{B}}((0,K_{n}),(X_1,X_2))\cong\mathrm{Hom}_{\mathcal{B}}(K_{n},X_2).$$
Then we have the following commutative diagram with exact rows:
$$\small\xymatrix{
& \mathrm{Hom}_{(T\downarrow\mathcal{A})}(\textbf{p}(0,P_{n-1}),X) \ar[d]_{\cong} \ar[r]^{} & \mathrm{Hom}_{(T\downarrow\mathcal{A})}(\textbf{p}(0,K_{n}),X) \ar[d]_{\cong}\ar[r]^{} & \mathrm{Ext}_{(T\downarrow\mathcal{A})}^{1}(\textbf{p}(0,K_{n-1}),X) \ar[d]_{}\ar[r]^{}&0\ar@{=}[d]  \\
& \mathrm{Hom}_{\mathcal{B}}(P_{n-1},X_2) \ar[r]^{} & \mathrm{Hom}_{\mathcal{B}}(K_{n},X_2) \ar[r]^{} & \mathrm{Ext}_{\mathcal{B}}^{1}(K_{n-1},X_2)\ar[r]^{} &0. &  \\   }\vspace*{2mm}$$
By five lemma, we obtain the following isomorphism
$$\mathrm{Ext}_{(T\downarrow\mathcal{A})}^{1}
(\textbf{p}(0,K_{n-1}),\left(\begin{smallmatrix}X_1\\X_2\end{smallmatrix}\right)_{\psi})\cong
\mathrm{Ext}_{\mathcal{B}}^{1}(K_{n-1},X_2).$$
Therefore,
$$
\aligned
\mathrm{Ext}_{(T\downarrow\mathcal{A})}
^{n}(\textbf{p}(0,Y),\left(\begin{smallmatrix}X_1\\X_2\end{smallmatrix}\right)_{\psi})&\cong\mathrm{Ext}_{
(T\downarrow\mathcal{A})}^{1}(\textbf{p}(0,K_{n-1}),\left(\begin{smallmatrix}X_1\\X_2\end{smallmatrix}\right)
_{\psi})\\
&\cong\mathrm{Ext}_{\mathcal{B}}^{1}(K_{n-1},X_2)\\
&\cong\mathrm{Ext}_{\mathcal{B}}^{n}(Y,X_2).
\endaligned
$$
Continuing this process, we have the following commutative diagram with exact rows:
$$\small\xymatrix{
& \mathrm{Hom}_{(T\downarrow\mathcal{A})}(\textbf{p}(0,P_{n-2}),X) \ar[d]_{\cong} \ar[r]^{} & \mathrm{Hom}_{(T\downarrow\mathcal{A})}(\textbf{p}(0,K_{n-1}),X) \ar[d]_{\cong}\ar[r]^{} & \mathrm{Ext}_{(T\downarrow\mathcal{A})}^{1}(\textbf{p}(0,K_{n-2}),X) \ar[d]_{}\ar[r]^{}&0\ar@{=}[d] \\
& \mathrm{Hom}_{\mathcal{B}}(P_{n-2},X_2) \ar[r]^{} & \mathrm{Hom}_{\mathcal{B}}(K_{n-1},X_2) \ar[r]^{} & \mathrm{Ext}_{\mathcal{B}}^{1}(K_{n-2},X_2)\ar[r]^{}&0. & \\   }\vspace*{2mm}$$
Then $\mathrm{Ext}_{(T\downarrow\mathcal{A})}^{1}(\textbf{p}(0,K_{n-2}),\left(\begin{smallmatrix}X_1\\X_2\end
{smallmatrix}\right)_{\psi})\cong\mathrm{Ext}_{\mathcal{B}}^{1}(K_{n-2},X_2)$, and hence
$$
\aligned
\mathrm{Ext}_{(T\downarrow\mathcal{A})}
^{n-1}(\textbf{p}(0,Y),\left(\begin{smallmatrix}X_1\\X_2\end{smallmatrix}\right)_{\psi})
&\cong\mathrm{Ext}_{(T\downarrow\mathcal{A})}^{1}(\textbf{p}(0,K_{n-2}),\left(\begin{smallmatrix}X_1\\X_2
\end{smallmatrix}\right)_{\psi})\\
&\cong\mathrm{Ext}_{\mathcal{B}}^{1}(K_{n-2},X_2)\\
&\cong\mathrm{Ext}_{\mathcal{B}}^{n-1}(Y,X_2).
\endaligned
$$
At last, we have the following commutative diagram with exact rows:
$$\small\xymatrix{
& \mathrm{Hom}_{(T\downarrow\mathcal{A})}(\textbf{p}(0,P_{0}),X) \ar[d]_{\cong} \ar[r]^{} & \mathrm{Hom}_{(T\downarrow\mathcal{A})}(\textbf{p}(0,K_{1}),X) \ar[d]_{\cong}\ar[r]^{} & \mathrm{Ext}_{(T\downarrow\mathcal{A})}^{1}(\textbf{p}(0,Y),X) \ar[d]_{}\ar[r]^{}&0\ar@{=}[d] \\
& \mathrm{Hom}_{\mathcal{B}}(P_{0},X_2) \ar[r]^{} & \mathrm{Hom}_{\mathcal{B}}(K_{1},X_2) \ar[r]^{} & \mathrm{Ext}_{\mathcal{B}}^{1}(Y,X_2)\ar[r]^{}&0. & \\   }\vspace*{2mm}$$
Then $\mathrm{Ext}_{(T\downarrow\mathcal{A})}^{1}(\textbf{p}(0,Y),\left(\begin{smallmatrix}X_1\\X_2\end
{smallmatrix}\right)_{\psi})\cong\mathrm{Ext}_{\mathcal{B}}^{1}(Y,X_2)$.
This completes the proof.
\end{proof}

\begin{lem}\label{prop:2.4}{\it{Let $n\geqslant1$ be an integer. Then the following statements hold.

$\mathrm{(1)}$ $\mathrm{Ext}_{(\mathcal{B}\downarrow G)}^{n}(\left(\begin{smallmatrix}M_1\\M_2
\end{smallmatrix}\right)_{\varphi},\left(\begin{smallmatrix}0\\Y
\end{smallmatrix}\right)_{0})\cong\mathrm{Ext}_{\mathcal{B}}^{n}(M_2,Y)$.

$\mathrm{(2)}$ If $(\mathrm{R}^{j}G)X=0$ for each $1\leqslant j\leqslant n$, then
$$\mathrm{Ext}_{(\mathcal{B}\downarrow G)}^{j}(\left(\begin{smallmatrix}Y_1\\Y_2\end{smallmatrix}\right)_{\psi},\left(\begin{smallmatrix}X\\G(X)\end{smallmatrix}
\right)_{1})\cong\mathrm{Ext}_{\mathcal{A}}^{j}(Y_1,X).$$
}}
\end{lem}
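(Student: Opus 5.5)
Part (1) is quoted from the literature (the dual of \cite[Lemma~3.1~(2)]{YDR2024}, namely \cite[Lemma~3.6~(2)]{YJDY2025}), so the work is in (2). The plan is to dualize the proof of Lemma 3.5 (2): work with injective coresolutions in $\mathcal{A}$, use the vanishing of $(\mathrm{R}^jG)X$ to keep $G$ exact on them, and use the adjunction $(\mathbf{g},\mathbf{h})$ of Remark 2.6 (5) together with the fact that $\mathbf{h}$ preserves injectives (Remark 2.6 (2)). Note that $\left(\begin{smallmatrix}X\\G(X)\end{smallmatrix}\right)_{1}=\mathbf{h}(X,0)$.

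Take an injective coresolution $0\to X\to I^0\to I^1\to\cdots\to I^{n-1}\to L^n\to0$ in $\mathcal{A}$ with cosyzygies $L^1=\mathrm{Coker}(X\to I^0)$, $L^{k+1}=\mathrm{Coker}(L^{k}\to I^{k})$. Apply $G$ to $0\to X\to I^0\to L^1\to0$. This gives $0\to G(X)\to G(I^0)\to G(L^1)\to(\mathrm{R}^1G)X=0$, so the sequence is short exact. By dimension shifting, $(\mathrm{R}^1G)L^{k}\cong(\mathrm{R}^{k+1}G)X$, and these vanish for $k+1\le n$. Hence each sequence $0\to G(L^{k})\to G(I^{k})\to G(L^{k+1})\to0$ is exact for $k\le n-1$, and splicing gives an exact sequence
$0\to G(X)\to G(I^0)\to\cdots\to G(I^{n-1})\to G(L^n)\to0$ in $\mathcal{B}$.
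Because exactness in $(\mathcal{B}\downarrow G)$ is componentwise, we then get an exact sequence
$$0\to\mathbf{h}(X,0)\to\mathbf{h}(I^0,0)\to\cdots\to\mathbf{h}(I^{n-1},0)\to\mathbf{h}(L^n,0)\to0,$$
in which each $\mathbf{h}(I^k,0)$ is injective.

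Next, for $Y=\left(\begin{smallmatrix}Y_1\\Y_2\end{smallmatrix}\right)_{\psi}$ the adjunction gives
$$\mathrm{Hom}_{(\mathcal{B}\downarrow G)}(Y,\mathbf{h}(A,0))\cong\mathrm{Hom}_{\mathcal{A}\times\mathcal{B}}((Y_1,Y_2),(A,0))\cong\mathrm{Hom}_{\mathcal{A}}(Y_1,A),$$
naturally in $A$. For each short piece $0\to\mathbf{h}(L^k,0)\to\mathbf{h}(I^k,0)\to\mathbf{h}(L^{k+1},0)\to0$, compare the long exact Hom/Ext sequence in $(\mathcal{B}\downarrow G)$ with the one in $\mathcal{A}$ for $0\to L^k\to I^k\to L^{k+1}\to0$. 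The Ext$^1$ terms into injectives vanish, so the five lemma (really just the induced map on cokernels) gives
$$\mathrm{Ext}^1_{(\mathcal{B}\downarrow G)}(Y,\mathbf{h}(L^k,0))\cong\mathrm{Ext}^1_{\mathcal{A}}(Y_1,L^k).$$
Dimension shifting on both sides then gives, for each $1\le j\le n$,
$$\mathrm{Ext}^j_{(\mathcal{B}\downarrow G)}(Y,\mathbf{h}(X,0))\cong\mathrm{Ext}^1_{(\mathcal{B}\downarrow G)}(Y,\mathbf{h}(L^{j-1},0))\cong\mathrm{Ext}^1_{\mathcal{A}}(Y_1,L^{j-1})\cong\mathrm{Ext}^j_{\mathcal{A}}(Y_1,X),$$
where $L^0=X$.

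The main obstacle is the exactness step: we must check that the vanishing hypotheses $(\mathrm{R}^jG)X=0$ for $1\le j\le n$ are exactly enough to make $G$ exact on the first $n$ short pieces. In particular, the last piece ending in $G(L^n)$ needs $(\mathrm{R}^1G)L^{n-1}\cong(\mathrm{R}^nG)X=0$. We also need that the shifted isomorphisms $\mathrm{Ext}^{j}(Y,\mathbf{h}(X,0))\cong\mathrm{Ext}^{j-1}(Y,\mathbf{h}(L^1,0))$ only use injectivity of the $\mathbf{h}(I^k,0)$, which holds. The naturality of the adjunction isomorphism, needed so that the comparison squares commute, is routine.
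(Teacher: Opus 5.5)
Your proposal is correct and is the dualization the paper intends. The paper writes out only Lemma 3.5 (2): a projective resolution, exactness of $T$ on it from $(\mathrm{L}_jT)Y=0$, the projectives $\mathbf{p}(0,P_i)$, the adjunction $(\mathbf{p},\mathbf{q})$, and an $\mathrm{Ext}^1$ comparison followed by dimension shifting. It declares Lemma 3.6 (2) similar and takes part (1) from the literature, as you do. Your index bookkeeping, requiring $(\mathrm{R}^1G)L^{k}\cong(\mathrm{R}^{k+1}G)X=0$ exactly for $0\leqslant k\leqslant n-1$, is right. Phrasing the $\mathrm{Ext}^1$ comparison as an induced isomorphism of cokernels is slightly cleaner than the paper's five-lemma diagram.
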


{\bf 3.3 Construct left (right) $n$-cotorsion pairs in comma categories}

In this subsection, via Lemma 3.5 and Lemma 3.6, we investigate the relationships among some classes of objects in comma categories $(T\downarrow\mathcal{A})$ and $(\mathcal{B}\downarrow G)$ associated with $\left(\begin{smallmatrix}\mathcal{X}\\ \mathcal{Y}\end{smallmatrix}\right)$, $\left(\begin{smallmatrix}\mathcal{M}\\ \mathcal{N}\end{smallmatrix}\right)$, $\mathfrak{B}_{\mathcal{Y}}^{\mathcal{X}}$ and $\mathfrak{D}_{\mathcal{N}}^{\mathcal{M}}$. We propose an approach to constructing left (resp. right) $n$-cotorsion pairs in the comma category $(\mathcal{B}\downarrow G)$ (resp. $(T\downarrow\mathcal{A})$) by means of left (resp. right) $n$-cotorsion pairs in abelian categories $\mathcal{A}$ and $\mathcal{B}$. At the same time, conversely, we also give an approach to constructing left (resp. right) $n$-cotorsion pairs in abelian categories $\mathcal{A}$ and $\mathcal{B}$ by virtue of left (resp. right) $n$-cotorsion pairs in the comma category $(\mathcal{B}\downarrow G)$ (resp. $(T\downarrow\mathcal{A})$).

\begin{lem}\label{prop:2.4}{\it{For any integer $n\geqslant1$, the following statements hold.

$\mathrm{(1)}$ If $(\mathrm{L}_{j}T)Y=0$ for each $1\leqslant j\leqslant n$ and any $Y\in\mathcal{Y}$, then
$\bigcap\limits_{i=1}\limits^{n}(\mathfrak{B}_{\mathcal{Y}}^{\mathcal{X}})^{\bot_{i}}=
\left(\begin{smallmatrix}\mathcal{X}^{\bot_{[1,n]}}\\ \mathcal{Y}^{\bot_{[1,n]}}\end{smallmatrix}\right)$.

$\mathrm{(2)}$ If $(\mathrm{R}^{j}G)M=0$ for each $1\leqslant j\leqslant n$ and any $M\in\mathcal{M}$, then $\bigcap
\limits_{i=1}\limits^{n}{^{\bot_{i}}(\mathfrak{D}_{\mathcal{N}}^{\mathcal{M}})}=
\left(\begin{smallmatrix}{^{\bot_{[1,n]}}\mathcal{M}}\\ {^{\bot_{[1,n]}}\mathcal{N}}\end{smallmatrix}\right)$.
}}
\end{lem}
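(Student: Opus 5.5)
I will prove (1); (2) is the formal dual, obtained by exchanging Lemma 3.5 with Lemma 3.6, $\mathbf{p}$ with $\mathbf{h}$, and cokernels with kernels. The idea is to test the Ext-orthogonality against the two special kinds of objects in $\mathfrak{B}_{\mathcal{Y}}^{\mathcal{X}}$, and then to use the short exact sequence attached to a general object of $\mathfrak{B}_{\mathcal{Y}}^{\mathcal{X}}$.

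\emph{Inclusion ``$\subseteq$''.} Let $\left(\begin{smallmatrix}N_1\\N_2\end{smallmatrix}\right)_{\psi}\in\bigcap_{i=1}^{n}(\mathfrak{B}_{\mathcal{Y}}^{\mathcal{X}})^{\bot_i}$.
\begin{itemize}
\item For $X\in\mathcal{X}$, the object $\left(\begin{smallmatrix}X\\0\end{smallmatrix}\right)_0$ lies in $\mathfrak{B}_{\mathcal{Y}}^{\mathcal{X}}$, since its structure map $0\to X$ is a monomorphism with cokernel $X$ and $0\in\mathcal{Y}$. Lemma 3.5(1) then gives $\mathrm{Ext}^i_{\mathcal{A}}(X,N_1)\cong\mathrm{Ext}^i(\left(\begin{smallmatrix}X\\0\end{smallmatrix}\right)_0,\left(\begin{smallmatrix}N_1\\N_2\end{smallmatrix}\right)_\psi)=0$ for $1\le i\le n$.
\item For $Y\in\mathcal{Y}$, the object $\left(\begin{smallmatrix}T(Y)\\Y\end{smallmatrix}\right)_1=\mathbf{p}(0,Y)$ lies in $\mathfrak{B}_{\mathcal{Y}}^{\mathcal{X}}$, since the identity is monic with cokernel $0\in\mathcal{X}$. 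The hypothesis $(\mathrm{L}_jT)Y=0$ for $1\le j\le n$ lets us apply Lemma 3.5(2), which gives $\mathrm{Ext}^i_{\mathcal{B}}(Y,N_2)=0$ for $1\le i\le n$.
\end{itemize}
Hence $N_1\in\mathcal{X}^{\bot_{[1,n]}}$ and $N_2\in\mathcal{Y}^{\bot_{[1,n]}}$, which is the first inclusion.

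\emph{Inclusion ``$\supseteq$''.} Take $\left(\begin{smallmatrix}X\\Y\end{smallmatrix}\right)_{\varphi}\in\mathfrak{B}_{\mathcal{Y}}^{\mathcal{X}}$, so $\varphi$ is monic and $\mathrm{Coker}\varphi=:X'\in\mathcal{X}$. The pair $(\varphi,1_Y)$ is a morphism in $(T\downarrow\mathcal{A})$, and so is $(\pi,0)$ with $\pi:X\to X'$ the cokernel map. These give a short exact sequence
$$0\rightarrow\left(\begin{smallmatrix}T(Y)\\Y\end{smallmatrix}\right)_{1}\rightarrow\left(\begin{smallmatrix}X\\Y\end{smallmatrix}\right)_{\varphi}\rightarrow\left(\begin{smallmatrix}X'\\0\end{smallmatrix}\right)_{0}\rightarrow0,$$
which is exact because it is exact componentwise. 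Now let $\left(\begin{smallmatrix}N_1\\N_2\end{smallmatrix}\right)_\psi$ have $N_1\in\mathcal{X}^{\bot_{[1,n]}}$ and $N_2\in\mathcal{Y}^{\bot_{[1,n]}}$. By Lemma 3.5(1) the outer term $\left(\begin{smallmatrix}X'\\0\end{smallmatrix}\right)_0$ has vanishing $\mathrm{Ext}^i$ into it for $1\le i\le n$. By Lemma 3.5(2), which applies thanks to the vanishing of $\mathrm{L}_jT$ on $\mathcal{Y}$, the same holds for $\left(\begin{smallmatrix}T(Y)\\Y\end{smallmatrix}\right)_1$. The long exact Ext sequence, in the segment $\mathrm{Ext}^i(\text{right})\to\mathrm{Ext}^i(\text{middle})\to\mathrm{Ext}^i(\text{left})$, then forces $\mathrm{Ext}^i(\left(\begin{smallmatrix}X\\Y\end{smallmatrix}\right)_\varphi,\left(\begin{smallmatrix}N_1\\N_2\end{smallmatrix}\right)_\psi)=0$ for every $1\le i\le n$.

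The main point of care is Lemma 3.5(2). It is stated with the index $j$ tied to the vanishing range, so I will read it as giving the isomorphism for every $1\le j\le n$ when $(\mathrm{L}_jT)Y=0$ for all $1\le j\le n$; its proof does establish all degrees $1,\dots,n$. For (2), the corresponding short exact sequence is
$$0\rightarrow\left(\begin{smallmatrix}0\\ \mathrm{Ker}\phi\end{smallmatrix}\right)_0\rightarrow\left(\begin{smallmatrix}M\\N\end{smallmatrix}\right)_\phi\rightarrow\left(\begin{smallmatrix}M\\G(M)\end{smallmatrix}\right)_1\rightarrow0,$$
and it is used together with Lemma 3.6(1),(2) in the same way.
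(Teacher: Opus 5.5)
Your proposal is correct and is essentially the paper's own proof: for $\subseteq$ you test against $\left(\begin{smallmatrix}X\\0\end{smallmatrix}\right)_{0}$ and $\left(\begin{smallmatrix}T(Y)\\Y\end{smallmatrix}\right)_{1}$ using Lemma 3.5 (1) and (2), and for $\supseteq$ you use the same short exact sequence $0\rightarrow\left(\begin{smallmatrix}T(Y)\\Y\end{smallmatrix}\right)_{1}\rightarrow\left(\begin{smallmatrix}X\\Y\end{smallmatrix}\right)_{\varphi}\rightarrow\left(\begin{smallmatrix}\mathrm{Coker}\varphi\\0\end{smallmatrix}\right)_{0}\rightarrow0$ together with the long exact $\mathrm{Ext}$ sequence. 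The dual sequence you give for (2) is the right one for the argument the paper calls ``similar,'' and the paper also uses Lemma 3.5 (2) the way you read it, in every degree $1\leqslant j\leqslant n$.
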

\begin{proof} The proof of (1) is given below, and the proof of (2) is similar.

Let $\left(\begin{smallmatrix}A\\B\end{smallmatrix}\right)_{\varphi}\in\bigcap
\limits_{i=1}\limits^{n}(\mathfrak{B}_{\mathcal{Y}}^{\mathcal{X}})^{\bot_{i}}$,~$X\in\mathcal{X}$ and $Y\in
\mathcal{Y}$. Since $(\mathrm{L}_{j}T)Y=0$ for each $1\leqslant j\leqslant n$ and $Y\in\mathcal{Y}$, we have $\mathrm{Ext}_{(T\downarrow\mathcal{A})}^{j}
(\left(\begin{smallmatrix}T(Y)\\Y\end{smallmatrix}\right)_{1},\left(\begin{smallmatrix}A\\B\end{smallmatrix}
\right)_{\varphi})\cong\mathrm{Ext}_{\mathcal{B}}^{j}(Y,B)$ by Lemma 3.5 (2). Since $\left(\begin{smallmatrix}T(Y)\\Y\end{smallmatrix}\right)_{1}\in\mathfrak{B}_{\mathcal{Y}}^
{\mathcal{X}}$, we have $\mathrm{Ext}_{(T\downarrow\mathcal{A})}^{j}
(\left(\begin{smallmatrix}T(Y)\\Y\end{smallmatrix}\right)_{1},\left(\begin{smallmatrix}A\\B\end{smallmatrix}
\right)_{\varphi})=0$ for each $1\leqslant j\leqslant n$, which implies $\mathrm{Ext}_{\mathcal{B}}^{j}(Y,B)=0$. Thus $B\in\mathcal{Y}^{\bot_{[1,n]}}$. Meanwhile, since $\left(\begin{smallmatrix}X\\0\end{smallmatrix}\right)
_{0}\in\mathfrak{B}_{\mathcal{Y}}^{\mathcal{X}}$, we have $\mathrm{Ext}_{\mathcal{A}}^{j}(X,A)\cong
\mathrm{Ext}_{(T\downarrow\mathcal{A})}^{j}(\left(\begin{smallmatrix}X\\0\end{smallmatrix}\right)_{0},\left(
\begin{smallmatrix}A\\B\end{smallmatrix}\right)_{\varphi})=0$ for each $1\leqslant l\leqslant n$ by Lemma 3.5 (1). So $A\in\mathcal{X}^{\bot_{[1,n]}}$. Then $\left(\begin{smallmatrix}A\\B\end{smallmatrix}\right)
_{\varphi}\in\left(\begin{smallmatrix}\mathcal{X}^{\bot_{[1,n]}}\\ \mathcal{Y}^{\bot_{[1,n]}}
\end{smallmatrix}\right)$ and hence $\textstyle\bigcap\limits_{i=1}
\limits^{n}(\mathfrak{B}_{\mathcal{Y}}^{\mathcal{X}})^{\bot_{i}}\subseteq\left(\begin{smallmatrix}\mathcal{X}
^{\bot_{[1,n]}}\\ \mathcal{Y}^{\bot_{[1,n]}}\end{smallmatrix}\right)$.

 Let $\left(\begin{smallmatrix}C\\D\end{smallmatrix}\right)_{\psi}\in\left
(\begin{smallmatrix}\mathcal{X}^{\bot_{[1,n]}}\\ \mathcal{Y}^{\bot_{[1,n]}}\end{smallmatrix}\right)$. For any $\left(\begin{smallmatrix}C'\\D'\end{smallmatrix}\right)_{\psi^{'}}\in\mathfrak{B}_{\mathcal{Y}}
^{\mathcal{X}}$, there exists a short exact sequence
$$0\rightarrow\left(\begin{smallmatrix}T(D')\\D'\end{smallmatrix}\right)_{1}\rightarrow\left(
\begin{smallmatrix}C'\\D'\end{smallmatrix}\right)_{\psi^{'}}\rightarrow\left(\begin{smallmatrix}\mathrm{Coker}
\psi^{'}\\0\end{smallmatrix}\right)_{0}\rightarrow0$$
in $(T\downarrow\mathcal{A})$. Applying $\mathrm{Hom}_{(T\downarrow\mathcal{A})}(-,\left(
\begin{smallmatrix}C\\D\end{smallmatrix}\right)_{\psi})$ to the short exact sequence, for each $1\leqslant k\leqslant n$, we can obtain an exact sequence
$$\mathrm{Ext}_{(T\downarrow\mathcal{A})}^{k}(\left(\begin{smallmatrix}\mathrm{Coker}\psi^{'}\\0\end{smallmatrix}
\right),\left(\begin{smallmatrix}C\\D\end{smallmatrix}\right)_{\psi})\rightarrow\mathrm{Ext}_{(T\downarrow
\mathcal{A})}^{k}(\left(\begin{smallmatrix}C'\\D'\end{smallmatrix}\right)_{\psi^{'}},\left(\begin{smallmatrix}C\\D
\end{smallmatrix}\right)_{\psi})\rightarrow\mathrm{Ext}_{(T\downarrow
\mathcal{A})}^{k}(\left(\begin{smallmatrix}T(D')\\D'\end{smallmatrix}\right),\left(\begin{smallmatrix}C\\D
\end{smallmatrix}\right)_{\psi}).$$
Since $\left(\begin{smallmatrix}C'\\D'\end{smallmatrix}\right)_{\psi^{'}}
\in\mathfrak{B}_{\mathcal{Y}}^{\mathcal{X}}$, we have $D'\in\mathcal{Y}$ and $\mathrm{Coker}\psi^{'}\in
\mathcal{X}$. For each $1\leqslant k\leqslant n$, we get
$$\mathrm{Ext}_{(T\downarrow\mathcal{A})}^{k}(\left(\begin{smallmatrix}\mathrm{Coker}\psi^{'}\\0\end{smallmatrix}
\right)_{0},\left(\begin{smallmatrix}C\\D\end{smallmatrix}\right)_{\psi})\cong\mathrm{Ext}_{\mathcal{A}}
^{k}(\mathrm{Coker}\psi^{'},C)=0$$
by Lemma 3.5 (1). Since $\left(\begin{smallmatrix}T(D')\\D'\end{smallmatrix}\right)_{1}\in\mathfrak{B}_{\mathcal{Y}}^
{\mathcal{X}}$, for each $1\leqslant k\leqslant n$, we obtain
$$\mathrm{Ext}_{(T\downarrow\mathcal{A})}^{k}(\left(\begin{smallmatrix}T(D')\\D'\end{smallmatrix}\right)_{1},\left
(\begin{smallmatrix}C\\D\end{smallmatrix}\right)_{\psi})\cong\mathrm{Ext}_{\mathcal{B}}^{k}(D',D)=0$$
by Lemma 3.5 (2). Then, for each $1\leqslant k\leqslant n$, $\mathrm{Ext}_{(T\downarrow\mathcal{A})}^{k}(\left(\begin{smallmatrix}C'\\D'
\end{smallmatrix}\right)_{\psi^{'}},\left(\begin{smallmatrix}C\\D\end{smallmatrix}\right)_{\psi})=0$ and hence $\left(\begin{smallmatrix}C\\D\end{smallmatrix}
\right)_{\psi}\in\bigcap\limits_{i=1}\limits^{n}(\mathfrak{B}_{\mathcal{Y}}^{\mathcal{X}})
^{\bot_{i}}$. Therefore $\textstyle\bigcap\limits_{i=1}
\limits^{n}(\mathfrak{B}_{\mathcal{Y}}^{\mathcal{X}})^{\bot_{i}}\supseteq\left(\begin{smallmatrix}\mathcal{X}
^{\bot_{[1,n]}}\\ \mathcal{Y}^{\bot_{[1,n]}}\end{smallmatrix}\right)$.
This completes the proof.
\end{proof}

\begin{prop}\label{prop:2.4}{\it{
$\mathrm{(1)}$ Let $B\in\mathcal{B}$ and there exist an exact sequence
$$0\rightarrow B\overset{d_{0}}\rightarrow Y_{0}\overset{d_{1}}\rightarrow Y_{1}\rightarrow\cdots\rightarrow Y_{n-1}\overset{d_{n}}\rightarrow Y_{n}\rightarrow0$$
in $\mathcal{B}$ with each $Y_{i}\in\mathcal{Y}$. If $(\mathrm{L}_{1}T)Y=0$ for any $Y\in\mathcal{Y}_{n}^{\vee}$, then there exists an exact sequence
$$0\rightarrow T(B)\overset{T(d_{0})}\rightarrow T(Y_{0})\overset{T(d_{1})}\rightarrow T(Y_{1})\rightarrow\cdots\rightarrow T(Y_{n-1})\overset{T(d_{n})}\rightarrow T(Y_{n})\rightarrow0$$
in $\mathcal{A}$.

$\mathrm{(2)}$ Let $A\in\mathcal{A}$ and there exist an exact sequence
$$0\rightarrow X_{n}\overset{f_{n}}\rightarrow X_{n-1}\rightarrow\cdots\rightarrow X_{1}\overset{f_{1}}\rightarrow X_{0}\overset{f_{0}}\rightarrow A\rightarrow0$$
in $\mathcal{A}$ with each $X_{i}\in\mathcal{X}$. If $(\mathrm{R}^{1}G)X=0$ for any $X\in\mathcal{X}_{n}^{\wedge}$, then there exists an exact sequence
$$0\rightarrow G(X_{n})\overset{G(f_{n})}\rightarrow G(X_{n-1})\rightarrow\cdots\rightarrow G(X_{1})\overset{G(f_{1})}\rightarrow G(X_{0})\overset{G(f_{0})}\rightarrow G(A)\rightarrow0$$
in $\mathcal{B}$.
}}
\end{prop}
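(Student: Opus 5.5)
We prove (1); statement (2) is the exact dual, with $G$ left exact, kernels replacing cokernels and $\mathrm{R}^{1}G$ replacing $\mathrm{L}_{1}T$. The plan is to split the long exact sequence into short exact sequences. Apply $T$ to each one and use right exactness of $T$ together with the vanishing of $\mathrm{L}_{1}T$ to keep each applied sequence short exact. Then splice back.

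First we set up the short exact sequences. Put $K_0:=B$ and $K_i:=\mathrm{Im}\,d_i=\mathrm{Ker}\,d_{i+1}\subseteq Y_i$ for $1\leqslant i\leqslant n-1$, and note that $d_n$ is surjective, so $K_n=Y_n$. Then we have short exact sequences
$$0\rightarrow K_{i}\rightarrow Y_{i}\rightarrow K_{i+1}\rightarrow 0,\qquad 0\leqslant i\leqslant n-1 .$$
The key observation is that each $K_{i+1}$ lies in $\mathcal{Y}_{n}^{\vee}$. Indeed, $0\rightarrow K_{i+1}\rightarrow Y_{i+1}\rightarrow\cdots\rightarrow Y_n\rightarrow 0$ is a $\mathcal{Y}$-coresolution of length $n-i-1\leqslant n$. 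For $K_n=Y_n$ this holds trivially, since $Y_n\in\mathcal{Y}\subseteq\mathcal{Y}_n^{\vee}$; here we use that $\mathcal{Y}$ contains $0$, so that coresolutions can be padded by zeros.

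Next, for each short sequence the functor $T$ gives the exact sequence
$$(\mathrm{L}_{1}T)K_{i+1}\rightarrow T(K_i)\rightarrow T(Y_i)\rightarrow T(K_{i+1})\rightarrow 0,$$
from the long exact sequence of left derived functors. Right exactness of $T$ ensures that $\mathrm{L}_{0}T=T$. By hypothesis $(\mathrm{L}_{1}T)K_{i+1}=0$, so $0\rightarrow T(K_i)\rightarrow T(Y_i)\rightarrow T(K_{i+1})\rightarrow0$ is short exact for every $i$.

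Finally, we splice these short exact sequences. The composites $T(Y_{i-1})\rightarrow T(K_i)\rightarrow T(Y_i)$ equal $T(d_i)$ by functoriality, because $d_i$ factors as $Y_{i-1}\twoheadrightarrow K_i\hookrightarrow Y_i$. Splicing therefore yields exactly the stated exact sequence. Exactness at $T(B)$ is the injectivity from the $i=0$ sequence, and surjectivity onto $T(Y_n)$ comes from the $i=n-1$ sequence.

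The only delicate point is checking that every intermediate syzygy $K_{i+1}$, including the endpoint $K_n=Y_n$, genuinely lies in $\mathcal{Y}_n^{\vee}$, so that the hypothesis applies. This is where the assumption on $\mathcal{Y}_n^{\vee}$, rather than merely on $\mathcal{Y}$, is used; $B$ itself need not satisfy it. Everything else is routine.
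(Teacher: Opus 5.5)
Your proof is correct and is the same as the paper's: the paper's $\mathrm{Coker}\,d_i$ is your $K_{i+1}$. It likewise notes these objects lie in $\mathcal{Y}_{n}^{\vee}$, uses $(\mathrm{L}_{1}T)\mathrm{Coker}\,d_{i+1}=0$ to keep each $T$-image short exact, and splices the pieces back together. One small point: padding by zeros is not actually needed, since the coresolution dimension is defined as a minimum, so a coresolution of length at most $n$ already places the object in $\mathcal{Y}_{n}^{\vee}$.
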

\begin{proof} The proof of (1) is given below, and the proof of (2) is similar.

The short exact sequence $0\rightarrow B\overset{d_{0}}\rightarrow Y_{0}\rightarrow\mathrm{Coker}d_{0}\rightarrow0$ induces an exact sequence
$$(\mathrm{L}_{1}T)\mathrm{Coker}d_{0}\rightarrow T(B)\rightarrow T(Y_{0})\rightarrow T(\mathrm{Coker}d_{0})\rightarrow0.$$
Clearly, $\mathrm{Coker}d_{i}\in\mathcal{Y}_{n}^{\vee}$ for each $0\leqslant i\leqslant n$. By assumption, $(\mathrm{L}_{1}T)\mathrm{Coker}d_{0}=0$. So the sequence
$$0\rightarrow T(B)\rightarrow T(Y_{0})\rightarrow T(\mathrm{Coker}d_{0})\rightarrow0$$
is exact. For $0\leqslant i\leqslant n-2$, the short exact sequence $0\rightarrow \mathrm{Coker}d_{i}\rightarrow Y_{i+1}\rightarrow\mathrm{Coker}d_{i+1}\rightarrow0$ induces the exact sequence
$$(\mathrm{L}_{1}T)\mathrm{Coker}d_{i+1}\rightarrow T(\mathrm{Coker}d_{i})\rightarrow T(Y_{i+1})\rightarrow T(\mathrm{Coker}d_{i+1})\rightarrow0.$$
By assumption, $(\mathrm{L}_{1}T)\mathrm{Coker}d_{i+1}=0$. So the sequence
$$0\rightarrow T(\mathrm{Coker}d_{i})\rightarrow T(Y_{i+1})\rightarrow T(\mathrm{Coker}d_{i+1})\rightarrow0$$
is exact. Then we have the exact sequence
$$0\rightarrow T(B)\overset{T(d_{0})}\rightarrow T(Y_{0})\overset{T(d_{1})}\rightarrow T(Y_{1})\rightarrow\cdots\rightarrow T(Y_{n-1})\overset{T(d_{n})}\rightarrow T(Y_{n})\rightarrow0$$
in $\mathcal{A}$.
\end{proof}

\begin{lem}\label{prop:2.4}{\it{The following statements hold.

$\mathrm{(1)}$ $\left(\begin{smallmatrix}\mathcal{X}\\ \mathcal{Y}\end{smallmatrix}\right)_{n}^{\vee}\subseteq\left
(\begin{smallmatrix}\mathcal{X}_{n}^{\vee}\\ \mathcal{Y}_{n}^{\vee}\end{smallmatrix}\right)$. In addition, if $\left(\begin{smallmatrix}\mathcal{X}\\ \mathcal{Y}\end{smallmatrix}\right)_{n}^{\vee}$ is closed under extensions, then $\left(\begin{smallmatrix}\mathcal{X}\\ \mathcal{Y}\end{smallmatrix}\right)_{n}^{\vee}=\left
(\begin{smallmatrix}\mathcal{X}_{n}^{\vee}\\ \mathcal{Y}_{n}^{\vee}\end{smallmatrix}\right)$.

$\mathrm{(2)}$ $\left(\begin{smallmatrix}\mathcal{M}\\ \mathcal{N}\end{smallmatrix}\right)_{n}^{\wedge}\subseteq\left
(\begin{smallmatrix}\mathcal{M}_{n}^{\wedge}\\ \mathcal{N}_{n}^{\wedge}\end{smallmatrix}\right)$. In addition, if $\left(\begin{smallmatrix}\mathcal{M}\\ \mathcal{N}\end{smallmatrix}\right)_{n}^{\wedge}$ is closed under extensions, then $\left(\begin{smallmatrix}\mathcal{M}\\ \mathcal{N}\end{smallmatrix}\right)_{n}^{\wedge}=\left
(\begin{smallmatrix}\mathcal{M}_{n}^{\wedge}\\ \mathcal{N}_{n}^{\wedge}\end{smallmatrix}\right)$.

$\mathrm{(3)}$ If $(\mathrm{L}_{1}T)Y=0$ for any $Y\in\mathcal{Y}_{n}^{\vee}$ and $(\mathfrak{B}_{\mathcal{Y}}^
{\mathcal{X}})_{n}^{\vee}$ is closed under extensions, then $(\mathfrak{B}_{\mathcal{Y}}^{\mathcal{X}})
_{n}^{\vee}=\mathfrak{B}_{\mathcal{Y}_{n}^{\vee}}^{\mathcal{X}_{n}^{\vee}}$.

$\mathrm{(4)}$ If $(\mathrm{R}^{1}G)M=0$ for any $M\in\mathcal{M}_{n}^{\wedge}$ and $(\mathfrak{D}_{\mathcal{N}}^
{\mathcal{M}})_{n}^{\wedge}$ is closed under extensions, then $(\mathfrak{D}_{\mathcal{N}}^{\mathcal{M}})
_{n}^{\wedge}=\mathfrak{D}_{\mathcal{N}_{n}^{\wedge}}^{\mathcal{M}_{n}^{\wedge}}$.
}}
\end{lem}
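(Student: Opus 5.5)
I will prove (1) and (3) in detail; (2) and (4) are dual, interchanging coresolutions with resolutions, $T$ with $G$, and using Proposition 3.8 (2) instead of (1).

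For the inclusion in (1), take $\left(\begin{smallmatrix}A\\B\end{smallmatrix}\right)_{\varphi}$ in $\left(\begin{smallmatrix}\mathcal{X}\\ \mathcal{Y}\end{smallmatrix}\right)_{n}^{\vee}$, with a coresolution
$$0\rightarrow\left(\begin{smallmatrix}A\\B\end{smallmatrix}\right)_{\varphi}\rightarrow\left(\begin{smallmatrix}X^0\\Y^0\end{smallmatrix}\right)\rightarrow\cdots\rightarrow\left(\begin{smallmatrix}X^n\\Y^n\end{smallmatrix}\right)\rightarrow0$$
in $(T\downarrow\mathcal{A})$. Exactness in the comma category is exactness componentwise. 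Hence the top rows give an $\mathcal{X}$-coresolution of $A$ of length $n$, and the bottom rows give a $\mathcal{Y}$-coresolution of $B$ of length $n$. So $A\in\mathcal{X}_n^\vee$ and $B\in\mathcal{Y}_n^\vee$.

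For the reverse inclusion, assume $\left(\begin{smallmatrix}\mathcal{X}\\ \mathcal{Y}\end{smallmatrix}\right)_{n}^{\vee}$ is closed under extensions, and take $\left(\begin{smallmatrix}A\\B\end{smallmatrix}\right)_{\varphi}$ with $A\in\mathcal{X}_n^\vee$ and $B\in\mathcal{Y}_n^\vee$. Use the short exact sequence
$$0\rightarrow\left(\begin{smallmatrix}A\\0\end{smallmatrix}\right)_0\rightarrow\left(\begin{smallmatrix}A\\B\end{smallmatrix}\right)_{\varphi}\rightarrow\left(\begin{smallmatrix}0\\B\end{smallmatrix}\right)_0\rightarrow0,$$
which is a legitimate sequence in $(T\downarrow\mathcal{A})$ because the morphisms are $\left(\begin{smallmatrix}1\\0\end{smallmatrix}\right)$ and $\left(\begin{smallmatrix}0\\1\end{smallmatrix}\right)$, and commutativity is clear. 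The end terms lie in the class:
\begin{itemize}
\item The $\mathcal{X}$-coresolution of $A$ gives a coresolution of $\left(\begin{smallmatrix}A\\0\end{smallmatrix}\right)_0$ by the objects $\left(\begin{smallmatrix}X^i\\0\end{smallmatrix}\right)_0$, which lie in $\left(\begin{smallmatrix}\mathcal{X}\\ \mathcal{Y}\end{smallmatrix}\right)$ since $0\in\mathcal{Y}$.
\item Likewise, $\left(\begin{smallmatrix}0\\B\end{smallmatrix}\right)_0$ is coresolved by the objects $\left(\begin{smallmatrix}0\\Y^i\end{smallmatrix}\right)_0$, which are valid objects with zero structure map.
\end{itemize}
Closure under extensions then gives the claim.

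For (3), first note that every object of $\mathfrak{B}_{\mathcal{Y}}^{\mathcal{X}}$ has the form $\left(\begin{smallmatrix}C\\Y\end{smallmatrix}\right)_{\psi}$ with $\psi$ monic and $\mathrm{Coker}\psi\in\mathcal{X}$.

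For ``$\subseteq$'', take a $\mathfrak{B}_{\mathcal{Y}}^{\mathcal{X}}$-coresolution of $\left(\begin{smallmatrix}A\\B\end{smallmatrix}\right)_{\varphi}$ of length $n$. The bottom components give $B\in\mathcal{Y}_n^\vee$, and Proposition 3.8 (1) makes $T$ of that sequence exact. Then build the commutative diagram whose rows are
$$0\rightarrow T(B)\rightarrow T(Y^0)\rightarrow\cdots\rightarrow T(Y^n)\rightarrow0,\qquad 0\rightarrow A\rightarrow C^0\rightarrow\cdots\rightarrow C^n\rightarrow0,$$
both exact, with vertical maps $\varphi$ and $\psi^i$, where the $\psi^i$ are monic.
\begin{itemize}
\item The snake lemma (or the long exact homology sequence of the short exact sequence of complexes once we know $\varphi$ is monic) gives $\varphi$ monic. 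Concretely, $\mathrm{Ker}\varphi$ embeds into $\mathrm{Ker}\psi^0=0$ by commutativity, since $T(B)\rightarrow T(Y^0)$ is monic.
\item The cokernel complex $0\rightarrow\mathrm{Coker}\varphi\rightarrow\mathrm{Coker}\psi^0\rightarrow\cdots\rightarrow\mathrm{Coker}\psi^n\rightarrow0$ is exact. This follows from the long exact sequence of the short exact sequence of complexes, since the first two complexes are acyclic.
\end{itemize}
Hence $\mathrm{Coker}\varphi\in\mathcal{X}_n^\vee$.

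For ``$\supseteq$'', take $\left(\begin{smallmatrix}A\\B\end{smallmatrix}\right)_{\varphi}$ with $\varphi$ monic, $B\in\mathcal{Y}_n^\vee$ and $\mathrm{Coker}\varphi\in\mathcal{X}_n^\vee$. Use the exact sequence
$$0\rightarrow\left(\begin{smallmatrix}T(B)\\B\end{smallmatrix}\right)_1\rightarrow\left(\begin{smallmatrix}A\\B\end{smallmatrix}\right)_{\varphi}\rightarrow\left(\begin{smallmatrix}\mathrm{Coker}\varphi\\0\end{smallmatrix}\right)_0\rightarrow0.$$
The right term is coresolved by the objects $\left(\begin{smallmatrix}X^i\\0\end{smallmatrix}\right)_0\in\mathfrak{B}_{\mathcal{Y}}^{\mathcal{X}}$. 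For the left term, apply $T$ to a $\mathcal{Y}$-coresolution of $B$; by Proposition 3.8 (1) this is exact. So the objects $\left(\begin{smallmatrix}T(Y^i)\\Y^i\end{smallmatrix}\right)_1\in\mathfrak{B}_{\mathcal{Y}}^{\mathcal{X}}$ form an exact coresolution of $\left(\begin{smallmatrix}T(B)\\B\end{smallmatrix}\right)_1$ in the comma category. Closure under extensions finishes the argument.

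The main obstacle is the ``$\subseteq$'' direction of (3): verifying that the structure map $\varphi$ is monic and that the induced cokernel sequence is exact. Both steps depend on the exactness of $T$ applied to the coresolution, which is exactly what the hypothesis on $\mathrm{L}_1T$ together with Proposition 3.8 supplies.
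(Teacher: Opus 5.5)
Your proposal is correct and follows essentially the same route as the paper. In (1) and in the reverse inclusion of (3) you use componentwise exactness, the short exact sequences with end terms $\left(\begin{smallmatrix}A\\0\end{smallmatrix}\right)_0$, $\left(\begin{smallmatrix}0\\B\end{smallmatrix}\right)_0$ (resp. $\left(\begin{smallmatrix}T(B)\\B\end{smallmatrix}\right)_1$, $\left(\begin{smallmatrix}\mathrm{Coker}\varphi\\0\end{smallmatrix}\right)_0$), and closure under extensions. For the forward inclusion in (3) you use Proposition 3.8 (1) together with the three-row diagram; your explicit check that $\mathrm{Ker}\varphi$ embeds into $\mathrm{Ker}\psi^0=0$ via the monic $T(B)\to T(Y^0)$ spells out a step the paper states in one line.
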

\begin{proof} We just prove (1) and (3) below. The proofs of (2) and (4) are similar.

$\mathrm{(1)}$ For any $\left(\begin{smallmatrix}A\\B\end{smallmatrix}\right)_{\varphi}
\in\left(\begin{smallmatrix}\mathcal{X}\\ \mathcal{Y}\end{smallmatrix}\right)_{n}^{\vee}$, there exists an exact sequence
$$0\rightarrow\left(\begin{smallmatrix}A\\B\end{smallmatrix}\right)_{\varphi}\rightarrow\left(\begin{smallmatrix}
X_{0}\\Y_{0}\end{smallmatrix}\right)_{\varphi_{0}}\rightarrow\left(\begin{smallmatrix}X_{1}\\Y_{1}\end{smallmatrix}
\right)_{\varphi_{1}}\rightarrow\cdots\rightarrow\left(\begin{smallmatrix}X_{n}\\Y_{n}\end{smallmatrix}
\right)_{\varphi_{n}}\rightarrow0$$
in $(T\downarrow\mathcal{A})$ with each $\left(\begin{smallmatrix}X_{i}\\Y_{i}\end{smallmatrix}\right)_{\varphi_{i}}
\in\left(\begin{smallmatrix}\mathcal{X}\\ \mathcal{Y}\end{smallmatrix}\right)$. Then we have the exact sequences $0\rightarrow A\rightarrow X_{0}\rightarrow X_{1}\rightarrow\cdots\rightarrow X_{n}\rightarrow0$ in $\mathcal{A}$ and $0\rightarrow B\rightarrow Y_{0}\rightarrow Y_{1}\rightarrow\cdots\rightarrow Y_{n}\rightarrow0$ in $\mathcal{B}$. So $A\in\mathcal{X}_{n}^{\vee}$ and $B\in
\mathcal{Y}_{n}^{\vee}$. Then $\left(\begin{smallmatrix}A\\B\end{smallmatrix}\right)_{\varphi}\in\left(\begin
{smallmatrix}\mathcal{X}_{n}^{\vee}\\ \mathcal{Y}_{n}^{\vee}\end{smallmatrix}\right)$ and hence $\left(
\begin{smallmatrix}\mathcal{X}\\ \mathcal{Y}\end{smallmatrix}\right)_{n}^{\vee}\subseteq\left
(\begin{smallmatrix}\mathcal{X}_{n}^{\vee}\\ \mathcal{Y}_{n}^{\vee}\end{smallmatrix}\right)$.

On the other hand, for any $\left(\begin{smallmatrix}C\\D\end{smallmatrix}\right)_{\psi}\in\left
(\begin{smallmatrix}\mathcal{X}_{n}^{\vee}\\ \mathcal{Y}_{n}^{\vee}\end{smallmatrix}\right)$, there exists an exact sequence
$$0\rightarrow\left(\begin{smallmatrix}C\\0\end{smallmatrix}\right)_{0}\rightarrow\left(\begin{smallmatrix}C\\D
\end{smallmatrix}\right)_{\psi}\rightarrow\left(\begin{smallmatrix}0\\D\end{smallmatrix}\right)_{0}\rightarrow0$$
in $(T\downarrow\mathcal{A})$. Since $C\in\mathcal{X}_{n}^{\vee}$, there exists an exact sequence $0\rightarrow C\rightarrow X_{0}\rightarrow X_{1}\rightarrow\cdots\rightarrow X_{n}\rightarrow0$ in $\mathcal{A}$ with each $X_{i}\in\mathcal{X}$. Further, we have the exact sequence
$$0\rightarrow\left(\begin{smallmatrix}C\\0\end{smallmatrix}\right)\rightarrow\left(\begin{smallmatrix}X_{0}\\0
\end{smallmatrix}\right)\rightarrow\left(\begin{smallmatrix}X_{1}\\0\end{smallmatrix}\right)\rightarrow\cdots
\rightarrow\left(\begin{smallmatrix}X_{n}\\0\end{smallmatrix}\right)\rightarrow0$$
in $(T\downarrow\mathcal{A})$ with each $\left(\begin{smallmatrix}X_{i}\\0\end{smallmatrix}\right)\in
\left(\begin{smallmatrix}\mathcal{X}\\ \mathcal{Y}\end{smallmatrix}\right)$. So $\left(\begin{smallmatrix}C\\0\end{smallmatrix}\right)\in\left
(\begin{smallmatrix}\mathcal{X}\\ \mathcal{Y}\end{smallmatrix}\right)_{n}^{\vee}$. Since $D\in\mathcal{Y}_{n}
^{\vee}$, there exists an exact sequence $0\rightarrow D\rightarrow Y_{0}\rightarrow Y_{1}\rightarrow\cdots\rightarrow Y_{n}\rightarrow0$ in $\mathcal{B}$ with each $Y_{i}\in\mathcal{Y}$. Further, we have the exact sequence
$$0\rightarrow\left(\begin{smallmatrix}0\\D\end{smallmatrix}\right)\rightarrow\left(\begin{smallmatrix}0\\Y_{0}
\end{smallmatrix}\right)\rightarrow\left(\begin{smallmatrix}0\\Y_{1}\end{smallmatrix}\right)\rightarrow\cdots
\rightarrow\left(\begin{smallmatrix}0\\Y_{n}\end{smallmatrix}\right)\rightarrow0$$
in $(T\downarrow\mathcal{A})$ with each $\left(\begin{smallmatrix}0\\Y_{i}\end{smallmatrix}\right)\in
\left(\begin{smallmatrix}\mathcal{X}\\ \mathcal{Y}\end{smallmatrix}\right)$. So $\left(\begin{smallmatrix}0\\D\end{smallmatrix}\right)
\in\left(\begin{smallmatrix}\mathcal{X}\\ \mathcal{Y}\end{smallmatrix}\right)_{n}^{\vee}$. Since $\left(\begin
{smallmatrix}\mathcal{X}\\ \mathcal{Y}\end{smallmatrix}\right)_{n}^{\vee}$ is closed under extensions, $\left
(\begin{smallmatrix}C\\D\end{smallmatrix}\right)_{\psi}\in\left(\begin{smallmatrix}\mathcal{X}\\ \mathcal{Y}\end{smallmatrix}\right)_{n}^{\vee}$. Hence $\left(\begin{smallmatrix}\mathcal{X}_{n}^{\vee}\\ \mathcal{Y}_{n}^{\vee}\end{smallmatrix}\right)\subseteq\left(\begin{smallmatrix}\mathcal{X}\\ \mathcal{Y}\end{smallmatrix}\right)_{n}^{\vee}$.

$\mathrm{(3)}$ For any $\left(\begin{smallmatrix}A\\B\end{smallmatrix}\right)_{\varphi}\in(\mathfrak{B}
_{\mathcal{Y}}^{\mathcal{X}})_{n}^{\vee}$, there exists an exact sequence
$$0\rightarrow\left(\begin{smallmatrix}A\\B\end{smallmatrix}\right)_{\varphi}\rightarrow\left(\begin{smallmatrix}
A_{0}\\B_{0}\end{smallmatrix}\right)_{\alpha_{0}}\rightarrow\left(\begin{smallmatrix}A_{1}\\B_{1}\end{smallmatrix}
\right)_{\alpha_{1}}\rightarrow\cdots\rightarrow\left(\begin{smallmatrix}A_{n}\\B_{n}\end{smallmatrix}
\right)_{\alpha_{n}}\rightarrow0\eqno(\ast)$$
in $(T\downarrow\mathcal{A})$ with each $\left(\begin{smallmatrix}A_{i}\\B_{i}\end{smallmatrix}\right)
_{\alpha_{i}}\in\mathfrak{B}_{\mathcal{Y}}^{\mathcal{X}}$. Then we have the exact sequence
$$0\rightarrow B\rightarrow B_{0}\rightarrow B_{1}\rightarrow\cdots\rightarrow B_{n}\rightarrow0$$
in $\mathcal{B}$ with each $B_{i}\in\mathcal{Y}$. So $B\in\mathcal{Y}_{n}^{\vee}$. Since $\left(\begin{smallmatrix}A_{i}\\B_{i}\end{smallmatrix}\right)_{\alpha_{i}}\in\mathfrak{B}_
{\mathcal{Y}}^{\mathcal{X}}$, we have $\alpha_{i}$ is a monomorphism and $\mathrm{Coker}\alpha_{i}\in\mathcal{X}$ for each $0\leqslant i\leqslant n$. Consider the following diagram:
$$\small\xymatrix@R=0.8cm@C=0.5cm{
                 &0 \ar[d]_{}  &0 \ar[d]_{}   &0 \ar[d]_{}   &  &0 \ar[d]_{}     &0 \ar[d]_{}  \\
   0 \ar[r]^{}   &T(B) \ar[d]_{\varphi} \ar[r]^{} &T(B_{0}) \ar[d]_{\alpha_{0}} \ar[r]^{}&T(B_{1}) \ar[d]_{\alpha_{1}} \ar[r]^{}  &\cdots  \ar[r]^{}  &T(B_{n-1})  \ar[d]_{\alpha_{n-1}} \ar[r]^{}  &T(B_{n})  \ar[d]_{\alpha_{n}} \ar[r]^{}   &0  \\
   0 \ar[r]^{}   &A \ar[d]_{} \ar[r]^{} &A_{0}  \ar[d]_{} \ar[r]^{}&A_{1}  \ar[d]_{} \ar[r]^{}&\cdots \ar[r]^{}&A_{n-1} \ar[d]_{} \ar[r]^{} &A_{n}  \ar[d]_{} \ar[r]^{} &0   \\
   0 \ar[r]^{}   &\mathrm{Coker}\varphi  \ar[d]_{} \ar[r]^{} &\mathrm{Coker}\alpha_{0} \ar[d]_{} \ar[r]^{}   &\mathrm{Coker}\alpha_{1}  \ar[d]_{} \ar[r]^{}&\cdots \ar[r]^{}  &\mathrm{Coker}\alpha_{n-1}  \ar[d]_{} \ar[r]^{} &\mathrm{Coker}\alpha_{n}  \ar[d]_{} \ar[r]^{}    &0   \\
                 &0   &0       &0    &   &0         &0
    &      \\   }\vspace*{2mm}$$
Due to the exact sequence $(\ast)$, the second row is exact and the upper squares are commutative. By assumption and Proposition 3.8 (1), the first row is exact. In addition, since $\alpha_{0}$ is a monomorphism, so is $\varphi$. Since the first row and the second row in the above diagram are exact, so is the third row. It follows that $\mathrm{Coker}\varphi\in\mathcal{X}_{n}^{\vee}$. Then $\left(\begin{smallmatrix}A\\B\end{smallmatrix}
\right)_{\varphi}\in\mathfrak{B}_{\mathcal{Y}_{n}^{\vee}}^{\mathcal{X}_{n}^{\vee}}$ and hence $(\mathfrak{B}_{
\mathcal{Y}}^{\mathcal{X}})_{n}^{\vee}\subseteq\mathfrak{B}_{\mathcal{Y}_{n}^{\vee}}^{\mathcal{X}_{n}^{\vee}}$.

For any $\left(\begin{smallmatrix}C\\D\end{smallmatrix}\right)_{\psi}\in\mathfrak{B}_{\mathcal{Y}_{n}^{\vee}}
^{\mathcal{X}_{n}^{\vee}}$, we have $D\in\mathcal{Y}_{n}^{\vee}$, $\mathrm{Coker}\psi\in\mathcal{X}_{n}
^{\vee}$ and $\psi$ is a monomorphism. Consider the following exact sequence
$$0\rightarrow\left(\begin{smallmatrix}T(D)\\D\end{smallmatrix}\right)_{1}\rightarrow\left(
\begin{smallmatrix}C\\D\end{smallmatrix}\right)_{\psi}\rightarrow\left(
\begin{smallmatrix}\mathrm{Coker}\psi\\0\end{smallmatrix}\right)_{0}\rightarrow0$$
in $(T\downarrow\mathcal{A})$. Since $\mathrm{Coker}\psi\in\mathcal{X}_{n}^{\vee}$, there exists an exact sequence
$$0\rightarrow\mathrm{Coker}\psi\rightarrow X_{0}\rightarrow X_{1}\rightarrow\cdots\rightarrow X_{n}\rightarrow0$$
in $\mathcal{A}$ with each $X_{i}\in\mathcal{X}$. Further, there is an exact sequence
$$0\rightarrow\left(\begin{smallmatrix}\mathrm{Coker}\psi\\0\end{smallmatrix}\right)\rightarrow\left(
\begin{smallmatrix}X_{0}\\0\end{smallmatrix}\right)\rightarrow\left(
\begin{smallmatrix}X_{1}\\0\end{smallmatrix}\right)\rightarrow\cdots\rightarrow\left(
\begin{smallmatrix}X_{n}\\0\end{smallmatrix}\right)\rightarrow0$$
in $(T\downarrow\mathcal{A})$. Clearly, $\left(\begin{smallmatrix}X_{i}\\0\end{smallmatrix}\right)_{0}\in
\mathfrak{B}_{\mathcal{Y}}^{\mathcal{X}}$ for each $0\leqslant i\leqslant n$. Then $\left(\begin{smallmatrix}
\mathrm{Coker}\psi\\0\end{smallmatrix}\right)_{0}\in
(\mathfrak{B}_{\mathcal{Y}}^{\mathcal{X}})_{n}^{\vee}$.
Since $D\in\mathcal{Y}_{n}^{\vee}$, there exists an exact sequence
$$0\rightarrow D\rightarrow Y_{0}\rightarrow Y_{1}\rightarrow\cdots\rightarrow Y_{n}\rightarrow0$$
in $\mathcal{B}$ with each $Y_{i}\in\mathcal{Y}$. By assumption and Proposition 3.8 (1), there exists an exact sequence
$$0\rightarrow T(D)\rightarrow T(Y_{0})\rightarrow T(Y_{1})\rightarrow\cdots\rightarrow T(Y_{n})\rightarrow0$$
in $\mathcal{A}$. Further, there exists the following exact sequence
$$0\rightarrow\left(\begin{smallmatrix}T(D)\\D\end{smallmatrix}\right)\rightarrow\left(\begin{smallmatrix}T(Y_{0})
\\Y_{0}\end{smallmatrix}\right)\rightarrow\left(\begin{smallmatrix}T(Y_{1})\\Y_{1}\end{smallmatrix}\right)
\rightarrow\cdots\rightarrow\left(\begin{smallmatrix}T(Y_{n})\\Y_{n}\end{smallmatrix}\right)\rightarrow0$$
in $(T\downarrow\mathcal{A})$. Clearly, $\left(\begin{smallmatrix}T(Y_{i})\\Y_{i}\end{smallmatrix}\right)_{1}\in
\mathfrak{B}_{\mathcal{Y}}^{\mathcal{X}}$ for each $0\leqslant i\leqslant n$. So $\left(\begin{smallmatrix}T(D)
\\D\end{smallmatrix}\right)_{1}\in(\mathfrak{B}_{\mathcal{Y}}^{\mathcal{X}})_{n}^{\vee}$. Since $(\mathfrak{B}
_{\mathcal{Y}}^{\mathcal{X}})_{n}^{\vee}$ is closed under extensions, $\left(\begin{smallmatrix}C\\D
\end{smallmatrix}\right)_{\psi}\in(\mathfrak{B}_{\mathcal{Y}}^{\mathcal{X}})_{n}^{\vee}$. Then $\mathfrak{B}
_{\mathcal{Y}_{n}^{\vee}}^{\mathcal{X}_{n}^{\vee}}\subseteq
(\mathfrak{B}_{\mathcal{Y}}^{\mathcal{X}})_{n}^{\vee}$.
\end{proof}

\begin{rem}\label{prop:2.4}{\rm{If $(\mathrm{R}^{j}G)M=0$ for each $1\leqslant j\leqslant n$ and $M\in\mathcal{M}$, then $(\mathrm{R}^{1}G)M'=0$ for any $M'\in\mathcal{M}_{n-1}^{\wedge}$. Indeed, for any $M'\in\mathcal{M}_{n-1}^{\wedge}$, there exists an exact sequence
$$0\rightarrow M_{n-1}\overset{d_{n-1}}\rightarrow\cdots\rightarrow M_{1}\overset{d_{1}}\rightarrow M_{0}\overset{d_{0}}\rightarrow M'\rightarrow0$$
in $\mathcal{A}$ with each $M_{i}\in\mathcal{M}$ . Then it follows from the assumption that
$$(\mathrm{R}^{1}G)M'\cong(\mathrm{R}^{2}G)\mathrm{Ker}d_{0}\cong\cdots\cong(\mathrm{R}^{n-1}G)\mathrm{Ker}d_{n-3}
\cong(\mathrm{R}^{n}G)M_{n-1}=0.$$
Similarly, if $(\mathrm{L}_{j}T)Y=0$ for each $1\leqslant j
\leqslant n$ and $Y\in\mathcal{Y}$, then $(\mathrm{L}_{1}T)Y'=0$ for any $Y'\in\mathcal{Y}_{n-1}^{\vee}$.
}}
\end{rem}

In the following, we construct the left $n$-cotorsion pairs in the comma category $(\mathcal{B}\downarrow G)$ using the left $n$-cotorsion pairs in $\mathcal{A}$ and $\mathcal{B}$, and construct the right $n$-cotorsion pairs in the comma category $(T\downarrow\mathcal{A})$ using the right $n$-cotorsion pairs in $\mathcal{A}$ and $\mathcal{B}$, together with Proposition 3.4, Lemma 3.7, Lemma 3.9 and other known results. The following theorem is the main result of this paper.

\begin{thm}\label{prop:2.4}{\it{$\mathrm{(1)}$ Suppose that the comma category $(\mathcal{B}\downarrow G)$ has enough projective objects, $(\mathcal{X}, \mathcal{M})$ is a left $n$-cotorsion pair in $\mathcal{A}$, and $(\mathcal{Y},\mathcal{N})$ is a left $n$-cotorsion pair in $\mathcal{B}$. If $(\mathrm{R}^j G)M=0$ for each $1\leqslant j \leqslant n$ and any $M\in\mathcal{M}$, the class $\left(\begin{smallmatrix}\mathcal{X}\\ \mathcal{Y}\end{smallmatrix}\right)$ is resolving, and $(\mathfrak{D}_{\mathcal{N}}
^{\mathcal{M}})_{n-1}^{\wedge}$ is closed under extensions, then $(\left(\begin{smallmatrix}\mathcal{X}\\ \mathcal{Y}\end{smallmatrix}\right),\mathfrak{D}_{\mathcal{N}}^{\mathcal{M}})$ is a left $n$-cotorsion pair in the comma category $(\mathcal{B} \downarrow G)$.

$\mathrm{(2)}$ Suppose that the comma category $(T\downarrow\mathcal{A})$ has enough injective objects, $(\mathcal{X},\mathcal{M})$ is a right $n$-cotorsion pair in $\mathcal{A}$, and $(\mathcal{Y},\mathcal{N})$ is a right $n$-cotorsion pair in $\mathcal{B}$. If $(\mathrm{L}
_{j}T)Y=0$ for each $1\leqslant j\leqslant n$ and any $Y\in\mathcal{Y}$, the class $\left(\begin{smallmatrix}\mathcal{M}\\ \mathcal{N}\end{smallmatrix}\right)$ is coresolving, and $(\mathfrak{B}_{\mathcal{Y}}
^{\mathcal{X}})_{n-1}^{\vee}$ is closed under extensions, then $(\mathfrak{B}_{\mathcal{Y}}^{\mathcal{X}}
,\left(\begin{smallmatrix}\mathcal{M}\\ \mathcal{N}\end{smallmatrix}\right))$ is a right $n$-cotorsion pair in the comma category $(T\downarrow\mathcal{A})$.
}}
\end{thm}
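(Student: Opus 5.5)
We prove (1) by checking the three conditions of Definition 2.1 for the pair $(\left(\begin{smallmatrix}\mathcal{X}\\ \mathcal{Y}\end{smallmatrix}\right),\mathfrak{D}_{\mathcal{N}}^{\mathcal{M}})$ in $(\mathcal{B}\downarrow G)$. Statement (2) is the dual: it uses Lemma 3.7 (1), Lemma 3.9 (3), the last part of Remark 3.10 and pushouts in place of pullbacks.

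\emph{Conditions (1) and (2).} Since $\mathcal{X}$ and $\mathcal{Y}$ are first components of left $n$-cotorsion pairs, they are closed under direct summands. A summand of $\left(\begin{smallmatrix}X\\Y\end{smallmatrix}\right)_{\phi}$ has components that are summands of $X$ and $Y$, so $\left(\begin{smallmatrix}\mathcal{X}\\ \mathcal{Y}\end{smallmatrix}\right)$ is closed under direct summands. For the Ext-vanishing we use the hypothesis $(\mathrm{R}^jG)M=0$ for $1\leqslant j\leqslant n$ and $M\in\mathcal{M}$. With it, Lemma 3.7 (2) gives
$$\textstyle\bigcap_{i=1}^{n}{^{\bot_{i}}(\mathfrak{D}_{\mathcal{N}}^{\mathcal{M}})}=\left(\begin{smallmatrix}{^{\bot_{[1,n]}}\mathcal{M}}\\ {^{\bot_{[1,n]}}\mathcal{N}}\end{smallmatrix}\right).$$
We have $\mathcal{X}\subseteq{^{\bot_{[1,n]}}\mathcal{M}}$ and $\mathcal{Y}\subseteq{^{\bot_{[1,n]}}\mathcal{N}}$. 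Hence $\mathrm{Ext}^i(\left(\begin{smallmatrix}\mathcal{X}\\ \mathcal{Y}\end{smallmatrix}\right),\mathfrak{D}_{\mathcal{N}}^{\mathcal{M}})=0$ for $1\leqslant i\leqslant n$.

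\emph{Identifying the kernel class.} By Remark 3.10, $(\mathrm{R}^1G)M'=0$ for every $M'\in\mathcal{M}_{n-1}^{\wedge}$. Together with the assumption that $(\mathfrak{D}_{\mathcal{N}}^{\mathcal{M}})_{n-1}^{\wedge}$ is closed under extensions, Lemma 3.9 (4), applied with $n-1$ in place of $n$, gives
$$(\mathfrak{D}_{\mathcal{N}}^{\mathcal{M}})_{n-1}^{\wedge}=\mathfrak{D}_{\mathcal{N}_{n-1}^{\wedge}}^{\mathcal{M}_{n-1}^{\wedge}}.$$
So for condition (3) it suffices to produce, for every object, an epimorphism from $\left(\begin{smallmatrix}\mathcal{X}\\ \mathcal{Y}\end{smallmatrix}\right)$ whose kernel $\left(\begin{smallmatrix}K_1\\K_2\end{smallmatrix}\right)_{\theta}$ satisfies three things: $K_1\in\mathcal{M}_{n-1}^{\wedge}$, $\theta$ is an epimorphism, and $\mathrm{Ker}\,\theta\in\mathcal{N}_{n-1}^{\wedge}$.

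\emph{Condition (3): the construction.} Fix $\left(\begin{smallmatrix}A\\B\end{smallmatrix}\right)_{\psi}$. The steps are as follows.
\begin{enumerate}
\item Take $0\to K_A\to X_A\overset{f}\to A\to0$ with $X_A\in\mathcal{X}$ and $K_A\in\mathcal{M}_{n-1}^{\wedge}$, using the pair $(\mathcal{X},\mathcal{M})$.
\item Since $(\mathrm{R}^1G)K_A=0$, the sequence $0\to G(K_A)\to G(X_A)\to G(A)\to0$ is exact.
\item Form the pullback $H$ of $\psi$ and $G(f)$. In an abelian category, pulling back the epimorphism $G(f)$ gives a short exact sequence $0\to G(K_A)\to H\overset{\alpha}\to B\to0$, together with a map $\beta:H\to G(X_A)$.
\item Take $0\to K'\to Y_H\overset{\gamma}\to H\to0$ with $Y_H\in\mathcal{Y}$ and $K'\in\mathcal{N}_{n-1}^{\wedge}$, using the pair $(\mathcal{Y},\mathcal{N})$.
\item The morphism $\left(\begin{smallmatrix}f\\ \alpha\gamma\end{smallmatrix}\right):\left(\begin{smallmatrix}X_A\\Y_H\end{smallmatrix}\right)_{\beta\gamma}\to\left(\begin{smallmatrix}A\\B\end{smallmatrix}\right)_{\psi}$ is a morphism in $(\mathcal{B}\downarrow G)$, by the commutativity of the pullback square. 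Both components are epimorphisms, so it is an epimorphism, and its source lies in $\left(\begin{smallmatrix}\mathcal{X}\\ \mathcal{Y}\end{smallmatrix}\right)$.
\end{enumerate}

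\emph{Condition (3): the kernel.} The kernel is $\left(\begin{smallmatrix}K_A\\L\end{smallmatrix}\right)_{\theta}$ with $L=\mathrm{Ker}(\alpha\gamma)$. Its structure map $\theta:L\to G(K_A)$ is the one induced from $\beta\gamma$, which is well defined because $G$ is left exact and $G(K_A)=\mathrm{Ker}\,G(f)$. Applying the snake lemma (or the $3\times3$ lemma) to the composite $Y_H\overset{\gamma}\to H\overset{\alpha}\to B$ with $\gamma$ epic gives an exact sequence $0\to K'\to L\to\mathrm{Ker}\,\alpha\to0$. We have $\mathrm{Ker}\,\alpha\cong G(K_A)$, and the map $L\to G(K_A)$ in this sequence should coincide with $\theta$. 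Granting that, $\theta$ is an epimorphism with $\mathrm{Ker}\,\theta\cong K'\in\mathcal{N}_{n-1}^{\wedge}$, and $K_A\in\mathcal{M}_{n-1}^{\wedge}$. By the identification above, the kernel lies in $(\mathfrak{D}_{\mathcal{N}}^{\mathcal{M}})_{n-1}^{\wedge}$, which completes condition (3).

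\emph{Expected obstacle.} The main point to check is that $\theta$ agrees with the map $L\to\mathrm{Ker}\,\alpha\cong G(K_A)$ produced by the snake lemma. This is a diagram chase through the pullback: the identification $\mathrm{Ker}\,\alpha\cong G(K_A)$ is realised by $\beta$, so the two maps agree. The construction above does not appear to need the hypotheses that $\left(\begin{smallmatrix}\mathcal{X}\\ \mathcal{Y}\end{smallmatrix}\right)$ is resolving or that $(\mathcal{B}\downarrow G)$ has enough projectives. If something in the argument does need them, the fallback is Proposition 3.4 (2): assemble special precovers along the short exact sequence $0\to\left(\begin{smallmatrix}0\\ \mathrm{Ker}\psi\end{smallmatrix}\right)\to\left(\begin{smallmatrix}A\\B\end{smallmatrix}\right)\to\left(\begin{smallmatrix}A\\ \mathrm{Im}\psi\end{smallmatrix}\right)\to0$.
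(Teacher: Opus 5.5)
Your proof is correct, and it takes a different route from the paper's. You verify the three conditions of Definition 2.1 directly. You build the approximation of $\left(\begin{smallmatrix}A\\B\end{smallmatrix}\right)_{\psi}$ in one step by the pullback construction from the proof of Proposition 3.3. The extra input is that $(\mathrm{R}^1G)K_A=0$ (Remark 3.10) makes $G(f)$ epic. The pullback is then an extension of $B$ by $G(K_A)$, and the kernel's structure map is $\beta|_{\mathrm{Ker}\alpha}\circ\gamma|_{L}$. This map is epic with kernel $K'$, and your diagram chase confirming it is right.

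The paper instead writes $\left(\begin{smallmatrix}C\\D\end{smallmatrix}\right)_{\psi}$ as an extension of $\left(\begin{smallmatrix}0\\D\end{smallmatrix}\right)_{0}$ by $\left(\begin{smallmatrix}C\\0\end{smallmatrix}\right)_{0}$. It approximates the two ends separately and glues the approximations with the horseshoe-type Proposition 3.4 (2). It then concludes via \cite[Theorem~2.7]{MOM2021}, using the full equality $\bigcap_{i=1}^{n}{^{\bot_{i}}(\mathfrak{D}_{\mathcal{N}}^{\mathcal{M}})}=\left(\begin{smallmatrix}\mathcal{X}\\ \mathcal{Y}\end{smallmatrix}\right)$.

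The gluing step is exactly where the paper needs two extra ingredients. First, $\left(\begin{smallmatrix}\mathcal{X}\\ \mathcal{Y}\end{smallmatrix}\right)$ must be projectively resolving, which requires enough projectives in $(\mathcal{B}\downarrow G)$ together with the resolving hypothesis. Second, it needs \cite[Proposition~2.5]{MOM2021} to guarantee $(\mathfrak{D}_{\mathcal{N}}^{\mathcal{M}})_{n-1}^{\wedge}\subseteq\left(\begin{smallmatrix}\mathcal{X}\\ \mathcal{Y}\end{smallmatrix}\right)^{\bot}$, so that the partial approximations are special.

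Your argument uses neither, so you are right that these two hypotheses are superfluous for Theorem 3.11 as stated. They matter only afterwards, in Remark 3.15, for deducing heredity via Lemma 3.14. You do still need the closure of $(\mathfrak{D}_{\mathcal{N}}^{\mathcal{M}})_{n-1}^{\wedge}$ under extensions. It is what gives the inclusion $\mathfrak{D}_{\mathcal{N}_{n-1}^{\wedge}}^{\mathcal{M}_{n-1}^{\wedge}}\subseteq(\mathfrak{D}_{\mathcal{N}}^{\mathcal{M}})_{n-1}^{\wedge}$ in Lemma 3.9 (4), and that inclusion is the only direction your condition (3) requires. Likewise, only the inclusion $\left(\begin{smallmatrix}{^{\bot_{[1,n]}}\mathcal{M}}\\ {^{\bot_{[1,n]}}\mathcal{N}}\end{smallmatrix}\right)\subseteq\bigcap_{i=1}^{n}{^{\bot_{i}}(\mathfrak{D}_{\mathcal{N}}^{\mathcal{M}})}$ of Lemma 3.7 (2) is needed.

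The paper's route is modular and mirrors the standard cotorsion-pair arguments of \cite{L2020} and \cite{YJD2024}. Yours is shorter and proves a stronger statement. The dual pushout argument for (2) goes through the same way, using right exactness of $T$ and the vanishing of $\mathrm{L}_1T$ on $\mathcal{Y}_{n-1}^{\vee}$ from Remark 3.10.
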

\begin{proof} The proof of (1) is given below, and the proof of (2) is similar.

Since $(\mathcal{X}, \mathcal{M})$ is a left $n$-cotorsion pair in  $\mathcal{A}$ and $(\mathcal{Y},\mathcal{N})$ is a left $n$-cotorsion pair in $\mathcal{B}$, we have $\mathcal{X}=\bigcap
\limits_{i=1}\limits^{n}{^{\bot_{i}}\mathcal{M}}$ and $\mathcal{Y}=\bigcap\limits_{i=1}\limits^{n}{^{\bot_{i}}
\mathcal{N}}$ by \cite[\text{Theorem}~2.7]{MOM2021}. Then $\bigcap\limits_{i=1}\limits^{n}{^{\bot_{i}}
(\mathfrak{D}_{\mathcal{N}}^{\mathcal{M}})}=\left(\begin{smallmatrix}{^{\bot_{[1,n]}}\mathcal{M}}\\ {^{\bot_{[1,n]}}\mathcal{N}}\end{smallmatrix}\right)=\left(\begin{smallmatrix}\mathcal{X}\\ \mathcal{Y}\end{smallmatrix}\right)$ by Lemma 3.7 (2). Note that $\left(\begin{smallmatrix}\mathcal{X}\\ \mathcal{Y}\end{smallmatrix}\right)$ contains all projective objects, that is, $\left(\begin{smallmatrix}
\mathcal{X}\\ \mathcal{Y}\end{smallmatrix}\right)$ is a projectively resolving class. By
\cite[\text{Proposition}~2.5]{MOM2021}, we have $\mathrm{Ext}
_{(\mathcal{B}\downarrow G)}^{1}(\left(\begin{smallmatrix}\mathcal{X}\\ \mathcal{Y}\end{smallmatrix}\right),(\mathfrak{D}_{\mathcal{N}}^{\mathcal{M}})_{n-1}^{\wedge})
=0$ and $(\mathfrak{D}_{\mathcal{N}}^{\mathcal{M}})_{n-1}^{\wedge}\subseteq{\left(\begin{smallmatrix}\mathcal{X}\\ \mathcal{Y}\end{smallmatrix}\right)}^{\bot}$. For any $\left(\begin{smallmatrix}C\\D\end{smallmatrix}
\right)_{\psi}\in(\mathcal{B}\downarrow G)$, there exists a short exact sequence
$$0\rightarrow\left(\begin{smallmatrix}C\\0\end{smallmatrix}\right)_{0}\rightarrow
\left(\begin{smallmatrix}C\\D\end{smallmatrix}\right)_{\psi}\rightarrow\left(\begin{smallmatrix}0
\\D\end{smallmatrix}\right)_{0}\rightarrow0$$
in $(\mathcal{B}\downarrow G)$. Since $(\mathcal{X},\mathcal{M})$ is a left $n$-cotorsion pair in $\mathcal{A}$, there exists a short exact sequence $0\rightarrow M\overset{\alpha}\rightarrow X\overset{\pi}\rightarrow C\rightarrow0$ in $\mathcal{A}$ with $X\in\mathcal{X}$ and $M\in\mathcal{M}
_{n-1}^{\wedge}$. Since $(\mathcal{Y},\mathcal{N})$ is a left $n$-cotorsion pair in $\mathcal{B}$, there exists a short exact sequence $0\rightarrow N\rightarrow Y\overset{\beta}\rightarrow G(M)\rightarrow0$ in $\mathcal{B}$, where $Y\in\mathcal{Y}$ and $N\in\mathcal{N}_{n-1}^{\wedge}$. Then $\left(\begin{smallmatrix}
X\\Y\end{smallmatrix}\right)_{G(\alpha)\beta}$ is an object in $(\mathcal{B}\downarrow G)$ and $\left(\begin
{smallmatrix}X\\Y\end{smallmatrix}\right)_{G(\alpha)\beta}\in\left(\begin{smallmatrix}
\mathcal{X}\\ \mathcal{Y}\end{smallmatrix}\right)$. There exists a short exact sequence
$$0\rightarrow\left(\begin{smallmatrix}M\\Y\end{smallmatrix}\right)_{\beta}\overset{\left(\begin{smallmatrix}
\alpha\\1\end{smallmatrix}\right)}\rightarrow\left(\begin{smallmatrix}X\\Y
\end{smallmatrix}\right)_{G(\alpha)\beta}\overset{\left(\begin{smallmatrix}
\pi\\0\end{smallmatrix}\right)}\rightarrow\left(\begin{smallmatrix}C\\0\end{smallmatrix}\right)_{0}
\rightarrow0$$
in $(\mathcal{B}\downarrow G)$. In addition, by assumption, Lemma 3.9 (4) and Remark 3.10, $$\left(\begin{smallmatrix}M\\Y\end{smallmatrix}\right)_{\beta}\in
\mathfrak{D}_{\mathcal{N}_{n-1}^{\wedge}}^{\mathcal{M}_{n-1}^{\wedge}}=(\mathfrak{D}_{\mathcal{N}}
^{\mathcal{M}})_{n-1}^{\wedge}\subseteq{\left(\begin{smallmatrix}\mathcal{X}\\ \mathcal{Y}\end{smallmatrix}\right)}^{\bot}.$$ So $\left(\begin{smallmatrix}
C\\0\end{smallmatrix}\right)_{0}$ has a special $\left(\begin{smallmatrix}\mathcal{X}\\ \mathcal{Y}\end{smallmatrix}\right)$-precover. For $D\in\mathcal{B}$, there exists a short exact sequence $0\rightarrow N_{1}\rightarrow Y_{1}\rightarrow D\rightarrow0$ in $\mathcal{B}$ with $Y_{1}\in
\mathcal{Y}$ and $N_{1}\in\mathcal{N}_{n-1}^{\wedge}$. Further, there exists an exact sequence $$0\rightarrow\left(\begin{smallmatrix}0\\N_{1}\end{smallmatrix}\right)_{0}\rightarrow
\left(\begin{smallmatrix}0\\Y_{1}\end{smallmatrix}\right)_{0}\rightarrow\left(\begin{smallmatrix}0\\D
\end{smallmatrix}\right)_{0}\rightarrow0$$
in $(\mathcal{B}\downarrow G)$ with $\left(\begin{smallmatrix}0\\Y_{1}\end{smallmatrix}\right)_{0}\in\left(
\begin{smallmatrix}\mathcal{X}\\ \mathcal{Y}\end{smallmatrix}\right)$. By assumption, Lemma 3.9 (4) and Remark 3.10 again, $$\left(\begin{smallmatrix}0
\\N_{1}\end{smallmatrix}\right)_{0}\in\mathfrak{D}_{\mathcal{N}_{n-1}^{\wedge}}^{\mathcal{M}_{n-1}^{\wedge}}
=(\mathfrak{D}_{\mathcal{N}}^{\mathcal{M}})_{n-1}^{\wedge}\subseteq{\left(\begin{smallmatrix}\mathcal{X}\\ \mathcal{Y}\end{smallmatrix}\right)}^{\bot}.$$
So $\left(\begin{smallmatrix}0
\\D\end{smallmatrix}\right)_{0}$ also has a special $\left(\begin{smallmatrix}\mathcal{X}\\ \mathcal{Y}\end{smallmatrix}\right)$-precover. By Proposition 3.4 (2), there exists the following commutative diagram with exact rows and columns:
$$\small\xymatrix@R=0.8cm@C=0.8cm{
   &0 \ar[d]_{}                       &0 \ar[d]_{}                         &0 \ar[d]_{} &   \\
  0  \ar[r]^{} &{\left(\begin{smallmatrix}M\\Y\end{smallmatrix}\right)}  \ar[d]_{} \ar[r]^{}  &{\left(\begin{smallmatrix}M\\H\end{smallmatrix}\right)} \ar[d]_{} \ar[r]^{}  &{\left(\begin{smallmatrix}0\\N_{1}\end{smallmatrix}\right)} \ar[d]_{} \ar[r]^{}        &0 \\
 0 \ar[r]^{}      &{\left(\begin{smallmatrix}X\\Y\end{smallmatrix}\right)} \ar[d]_{} \ar[r]^{}  &{\left(\begin{smallmatrix}X\\Y_{2}\end{smallmatrix}\right)} \ar[d]_{} \ar[r]^{}  &{\left(\begin{smallmatrix}0\\Y_{1}\end{smallmatrix}\right)} \ar[d]_{} \ar[r]^{}        &0  \\
  0  \ar[r]^{}     &{\left(\begin{smallmatrix}C\\0\end{smallmatrix}\right)} \ar[d]_{} \ar[r]^{}   &{\left(\begin{smallmatrix}C\\D\end{smallmatrix}\right)} \ar[d]_{} \ar[r]^{}   &{\left(\begin{smallmatrix}0\\D\end{smallmatrix}\right)} \ar[d]_{} \ar[r]^{}        &0  \\
  &0 &0 &0 &\\
   }\vspace*{2mm}$$
with $\left(\begin{smallmatrix}X\\Y_{2}\end{smallmatrix}\right)\in\left(
\begin{smallmatrix}\mathcal{X}\\ \mathcal{Y}\end{smallmatrix}\right)$. Since $(\mathfrak{D}_{\mathcal{N}}^{\mathcal{M}})_{n-1}^{\wedge}$ is closed under extensions, we have $\left(\begin{smallmatrix}M\\H\end{smallmatrix}\right)\in(\mathfrak{D}_{\mathcal{N}}
^{\mathcal{M}})_{n-1}^{\wedge}$. Hence $(\left(\begin{smallmatrix}\mathcal{X}
\\ \mathcal{Y}\end{smallmatrix}\right),\mathfrak{D}_{\mathcal{N}}^{\mathcal{M}})$ is a left $n$-cotorsion pair in $(\mathcal{B} \downarrow G)$ by \cite[\text{Theorem}~2.7]{MOM2021}.
\end{proof}

Next, we construct the left $n$-cotorsion pairs in $\mathcal{A}$ and $\mathcal{B}$ through the left $n$-cotorsion pairs in the comma category $(\mathcal{B}\downarrow G)$, and construct the right $n$-cotorsion pairs in $\mathcal{A}$ and $\mathcal{B}$ through the right $n$-cotorsion pairs in the comma category $(T\downarrow\mathcal{A})$.

\begin{thm}\label{prop:2.4}{\it{
$\mathrm{(1)}$ Assume that $(\left(\begin{smallmatrix}\mathcal{X}\\ \mathcal{Y}\end{smallmatrix}
\right),\mathfrak{D}_{\mathcal{N}}^{\mathcal{M}})$ is a left $n$-cotorsion pair in the comma category $(\mathcal{B}\downarrow G)$.

$(a)$ If $(\mathrm{R}^{j}G)M=0$ for each $1\leqslant j\leqslant n$ and any $M\in\mathcal{M}$, then $(\mathcal{X},
\mathcal{M})$ is a left $n$-cotorsion pair in $\mathcal{A}$.

$(b)$ If $(\mathrm{R}^{j}G)M=0$ for each $1\leqslant j\leqslant n$ and any $M\in\mathcal{M}$, $\mathcal{N}_{n-1}
^{\wedge}$ is closed under extensions and $G(X)\in\mathcal{N}_{n-1}^{\wedge}$ for any $X\in\mathcal{X}$, then $(\mathcal{Y},\mathcal{N})$ is a left $n$-cotorsion pair in $\mathcal{B}$.

$\mathrm{(2)}$ Assume that $(\mathfrak{B}_{\mathcal{Y}}^{\mathcal{X}},\left(\begin{smallmatrix}\mathcal{M}\\ \mathcal{N}\end{smallmatrix}\right))$ is a right $n$-cotorsion pair in the comma category $(T\downarrow
\mathcal{A})$.

$(a)$ If $(\mathrm{L}_{j}T)Y=0$ for each $1\leqslant j\leqslant n$ and any $Y\in\mathcal{Y}$, then $(\mathcal{Y},\mathcal{N})$ is a right $n$-cotorsion pair in $\mathcal{B}$.

$(b)$ If $(\mathrm{L}_{j}T)Y=0$ for each $1\leqslant j\leqslant n$ and any $Y\in\mathcal{Y}$, $\mathcal{X}
_{n-1}^{\vee}$ is closed under extensions and $T(N)\in\mathcal{X}_{n-1}^{\vee}$ for any $N\in\mathcal{N}$, then $(\mathcal{X},\mathcal{M})$ is a right $n$-cotorsion pair in $\mathcal{A}$.
}}
\end{thm}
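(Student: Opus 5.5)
I will prove (1) and note that (2) is dual, with $(T\downarrow\mathcal{A})$, Lemma 3.5 and $\mathfrak{B}$ replacing $(\mathcal{B}\downarrow G)$, Lemma 3.6 and $\mathfrak{D}$. Throughout I use two facts about the comma category $(\mathcal{B}\downarrow G)$. First, the embeddings $A\mapsto\left(\begin{smallmatrix}A\\0\end{smallmatrix}\right)_0$ and $B\mapsto\left(\begin{smallmatrix}0\\B\end{smallmatrix}\right)_0$ are exact. Second, $\left(\begin{smallmatrix}M\\G(M)\end{smallmatrix}\right)_1\in\mathfrak{D}_{\mathcal{N}}^{\mathcal{M}}$ for $M\in\mathcal{M}$, and $\left(\begin{smallmatrix}0\\N\end{smallmatrix}\right)_0\in\mathfrak{D}_{\mathcal{N}}^{\mathcal{M}}$ for $N\in\mathcal{N}$.

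\emph{Closure under direct summands.} If $X\oplus X'\in\mathcal{X}$, then $\left(\begin{smallmatrix}X\oplus X'\\0\end{smallmatrix}\right)\in\left(\begin{smallmatrix}\mathcal{X}\\ \mathcal{Y}\end{smallmatrix}\right)$. This class is closed under summands, so $X\in\mathcal{X}$. The same argument works for $\mathcal{Y}$.

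\emph{Ext-vanishing.} Take $X\in\mathcal{X}$ and $M\in\mathcal{M}$. Lemma 3.6(2) applies because $(\mathrm{R}^jG)M=0$, and it gives $\mathrm{Ext}^i_{\mathcal{A}}(X,M)\cong\mathrm{Ext}^i_{(\mathcal{B}\downarrow G)}\big(\left(\begin{smallmatrix}X\\0\end{smallmatrix}\right),\left(\begin{smallmatrix}M\\G(M)\end{smallmatrix}\right)_1\big)=0$ for $1\le i\le n$. Now take $Y\in\mathcal{Y}$ and $N\in\mathcal{N}$. Lemma 3.6(1) gives $\mathrm{Ext}^i_{\mathcal{B}}(Y,N)\cong\mathrm{Ext}^i\big(\left(\begin{smallmatrix}0\\Y\end{smallmatrix}\right),\left(\begin{smallmatrix}0\\N\end{smallmatrix}\right)\big)=0$.

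\emph{Approximation sequences.} For (a), take $A\in\mathcal{A}$ and apply the left $n$-cotorsion pair in the comma category to $\left(\begin{smallmatrix}A\\0\end{smallmatrix}\right)_0$. This gives $0\to K\to\left(\begin{smallmatrix}X\\Y\end{smallmatrix}\right)\to\left(\begin{smallmatrix}A\\0\end{smallmatrix}\right)\to0$ with $K\in(\mathfrak{D}_{\mathcal{N}}^{\mathcal{M}})^{\wedge}_{n-1}$. Its top row $0\to K_1\to X\to A\to0$ is exact in $\mathcal{A}$, with $X\in\mathcal{X}$. The top components of a $\mathfrak{D}$-resolution of $K$ of length $n-1$ form an $\mathcal{M}$-resolution of $K_1$, exactly as in the first half of Lemma 3.9(1). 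This half needs no closure hypothesis, so $K_1\in\mathcal{M}^{\wedge}_{n-1}$.

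For (b), take $B\in\mathcal{B}$ and apply the same construction to $\left(\begin{smallmatrix}0\\B\end{smallmatrix}\right)_0$. We get $0\to\left(\begin{smallmatrix}K_1\\K_2\end{smallmatrix}\right)_\phi\to\left(\begin{smallmatrix}X\\Y\end{smallmatrix}\right)\to\left(\begin{smallmatrix}0\\B\end{smallmatrix}\right)\to0$. Its top row gives $K_1\cong X$, and its bottom row is $0\to K_2\to Y\to B\to0$. Since $X\in\mathcal{X}$ and $K_1\in\mathcal{M}^{\wedge}_{n-1}$, we have $X\in\mathcal{X}\cap\mathcal{M}^{\wedge}_{n-1}$.

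\emph{Main obstacle: showing $K_2\in\mathcal{N}^{\wedge}_{n-1}$.} The object $\left(\begin{smallmatrix}K_1\\K_2\end{smallmatrix}\right)_\phi$ lies in $(\mathfrak{D}_{\mathcal{N}}^{\mathcal{M}})^{\wedge}_{n-1}$. I plan to argue as in Lemma 3.9(4), using only the inclusion direction, which needs no closure hypothesis. Choose a $\mathfrak{D}$-resolution $\left(\begin{smallmatrix}M_i\\N'_i\end{smallmatrix}\right)_{\phi_i}$ with each $\phi_i$ epi and $\operatorname{Ker}\phi_i\in\mathcal{N}$. Remark 3.10 gives $(\mathrm{R}^1G)=0$ on $\mathcal{M}^{\wedge}_{n-1}$. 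By Proposition 3.8(2) the sequence $0\to G(M_{n-1})\to\cdots\to G(K_1)\to0$ is then exact. The snake lemma on the diagram of the $\phi_i$ shows that $\phi$ is an epimorphism and that $\operatorname{Ker}\phi$ has an $\mathcal{N}$-resolution of length $n-1$, so $\operatorname{Ker}\phi\in\mathcal{N}^{\wedge}_{n-1}$.

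This yields a short exact sequence $0\to\operatorname{Ker}\phi\to K_2\to G(K_1)\to0$. Its right term is $G(K_1)=G(X)$, which lies in $\mathcal{N}^{\wedge}_{n-1}$ by hypothesis. Since $\mathcal{N}^{\wedge}_{n-1}$ is closed under extensions, $K_2\in\mathcal{N}^{\wedge}_{n-1}$, which completes (b). The most delicate part is the bookkeeping in the snake-lemma step, i.e.\ checking exactness of the kernel row.
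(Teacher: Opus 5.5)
Your proof is correct. Its one substantive step is the same as the paper's: in (1)(b) you get $K_2\in\mathcal{N}_{n-1}^{\wedge}$ from the inclusion half of Lemma 3.9 (4) (via Remark 3.10 and Proposition 3.8 (2)). You then apply extension-closure of $\mathcal{N}_{n-1}^{\wedge}$ to $0\to\mathrm{Ker}\phi\to K_2\to G(K_1)\to 0$ with $G(K_1)\cong G(X)$.

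Where you differ is in how the remaining axioms are checked.

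\begin{itemize}
\item \textbf{Summand closure.} The paper first proves $\mathcal{X}={^{\bot_{[1,n]}}\mathcal{M}}$ (resp. $\mathcal{Y}={^{\bot_{[1,n]}}\mathcal{N}}$). It does this by combining Lemma 3.7 (2) with \cite[Theorem~2.7]{MOM2021} in $(\mathcal{B}\downarrow G)$, and concludes with \cite[Theorem~2.7]{MOM2021} again. You verify Definition 2.1 directly. You inherit closure under direct summands from the class $\left(\begin{smallmatrix}\mathcal{X}\\ \mathcal{Y}\end{smallmatrix}\right)$, since direct sums in the comma category are componentwise and every class contains $0$.
\item \textbf{Approximation in (1)(a).} You read off $K_1\in\mathcal{M}_{n-1}^{\wedge}$ from the top components of a $\mathfrak{D}_{\mathcal{N}}^{\mathcal{M}}$-resolution. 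The paper instead goes through Lemma 3.9 (4) and Remark 3.10.
\end{itemize}

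Your route is more elementary and shows exactly where the hypothesis $(\mathrm{R}^{j}G)M=0$ is needed. It is used only for Lemma 3.6 (2) in (a) and for exactness of the $G$-row, hence of the kernel row, in (b). Summand closure, $\mathrm{Ext}^{i}_{\mathcal{B}}(\mathcal{Y},\mathcal{N})=0$ (via Lemma 3.6 (1)), and the approximation in (a) need no hypothesis on $G$. The paper's route records the left-orthogonal descriptions of $\mathcal{X}$ and $\mathcal{Y}$ along the way. However, these follow from \cite[Theorem~2.7]{MOM2021} anyway once the pairs are known to be left $n$-cotorsion. Your indicated dualization for (2), using Lemma 3.5 and the inclusion half of Lemma 3.9 (3), also goes through.
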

\begin{proof} The proof of (1) is given below, and the proof of (2) is similar.

$(a)$ For any $X\in\mathcal{X}$, $M\in\mathcal{M}$ and $1\leqslant i\leqslant n$, since $\left(\begin{smallmatrix}X\\0\end{smallmatrix}\right)_{0}\in
\left(\begin{smallmatrix}\mathcal{X}\\ \mathcal{Y}\end{smallmatrix}\right)$ and $\left(\begin{smallmatrix}M\\G(M)
\end{smallmatrix}\right)_{1}\in\mathfrak{D}_{\mathcal{N}}^{\mathcal{M}}$, we have
$$\mathrm{Ext}_{\mathcal{A}}^{i}(X,M)\cong\mathrm{Ext}_{(\mathcal{B}\downarrow G)}^{i}(\left(\begin{smallmatrix}X\\0\end{smallmatrix}\right)_{0},
\left(\begin{smallmatrix}M\\G(M)
\end{smallmatrix}\right)_{1})=0$$
by assumption and Lemma 3.6 (2).
So $\mathcal{X}\subseteq\bigcap\limits_{i=1}\limits^{n}{^{\bot_{i}}
\mathcal{M}}$. For any $C\in\bigcap\limits_{i=1}\limits^{n}{^{\bot_{i}}\mathcal{M}}$, we have
$$\left(\begin{smallmatrix}C\\0\end{smallmatrix}\right)_{0}\in\left
(\begin{smallmatrix}{^{\bot_{[1,n]}}\mathcal{M}}\\{^{\bot_{[1,n]}}\mathcal{N}}\end{smallmatrix}\right)=
\textstyle\bigcap\limits_{i=1}\limits^{n}{^{\bot_{i}}(\mathfrak{D}_{\mathcal{N}}^{\mathcal{M}})}=\left
(\begin{smallmatrix}\mathcal{X}\\ \mathcal{Y}\end{smallmatrix}\right)$$
by Lemma 3.7 (2) and \cite[\text{Theorem}~2.7]{MOM2021}. We obtain $C\in\mathcal{X}$. Namely, $\bigcap\limits_{i=1}
\limits^{n}{^{\bot_{i}}\mathcal{M}}\subseteq\mathcal{X}$. Hence $\mathcal{X}=\bigcap\limits_{i=1}\limits^{n}
{^{\bot_{i}}\mathcal{M}}$. For any $A\in\mathcal{A}$, since $(\left(\begin{smallmatrix}\mathcal{X}\\ \mathcal{Y}\end{smallmatrix}\right),\mathfrak{D}_{\mathcal{N}}^{\mathcal{M}})$ is a left $n$-cotorsion pair in $(\mathcal{B}\downarrow G)$, there exists a short exact sequence
$$0\rightarrow\left(\begin{smallmatrix}X_{2}\\Y_{1}\end{smallmatrix}\right)_{\psi}\rightarrow\left(
\begin{smallmatrix}X_{1}\\Y\end{smallmatrix}\right)_{\varphi}\rightarrow\left(\begin{smallmatrix}A\\0
\end{smallmatrix}\right)_{0}\rightarrow0$$
in $(\mathcal{B}\downarrow G)$, where $\left(\begin{smallmatrix}X_{1}\\Y\end{smallmatrix}\right)_{\varphi}\in
\left(\begin{smallmatrix}\mathcal{X}\\ \mathcal{Y}\end{smallmatrix}\right)$ and $\left(\begin{smallmatrix}X_{2}
\\Y_{1}\end{smallmatrix}\right)_{\psi}\in(\mathfrak{D}_{\mathcal{N}}^{\mathcal{M}})_{n-1}^{\wedge}$. In addition, by assumption, Lemma 3.9 (4) and Remark 3.10, $\left(\begin{smallmatrix}X_{2}\\Y_{1}\end{smallmatrix}\right)_{\psi}
\in(\mathfrak{D}_{\mathcal{N}}^{\mathcal{M}})_{n-1}^{\wedge}\subseteq\mathfrak{D}_{\mathcal{N}_{n-1}^{\wedge}}
^{\mathcal{M}_{n-1}^{\wedge}}$. Then we have the short exact sequence $0\rightarrow X_{2}\rightarrow X_{1}\rightarrow A\rightarrow0$ in $\mathcal{A}$ with $X_{1}\in\mathcal{X}$ and $X_{2}\in
\mathcal{M}_{n-1}^{\wedge}$. Hence $(\mathcal{X},\mathcal{M})$ is a left $n$-cotorsion pair in $\mathcal{A}$ by \cite[\text{Theorem}~2.7]{MOM2021}.

$(b)$ For any $Y\in\mathcal{Y}$, $N\in\mathcal{N}$ and $1\leqslant i\leqslant n$, since $\left(\begin{smallmatrix}0\\Y\end{smallmatrix}\right)_{0}\in
\left(\begin{smallmatrix}\mathcal{X}\\ \mathcal{Y}\end{smallmatrix}\right)$ and $\left(\begin{smallmatrix}0\\N
\end{smallmatrix}\right)_{0}\in\mathfrak{D}_{\mathcal{N}}^{\mathcal{M}}$, we have
$$\mathrm{Ext}_{\mathcal{B}}^{i}(Y,N)\cong\mathrm{Ext}_{(\mathcal{B}\downarrow G)}^{i}(\left(\begin{smallmatrix}0\\Y\end{smallmatrix}\right)_{0},\left(\begin{smallmatrix}0\\N
\end{smallmatrix}\right)_{0})=0$$
by assumption.
So $\mathcal{Y}\subseteq\bigcap\limits_{i=1}\limits^{n}{^{\bot_{i}}
\mathcal{N}}$. For any $D\in\bigcap\limits_{i=1}\limits^{n}{^{\bot_{i}}\mathcal{N}}$, we have
$$\left(\begin{smallmatrix}0\\D\end{smallmatrix}\right)_{0}\in\left
(\begin{smallmatrix}{^{\bot_{[1,n]}}\mathcal{M}}\\{^{\bot_{[1,n]}}\mathcal{N}}\end{smallmatrix}\right)=
\textstyle\bigcap\limits_{i=1}\limits^{n}{^{\bot_{i}}(\mathfrak{D}_{\mathcal{N}}^{\mathcal{M}})}=\left
(\begin{smallmatrix}\mathcal{X}\\ \mathcal{Y}\end{smallmatrix}\right)$$
by Lemma 3.7 (2) and \cite[\text{Theorem}~2.7]{MOM2021}. So $D\in\mathcal{Y}$ and $\bigcap\limits_{i=1}
\limits^{n}{^{\bot_{i}}\mathcal{N}}\subseteq\mathcal{Y}$. Hence $\mathcal{Y}=\bigcap\limits_{i=1}\limits^{n}
{^{\bot_{i}}\mathcal{N}}$. For any $B\in\mathcal{B}$, there exists a short exact sequence
$$0\rightarrow\left(\begin{smallmatrix}K_{2}\\L_{1}\end{smallmatrix}\right)_{\psi^{'}}\rightarrow
\left(\begin{smallmatrix}K_{1}\\L\end{smallmatrix}\right)_{\varphi^{'}}\rightarrow\left(
\begin{smallmatrix}0\\B\end{smallmatrix}\right)_{0}\rightarrow0$$
in $(\mathcal{B}\downarrow G)$ with $\left(\begin{smallmatrix}K_{1}\\L\end{smallmatrix}\right)
_{\varphi^{'}}\in\left(\begin{smallmatrix}\mathcal{X}\\ \mathcal{Y}\end{smallmatrix}
\right)$ and $\left(\begin{smallmatrix}K_{2}\\L_{1}\end{smallmatrix}\right)_{\psi^{'}}\in(\mathfrak{D}
_{\mathcal{N}}^{\mathcal{M}})_{n-1}^{\wedge}$. Obviously, $K_{1}\in\mathcal{X}$, $L\in\mathcal{Y}$ and $K_{1}\cong K_{2}$. By assumption, Lemma 3.9 (4) and Remark 3.10 again, $\left(\begin{smallmatrix}K_{2}\\L_{1}
\end{smallmatrix}\right)_{\psi^{'}}\in(\mathfrak{D}_{\mathcal{N}}^{\mathcal{M}})_{n-1}^{\wedge}\subseteq
\mathfrak{D}_{\mathcal{N}_{n-1}^{\wedge}}^{\mathcal{M}_{n-1}^{\wedge}}$ . Then we get a short exact sequence
$$0\rightarrow\mathrm{Ker}\psi^{'}\rightarrow L_{1}\rightarrow G(K_{2})\rightarrow0$$
in $\mathcal{B}$ with $\mathrm{Ker}\psi^{'}\in\mathcal{N}_{n-1}^{\wedge}$. By the assumption that $G(X)\in
\mathcal{N}_{n-1}^{\wedge}$ for any $X\in\mathcal{X}$, we obtain $G(K_{2})\cong G(K_{1})\in\mathcal{N}_{n-1}^{\wedge}$. Since $\mathcal{N}_{n-1}^{\wedge}$ is closed under extensions, we have $L_{1}\in\mathcal{N}_{n-1}^{\wedge}$. So there exists a short exact sequence $0\rightarrow L_{1}\rightarrow L\rightarrow B\rightarrow0$ in $\mathcal{B}$ with $L\in\mathcal{Y}$ and $L_{1}\in\mathcal{N}_{n-1}^{\wedge}$. By \cite[\text{Theorem}~2.7]{MOM2021}, $(\mathcal{Y},\mathcal{N})$ is a left $n$-cotorsion pair in $\mathcal{B}$.
\end{proof}

\begin{rem}\label{prop:2.4}{\rm{In the case that keep the conditions as Theorem 3.11 or Theorem 3.12, according to \cite[\text{Proposition}~3.1]{MOM2021} and its dual, one gets that $\mathcal{X}$, $\mathcal{Y}$ and $\left(\begin{smallmatrix}\mathcal{X}\\ \mathcal{Y}\end{smallmatrix}\right)$ are special precovering classes, and $\mathcal{M}$, $\mathcal{N}$ and $\left(\begin{smallmatrix}\mathcal{M}\\ \mathcal{N}
\end{smallmatrix}\right)$ are special preenveloping classes.
}}
\end{rem}

Of course, we know that in general, Theorem 3.11 seems to be more important than Theorem 3.12. However, Theorem 3.12 can be regarded as an incidental result in the study of how to construct left or right $n$-cotorsion pairs in comma categories. By Remark 3.13, we realize that the study of left or right $n$-cotorsion pairs can enrich and develop the approximation theory. To conclude this part, we discuss the heredity of left or right $n$-cotorsion pairs in comma categories and the corresponding abelian categories.

The following lemma is a supplement to \cite[\text{Proposition}~4.9]{MOM2021}.

\begin{lem}\label{prop:2.4}{\it{$\mathrm{(1)}$ Let $(\mathcal{K},\mathcal{L})$ be a left $n$-cotorsion pair in the abelian category $\mathcal{C}$. Consider the following conditions.

$(a)$ For any $K\in\mathcal{K}$, $L\in\mathcal{L}$ and $i\geqslant1$, we have $\mathrm{Ext}_{\mathcal{C}}
^{i}(K,L)=0$.

$(b)$ $(\mathcal{K},\mathcal{L})$ is hereditary, i.e., $\mathrm{Ext}_{\mathcal{C}}^{n+1}(K,L)=0$ for any $K\in\mathcal{K}$ and $L\in\mathcal{L}$.

$(c)$ $\mathcal{K}$ is a resolving class in $\mathcal{C}$.

$(d)$ $\mathcal{K}$ is closed under kernels of epimorphisms.\\
Then $(a)\Rightarrow(b)\Rightarrow(c)\Rightarrow(d)$ holds. In addition, if $\mathcal{C}$ has enough projective objects, then $(d)\Rightarrow(a)$ holds.

$\mathrm{(2)}$ Let $(\mathcal{K},\mathcal{L})$ be a right $n$-cotorsion pair in the abelian category $\mathcal{C}$. Consider the following conditions.

$(a)$ For any $K\in\mathcal{K}$, $L\in\mathcal{L}$ and $i\geqslant1$, we have $\mathrm{Ext}_{\mathcal{C}}
^{i}(K,L)=0$.

$(b)$ $(\mathcal{K},\mathcal{L})$ is hereditary, i.e., $\mathrm{Ext}_{\mathcal{C}}^{n+1}(K,L)=0$ for any $K\in\mathcal{K}$ and $L\in\mathcal{L}$.

$(c)$ $\mathcal{L}$ is a coresolving class in $\mathcal{C}$.

$(d)$ $\mathcal{L}$ is closed under cokernels of monomorphisms.\\
Then $(a)\Rightarrow(b)\Rightarrow(c)\Rightarrow(d)$ holds. In addition, if $\mathcal{C}$ has enough injective objects, then $(d)\Rightarrow(a)$ holds.
}}
\end{lem}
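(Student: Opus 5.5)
For part (1) I would argue around the description $\mathcal{K}=\bigcap_{i=1}^{n}{^{\bot_{i}}\mathcal{L}}$, which holds for any left $n$-cotorsion pair by \cite[Theorem~2.7]{MOM2021}. Part (2) is the exact dual: replace syzygies by cosyzygies, projectives by injectives, and use $\mathcal{L}=\bigcap_{i=1}^{n}\mathcal{K}^{\bot_{i}}$. So I only plan (1).

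The implications $(a)\Rightarrow(b)$ and $(c)\Rightarrow(d)$ are immediate from the definitions.

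For $(b)\Rightarrow(c)$ I need $\mathcal{K}$ to be closed under extensions and under kernels of epimorphisms.
\begin{itemize}
\item \emph{Extensions.} Take $0\to K'\to E\to K''\to 0$ with $K',K''\in\mathcal{K}$. For $1\leqslant i\leqslant n$ the long exact sequence gives $\mathrm{Ext}^i_{\mathcal{C}}(K'',L)\to\mathrm{Ext}^i_{\mathcal{C}}(E,L)\to\mathrm{Ext}^i_{\mathcal{C}}(K',L)$. The outer terms vanish, so $\mathrm{Ext}^i_{\mathcal{C}}(E,L)=0$ and $E\in\bigcap_{i=1}^{n}{^{\bot_i}\mathcal{L}}=\mathcal{K}$.
\item \emph{Kernels of epimorphisms.} Take $0\to K'\to K\to K''\to 0$ with $K,K''\in\mathcal{K}$. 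Use the segment $\mathrm{Ext}^i_{\mathcal{C}}(K,L)\to\mathrm{Ext}^i_{\mathcal{C}}(K',L)\to\mathrm{Ext}^{i+1}_{\mathcal{C}}(K'',L)$.
\begin{itemize}
\item For $1\leqslant i\leqslant n-1$, both outer terms vanish by condition (2) of a left $n$-cotorsion pair.
\item For $i=n$, the left term vanishes by condition (2), and the right term $\mathrm{Ext}^{n+1}_{\mathcal{C}}(K'',L)$ vanishes by hypothesis $(b)$.
\end{itemize}
Hence $K'\in\mathcal{K}$. This case, $i=n$, is the only point where heredity is used.
\end{itemize}

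For $(d)\Rightarrow(a)$, assume $\mathcal{C}$ has enough projectives.
\begin{itemize}
\item Any projective $P$ satisfies $\mathrm{Ext}^i_{\mathcal{C}}(P,L)=0$ for all $i\geqslant 1$, so $P\in\mathcal{K}$.
\item For $K\in\mathcal{K}$, choose $0\to\Omega K\to P\to K\to 0$ with $P$ projective. By $(d)$, $\Omega K\in\mathcal{K}$.
\item Dimension shifting gives $\mathrm{Ext}^{m+1}_{\mathcal{C}}(K,L)\cong\mathrm{Ext}^{m}_{\mathcal{C}}(\Omega K,L)$ for $m\geqslant 1$.
\item I then induct on $m$ to show $\mathrm{Ext}^m_{\mathcal{C}}(K,L)=0$ for all $K\in\mathcal{K}$, $L\in\mathcal{L}$. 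The base cases $m\leqslant n$ are condition (2). The inductive step applies the hypothesis to $\Omega K\in\mathcal{K}$.
\end{itemize}

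The main care point is the $(b)\Rightarrow(c)$ step: the claim $\mathcal{K}=\bigcap_{i=1}^{n}{^{\bot_i}\mathcal{L}}$ must be invoked, not just condition (2), and the $i=n$ term is where $(b)$ enters. Everything else is routine long exact sequence bookkeeping.
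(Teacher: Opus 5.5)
Your proof is correct and follows the paper's route. The implications (a)$\Rightarrow$(b) and (c)$\Rightarrow$(d) are trivial, and (d)$\Rightarrow$(a) is the same dimension shifting through syzygies that remain in $\mathcal{K}$: projectives lie in $\mathcal{K}$, and $\mathcal{K}$ is closed under kernels of epimorphisms. The only difference is in (b)$\Rightarrow$(c): the paper simply cites \cite[Proposition~4.9]{MOM2021}, whereas you prove it directly from $\mathcal{K}=\bigcap_{i=1}^{n}{^{\bot_{i}}\mathcal{L}}$ with the long exact sequence, and you correctly identify the $i=n$ term as the only place where heredity is used.
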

\begin{proof} The proof of (1) is given below, and the proof of (2) is similar.

Clearly, $(a)\Rightarrow(b)$ and $(c)\Rightarrow(d)$ hold. By \cite[\text{Proposition}~4.9]{MOM2021}, we have $(b)\Rightarrow(c)$. It remains to prove that $(d)\Rightarrow(a)$.

Let $K\in\mathcal{K}$ and $L\in\mathcal{L}$. Since $(\mathcal{K},\mathcal{L})$ is a left $n$-cotorsion pair, we have $\mathrm{Ext}_{\mathcal{C}}^{i}(K,L)=0$ for each $1\leqslant i\leqslant n$. For any $j\geqslant1$, take a partial projective resolution of $K$
$$0\rightarrow K_{j-1}\rightarrow P_{j-1}\overset{d_{j-1}}\rightarrow P_{j-2}\overset{d_{j-2}}\rightarrow\cdots\rightarrow P_{1}\overset{d_{1}}\rightarrow P_{0}
\overset{d_{0}}\rightarrow K\rightarrow0,$$
where each $P_{l}$ is a projective object in $\mathcal{C}$, and $K_{j-1}$ is the $(j-1)$-th syzygy of $K$. By dimension shifting, we have $\mathrm{Ext}_{\mathcal{C}}^{n}(K_{j-1},L)\cong\mathrm{Ext}_{\mathcal{C}}
^{n+j}(K,L)$. Since $(\mathcal{K},\mathcal{L})$ is a left $n$-cotorsion pair and $\mathcal{C}$ has enough projective objects, $\mathcal{K}$ contains all projective objects. Moreover, since $\mathcal{K}$ is closed under kernels of epimorphisms, we have $K_{j-1}\in\mathcal{K}$. Therefore, $\mathrm{Ext}_{\mathcal{C}}^{n}(K_{j-1},L)=0$ for any $j\geqslant1$. Then $\mathrm{Ext}_{\mathcal{C}}^{i}(K,L)=0$ for any $i\geqslant1$.
\end{proof}

\begin{rem}\label{prop:2.4}{\rm{(1) In Theorem 3.11 (1), since the comma category $(\mathcal{B}\downarrow G)$ has enough projective objects and  $\left(\begin{smallmatrix}\mathcal{X}\\ \mathcal{Y}\end{smallmatrix}\right)$ is a resolving class, we have that the left $n$-cotorsion pair $(\left(\begin{smallmatrix}
\mathcal{X}\\ \mathcal{Y}\end{smallmatrix}\right),\mathfrak{D}_{\mathcal{N}}
^{\mathcal{M}})$ in $(\mathcal{B}\downarrow G)$ is hereditary by Lemma 3.14 (1).

(2) In Theorem 3.11 (2), since the comma category $(T\downarrow\mathcal{A})$ has enough injective objects and $\left(\begin{smallmatrix}\mathcal{M}\\ \mathcal{N}
\end{smallmatrix}\right)$ is a coresolving class, we have that the right $n$-cotorsion pair $(\mathfrak{B}_{\mathcal{Y}}
^{\mathcal{X}},\left(\begin{smallmatrix}\mathcal{M}\\ \mathcal{N}\end{smallmatrix}\right))$ in $(T\downarrow
\mathcal{A})$ is hereditary by Lemma 3.14 (2).
}}
\end{rem}

Now, we characterize hereditary left (resp. right) $n$-cotorsion pairs in comma categories by hereditary left (resp. right) $n$-cotorsion pairs in corresponding abelian categories.

\begin{prop}\label{prop:2.4}{\it{$\mathrm{(1)}$ Suppose that the comma category $(\mathcal{B}\downarrow G)$ has enough projective objects, $(\mathcal{X}, \mathcal{M})$ is a hereditary left $n$-cotorsion pair in $\mathcal{A}$, and $(\mathcal{Y},\mathcal{N})$ is a hereditary left $n$-cotorsion pair in $\mathcal{B}$. If $(\mathrm{R}^j G)M=0$ for each $1\leqslant j \leqslant n$ and any $M\in\mathcal{M}$, and $(\mathfrak{D}_{\mathcal{N}}
^{\mathcal{M}})_{n-1}^{\wedge}$ is closed under extensions, then $(\left(\begin{smallmatrix}\mathcal{X}\\ \mathcal{Y}\end{smallmatrix}\right),\mathfrak{D}_{\mathcal{N}}^{\mathcal{M}})$ is a hereditary left $n$-cotorsion pair in the comma category $(\mathcal{B}\downarrow G)$.

$\mathrm{(2)}$ Suppose that the comma category $(T\downarrow\mathcal{A})$ has enough injective objects, $(\mathcal{X},\mathcal{M})$ is a hereditary right $n$-cotorsion pair in $\mathcal{A}$, and $(\mathcal{Y},\mathcal{N})$ is a hereditary right $n$-cotorsion pair in $\mathcal{B}$. If $(\mathrm{L}_{j}T)Y=0$ for each $1\leqslant j\leqslant n$ and any $Y\in\mathcal{Y}$, and $(\mathfrak{B}_{\mathcal{Y}}^{\mathcal{X}})_{n-1}^{\vee}$ is closed under extensions, then $(\mathfrak{B}_{\mathcal{Y}}^{\mathcal{X}},\left(\begin{smallmatrix}\mathcal{M}\\ \mathcal{N}\end{smallmatrix}\right))$ is a hereditary right $n$-cotorsion pair in the comma category $(T\downarrow
\mathcal{A})$.
}}
\end{prop}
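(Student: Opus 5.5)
We prove (1); (2) is dual. The strategy is to reduce everything to Theorem 3.11 (1) and Remark 3.15 (1). Theorem 3.11 (1) has exactly the hypotheses of the present statement, except one: it also asks that the class $\left(\begin{smallmatrix}\mathcal{X}\\ \mathcal{Y}\end{smallmatrix}\right)$ be resolving in $(\mathcal{B}\downarrow G)$. Once that is checked, Theorem 3.11 (1) makes $(\left(\begin{smallmatrix}\mathcal{X}\\ \mathcal{Y}\end{smallmatrix}\right),\mathfrak{D}_{\mathcal{N}}^{\mathcal{M}})$ a left $n$-cotorsion pair. Remark 3.15 (1), which is Lemma 3.14 (1) $(c)\Rightarrow(a)\Rightarrow(b)$ applied in a comma category with enough projectives, then shows it is hereditary.

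The key step is therefore to show that $\left(\begin{smallmatrix}\mathcal{X}\\ \mathcal{Y}\end{smallmatrix}\right)$ is resolving.

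\begin{itemize}
\item Since $\mathcal{A}$ has enough projective objects and $(\mathcal{X},\mathcal{M})$ is a hereditary left $n$-cotorsion pair, Lemma 3.14 (1) $(b)\Rightarrow(c)$ shows that $\mathcal{X}$ is resolving in $\mathcal{A}$.
\item In the same way, $\mathcal{Y}$ is resolving in $\mathcal{B}$.
\item Exactness in $(\mathcal{B}\downarrow G)$ is componentwise. Take a short exact sequence $0\to\left(\begin{smallmatrix}A'\\B'\end{smallmatrix}\right)\to\left(\begin{smallmatrix}A\\B\end{smallmatrix}\right)\to\left(\begin{smallmatrix}A''\\B''\end{smallmatrix}\right)\to0$. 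It yields short exact sequences $0\to A'\to A\to A''\to0$ in $\mathcal{A}$ and $0\to B'\to B\to B''\to0$ in $\mathcal{B}$.
\item Closure of $\mathcal{X}$ and $\mathcal{Y}$ under extensions and kernels of epimorphisms then transfers at once to closure of $\left(\begin{smallmatrix}\mathcal{X}\\ \mathcal{Y}\end{smallmatrix}\right)$ under the same operations. The structure maps play no role, since membership in $\left(\begin{smallmatrix}\mathcal{X}\\ \mathcal{Y}\end{smallmatrix}\right)$ depends only on the two components.
\end{itemize}

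With this in hand, all hypotheses of Theorem 3.11 (1) hold, and the conclusion follows as described above.

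As an alternative to Remark 3.15, one can argue heredity directly. By Lemma 3.14 (1), $\left(\begin{smallmatrix}\mathcal{X}\\ \mathcal{Y}\end{smallmatrix}\right)$ is closed under kernels of epimorphisms. The comma category has enough projectives, and every projective lies in $\left(\begin{smallmatrix}\mathcal{X}\\ \mathcal{Y}\end{smallmatrix}\right)$; this is noted in the proof of Theorem 3.11 and also follows from Lemma 3.7 (2). The $(d)\Rightarrow(a)$ argument of Lemma 3.14 (1) then gives vanishing of $\mathrm{Ext}^{n+1}$.

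I expect no serious obstacle. The only point needing care is that Lemma 3.14's $(b)\Rightarrow(c)$ is invoked in $\mathcal{A}$ and $\mathcal{B}$ themselves, where enough projectives are assumed throughout Section 3. For (2), the dual argument shows that $\mathcal{M}$ and $\mathcal{N}$ are coresolving. Hence $\left(\begin{smallmatrix}\mathcal{M}\\ \mathcal{N}\end{smallmatrix}\right)$ is coresolving in $(T\downarrow\mathcal{A})$, and we finish with Theorem 3.11 (2) and Remark 3.15 (2).
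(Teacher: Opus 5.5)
Your proof is correct and follows the paper's own route. The paper also combines Theorem 3.11 (1) with Remark 3.15 (1), and dually Theorem 3.11 (2) with Remark 3.15 (2). For the missing hypothesis that $\left(\begin{smallmatrix}\mathcal{X}\\ \mathcal{Y}\end{smallmatrix}\right)$ is resolving (resp.\ $\left(\begin{smallmatrix}\mathcal{M}\\ \mathcal{N}\end{smallmatrix}\right)$ is coresolving), the paper cites \cite[Lemma~4.1~(1)]{YJD2024} (resp.\ \cite[Lemma~3.2~(1)]{JH2022}), whereas you verify it directly from Lemma 3.14 $(b)\Rightarrow(c)$ and componentwise exactness.
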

\begin{proof} $\mathrm{(1)}$ It follows from \cite[\text{Lemma}~4.1~(1)]{YJD2024}, Theorem 3.11 (1) and Remark 3.15 (1) that $(\left(\begin{smallmatrix}\mathcal{X}\\\mathcal{Y}\end{smallmatrix}\right),\mathfrak{D}_{\mathcal{N}}
^{\mathcal{M}})$ is a hereditary left $n$-cotorsion pair in the comma category $(\mathcal{B}\downarrow G)$.

$\mathrm{(2)}$ It follows from \cite[\text{Lemma}~3.2~(1)]{JH2022}, Theorem 3.11 (2) and Remark 3.15 (2) that $(\mathfrak{B}_{\mathcal{Y}}^{\mathcal{X}},\left(\begin{smallmatrix}\mathcal{M}\\ \mathcal{N}\end{smallmatrix}\right))$ is a hereditary right $n$-cotorsion pair in the comma category $(T\downarrow
\mathcal{A})$.
\end{proof}

At last, we characterize hereditary left (resp. right) $n$-cotorsion pairs in abelian categories by hereditary left (resp. right) $n$-cotorsion pairs in corresponding comma categories.

\begin{prop}\label{prop:2.4}{\it{$\mathrm{(1)}$ Assume that $(\left(\begin{smallmatrix}\mathcal{X}\\ \mathcal{Y}\end{smallmatrix}
\right),\mathfrak{D}_{\mathcal{N}}^{\mathcal{M}})$ is a hereditary left $n$-cotorsion pair in the comma category $(\mathcal{B}\downarrow G)$.

$(a)$ If $(\mathrm{R}^{j}G)M=0$ for each $1\leqslant j\leqslant n$ and any $M\in\mathcal{M}$, then $(\mathcal{X},
\mathcal{M})$ is a hereditary left $n$-cotorsion pair in $\mathcal{A}$.

$(b)$ If $(\mathrm{R}^{j}G)M=0$ for each $1\leqslant j\leqslant n$ and any $M\in\mathcal{M}$, $\mathcal{N}_{n-1}
^{\wedge}$ is closed under extensions and $G(X)\in\mathcal{N}_{n-1}^{\wedge}$ for any $X\in\mathcal{X}$, then $(\mathcal{Y},\mathcal{N})$ is a hereditary left $n$-cotorsion pair in $\mathcal{B}$.

$\mathrm{(2)}$ Assume that $(\mathfrak{B}_{\mathcal{Y}}^{\mathcal{X}},\left(\begin{smallmatrix}\mathcal{M}\\ \mathcal{N}\end{smallmatrix}\right))$ is a hereditary right $n$-cotorsion pair in the comma category $(T\downarrow
\mathcal{A})$.

$(a)$ If $(\mathrm{L}_{j}T)Y=0$ for each $1\leqslant j\leqslant n$ and any $Y\in\mathcal{Y}$, then $(\mathcal{Y},
\mathcal{N})$ is a hereditary right $n$-cotorsion pair in $\mathcal{B}$.

$(b)$ If $(\mathrm{L}_{j}T)Y=0$ for each $1\leqslant j\leqslant n$ and any $Y\in\mathcal{Y}$, $\mathcal{X}
_{n-1}^{\vee}$ is closed under extensions and $T(N)\in\mathcal{X}_{n-1}^{\vee}$ for any $N\in\mathcal{N}$, then $(\mathcal{X},\mathcal{M})$ is a hereditary right $n$-cotorsion pair in $\mathcal{A}$.
}}
\end{prop}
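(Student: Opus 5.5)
Since a hereditary pair is in particular a left (resp. right) $n$-cotorsion pair, Theorem 3.12 immediately gives that $(\mathcal{X},\mathcal{M})$ and $(\mathcal{Y},\mathcal{N})$ in (1)(a), (1)(b) (resp. $(\mathcal{Y},\mathcal{N})$ and $(\mathcal{X},\mathcal{M})$ in (2)(a), (2)(b)) are left (resp. right) $n$-cotorsion pairs. What remains is heredity, i.e. $\mathrm{Ext}^{n+1}$-vanishing. We prove (1); (2) is dual, using Lemma 3.5 in place of Lemma 3.6 and the objects $\left(\begin{smallmatrix}T(Y)\\Y\end{smallmatrix}\right)_{1}$, $\left(\begin{smallmatrix}X\\0\end{smallmatrix}\right)_{0}$ of $\mathfrak{B}_{\mathcal{Y}}^{\mathcal{X}}$.

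For (1)(a) we transport Ext-vanishing from the comma category. Take $X\in\mathcal{X}$ and $M\in\mathcal{M}$. Then $\left(\begin{smallmatrix}X\\0\end{smallmatrix}\right)_{0}\in\left(\begin{smallmatrix}\mathcal{X}\\ \mathcal{Y}\end{smallmatrix}\right)$ and $\left(\begin{smallmatrix}M\\G(M)\end{smallmatrix}\right)_{1}\in\mathfrak{D}_{\mathcal{N}}^{\mathcal{M}}$, the latter since $1$ is epic with kernel $0\in\mathcal{N}$. Because the pair in the comma category is hereditary,
$$\mathrm{Ext}^{n+1}_{(\mathcal{B}\downarrow G)}\Big(\left(\begin{smallmatrix}X\\0\end{smallmatrix}\right)_{0},\left(\begin{smallmatrix}M\\G(M)\end{smallmatrix}\right)_{1}\Big)=0 .$$
We want to identify this group with $\mathrm{Ext}^{n+1}_{\mathcal{A}}(X,M)$. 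Lemma 3.6 (2) only provides the isomorphism in degrees $j\leqslant n$ under the vanishing of $(\mathrm{R}^jG)M$ for $j\leqslant n$, so this is the main obstacle.

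The workaround is to use the equivalent criterion of Lemma 3.14 (1) instead of degree $n+1$ directly. The category $\mathcal{A}$ has enough projectives, so heredity of $(\mathcal{X},\mathcal{M})$ is equivalent to $\mathcal{X}$ being closed under kernels of epimorphisms. Let $0\to X'\to X_1\to X_2\to0$ be exact with $X_1,X_2\in\mathcal{X}$. Then $0\to\left(\begin{smallmatrix}X'\\0\end{smallmatrix}\right)_0\to\left(\begin{smallmatrix}X_1\\0\end{smallmatrix}\right)_0\to\left(\begin{smallmatrix}X_2\\0\end{smallmatrix}\right)_0\to0$ is exact in $(\mathcal{B}\downarrow G)$. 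Heredity there means, by \cite[Proposition~4.9]{MOM2021} ((b)$\Rightarrow$(c) of Lemma 3.14, which needs no projectives), that $\left(\begin{smallmatrix}\mathcal{X}\\ \mathcal{Y}\end{smallmatrix}\right)$ is resolving. Hence $\left(\begin{smallmatrix}X'\\0\end{smallmatrix}\right)_0\in\left(\begin{smallmatrix}\mathcal{X}\\ \mathcal{Y}\end{smallmatrix}\right)$, so $X'\in\mathcal{X}$, and Lemma 3.14 (1) gives heredity.

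For (1)(b) we argue the same way with $0\to\left(\begin{smallmatrix}0\\Y'\end{smallmatrix}\right)_0\to\left(\begin{smallmatrix}0\\Y_1\end{smallmatrix}\right)_0\to\left(\begin{smallmatrix}0\\Y_2\end{smallmatrix}\right)_0\to0$ for an exact sequence $0\to Y'\to Y_1\to Y_2\to0$ in $\mathcal{B}$ with $Y_1,Y_2\in\mathcal{Y}$. This shows $\mathcal{Y}$ is closed under kernels of epimorphisms, and Lemma 3.14 (1) in $\mathcal{B}$, which has enough projectives, gives heredity of $(\mathcal{Y},\mathcal{N})$. For (2) we use Lemma 3.14 (2): the class $\left(\begin{smallmatrix}\mathcal{M}\\ \mathcal{N}\end{smallmatrix}\right)$ is coresolving in $(T\downarrow\mathcal{A})$. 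Applying this to objects $\left(\begin{smallmatrix}M\\0\end{smallmatrix}\right)_0$ and $\left(\begin{smallmatrix}0\\N\end{smallmatrix}\right)_0$ shows $\mathcal{M}$ and $\mathcal{N}$ are closed under cokernels of monomorphisms, and enough injectives in $\mathcal{A}$ and $\mathcal{B}$ finish the argument.
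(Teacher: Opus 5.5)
Your proof is correct and follows the paper's route: Theorem 3.12 gives the $n$-cotorsion pair property, and heredity passes from the comma category through Lemma 3.14 via the resolving (resp. coresolving) property of the comma-category class. The only difference is that you verify directly, using the objects $\left(\begin{smallmatrix}X\\0\end{smallmatrix}\right)_{0}$ and $\left(\begin{smallmatrix}0\\Y\end{smallmatrix}\right)_{0}$, the componentwise closure under kernels of epimorphisms (resp. cokernels of monomorphisms). The paper instead imports that step from \cite[Lemma~4.1~(1)]{YJD2024} and \cite[Lemma~3.2~(1)]{JH2022}.
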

\begin{proof} $\mathrm{(1)}$ It follows from Theorem 3.12 (1), \cite[\text{Lemma}~4.1~(1)]{YJD2024} and Lemma 3.14 (1).

$\mathrm{(2)}$ It follows from Theorem 3.12 (2), \cite[\text{Lemma}~3.2~(1)]{JH2022} and Lemma 3.14 (2).
\end{proof}

\section{$n$-Cotorsion pairs over trivial ring extensions}

The goal of this section is to construct left $n$-cotorsion pairs and right $n$-cotorsion pairs over trivial ring extensions by investigating some special classes of modules over such rings. As an application, we immediately apply main results obtained over trivial ring extensions to Morita rings with zero bimodule homomorphisms. The research framework employed in this section is consistent with that adopted in the third section.

Throughout this section, let $\mathcal{C}$ be a class of left $R$-modules. Now, we define several special classes in $R\ltimes M\text{-}\mathrm{Mod}$ as follows.

(1) $\mathbf{T}(\mathcal{C}):=\{\mathbf{T}(C)\in R\ltimes M\text{-}\mathrm{Mod}\mid \forall~C\in\mathcal{C}\}$.

(2) $\mathbf{H}(\mathcal{C}):=\{\mathbf{H}(C)\in R\ltimes M\text{-}\mathrm{Mod}\mid \forall~C\in\mathcal{C}\}$.

(3) $\widetilde{\mathbf{T}}(\mathcal{C}):=\{(X,\alpha)\in R\ltimes M\text{-}\mathrm{Mod}\mid\text{there exists an exact sequence}~0\rightarrow\mathbf{T}(C_{1})\rightarrow(X,\alpha)\rightarrow\mathbf{T}(C_{2})
\rightarrow0,~\text{where}~C_{1}\in\mathcal{C}~\text{and}~C_{2}\in\mathcal{C}\}$.

(4) $\widetilde{\mathbf{H}}(\mathcal{C}):=\{[Y,\beta]\in R\ltimes M\text{-}\mathrm{Mod}\mid\text{there exists an exact sequence}~0\rightarrow\mathbf{H}(C_{1})\rightarrow[Y,\beta]\rightarrow\mathbf{H}(C_{2})
\rightarrow0,~\text{where}~C_{1}\in\mathcal{C}~\text{and}~C_{2}\in\mathcal{C}\}$.

(5) $\mathfrak{U}^{\mathcal{C}}:=\{(X,\alpha)\in R\ltimes M\text{-}\mathrm{Mod}\mid \forall~X\in\mathcal{C}\}=\{[Y,\beta]\in R\ltimes M\text{-}\mathrm{Mod}\mid \forall~Y\in\mathcal{C}\}$.

(6) $\mathfrak{B}^{\mathcal{C}}:=\{(X,\alpha)\in R\ltimes M\text{-}\mathrm{Mod}\mid\mathrm{Coker}\alpha\in
\mathcal{C},~\text{and the sequence}~M\otimes_{R}M\otimes_{R}X\overset{M\otimes_{R}\alpha}\longrightarrow M\otimes_{R}X\overset{\alpha}\longrightarrow X~\text{is exact}\}$.

(7) $\mathfrak{J}^{\mathcal{C}}:=\{[Y,\beta]\in R\ltimes M\text{-}\mathrm{Mod}\mid\mathrm{Ker}\beta\in
\mathcal{C},~\text{and the sequence}~Y\overset{\beta}\longrightarrow\mathrm{Hom}_{R}(M,Y)\overset{\mathrm{Hom}_{R}
(M,\beta)}\longrightarrow\mathrm{Hom}_{R}(M,\mathrm{Hom}_{R}(M,Y))~\text{is exact}\}$.

Clearly, we have $\mathbf{T}(\mathcal{C})\subseteq\widetilde{\mathbf{T}}(\mathcal{C})\cap\mathfrak{B}
^{\mathcal{C}}$ and $\mathbf{H}(\mathcal{C})\subseteq\widetilde{\mathbf{H}}(\mathcal{C})\cap\mathfrak{J}
^{\mathcal{C}}$.

{\bf 4.1 Construct left (right) $n$-cotorsion pairs over trivial ring extensions}

In this subsection, based on known homological formulas over trivial ring extensions,
we first investigate some classes of modules satisfying certain conditions over such rings, and then through these classes, we establish left $n$-cotorsion pairs and right $n$-cotorsion pairs over trivial ring extensions.

By observing \cite[\text{Lemma}~3.4]{L2023} and its proof, we directly give the following improved lemma, whose proof is omitted. By the way, its proof is similar to the proofs of Lemma 3.5 (2) and Lemma 3.6 (2), and can make use of adjoint pairs $(\mathbf{T},\mathbf{U})$ and $(\mathbf{U}', \mathbf{H})$ in Remark 2.7.
It is worth emphasizing that this improvement will also play a significant role in investigating the relationships among some special classes of modules over trivial ring extensions.

\begin{lem}\label{prop:2.4}{\it{Let $X$ be a left $R$-module, $(Y,\beta)$ a left $R\ltimes M$-module, and $n\geqslant1$ an integer.

$(1)$ If $\mathrm{Tor}_{i}^{R}(M,X)=0$ for each $1\leqslant i\leqslant n$, then $\mathrm{Ext}_{R\ltimes M}
^{i}(\mathbf{T}(X),(Y,\beta))\cong\mathrm{Ext}_{R}^{i}(X,Y)$.

$(2)$ If $\mathrm{Ext}_{R}^{i}(M,X)=0$ for each $1\leqslant i\leqslant n$, then $\mathrm{Ext}_{R\ltimes M}
^{i}((Y,\beta),\mathbf{H}(X))\cong\mathrm{Ext}_{R}^{i}(Y,X)$.
}}
\end{lem}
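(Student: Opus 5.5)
I follow the pattern of the proof of Lemma 3.5 (2): dimension shifting along a partial projective resolution of $X$, combined with the adjunction $(\mathbf{T},\mathbf{U})$ of Remark 2.7. I treat (1) in detail; (2) is the formal dual, using an injective coresolution of $X$ and the adjunction $(\mathbf{U}',\mathbf{H})$.

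\textbf{Step 1: an exact $\mathbf{T}$-resolution.} Take an exact sequence
$$0\rightarrow K_{n}\rightarrow P_{n-1}\rightarrow\cdots\rightarrow P_{0}\rightarrow X\rightarrow 0$$
in $R$-Mod with each $P_i$ projective, and write $K_j$ for the syzygies, so $K_0=X$. Since $\mathrm{Tor}_i^R(M,X)=0$ for $1\leqslant i\leqslant n$, dimension shifting gives $\mathrm{Tor}_1^R(M,K_{j})\cong\mathrm{Tor}_{j+1}^R(M,X)=0$ for $0\leqslant j\leqslant n-1$. Hence each short exact sequence $0\rightarrow K_{j+1}\rightarrow P_j\rightarrow K_j\rightarrow 0$ stays exact after applying $M\otimes_R-$. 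Because $\mathbf{T}(Z)$ has underlying module $Z\oplus(M\otimes_RZ)$ and exactness in $\Xi$ is detected on underlying modules, each sequence
$$0\rightarrow\mathbf{T}(K_{j+1})\rightarrow\mathbf{T}(P_j)\rightarrow\mathbf{T}(K_j)\rightarrow0$$
is exact in $R\ltimes M$-Mod.

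\textbf{Step 2: comparison via the adjunction.} Each $\mathbf{T}(P_j)$ is projective, since $\mathbf{T}$ is left adjoint to the exact functor $\mathbf{U}$. The adjunction gives natural isomorphisms
$$\mathrm{Hom}_{R\ltimes M}(\mathbf{T}(Z),(Y,\beta))\cong\mathrm{Hom}_R(Z,Y).$$
Applying $\mathrm{Hom}_{R\ltimes M}(-,(Y,\beta))$ to the sequences of Step 1 and $\mathrm{Hom}_R(-,Y)$ to the original ones produces commutative diagrams with exact rows:
$$\mathrm{Hom}(\mathbf{T}(P_j),-)\rightarrow\mathrm{Hom}(\mathbf{T}(K_{j+1}),-)\rightarrow\mathrm{Ext}^1(\mathbf{T}(K_j),-)\rightarrow0,$$
each compared with its $R$-module counterpart. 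The five lemma (or simply cokernels of isomorphic maps) then gives $\mathrm{Ext}^1_{R\ltimes M}(\mathbf{T}(K_j),(Y,\beta))\cong\mathrm{Ext}^1_R(K_j,Y)$ for $0\leqslant j\leqslant n-1$.

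\textbf{Step 3: shifting up.} For $1\leqslant i\leqslant n$, shift on both sides using the exact sequences from Step 1 and the projectivity of $\mathbf{T}(P_j)$ and $P_j$:
$$\mathrm{Ext}^i_{R\ltimes M}(\mathbf{T}(X),(Y,\beta))\cong\mathrm{Ext}^1_{R\ltimes M}(\mathbf{T}(K_{i-1}),(Y,\beta))\cong\mathrm{Ext}^1_R(K_{i-1},Y)\cong\mathrm{Ext}^i_R(X,Y).$$

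\textbf{Dual case (2).} Use an injective coresolution $0\rightarrow X\rightarrow I^0\rightarrow\cdots$ with cosyzygies $C^j$. The hypothesis $\mathrm{Ext}^i_R(M,X)=0$ forces $\mathrm{Ext}^1_R(M,C^j)=0$, so $\mathrm{Hom}_R(M,-)$ preserves the short exact sequences. Since $\mathbf{H}(Z)$ has underlying module $\mathrm{Hom}_R(M,Z)\oplus Z$, the sequences $0\rightarrow\mathbf{H}(C^{j})\rightarrow\mathbf{H}(I^j)\rightarrow\mathbf{H}(C^{j+1})\rightarrow0$ are exact. The $\mathbf{H}(I^j)$ are injective because $\mathbf{H}$ is right adjoint to the exact functor $\mathbf{U}'$, and the adjunction isomorphism $\mathrm{Hom}((Y,\beta),\mathbf{H}(Z))\cong\mathrm{Hom}_R(Y,Z)$ finishes the argument exactly as above.

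\textbf{Main obstacle.} The key point is the index bookkeeping in Step 1: the vanishing range $1\leqslant i\leqslant n$ must cover exactly the syzygies $K_0,\dots,K_{n-1}$ needed for degrees up to $n$. A secondary point is checking that the connecting maps are compatible with the adjunction isomorphisms, which holds by naturality of the adjunction.
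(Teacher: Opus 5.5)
Your proof is correct and follows the route the paper indicates. The paper omits this proof, saying only that it is similar to Lemma 3.5 (2) and Lemma 3.6 (2) via the adjoint pairs $(\mathbf{T},\mathbf{U})$ and $(\mathbf{U}',\mathbf{H})$; that is exactly your dimension-shifting argument, where $\mathrm{Tor}_1^R(M,K_j)\cong\mathrm{Tor}_{j+1}^R(M,X)=0$ (dually $\mathrm{Ext}^1_R(M,C^j)=0$) for $0\leqslant j\leqslant n-1$ makes $\mathbf{T}$ (resp. $\mathbf{H}$) preserve the needed short exact sequences. Your index bookkeeping and your comparison of cokernels of isomorphic maps (cleaner than the paper's five-lemma phrasing) are both sound.
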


\begin{lem}\label{prop:2.4}{\it{For any integer $n\geqslant1$, the following statements hold.

$\mathrm{(1)}$ If $\mathrm{Ext}_{R}^{j}(M,C)=0$ for each $1\leqslant j\leqslant n$ and any $C\in\mathcal{C}$, then
$$\bigcap\limits_{i=1}\limits^{n}{^{\bot_{i}}(\mathbf{H}(\mathcal{C}))}=\mathfrak{U}^{^{\bot_{[1,n]}}\mathcal{C}}
=\bigcap\limits_{i=1}\limits^{n}{^{\bot_{i}}(\widetilde{\mathbf{H}}(\mathcal{C}))}.$$

$\mathrm{(2)}$ If $\mathrm{Tor}_{j}^{R}(M,C)=0$ for each $1\leqslant j\leqslant n$ and any $C\in\mathcal{C}$, then
$$\bigcap\limits_{i=1}\limits^{n}(\mathbf{T}(\mathcal{C}))^{\bot_{i}}=\mathfrak{U}^{\mathcal{C}
^{\bot_{[1,n]}}}=\bigcap\limits_{i=1}\limits^{n}(\widetilde{\mathbf{T}}(\mathcal{C}))^{\bot_{i}}.$$
}}
\end{lem}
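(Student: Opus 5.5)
We prove (1); (2) is the dual argument, with $\mathbf{T}$, Lemma 4.1 (1) and $\mathfrak{U}^{\mathcal{C}^{\bot_{[1,n]}}}$ in place of $\mathbf{H}$, Lemma 4.1 (2) and $\mathfrak{U}^{^{\bot_{[1,n]}}\mathcal{C}}$. The plan is to establish the chain
$$\bigcap_{i=1}^{n}{^{\bot_{i}}(\widetilde{\mathbf{H}}(\mathcal{C}))}\subseteq\bigcap_{i=1}^{n}{^{\bot_{i}}(\mathbf{H}(\mathcal{C}))}\subseteq\mathfrak{U}^{^{\bot_{[1,n]}}\mathcal{C}}\subseteq\bigcap_{i=1}^{n}{^{\bot_{i}}(\widetilde{\mathbf{H}}(\mathcal{C}))}.$$

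The first inclusion is immediate. Since $C\in\mathcal{C}$, the split sequence $0\rightarrow\mathbf{H}(C)\rightarrow\mathbf{H}(C)\rightarrow\mathbf{H}(0)\rightarrow0$ is exact, and $0\in\mathcal{C}$ by the standing convention. Hence $\mathbf{H}(\mathcal{C})\subseteq\widetilde{\mathbf{H}}(\mathcal{C})$, and taking left orthogonals reverses the inclusion.

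For the second inclusion, let $[Y,\beta]\in\bigcap_{i=1}^{n}{^{\bot_{i}}(\mathbf{H}(\mathcal{C}))}$ and $C\in\mathcal{C}$. By hypothesis $\mathrm{Ext}_R^{j}(M,C)=0$ for $1\leqslant j\leqslant n$. Lemma 4.1 (2) then gives $\mathrm{Ext}_R^{i}(Y,C)\cong\mathrm{Ext}^{i}_{R\ltimes M}([Y,\beta],\mathbf{H}(C))=0$ for $1\leqslant i\leqslant n$. Thus $Y\in{^{\bot_{[1,n]}}\mathcal{C}}$, that is, $[Y,\beta]\in\mathfrak{U}^{^{\bot_{[1,n]}}\mathcal{C}}$.

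For the third inclusion, let $[Y,\beta]\in\mathfrak{U}^{^{\bot_{[1,n]}}\mathcal{C}}$ and $[Z,\gamma]\in\widetilde{\mathbf{H}}(\mathcal{C})$. Then there is an exact sequence $0\rightarrow\mathbf{H}(C_1)\rightarrow[Z,\gamma]\rightarrow\mathbf{H}(C_2)\rightarrow0$ with $C_1,C_2\in\mathcal{C}$. Applying $\mathrm{Hom}_{R\ltimes M}([Y,\beta],-)$ gives, for each $1\leqslant i\leqslant n$, an exact sequence
$$\mathrm{Ext}^{i}_{R\ltimes M}([Y,\beta],\mathbf{H}(C_1))\rightarrow\mathrm{Ext}^{i}_{R\ltimes M}([Y,\beta],[Z,\gamma])\rightarrow\mathrm{Ext}^{i}_{R\ltimes M}([Y,\beta],\mathbf{H}(C_2)).$$
By Lemma 4.1 (2), the outer terms are isomorphic to $\mathrm{Ext}^{i}_R(Y,C_1)$ and $\mathrm{Ext}^{i}_R(Y,C_2)$. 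Both vanish since $Y\in{^{\bot_{[1,n]}}\mathcal{C}}$, so the middle term vanishes as well.

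No step is a real obstacle once Lemma 4.1 is available. The only point to check is that Lemma 4.1 (2) applies to an arbitrary $R\ltimes M$-module $[Y,\beta]$ in the first argument, using only the vanishing of $\mathrm{Ext}_R^{j}(M,C)$; this is exactly how the lemma is stated. The identification of $[Y,\beta]$ with $(Y,\alpha)$ in defining $\mathfrak{U}$ is harmless because $\mathfrak{U}$ depends only on the underlying $R$-module.
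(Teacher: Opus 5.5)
Your proof is correct and follows essentially the same route as the paper. The paper establishes the same three inclusions: one via Lemma 4.1, one via the long exact $\mathrm{Ext}$ sequence of the defining short exact sequence of $\widetilde{\mathbf{T}}(\mathcal{C})$, and one from $\mathbf{T}(\mathcal{C})\subseteq\widetilde{\mathbf{T}}(\mathcal{C})$. The only difference is that it writes out part (2) and declares (1) similar, whereas you write out (1) and leave (2) as the dual.
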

\begin{proof} The proof of (2) is given below, and the proof of (1) is similar.

Let $(X,\alpha)\in\bigcap\limits_{i=1}\limits^{n}(\mathbf{T}(\mathcal{C}))^{\bot_{i}}$ and $C\in
\mathcal{C}$. By Lemma 4.1 (1), we have $\mathrm{Ext}_{R}^{i}(C,X)\cong\mathrm{Ext}_{R\ltimes M}^{i}(\mathbf{T}(C),(X,\alpha))=0$ for each $1\leqslant i\leqslant n$. Thus $X\in\mathcal{C}^{\bot_{[1,n]}}$. Then $(X,\alpha)\in\mathfrak{U}^{\mathcal{C}^{\bot_{[1,n]}}}$ and hence $\bigcap\limits_{i=1}\limits^{n}(\mathbf{T}
(\mathcal{C}))^{\bot_{i}}\subseteq\mathfrak{U}^{\mathcal{C}^{\bot_{[1,n]}}}$.

Next, let $(Y,\beta)\in\mathfrak{U}^{\mathcal{C}^{\bot_{[1,n]}}}$ and $(G,f)\in\widetilde{\mathbf{T}}
(\mathcal{C})$. Then there exists a short exact sequence
$$0\rightarrow\mathbf{T}(C_{1})\rightarrow(G,f)\rightarrow\mathbf{T}(C_{2})\rightarrow0$$
in $R\ltimes M$-Mod with $C_{1}$, $C_{2}\in\mathcal{C}$. Applying the functor $\mathrm{Hom}_{R\ltimes M}(-,(Y,\beta))$ to the above short exact sequence, we have an exact sequence
$$\mathrm{Ext}_{R\ltimes M}^{i}(\mathbf{T}(C_{2}),(Y,\beta))\rightarrow\mathrm{Ext}_{R\ltimes M}^{i}((G,f),(Y,\beta))\rightarrow\mathrm{Ext}_{R\ltimes M}^{i}(\mathbf{T}(C_{1}),(Y,\beta))$$
for each $1\leqslant i\leqslant n$. By Lemma 4.1  (1), we have $\mathrm{Ext}_{R\ltimes M}^{i}(\mathbf{T}
(C_{2}),(Y,\beta))\cong\mathrm{Ext}_{R}^{i}(C_{2},Y)=0$ and $\mathrm{Ext}_{R\ltimes M}^{i}(\mathbf{T}(C_{1}),
(Y,\beta))\cong\mathrm{Ext}_{R}^{i}(C_{1},Y)=0$. Then $\mathrm{Ext}_{R\ltimes M}^{i}((G,f),(Y,\beta))=0$ and hence $(Y,\beta)\in\bigcap\limits_{i=1}\limits^{n}(\widetilde{\mathbf{T}}(\mathcal{C}))^{\bot_{i}}$. Therefore, $\mathfrak{U}^{\mathcal{C}^{\bot_{[1,n]}}}\subseteq\bigcap\limits_{i=1}\limits^{n}(\widetilde{\mathbf{T}}
(\mathcal{C}))^{\bot_{i}}$.

Finally, let $(N,\tau)\in\bigcap\limits_{i=1}\limits^{n}(\widetilde{\mathbf{T}}(\mathcal{C}))^{\bot_{i}}$ and $(Z,\gamma)\in\mathbf{T}(\mathcal{C})$. Since $\mathbf{T}(\mathcal{C})\subseteq\widetilde{\mathbf{T}}
(\mathcal{C})$, we have $(Z,\gamma)\in\widetilde{\mathbf{T}}(\mathcal{C})$. Then we have $\mathrm{Ext}_{R\ltimes M}^{i}((Z,\gamma),(N,\tau))=0$ for each $1\leqslant i\leqslant n$. So $(N,\tau)\in\bigcap\limits_{i=1}\limits^{n}
(\mathbf{T}(\mathcal{C}))^{\bot_{i}}$. Thus $\bigcap\limits_{i=1}\limits^{n}(\widetilde
{\mathbf{T}}(\mathcal{C}))^{\bot_{i}}\subseteq\bigcap\limits_{i=1}\limits^{n}(\mathbf{T}(\mathcal{C}))^{\bot_{i}}$.
This completes the proof.
\end{proof}

\begin{lem}\label{prop:2.4}{\it{The following statements hold.

$\mathrm{(1)}$ Suppose that $\mathrm{Ext}^{i}_{R}(M,C)=0$ and $\mathrm{Ext}_{R}^{1}(\mathrm{Hom}_{R}(M,F),F)=0$ for each $1
\leqslant i\leqslant n$, any $C\in\mathcal{C}$ and any $F\in\mathcal{C}_{n-1}^{\wedge}$. Then $(\mathfrak{J}^{\mathcal{C}})_{n-1}^{\wedge}=\mathfrak{J}^{\mathcal{C}_{n-1}^{\wedge}}$.

$\mathrm{(2)}$ Suppose that $\mathrm{Tor}_{i}^{R}(M,C)=0$ and $\mathrm{Ext}_{R}^{1}(E,M\otimes_{R}E)=0$ for each $1\leqslant i\leqslant n$, any $C\in\mathcal{C}$ and any $E\in\mathcal{C}_{n-1}^{\vee}$. Then $(\mathfrak{B}^{\mathcal{C}})_{n-1}
^{\vee}=\mathfrak{B}^{\mathcal{C}_{n-1}^{\vee}}$.
}}
\end{lem}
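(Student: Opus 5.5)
I will prove (2) by reducing $\mathfrak{B}^{\mathcal{C}}$-type classes to classes of the form $\mathbf{T}(-)$, and then argue as in Lemma 3.9 (3) with $\mathbf{T}$ in place of the pair $\left(\begin{smallmatrix}T(Y)\\Y\end{smallmatrix}\right)_1$. Part (1) is dual, with $\mathbf{H}$, $\mathrm{Ker}\,\beta$ and $\mathrm{Hom}_R(M,-)$ replacing $\mathbf{T}$, $\mathrm{Coker}\,\alpha$ and $M\otimes_R-$.

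\textbf{Step 1 (structure of $\mathfrak{B}^{E}$).} Let $(X,\alpha)\in R\ltimes M$-Mod with $\mathrm{Coker}\,\alpha=E$ and $M\otimes_RM\otimes_RX\to M\otimes_RX\overset{\alpha}\to X$ exact.
\begin{itemize}
\item Exactness gives $\mathrm{Im}\,\alpha\cong \mathrm{Coker}(M\otimes_R\alpha)$.
\item By right exactness of $M\otimes_R-$ applied to $M\otimes_RX\to X\to E\to 0$, this is $M\otimes_RE$.
\item Hence there is a short exact sequence $0\to M\otimes_RE\to X\to E\to 0$ of $R$-modules.
\end{itemize}
If $\mathrm{Ext}^1_R(E,M\otimes_RE)=0$, this sequence splits, so $X\cong E\oplus (M\otimes_RE)$. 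I then check that $\alpha$ is carried to $\mu$, so that $(X,\alpha)\cong \mathbf{T}(E)$. Conversely, $\mathbf{T}(E)\in\mathfrak{B}^{E}$ always. Since $\mathcal{C}\subseteq\mathcal{C}_{n-1}^{\vee}$, the hypothesis gives
\[
\mathfrak{B}^{\mathcal{C}}=\mathbf{T}(\mathcal{C})\quad\text{and}\quad \mathfrak{B}^{\mathcal{C}_{n-1}^{\vee}}=\mathbf{T}(\mathcal{C}_{n-1}^{\vee})
\]
up to isomorphism.

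\textbf{Step 2 (Tor vanishing).} As in Remark 3.10, the hypothesis $\mathrm{Tor}_i^R(M,C)=0$ for $1\leqslant i\leqslant n$ and dimension shifting along a $\mathcal{C}$-coresolution give $\mathrm{Tor}_1^R(M,E')=0$ for every $E'\in\mathcal{C}_{n-1}^{\vee}$. In particular this holds for all cosyzygies in such coresolutions. As in Proposition 3.8 (1), $M\otimes_R-$ then keeps any $\mathcal{C}$-coresolution $0\to E\to C_0\to\cdots\to C_{n-1}\to 0$ of $E\in\mathcal{C}_{n-1}^{\vee}$ exact. Hence $0\to\mathbf{T}(E)\to\mathbf{T}(C_0)\to\cdots\to\mathbf{T}(C_{n-1})\to0$ is exact in $R\ltimes M$-Mod, since exactness there is checked on underlying modules. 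This gives $\mathfrak{B}^{\mathcal{C}_{n-1}^{\vee}}\subseteq(\mathfrak{B}^{\mathcal{C}})_{n-1}^{\vee}$.

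\textbf{Step 3 (reverse inclusion; main obstacle).} Take $0\to(X,\alpha)\to\mathbf{T}(C_0)\to\cdots\to\mathbf{T}(C_{n-1})\to0$, using Step 1 to replace terms of $\mathfrak{B}^{\mathcal{C}}$ by $\mathbf{T}(C_i)$. Let $K_j$ be its cosyzygies, so $K_{n-1}=\mathbf{T}(C_{n-1})$. I argue by descending induction on $j$ that $K_j\cong\mathbf{T}(E_j)$ for some $E_j\in\mathcal{C}_{n-1}^{\vee}$ with a $\mathcal{C}$-coresolution of length $n-1-j$.

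For the inductive step, start from $0\to K_j\to\mathbf{T}(C_j)\to\mathbf{T}(E_{j+1})\to0$. I apply the cokernel-of-structure-map construction, $(Z,\gamma)\mapsto\mathrm{Coker}\,\gamma$, via the $3\times3$ diagram whose rows are the sequences $M\otimes_R(-)$ and $(-)$, as in the proof of Lemma 3.9 (3). This requires three things:
\begin{itemize}
\item $0\to M\otimes_RK_j\to M\otimes_R(C_j\oplus M\otimes_RC_j)\to M\otimes_R(E_{j+1}\oplus M\otimes_RE_{j+1})\to0$ is exact. Since the third underlying module is $E_{j+1}\oplus (M\otimes_R E_{j+1})$, this needs $\mathrm{Tor}_1^R(M,M\otimes_RE_{j+1})=0$ in addition to $\mathrm{Tor}_1^R(M,E_{j+1})=0$. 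This is where I expect trouble. Rather than tensoring, I will first try to use the snake lemma on the maps $\alpha$ directly.
\item For $\mathbf{T}(-)$ one has $\mathrm{Ker}\,\mu=\mathrm{Im}(M\otimes\mu)=M\otimes_RM\otimes_R(-)$ in the second summand. The snake lemma should then yield a short exact sequence $0\to\mathrm{Coker}\,\alpha_{K_j}\to C_j\to E_{j+1}\to 0$.
\item The same lemma should show that $K_j$ satisfies the exactness condition defining $\mathfrak{B}$.
\end{itemize}
Then $E_j:=\mathrm{Coker}\,\alpha_{K_j}\in\mathcal{C}_{n-1}^{\vee}$ with a shorter coresolution, and $K_j\in\mathfrak{B}^{E_j}$. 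By Step 1, using $\mathrm{Ext}^1_R(E_j,M\otimes_RE_j)=0$, we get $K_j\cong\mathbf{T}(E_j)$, completing the induction. At $j=0$ this gives $(X,\alpha)\in\mathfrak{B}^{\mathcal{C}_{n-1}^{\vee}}$.
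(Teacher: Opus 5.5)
Your Steps 1 and 2 are correct and coincide with the paper's proof of $\mathfrak{B}^{\mathcal{C}_{n-1}^{\vee}}\subseteq(\mathfrak{B}^{\mathcal{C}})_{n-1}^{\vee}$: the short exact sequence splits by the Ext hypothesis, and $\mathbf{T}$ of a $\mathcal{C}$-coresolution stays exact by Proposition 3.8 (1) and Remark 3.10.

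\textbf{The gap is Step 3}, the inclusion $(\mathfrak{B}^{\mathcal{C}})_{n-1}^{\vee}\subseteq\mathfrak{B}^{\mathcal{C}_{n-1}^{\vee}}$, which you leave as a plan. You rightly see that tensoring $0\to K_j\to\mathbf{T}(C_j)\to\mathbf{T}(E_{j+1})\to0$ with $M$ would need $\mathrm{Tor}_1^R(M,M\otimes_RE_{j+1})=0$. However, the snake-lemma substitute is left at ``should'', and its last claim is false as stated. Two ingredients are missing.

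(a) For $\mathrm{Coker}\,\alpha_{K_j}\to C_j$ to be monic, the connecting map $\mathrm{Ker}\,\mu_{\mathbf{T}(E_{j+1})}\to\mathrm{Coker}\,\alpha_{K_j}$ must vanish. The epimorphism $\mathbf{T}(C_j)\to K_{j+1}\cong\mathbf{T}(E_{j+1})$ need not be $\mathbf{T}$ of a map. By the adjunction $(\mathbf{T},\mathbf{U})$ it has the form $\left(\begin{smallmatrix}f&0\\ g&M\otimes_Rf\end{smallmatrix}\right)$ with $f\colon C_j\to E_{j+1}$ onto. On the kernels $0\oplus(M\otimes_RM\otimes_R-)$ of the structure maps it therefore induces $M\otimes_RM\otimes_Rf$, which is onto. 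Only then does the snake lemma give $0\to E_j:=\mathrm{Coker}\,\alpha_{K_j}\to C_j\to E_{j+1}\to0$.

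(b) The exactness condition for $K_j$ cannot come from the snake lemma alone. Take an epimorphism $p\colon P\to E$ with $P$ projective, $\Omega=\mathrm{Ker}\,p$ and $\mathrm{Tor}_1^R(M,E)\neq0$. Then $K=\mathrm{Ker}\,\mathbf{T}(p)$ satisfies every conclusion of the snake lemma. Yet the map $M\otimes_R\mathrm{Coker}\,\alpha_K=M\otimes_R\Omega\to K$ induced by $\alpha_K$ has kernel $\mathrm{Tor}_1^R(M,E)$, so $K\notin\mathfrak{B}$.

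What is needed is the Tor input from your Step 2. The exactness condition on $(X,\alpha)$ is equivalent to injectivity of the map $\bar\alpha\colon M\otimes_R\mathrm{Coker}\,\alpha\to X$ induced by $\alpha$. This map is natural, and for $\mathbf{T}(C_j)$ it is the inclusion of the summand $M\otimes_RC_j$. Hence $\bar\alpha_{K_j}$ is monic as soon as $M\otimes_RE_j\to M\otimes_RC_j$ is monic, which holds because $\mathrm{Tor}_1^R(M,E_{j+1})=0$.

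With (a) and (b) your descending induction goes through; the Ext hypothesis then gives $K_j\cong\mathbf{T}(E_j)$ at each stage.

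The paper instead treats the whole coresolution at once:
\begin{enumerate}
\item It uses $\mathrm{Coker}(-)\cong\mathbf{Z}(R)\otimes_{R\ltimes M}-$.
\item It uses Mao's formula to get $\mathrm{Tor}_i^{R\ltimes M}(\mathbf{Z}(R),-)=0$ on $\mathfrak{B}^{\mathcal{C}}$ for $1\leqslant i\leqslant n$.
\item It dimension-shifts to $\mathrm{Tor}_1$-vanishing on $(\mathfrak{B}^{\mathcal{C}})_{n-1}^{\vee}$.
\item It applies Proposition 3.8 (1) to obtain the exact sequence $0\to\mathrm{Coker}\,\alpha\to\mathrm{Coker}\,\alpha_0\to\cdots\to\mathrm{Coker}\,\alpha_{n-1}\to0$.
\item Monicity of $\bar\alpha$ then follows exactly as in (b), by comparison with $\bar\alpha_0$.
\end{enumerate}
Your completed argument would be more elementary, since it verifies the needed vanishing by hand with the snake lemma. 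The paper's argument for this inclusion, by contrast, does not invoke the Ext hypothesis at all.
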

\begin{proof} The proof of (2) is given below, and the proof of (1) is similar.

Let $(X,\alpha)\in(\mathfrak{B}^{\mathcal{C}})_{n-1}^{\vee}$. Then there exists an exact sequence
$$0\rightarrow(X,\alpha)\rightarrow(X_{0},\alpha_{0})\rightarrow(X_{1},\alpha_{1})\rightarrow\cdots\rightarrow
(X_{n-1},\alpha_{n-1})\rightarrow0$$
in $R\ltimes M$-Mod with each $(X_{i},\alpha_{i})\in\mathfrak{B}^{\mathcal{C}}$. By \cite[\text{Theorem}~3.4~(1)]{L2024}, we have
$$\mathrm{Tor}_{i}^{R\ltimes M}(\mathbf{Z}(R),(Y,\beta))
\cong\mathrm{Tor}_{i}^{R}(R,\mathrm{Coker}\beta)=0$$
for each $1\leqslant i\leqslant n$ and any $(Y,\beta)\in\mathfrak{B}^{\mathcal{C}}$. By \cite[\text{Lemma}~3.2~(2)]{TX2025}, for any $(Z,\gamma)\in(\mathfrak{B}^{\mathcal{C}})_{n-1}
^{\vee}$, we have $\mathrm{Tor}_{1}^{R\ltimes M}(\mathbf{Z}(R),(Z,\gamma))=0$. Therefore, by Proposition~3.8~(1), there exists an exact sequence
$$0\rightarrow\mathbf{Z}(R)\otimes_{R\ltimes M}(X,\alpha)\rightarrow\mathbf{Z}(R)\otimes_{R\ltimes M}(X_{0},\alpha_{0})\rightarrow\mathbf{Z}(R)\otimes_{R\ltimes M}(X_{1},\alpha_{1})\rightarrow$$
$$\cdots\rightarrow
\mathbf{Z}(R)\otimes_{R\ltimes M}(X_{n-1},\alpha_{n-1})\rightarrow0$$
in $R\ltimes M$-Mod. By \cite[\text{Lemma}~3.2~(5)]{L2024}, for each $0\leqslant j\leqslant n-1$, we have $$\mathbf{Z}(R)\otimes_{R\ltimes M}(X_{j},\alpha_{j})\cong R\otimes_{R}\mathrm{Coker}\alpha_{j}
\cong\mathrm{Coker}\alpha_{j},$$
$$\mathbf{Z}(R)\otimes_{R\ltimes M}(X,\alpha)\cong R\otimes_{R}\mathrm{Coker}\alpha\cong\mathrm{Coker}\alpha.$$
Thus there is an exact sequence
$$0\rightarrow\mathrm{Coker}\alpha\rightarrow\mathrm{Coker}\alpha_{0}
\rightarrow\mathrm{Coker}\alpha_{1}
\rightarrow\cdots\rightarrow\mathrm{Coker}\alpha_{n-1}\rightarrow0$$
in $R$-Mod with each $\mathrm{Coker}\alpha_{j}\in\mathcal{C}$. Then $\mathrm{Coker}
\alpha\in\mathcal{C}_{n}^{\vee}$. By assumption, Proposition~3.8~(1) and Remark 3.10, we obtain the following exact sequence
$$0\rightarrow M\otimes_{R}\mathrm{Coker}\alpha\rightarrow M\otimes_{R}\mathrm{Coker}\alpha_{0}
\rightarrow M\otimes_{R}\mathrm{Coker}\alpha_{1}
\rightarrow\cdots\rightarrow M\otimes_{R}\mathrm{Coker}\alpha_{n-1}\rightarrow0$$
in $R$-Mod.
By \cite[\text{Lemma}~3.2~(3)]{L2024}, we have the following commutative diagram with exact rows and exact columns:
$$\small\xymatrix@C=0.8cm@R=0.8cm{
&&0 \ar[d]_{}&0 \ar[d]_{}& \\
&& M\otimes_{R}\mathrm{Coker}\alpha \ar[d]_{} \ar[r]^{\quad\quad\quad\kappa} & X \ar[d]_{} \ar[r]^{} &\mathrm{Coker}\alpha \ar[d]_{}\ar[r]^{} & 0 \\
&0 \ar[r]^{} & M\otimes_{R}\mathrm{Coker}\alpha_{0} \ar[r]^{\quad\quad\quad\kappa_{0}} &  X_{0} \ar[r]^{} & \mathrm{Coker}\alpha_{0} \ar[r]^{} & 0  \\   }\vspace*{2mm}$$
Since the first square in the above diagram is commutative, $\kappa$ is a monomorphism.
By \cite[\text{Lemma}~3.2~(3)]{L2024}, the sequence
$$M\otimes_{R}M\otimes_{R}X\overset{M\otimes_{R}\alpha}\longrightarrow M\otimes_{R}X\overset{\alpha}\longrightarrow X$$
is exact. Then $(X,\alpha)\in\mathfrak{B}^{\mathcal{C}_{n-1}^{\vee}}$ and hence $(\mathfrak{B}^{\mathcal{C}})_{n-1}
^{\vee}\subseteq\mathfrak{B}^{\mathcal{C}_{n-1}^{\vee}}$.

Let $(G,\tau)\in\mathfrak{B}^{\mathcal{C}_{n-1}^{\vee}}$. Then $\mathrm{Coker}\tau\in\mathcal{C}_{n-1}^{\vee}$ and there exists a short exact sequence
$$0\rightarrow M\otimes_{R}\mathrm{Coker}\tau\overset{\kappa}\rightarrow G\overset{\rho}\rightarrow\mathrm{Coker}\tau\rightarrow0 \eqno(\ast)$$
in $R$-Mod such that $\tau=\kappa(M\otimes_{R}\rho)$ by \cite[\text{Lemma}~3.2~(3)]{L2024}. Then there exists an exact sequence
$$0\rightarrow\mathrm{Coker}\tau\rightarrow C_{0}\rightarrow C_{1}\rightarrow C_{2}\rightarrow\cdots
\rightarrow C_{n-1}\rightarrow0$$
in $R$-Mod with each $C_{j}\in\mathcal{C}$. By assumption, Proposition~3.8~(1) and Remark 3.10, we obtain an exact sequence
$$0\rightarrow M\otimes_{R}\mathrm{Coker}\tau\rightarrow M\otimes_{R}C_{0}\rightarrow M\otimes_{R}C_{1}\rightarrow
\cdots\rightarrow M\otimes_{R}C_{n-1}\rightarrow0$$
in $R$-Mod. Thus there exists an exact sequence
$$0\rightarrow(\mathrm{Coker}\tau\oplus(M\otimes_{R}\mathrm{Coker}\tau),\left(\begin{smallmatrix}0&0\\1&0
\end{smallmatrix}\right))\rightarrow(C_{0}\oplus(M\otimes_{R}C_{0}),\left(\begin{smallmatrix}0&0\\1&0
\end{smallmatrix}\right))\rightarrow$$
$$(C_{1}\oplus(M\otimes_{R}C_{1}),\left(\begin{smallmatrix}0&0\\1&0
\end{smallmatrix}\right))\rightarrow\cdots\rightarrow(C_{n-1}\oplus(M\otimes_{R}C_{n-1}),\left(\begin{smallmatrix}
0&0\\1&0\end{smallmatrix}\right))\rightarrow0$$
in $R\ltimes M$-Mod with each $(C_{j}\oplus(M\otimes_{R}C_{j}),\left(\begin{smallmatrix}0&0\\1&0\end{smallmatrix}
\right))=\mathbf{T}(C_{j})\in\mathfrak{B}^{\mathcal{C}}$. Therefore, $$(\mathrm{Coker}\tau\oplus(M\otimes_{R}\mathrm{Coker}\tau),\left(\begin{smallmatrix}0&0\\1&0
\end{smallmatrix}\right))=\mathbf{T}(\mathrm{Coker}\tau)
\in(\mathfrak{B}^{\mathcal{C}})_{n-1}^{\vee}.$$
Since the exact sequence $(\ast)$ is split by assumption, there exist $\theta:G\rightarrow M\otimes_{R}\mathrm{Coker}\tau$ and $\eta:\mathrm{Coker}\tau\rightarrow G$ such that $\theta\kappa=1_{M\otimes_{R}\mathrm{Coker}\tau}$, $\rho\eta=1_{\mathrm{Coker}\tau}$, $\kappa\theta+\eta\rho=1_{G}$ and $\theta\eta=0$. Then we get the isomorphism $\xi:G\rightarrow \mathrm{Coker}\tau\oplus(M\otimes_{R}\mathrm{Coker}\tau)$ defined by $\xi(x)=(\rho(x),\theta(x))$ for any $x\in G$ in $R$-Mod. Consider the following diagram:
$$\small\xymatrix@C=2cm@R=0.9cm{
      & M\otimes_{R}G \ar[d]_{\tau} \ar[r]^{M\otimes_{R}\xi\quad\quad\quad\quad\quad} & M\otimes_{R}(\mathrm{Coker}\tau\oplus(M\otimes_{R}\mathrm{Coker}\tau)) \ar[d]^{\left(\begin{smallmatrix}0&0\\1&0\end{smallmatrix}\right)}  & \\
      & G  \ar[r]^{\xi\quad\quad\quad\quad\quad} &  \mathrm{Coker}\tau\oplus(M\otimes_{R}\mathrm{Coker}\tau)  &  \\   }\vspace*{2mm}$$
Then $\xi\tau=\left(\begin{smallmatrix}\rho\\ \theta\end{smallmatrix}\right)\tau=\left(\begin{smallmatrix}0\\ \theta\tau\end{smallmatrix}\right)=\left(\begin{smallmatrix}0\\ \theta\kappa(M\otimes_{R}\rho)\end{smallmatrix}\right)=\left(\begin{smallmatrix}0\\ M\otimes_{R}\rho\end{smallmatrix}\right)=
\left(\begin{smallmatrix}0&0\\1&0\end{smallmatrix}\right)\left(\begin{smallmatrix}M\otimes_{R}\rho\\ M\otimes_{R}\theta\end{smallmatrix}\right)
=\left(\begin{smallmatrix}0&0\\1&0\end{smallmatrix}\right)M\otimes_{R}\xi$ and hence the above diagram is commutative.
Note that
$$(\eta,\kappa)\left(\begin{smallmatrix}\rho\\ \theta\end{smallmatrix}\right)=\eta\rho+\kappa\theta=1_{G}, \left(\begin{smallmatrix}\rho\\ \theta\end{smallmatrix}\right)(\eta,\kappa)=\left(\begin{smallmatrix}1&0\\0&1\end{smallmatrix}
\right).$$
Then $\xi$ is an isomorphism in $R\ltimes M$-Mod.
So $(G,\tau)\cong\mathbf{T}(\mathrm{Coker}\tau)$. Then $(G,\tau)\in(\mathfrak{B}^{\mathcal{C}})_{n-1}^{\vee}$ and hence
$\mathfrak{B}^{\mathcal{C}_{n-1}^{\vee}}\subseteq(\mathfrak{B}^{\mathcal{C}})_{n-1}^{\vee}$.
This completes the proof.
\end{proof}

Now, based on Lemma 4.2, Lemma 4.3 and other known results, we construct left (resp. right) $n$-cotorsion pairs in $R\ltimes M$-Mod in terms of left (resp. right) $n$-cotorsion pairs in $R$-Mod. The following theorem is another main result of this paper.

\begin{thm}\label{prop:2.4}{\it{$\mathrm{(1)}$ Let $(\mathcal{C},\mathcal{D})$ be a left $n$-cotorsion pair in $R$-$\mathrm{Mod}$. Assume that $\mathrm{Ext}^{i}_{R}(M,D)=0$ and $\mathrm{Ext}_{R}^{1}(\mathrm{Hom}_{R}(M,F),F)=0$ for each $1\leqslant i\leqslant n$, any $D\in\mathcal{D}$ and any $F\in\mathcal{D}_{n-1}^{\wedge}$, $\mathfrak{U}^{\mathcal{C}}$ is a resolving class, and $(\mathfrak{J}^{\mathcal{D}})_{n-1}^{\wedge}$ is closed under extensions. Then $(\mathfrak{U}^{\mathcal{C}},
\mathfrak{J}^{\mathcal{D}})$ is a left $n$-cotorsion pair in $R\ltimes M$-$\mathrm{Mod}$.

$\mathrm{(2)}$ Let $(\mathcal{C},\mathcal{D})$ be a right $n$-cotorsion pair in $R$-$\mathrm{Mod}$. Assume that $\mathrm{Tor}_{i}^{R}(M,C)=0$ and $\mathrm{Ext}_{R}^{1}(E,M\otimes_{R}E)=0$ for each $1\leqslant i\leqslant n$, any $C\in\mathcal{C}$ and any $E\in\mathcal{C}_{n-1}^{\vee}$, $\mathfrak{U}^{\mathcal{D}}$ is a coresolving class, and $(\mathfrak{B}^{\mathcal{C}})_{n-1}^{\vee}$ is closed under extensions. Then $(\mathfrak{B}^{\mathcal{C}},
\mathfrak{U}^{\mathcal{D}})$ is a right $n$-cotorsion pair in $R\ltimes M$-$\mathrm{Mod}$.
}}
\end{thm}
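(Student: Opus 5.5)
We prove (1); (2) is dual, with Lemma 4.2 (2), Lemma 4.3 (2), the functor $\mathbf{T}$ and special preenvelopes replacing their counterparts. The plan is to copy the proof of Theorem 3.11 (1), with $\mathbf{H}$ in place of $\mathbf{h}$ and Lemmas 4.2 and 4.3 in place of Lemmas 3.7 and 3.9. Then we conclude by the characterization \cite[Theorem~2.7]{MOM2021}. Its hypotheses reduce to three things: $\mathfrak{U}^{\mathcal{C}}=\bigcap_{i=1}^{n}{^{\bot_i}(\mathfrak{J}^{\mathcal{D}})}$, the condition $\mathrm{Ext}^1(\mathfrak{U}^{\mathcal{C}},(\mathfrak{J}^{\mathcal{D}})^{\wedge}_{n-1})=0$, and every module admitting a short exact sequence $0\to K\to U\to (X,\alpha)\to 0$ with $U\in\mathfrak{U}^{\mathcal{C}}$ and $K\in(\mathfrak{J}^{\mathcal{D}})^{\wedge}_{n-1}$.

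\emph{Step 1 (orthogonality).} We have $\mathbf{H}(\mathcal{D})\subseteq\mathfrak{J}^{\mathcal{D}}$. We also want $\mathfrak{J}^{\mathcal{D}}\subseteq\widetilde{\mathbf{H}}(\mathcal{D})$. The natural argument is the dual of the splitting step in the proof of Lemma 4.3: for $[Y,\beta]\in\mathfrak{J}^{\mathcal{D}}$ there is a short exact sequence $0\to\mathrm{Ker}\beta\to Y\to\mathrm{Hom}_R(M,\mathrm{Ker}\beta)\to 0$ (dual of \cite[Lemma~3.2~(3)]{L2024}). It splits since $\mathrm{Ext}^1_R(\mathrm{Hom}_R(M,F),F)=0$ for $F=\mathrm{Ker}\beta\in\mathcal{D}$. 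Hence $[Y,\beta]\cong\mathbf{H}(\mathrm{Ker}\beta)$, so in fact $\mathfrak{J}^{\mathcal{D}}=\mathbf{H}(\mathcal{D})$.

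Since $(\mathcal{C},\mathcal{D})$ is left $n$-cotorsion, $\mathcal{C}={^{\bot_{[1,n]}}\mathcal{D}}$ by \cite[Theorem~2.7]{MOM2021}. Lemma 4.2 (1) then gives
$$\bigcap_{i=1}^n{^{\bot_i}(\mathfrak{J}^{\mathcal{D}})}=\mathfrak{U}^{{^{\bot_{[1,n]}}\mathcal{D}}}=\mathfrak{U}^{\mathcal{C}}.$$
Note that $\mathfrak{U}^{\mathcal{C}}$ contains the projective $R\ltimes M$-modules, because the underlying $R$-module of a projective is in $^{\bot_{[1,n]}}\mathcal{D}$ (or alternatively by the Ext-isomorphism of Lemma 4.1 (2)). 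Together with the resolving hypothesis, this makes $\mathfrak{U}^{\mathcal{C}}$ projectively resolving. Then \cite[Proposition~2.5]{MOM2021} yields $(\mathfrak{J}^{\mathcal{D}})^{\wedge}_{n-1}\subseteq(\mathfrak{U}^{\mathcal{C}})^{\bot}$. Lemma 4.3 (1) identifies $(\mathfrak{J}^{\mathcal{D}})^{\wedge}_{n-1}=\mathfrak{J}^{\mathcal{D}^{\wedge}_{n-1}}$, which makes membership checkable on the $R$-level.

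\emph{Step 2 (approximation sequences).} This is the main obstacle, because we no longer have the two-piece decomposition $\binom{C}{0}\subseteq\binom{C}{D}$ of the comma category. My first attempt is to use the exact sequence $0\to\mathbf{Z}'(\mathrm{Ker}\beta)\to[X,\beta]\to\mathbf{Z}'(X/\mathrm{Ker}\beta)\to0$ (with $\mathbf{Z}'$ as in Remark 2.7 (6)). We then approximate each end and glue with Proposition 3.4 (2) applied to $\mathcal{G}=\mathfrak{U}^{\mathcal{C}}$. Extension-closure of $(\mathfrak{J}^{\mathcal{D}})^{\wedge}_{n-1}$ puts the glued kernel in the right class.

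So it suffices to treat modules of the form $\mathbf{Z}'(A)=[A,0]$. Take $0\to K\to C\to A\to0$ in $R$-Mod with $C\in\mathcal{C}$ and $K\in\mathcal{D}^{\wedge}_{n-1}$. Then form
$$0\to[K,0]\to[C,0]\to[A,0]\to 0.$$
Here $[C,0]\in\mathfrak{U}^{\mathcal{C}}$, but $[K,0]$ is not in $\mathfrak{J}$, since its structure map is not monic-with-$\mathrm{Hom}$-exactness. To repair this, I replace the middle term by a pullback along $\mathbf{H}(K)\supseteq$: embed $[K,0]\to\mathbf{H}(K)$, whose cokernel is $\mathbf{Z}'(\mathrm{Hom}_R(M,K))$. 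Using $\mathrm{Ext}^i_R(M,D)=0$ and Remark 3.10, we get $\mathrm{Ext}^1_R(M,K)=0$, so $\mathbf{H}(K)\in\mathfrak{J}^{\mathcal{D}^{\wedge}_{n-1}}$. Then take the pushout of $[C,0]\leftarrow[K,0]\to\mathbf{H}(K)$. This yields $0\to\mathbf{H}(K)\to U\to[A,0]\to0$, where $U$ is an extension of $\mathbf{Z}'\mathrm{Hom}_R(M,K)$ by $[C,0]$. If $U$ fails to have underlying module in $\mathcal{C}$, I would instead iterate special precovers of $\mathbf{Z}'\mathrm{Hom}_R(M,K)$. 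I expect the exact bookkeeping here, using the $\mathbf{U}'\dashv\mathbf{H}$ adjunction and Lemma 4.1 (2), to be the delicate point. Once every module has the required sequence, \cite[Theorem~2.7]{MOM2021} finishes the proof.
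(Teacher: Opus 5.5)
Your outline follows the paper's proof essentially step by step:
\begin{itemize}
\item $\mathfrak{J}^{\mathcal{D}}=\mathbf{H}(\mathcal{D})$ by splitting;
\item $\bigcap_{i=1}^{n}{}^{\bot_{i}}(\mathfrak{J}^{\mathcal{D}})=\mathfrak{U}^{\mathcal{C}}$ via Lemma 4.2 (1);
\item the sequence $0\to[\mathrm{Ker}\beta,0]\to[X,\beta]\to[\mathrm{Im}\beta,0]\to0$;
\item gluing by Proposition 3.4 (2) and extension-closure of $(\mathfrak{J}^{\mathcal{D}})^{\wedge}_{n-1}$.
\end{itemize}
Your pushout $U$ of $[C,0]\leftarrow[K,0]\to\mathbf{H}(K)$ is exactly the paper's module $[\mathrm{Hom}_{R}(M,K)\oplus C,\gamma]$ with $\gamma(a,c)=(0,\mathrm{Hom}_{R}(M,g)(a))$, where $g\colon K\to C$. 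But you stop precisely where something has to be proved, namely $U\in\mathfrak{U}^{\mathcal{C}}$, i.e.\ $\mathrm{Hom}_{R}(M,K)\in\mathcal{C}$. The fallback you propose is circular. A special $\mathfrak{U}^{\mathcal{C}}$-precover of $\mathbf{Z}'(\mathrm{Hom}_{R}(M,K))$ is the very problem you are solving for $[A,0]$. Building it the same way produces a new term $\mathbf{Z}'(\mathrm{Hom}_{R}(M,K'))$, and nothing forces the regress to stop. So Step 2, as written, does not yield the required sequence.

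The missing ingredient is a second use of the hypothesis on $\mathrm{Ext}^{1}_{R}(\mathrm{Hom}_{R}(M,-),-)$, which you spent only on the splitting in Step 1. The paper invokes it in the form $\mathrm{Ext}^{1}_{R}(\mathrm{Hom}_{R}(M,\mathcal{D}^{\wedge}_{n-1}),\mathcal{D}^{\wedge}_{n-1})=0$. That is, $\mathrm{Hom}_{R}(M,K)$ is tested against every $F\in\mathcal{D}^{\wedge}_{n-1}$, not only $F=K$. The paper combines this with \cite[Proposition~2.5]{MOM2021} to get $\mathrm{Hom}_{R}(M,K)\in{}^{\bot_{[1,n]}}\mathcal{D}=\mathcal{C}$.

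You can also see this directly. Take $0\to F\to C'\to\mathrm{Hom}_{R}(M,K)\to0$ with $C'\in\mathcal{C}$ and $F\in\mathcal{D}^{\wedge}_{n-1}$. It splits by that Ext-vanishing, so $\mathrm{Hom}_{R}(M,K)$ is a direct summand of $C'$ and lies in $\mathcal{C}$. Then the underlying module $\mathrm{Hom}_{R}(M,K)\oplus C$ of $U$ is in $\mathcal{C}$. Hence $0\to\mathbf{H}(K)\to U\to[A,0]\to0$ is a special $\mathfrak{U}^{\mathcal{C}}$-precover with kernel in $\mathfrak{J}^{\mathcal{D}^{\wedge}_{n-1}}=(\mathfrak{J}^{\mathcal{D}})^{\wedge}_{n-1}$, and your gluing step completes (1).

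Part (2) has the same gap in dual form. There you take $0\to A\to D\to E\to0$ with $D\in\mathcal{D}$ and $E\in\mathcal{C}^{\vee}_{n-1}$, and form the pullback of $(D,0)\to(E,0)\leftarrow\mathbf{T}(E)$. You need $M\otimes_{R}E\in\mathcal{D}$. This follows in the same way from $\mathrm{Ext}^{1}_{R}(\mathcal{C}^{\vee}_{n-1},M\otimes_{R}\mathcal{C}^{\vee}_{n-1})=0$ and closure of $\mathcal{D}$ under direct summands.
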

\begin{proof} The proof of (1) is given below, and the proof of (2) is similar.

Since $(\mathcal{C},\mathcal{D})$ is a left $n$-cotorsion pair in $R$-Mod, it follows from
\cite[\text{Theorem}~2.7]{MOM2021} that $\mathcal{C}=\bigcap\limits_{i=1}\limits^{n}{^{\bot_{i}}
\mathcal{D}}$. Because $\mathcal{D}\subseteq\mathcal{D}_{n-1}^{\wedge}$, we get $\mathfrak{J}^{\mathcal{D}}=\mathbf{H}(\mathcal{D})$ by assumption and \cite[\text{Lemma}~4.6~(2)]{L2024}. By Lemma 4.2 (1), we have
$$\textstyle\bigcap\limits_{i=1}\limits^{n}{^{\bot_{i}}(\mathfrak{J}^{\mathcal{D}})}
=\textstyle\bigcap\limits_{i=1}\limits^{n}{^{\bot_{i}}(\mathbf{H}(\mathcal{D}))}=\mathfrak{U}
^{^{\bot_{[1,n]}}\mathcal{D}}=\mathfrak{U}^{\mathcal{C}}.$$
Note that every projective left $R\ltimes M$-module is contained in $\mathfrak{U}^{\mathcal{C}}$. So $\mathfrak{U}
^{\mathcal{C}}$ is projectively resolving class. By \cite[\text{Proposition}~2.5]{MOM2021}, we have $\mathrm{Ext}
_{R\ltimes M}^{1}(\mathfrak{U}^{\mathcal{C}},(\mathfrak{J}^{\mathcal{D}})_{n-1}^{\wedge})=0$. That is, $(\mathfrak{J}
^{\mathcal{D}})_{n-1}^{\wedge}\subseteq(\mathfrak{U}^{\mathcal{C}})^{\bot}$. Let $[X,\alpha]\in R\ltimes M$-Mod. By \cite[\text{Lemma}~3.2~(2)]{L2024}, there exists a short exact sequence
$$0\rightarrow[\mathrm{Ker}\alpha,0]\rightarrow[X,\alpha]\rightarrow[\mathrm{Im}\alpha,0]
\rightarrow0$$
in $R\ltimes M$-Mod. Since $(\mathcal{C},\mathcal{D})$ is a left $n$-cotorsion pair in $R$-Mod, there exists a short exact sequence $0\rightarrow D\overset{g}\rightarrow C\overset{f}\rightarrow\mathrm{Im}\alpha
\rightarrow0$ in $R$-Mod with $C\in\mathcal{C}$ and $D\in\mathcal{D}_{n-1}^{\wedge}$. This yields the short exact sequence
$$0\longrightarrow\mathrm{Hom}_{R}(M,D)\oplus D\overset{\left(\begin{smallmatrix}1&0\\0&g
\end{smallmatrix}\right)}\longrightarrow\mathrm{Hom}_{R}(M,D)\oplus C\overset{(0,f)}\longrightarrow\mathrm{Im}\alpha\longrightarrow0.$$
Define
$$\beta:\mathrm{Hom}_{R}(M,D)\oplus C\rightarrow\mathrm{Hom}_{R}(M,\mathrm{Hom}_{R}(M,D)\oplus C)$$
by $\beta(a,c)=(0,g_{\ast}(a))$ for any $a\in\mathrm{Hom}_{R}(M,D)$ and any $c\in C$, where $g_{\ast}=\mathrm{Hom}_{R}(M,g)$. Then we have the following commutative diagram with exact rows:
$$\small\xymatrix@C=0.7cm@R=0.7cm{
& \mathrm{Hom}_{R}(M,D)\oplus D \ar[d]_{\left(\begin{smallmatrix}0&0\\1&0
\end{smallmatrix}\right)} \ar[r]^{\left(\begin{smallmatrix}1&0\\0&g
\end{smallmatrix}\right)} & \mathrm{Hom}_{R}(M,D)\oplus C \ar[d]_{\beta} \ar[r]^{(0,f)} & \mathrm{Im}\alpha\ar[d]_{0} &\\
& \mathrm{Hom}_{R}(M,\mathrm{Hom}_{R}(M,D)\oplus D)\ar[r]_{~\left(\begin{smallmatrix}1&0\\0&g
\end{smallmatrix}\right)_{\ast}} &  \mathrm{Hom}_{R}(M,\mathrm{Hom}_{R}(M,D)\oplus C) \ar[r]_{\quad\quad\quad(0,f)_{\ast}} & \mathrm{Hom}_{R}(M,\mathrm{Im}\alpha) & \\   }\vspace*{2mm}$$
where $\left(\begin{smallmatrix}1&0\\0&g\end{smallmatrix}\right)_{\ast}=\mathrm{Hom}_{R}(M,\left
(\begin{smallmatrix}1&0\\0&g\end{smallmatrix}\right))$ and $(0,f)_{\ast}=\mathrm{Hom}_{R}(M,(0,f))$. Thus there exists a short exact sequence
$$0\longrightarrow [\mathrm{Hom}_{R}(M,D)\oplus D,\left(\begin{smallmatrix}0&0\\1&0
\end{smallmatrix}\right)]\overset{\left(\begin{smallmatrix}1&0\\0&g\end{smallmatrix}\right)}\longrightarrow[\mathrm{Hom}_{R}(M,D)\oplus C,\beta]\overset{(0,f)}\longrightarrow[\mathrm{Im}\alpha,0]\longrightarrow0$$
in $R\ltimes M$-Mod. By Lemma 4.3 (1), we have
$$[\mathrm{Hom}
_{R}(M,D)\oplus D,\left(\begin{smallmatrix}0&0\\1&0\end{smallmatrix}\right)]=\mathbf{H}(D)\in\mathfrak{J}^{\mathcal{D}}\subseteq\mathfrak{J}
^{\mathcal{D}_{n-1}^{\wedge}}=(\mathfrak{J}^{\mathcal{D}})_{n-1}^{\wedge}\subseteq(\mathfrak{U}^{\mathcal{C}})
^{\bot}.$$
Since $\mathrm{Ext}_{R}^{1}(\mathrm{Hom}_{R}(M,\mathcal{D}_{n-1}^{\wedge}),\mathcal{D}
_{n-1}^{\wedge})=0$ and $\mathrm{Hom}_{R}(M,D)\in\mathrm{Hom}_{R}(M,\mathcal{D}_{n-1}^{\wedge})$, it follows from \cite[\text{Proposition}~2.5]{MOM2021} that $\mathrm{Hom}_{R}(M,D)\in\bigcap\limits_{i=1}\limits^{n}{^{\bot_{i}}
\mathcal{D}}=\mathcal{C}$. Then $\mathrm{Hom}_{R}(M,D)\oplus C\in\mathcal{C}$, and hence $[\mathrm{Hom}_{R}(M,D)
\oplus C,\beta]\in\mathfrak{U}^{\mathcal{C}}$. Therefore, $[\mathrm{Im}\alpha,0]$ admits a special $\mathfrak{U}
^{\mathcal{C}}$-precover. Since $(\mathcal{C},\mathcal{D})$ is a left $n$-cotorsion pair in $R$-Mod, there also exists a short exact sequence $0\rightarrow D_{0}\overset{g_{0}}\rightarrow C_{0}\overset{f_{0}}\rightarrow\mathrm{Ker}\alpha\rightarrow0$ in $R$-Mod with $C_{0}\in
\mathcal{C}$ and $D_{0}\in\mathcal{D}_{n-1}^{\wedge}$. Define $$\beta_{0}:\mathrm{Hom}_{R}(M,D_{0})
\oplus C_{0}\rightarrow\mathrm{Hom}_{R}(M,\mathrm{Hom}_{R}(M,D_{0})\oplus C_{0})$$
by $\beta_{0}(a,c)=
(0,{g_{0}}_{\ast}(a))$ for any $a\in\mathrm{Hom}_{R}(M,D_{0})$ and any $c\in C_{0}$, where ${g_{0}}_{\ast}=
\mathrm{Hom}_{R}(M,g_{0})$. Similarly, one can obtain that $[\mathrm{Ker}\alpha,0]$ also admits a special $\mathfrak{U}
^{\mathcal{C}}$-precover. That is, there exists a short exact sequence
$$0\longrightarrow [\mathrm{Hom}_{R}(M,D_{0})\oplus D_{0},\left(\begin{smallmatrix}0&0\\1&0
\end{smallmatrix}\right)]\longrightarrow[\mathrm{Hom}_{R}(M,D_{0})\oplus C_{0},\beta_{0}]\longrightarrow[\mathrm{Ker}\alpha,0]\longrightarrow0$$
in $R\ltimes M$-Mod with $[\mathrm{Hom}_{R}(M,D_{0})\oplus C_{0},\beta_{0}]\in\mathfrak{U}
^{\mathcal{C}}$ and $[\mathrm{Hom}_{R}(M,D_{0})\oplus D_{0},\left(\begin{smallmatrix}0&0\\1&0
\end{smallmatrix}\right)]\in(\mathfrak{J}^{\mathcal{D}})_{n-1}^{\wedge}$. By \cite[\text{Lemma}~5.4~(2)]{L2020}, there is the following commutative diagram with exact rows and columns:
$$\small\xymatrix@R=1cm@C=0.7cm{
   &0 \ar[d]_{}                       &0 \ar[d]_{}                         &0 \ar[d]_{} &   \\
  0  \ar[r]^{} &{[\mathrm{Hom}_{R}(M,D_{0})\oplus D_{0},\left(\begin{smallmatrix}0&0\\1&0
\end{smallmatrix}\right)]}  \ar[d]_{} \ar[r]^{}  &[Z,\psi] \ar[d]_{} \ar[r]^{}  &{[\mathrm{Hom}_{R}(M,D)\oplus D,\left(\begin{smallmatrix}0&0\\1&0\end{smallmatrix}\right)]} \ar[d]_{} \ar[r]^{}        &0 \\
 0 \ar[r]^{}      &[\mathrm{Hom}_{R}(M,D_{0})\oplus C_{0},\beta_{0}] \ar[d]_{} \ar[r]^{}  &[Y,\varphi] \ar[d]_{} \ar[r]^{}  &[\mathrm{Hom}_{R}(M,D)\oplus C,\beta] \ar[d]_{} \ar[r]^{}        &0  \\
  0  \ar[r]^{}     &[\mathrm{Ker}\alpha,0] \ar[d]_{} \ar[r]^{}   &[X,\alpha] \ar[d]_{} \ar[r]^{}   &[\mathrm{Im}\alpha,0] \ar[d]_{} \ar[r]^{}        &0  \\
  &0 &0 &0 &\\
   }\vspace*{2mm}$$
with $[Y,\varphi]\in\mathfrak{U}^{\mathcal{C}}$. Since $(\mathfrak{J}^{\mathcal{D}})_{n-1}^{\wedge}$ is closed under extensions, we have $[Z,\psi]\in(\mathfrak{J}^{\mathcal{D}})_{n-1}^{\wedge}$. Therefore, by
\cite[\text{Theorem}~2.7]{MOM2021}, $(\mathfrak{U}^{\mathcal{C}},\mathfrak{J}^{\mathcal{D}})$ is a left $n$-cotorsion pair in $R\ltimes M$-Mod.
\end{proof}

\begin{rem}\label{prop:2.4}{\rm{$\mathrm{(1)}$ In Theorem 4.4 (1), since  $\mathfrak{U}^{\mathcal{C}}$ is a resolving class, we have that the left $n$-cotorsion pair $(\mathfrak{U}^{\mathcal{C}},
\mathfrak{J}^{\mathcal{D}})$ in $R\ltimes M$-$\mathrm{Mod}$ is hereditary by Lemma 3.14 (1).

$\mathrm{(2)}$ In Theorem 4.4 (2), since $\mathfrak{U}^{\mathcal{D}}$ is a coresolving class, we have that the right $n$-cotorsion pair $(\mathfrak{B}^{\mathcal{C}},
\mathfrak{U}^{\mathcal{D}})$ in $R\ltimes M$-$\mathrm{Mod}$ is hereditary by Lemma 3.14 (2).
}}
\end{rem}

Next, we construct the left $n$-cotorsion pairs and the right $n$-cotorsion pairs in $R$-$\mathrm{Mod}$ through the left $n$-cotorsion pairs and the right $n$-cotorsion pairs in $R\ltimes M$-$\mathrm{Mod}$, respectively.

\begin{thm}\label{prop:2.4}{\it{Let $\mathcal{D}$ be a class of left $R$-modules. The following statements hold.

$\mathrm{(1)}$ Let $(\mathfrak{U}^{\mathcal{C}},\mathfrak{J}^{\mathcal{D}})$ be a left $n$-cotorsion pair in $R\ltimes M$-$\mathrm{Mod}$. Assume that $\mathrm{Ext}^{i}_{R}(M,D)=0$, $\mathrm{Ext}_{R}^{1}(\mathrm{Hom}_{R}(M,D),D)
=0$ and $\mathrm{Hom}_{R}(M,D)\in\mathcal{D}$ for each $1\leqslant i\leqslant n$ and any $D\in\mathcal{D}$, and $\mathcal{D}_{n-1}^{\wedge}$ is closed under extensions. Then $(\mathcal{C},\mathcal{D})$ is a left $n$-cotorsion pair in $R$-$\mathrm{Mod}$.

$\mathrm{(2)}$ Let $(\mathfrak{B}^{\mathcal{C}},\mathfrak{U}^{\mathcal{D}})$ be a right $n$-cotorsion pair in $R\ltimes M$-$\mathrm{Mod}$. Assume that $\mathrm{Tor}_{i}^{R}(M,C)=0$, $\mathrm{Ext}_{R}^{1}(C,M\otimes_{R}C)=0$ and $M\otimes_{R}C\in\mathcal{C}$ for each $1\leqslant i\leqslant n$ and any $C\in\mathcal{C}$, and $\mathcal{C}_{n-1}^{\vee}$ is closed under extensions. Then $(\mathcal{C},\mathcal{D})$ is a right $n$-cotorsion pair in $R$-$\mathrm{Mod}$.
}}
\end{thm}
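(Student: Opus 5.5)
I will prove (1); (2) is dual, using $\mathbf{T}$, $\mathfrak{B}^{\mathcal{C}}$ and Lemma 4.1 (1), Lemma 4.2 (2). The plan is to verify the criterion of \cite[Theorem~2.7]{MOM2021} for $(\mathcal{C},\mathcal{D})$ in $R$-Mod. That means showing two things: $\mathcal{C}=\bigcap_{i=1}^{n}{^{\bot_i}\mathcal{D}}$, and every left $R$-module $A$ admits a short exact sequence $0\to K\to C\to A\to 0$ with $C\in\mathcal{C}$ and $K\in\mathcal{D}_{n-1}^{\wedge}$.

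\emph{Orthogonality step.} Since $\mathrm{Ext}_R^i(M,D)=0$ for $1\leqslant i\leqslant n$ and $\mathrm{Ext}^1_R(\mathrm{Hom}_R(M,D),D)=0$ for all $D\in\mathcal{D}$, I will argue as in the proof of Theorem 4.4 via \cite[Lemma~4.6~(2)]{L2024} that $\mathfrak{J}^{\mathcal{D}}=\mathbf{H}(\mathcal{D})$. The relevant hypothesis there should only concern objects of $\mathcal{D}$ itself. Because $(\mathfrak{U}^{\mathcal{C}},\mathfrak{J}^{\mathcal{D}})$ is a left $n$-cotorsion pair, \cite[Theorem~2.7]{MOM2021} and Lemma 4.2 (1) give
$$\mathfrak{U}^{\mathcal{C}}=\textstyle\bigcap_{i=1}^{n}{^{\bot_i}(\mathbf{H}(\mathcal{D}))}=\mathfrak{U}^{^{\bot_{[1,n]}}\mathcal{D}}.$$
Every $R$-module $X$ underlies the $R\ltimes M$-module $[X,0]$, so comparing underlying modules yields $\mathcal{C}={^{\bot_{[1,n]}}\mathcal{D}}$.

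\emph{Approximation step.} Given $A\in R$-Mod, apply the left $n$-cotorsion pair in $R\ltimes M$-Mod to $[A,0]$. This gives $0\to[K,\psi]\to[C,\varphi]\to[A,0]\to0$ with $C\in\mathcal{C}$ and $[K,\psi]\in(\mathfrak{J}^{\mathcal{D}})_{n-1}^{\wedge}$. Applying $\mathbf{U}'$, which is exact, gives $0\to K\to C\to A\to 0$ in $R$-Mod, so it remains to show $K\in\mathcal{D}_{n-1}^{\wedge}$. Take a resolution $0\to J_{n-1}\to\cdots\to J_0\to[K,\psi]\to0$ with each $J_k\in\mathfrak{J}^{\mathcal{D}}=\mathbf{H}(\mathcal{D})$, say $J_k=\mathbf{H}(D_k)$. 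Its underlying sequence $0\to \mathrm{Hom}_R(M,D_{n-1})\oplus D_{n-1}\to\cdots\to\mathrm{Hom}_R(M,D_0)\oplus D_0\to K\to0$ is exact in $R$-Mod.

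\emph{Main obstacle.} The expected difficulty is ensuring that each term $\mathrm{Hom}_R(M,D_k)\oplus D_k$ lies in $\mathcal{D}$, or at least in $\mathcal{D}_{n-1}^{\wedge}$. Here I use the hypothesis $\mathrm{Hom}_R(M,D)\in\mathcal{D}$, together with the extension-closure of $\mathcal{D}_{n-1}^{\wedge}$: a direct sum is a split extension, so $\mathrm{Hom}_R(M,D_k)\oplus D_k\in\mathcal{D}_{n-1}^{\wedge}$. If direct sums do not land in $\mathcal{D}$ itself, a resolution by objects of $\mathcal{D}_{n-1}^{\wedge}$ does not immediately bound the $\mathcal{D}$-resolution dimension. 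I would therefore first try to show that $\mathcal{D}$ is closed under finite direct sums. The route is \cite[Theorem~2.7]{MOM2021} applied in $R\ltimes M$-Mod: it makes $\mathfrak{J}^{\mathcal{D}}$ closed under finite direct sums, being $\mathfrak{U}^{\mathcal{C}\perp}$-type. Then $\mathbf{H}(D)\oplus\mathbf{H}(D')=\mathbf{H}(D\oplus D')\in\mathbf{H}(\mathcal{D})$, and applying $\mathbf{U}'$ shows $\mathrm{Hom}_R(M,D\oplus D')\oplus D\oplus D'$ lies in $\mathcal{D}$ up to the identification. Alternatively, I would use the short exact sequence $0\to[\mathrm{Ker}\psi,0]\to[K,\psi]\to[\mathrm{Im}\psi,0]\to0$ together with the extension-closure of $\mathcal{D}_{n-1}^{\wedge}$. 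Once the terms are in $\mathcal{D}$, we get $K\in\mathcal{D}_{n-1}^{\wedge}$, and \cite[Theorem~2.7]{MOM2021} finishes the proof.
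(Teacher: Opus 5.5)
Your orthogonality step is correct, and a little more streamlined than the paper's two inclusions: from $\mathfrak{J}^{\mathcal{D}}=\mathbf{H}(\mathcal{D})$, Theorem 2.7 of \cite{MOM2021} and Lemma 4.2 (1) you get $\mathfrak{U}^{\mathcal{C}}=\mathfrak{U}^{{^{\bot_{[1,n]}}\mathcal{D}}}$, and testing on $[X,0]$ gives $\mathcal{C}={^{\bot_{[1,n]}}\mathcal{D}}$.

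\textbf{The gap is in the approximation step.} Your main route needs $\mathrm{Hom}_R(M,D_k)\oplus D_k\in\mathcal{D}$, and the justification fails on two counts.

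First, \cite[Theorem~2.7]{MOM2021} describes only the \emph{left} class, $\mathfrak{U}^{\mathcal{C}}=\bigcap_{i=1}^{n}{^{\bot_i}(\mathfrak{J}^{\mathcal{D}})}$. The right class of a left $n$-cotorsion pair is not an orthogonal class and need not be closed under finite direct sums. For a concrete case, take $R=k$ a field, $M=k$ (so $R\ltimes M\cong k[x]/(x^2)$), $n=2$, $\mathcal{C}$ all $k$-spaces, and $\mathcal{D}$ the $k$-spaces of dimension $\neq 2$.
\begin{itemize}
\item All hypotheses of the statement hold; in particular $\mathcal{D}_{1}^{\wedge}$ is every space.
\item $\mathfrak{J}^{\mathcal{D}}$ is exactly the free $k[x]/(x^2)$-modules of rank $\neq 2$. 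These are injective, so $(\mathfrak{U}^{\mathcal{C}},\mathfrak{J}^{\mathcal{D}})$ is a left $2$-cotorsion pair.
\item Yet neither $\mathfrak{J}^{\mathcal{D}}$ nor $\mathcal{D}$ is closed under finite sums, and $\mathbf{U}'(\mathbf{H}(k))=k^2\notin\mathcal{D}$.
\end{itemize}
The theorem itself still holds there; what fails is your intermediate claim.

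Second, even granting closure, $\mathbf{H}(E)\in\mathfrak{J}^{\mathcal{D}}$ only says $\mathrm{Ker}\,\vartheta=E\in\mathcal{D}$. It says nothing about the underlying module $\mathrm{Hom}_R(M,E)\oplus E$. So ``applying $\mathbf{U}'$'' does not land in $\mathcal{D}$; you are conflating $\mathfrak{J}^{\mathcal{D}}$ with $\mathfrak{U}^{\mathcal{D}}$.

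\textbf{What is needed is your one-line ``alternative'', and it carries the whole content of the step.} You must prove that $\mathrm{Ker}\,\psi$ and $\mathrm{Im}\,\psi$ lie in $\mathcal{D}_{n-1}^{\wedge}$. With your resolution $0\to\mathbf{H}(D_{n-1})\to\cdots\to\mathbf{H}(D_0)\to[K,\psi]\to0$, one way is as follows.
\begin{itemize}
\item The functor $[X,\alpha]\mapsto\mathrm{Ker}\,\alpha$ is $\mathrm{Hom}_{R\ltimes M}(\mathbf{Z}'(R),-)$.
\item By Lemma 4.1 (2), $\mathrm{Ext}^i_{R\ltimes M}(\mathbf{Z}'(R),\mathbf{H}(D))\cong\mathrm{Ext}^i_R(R,D)=0$ for $1\leqslant i\leqslant n$.
\item Dimension shifting along the resolution, as in Remark 3.10, shows this functor keeps it exact. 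This gives $0\to D_{n-1}\to\cdots\to D_0\to\mathrm{Ker}\,\psi\to0$.
\item Since $\mathbf{H}(D_k)$ modulo its kernel part is $\mathrm{Hom}_R(M,D_k)$, the long exact homology sequence makes $0\to\mathrm{Hom}_R(M,D_{n-1})\to\cdots\to\mathrm{Hom}_R(M,D_0)\to\mathrm{Im}\,\psi\to0$ exact.
\item Using $\mathrm{Hom}_R(M,\mathcal{D})\subseteq\mathcal{D}$, both $\mathrm{Ker}\,\psi$ and $\mathrm{Im}\,\psi$ lie in $\mathcal{D}_{n-1}^{\wedge}$. Extension-closure then gives $K\in\mathcal{D}_{n-1}^{\wedge}$.
\end{itemize}

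The paper does the same in substance:
\begin{itemize}
\item Lemma 4.3 (1) gives $[Z,\gamma]\in\mathfrak{J}^{\mathcal{D}_{n-1}^{\wedge}}$.
\item \cite[Lemma~3.2~(4)]{L2024} gives $0\to\mathrm{Ker}\,\gamma\to Z\to\mathrm{Hom}_R(M,\mathrm{Ker}\,\gamma)\to0$.
\item Proposition 3.8 (2) with Remark 3.10 gives $\mathrm{Hom}_R(M,\mathrm{Ker}\,\gamma)\in\mathcal{D}_{n-1}^{\wedge}$.
\end{itemize}
This is exactly where the full range $1\leqslant i\leqslant n$ of $\mathrm{Ext}^i_R(M,\mathcal{D})=0$ and the hypothesis $\mathrm{Hom}_R(M,\mathcal{D})\subseteq\mathcal{D}$ enter. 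Part (2) needs the dual repair.
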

\begin{proof} The proof of (1) is given below, and the proof of (2) is similar.

Let $C\in\mathcal{C}$ and $D\in\mathcal{D}$. For each $1\leqslant i\leqslant n$, it follows from Lemma 4.1 (2) that $\mathrm{Ext}_{R}^{i}(C,D)\cong\mathrm{Ext}_{R\ltimes M}^{i}([C,0],\mathbf{H}(D))=0$. Then $\mathcal{C}\subseteq\bigcap\limits_{i=1}\limits^{n}{^{\bot_{i}}\mathcal{D}}$. Let $N\in\bigcap\limits_{i=1}
\limits^{n}{^{\bot_{i}}\mathcal{D}}$. By Lemma 4.2 (1) and \cite[\text{Lemma}~4.6~(2)]{L2024}, we have $$[N,0]
\in\mathfrak{U}^{^{\bot_{[1,n]}}\mathcal{D}}=\bigcap\limits_{i=1}\limits^{n}{^{\bot_{i}}
(\mathbf{H}(\mathcal{D}))}=\bigcap\limits_{i=1}\limits^{n}{^{\bot_{i}}(\mathfrak{J}^{\mathcal{D}})}=
\mathfrak{U}^{\mathcal{C}},$$
which implies $N\in\mathcal{C}$. Therefore, $\mathcal{C}=\bigcap\limits_{i=1}
\limits^{n}{^{\bot_{i}}\mathcal{D}}$.

Let $X\in R$-$\mathrm{Mod}$. Since $(\mathfrak{U}^{\mathcal{C}},
\mathfrak{J}^{\mathcal{D}})$ is a left $n$-cotorsion pair in $R\ltimes M$-$\mathrm{Mod}$, there exists a short exact sequence
$$0\rightarrow[Z,\gamma]\rightarrow[Y,\beta]\rightarrow[X,0]\rightarrow0$$
in $R\ltimes M$-$\mathrm{Mod}$ with $[Y,\beta]\in\mathfrak{U}^{\mathcal{C}}$ and $[Z,\gamma]\in(\mathfrak{J}
^{\mathcal{D}})_{n-1}^{\wedge}$. Moreover, it follows from Lemma 4.3 (1) that $[Z,\gamma]\in(\mathfrak{J}
^{\mathcal{D}})_{n-1}^{\wedge}\subseteq\mathfrak{J}^{\mathcal{D}_{n-1}^{\wedge}}$. By \cite[\text{Lemma}~3.2~(4)]{L2024}, we obtain a short exact sequence
$$0\rightarrow\mathrm{Ker}\gamma\rightarrow Z\rightarrow\mathrm{Hom}_{R}(M,\mathrm{Ker}\gamma)\rightarrow0$$
in $R$-$\mathrm{Mod}$ with $\mathrm{Ker}\gamma\in\mathcal{D}_{n-1}^{\wedge}$. By assumption, \cite[\text{Lemma}~2.4]{MOM2021} and Proposition 3.8(2), we obtain $\mathrm{Hom}_{R}(M,\mathrm{Ker}\gamma)\in\mathcal{D}_{n-1}^{\wedge}$. Since $\mathcal{D}_{n-1}^{\wedge}$ is closed under extensions, we get $Z\in\mathcal{D}_{n-1}^{\wedge}$. Thus there exists a short exact sequence $0\rightarrow Z\rightarrow Y\rightarrow X\rightarrow0$ in $R$-$\mathrm{Mod}$ with $Y\in\mathcal{C}$ and $Z\in
\mathcal{D}_{n-1}^{\wedge}$. By \cite[\text{Theorem}~2.7]{MOM2021}, we obtain that $(\mathcal{C},\mathcal{D})$ is a left $n$-cotorsion pair in $R$-$\mathrm{Mod}$.
\end{proof}

\begin{rem}\label{prop:2.4}{\rm{In the case that keep the conditions as Theorem 4.4 or Theorem 4.6, according to \cite[\text{Proposition}~3.1]{MOM2021} and its dual, one obtains that $\mathfrak{U}^{\mathcal{C}}$, $\mathfrak{B}^{\mathcal{C}}$ and $\mathcal{C}$ are special precovering classes, and $\mathfrak{J}^{\mathcal{D}}$, $\mathfrak{U}^{\mathcal{D}}$ and $\mathcal{D}$ are special preenveloping classes.
}}
\end{rem}

Similarly, we know that in general, Theorem 4.4 seems to be more important than Theorem 4.6.
From now on, we characterize the heredity of left $n$-cotorsion pairs and right $n$-cotorsion pairs in $R\ltimes M$-$\mathrm{Mod}$ and $R$-$\mathrm{Mod}$.
In the following, we present a necessary and sufficient condition for $\mathfrak{U}^
{\mathcal{C}}$ to be a projectively resolving class and a necessary and sufficient condition for $\mathfrak{U}^
{\mathcal{D}}$ to be an injectively coresolving class, which is helpful for the study of the heredity.

\begin{lem}\label{prop:2.4}{\it{
$\mathrm{(1)}$ Let $(\mathcal{C},\mathcal{D})$ be a left $n$-cotorsion pair in $R$-$\mathrm{Mod}$. If $\mathrm{Ext}_{R}
^{i}(M,D)=0$ for each $1\leqslant i\leqslant n$ and any $D\in\mathcal{D}$, then $\mathfrak{U}^
{\mathcal{C}}$ is a projectively resolving class in $R\ltimes M$-$\mathrm{Mod}$ if and only if $(\mathcal{C},\mathcal{D})$ is hereditary.

$\mathrm{(2)}$ Let $(\mathcal{C},\mathcal{D})$ be a right $n$-cotorsion pair in $R$-$\mathrm{Mod}$. If $\mathrm{Tor}^{R}
_{i}(M,C)=0$ for each $1\leqslant i\leqslant n$ and any $C\in\mathcal{C}$, then $\mathfrak{U}^
{\mathcal{D}}$ is an injectively coresolving class in $R\ltimes M$-$\mathrm{Mod}$ if and only if $(\mathcal{C},\mathcal{D})$ is hereditary.
}}
\end{lem}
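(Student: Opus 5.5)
We prove (1); (2) is dual, with $\mathbf{T}$, $\mathrm{Tor}$ and Lemma 4.1 (1) replacing $\mathbf{H}$, $\mathrm{Ext}$ and Lemma 4.1 (2). The strategy is to move everything to $R$-Mod through the exact forgetful functor $\mathbf{U}'$. Since exactness in $R\ltimes M$-Mod is exactness of underlying $R$-modules, and membership in $\mathfrak{U}^{\mathcal{C}}$ depends only on the underlying module, closure of $\mathfrak{U}^{\mathcal{C}}$ under extensions and kernels of epimorphisms is essentially equivalent to the same closure properties of $\mathcal{C}$ in $R$-Mod.

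\emph{``If'' direction.} Assume $(\mathcal{C},\mathcal{D})$ is hereditary. By Lemma 3.14 (1) (the implication (b)$\Rightarrow$(c)), $\mathcal{C}$ is resolving. Closure of $\mathcal{C}$ under extensions is automatic anyway, since $\mathcal{C}=\bigcap_{i=1}^{n}{^{\bot_i}\mathcal{D}}$ by \cite[Theorem 2.7]{MOM2021}. Now take a short exact sequence in $R\ltimes M$-Mod whose outer or right-hand terms lie in $\mathfrak{U}^{\mathcal{C}}$. Applying $\mathbf{U}'$ gives a short exact sequence in $R$-Mod with the corresponding terms in $\mathcal{C}$, so the remaining term lies in $\mathcal{C}$ and hence its module lies in $\mathfrak{U}^{\mathcal{C}}$.

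It remains to show that every projective $R\ltimes M$-module lies in $\mathfrak{U}^{\mathcal{C}}$. Here I would use the hypothesis $\mathrm{Ext}^i_R(M,D)=0$ rather than computing underlying modules of projectives. By Lemma 4.1 (2), for any $R\ltimes M$-module $[Y,\beta]$ and $D\in\mathcal{D}$ we have $\mathrm{Ext}^i_R(Y,D)\cong \mathrm{Ext}^i_{R\ltimes M}([Y,\beta],\mathbf{H}(D))$ for $1\le i\le n$. If $[Y,\beta]$ is projective, the right side vanishes, so $Y\in\bigcap_{i=1}^n{^{\bot_i}\mathcal{D}}=\mathcal{C}$. (This is the same argument the paper alludes to in the proof of Theorem 4.4.) Hence $\mathfrak{U}^{\mathcal{C}}$ is projectively resolving.

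\emph{``Only if'' direction.} Assume $\mathfrak{U}^{\mathcal{C}}$ is projectively resolving. Given an epimorphism $C_1\to C_2$ in $R$-Mod with $C_1,C_2\in\mathcal{C}$ and kernel $K$, apply the exact functor $\mathbf{Z}'$ to get a short exact sequence $0\to[K,0]\to[C_1,0]\to[C_2,0]\to 0$ in $R\ltimes M$-Mod. Its last two terms lie in $\mathfrak{U}^{\mathcal{C}}$, so $[K,0]\in\mathfrak{U}^{\mathcal{C}}$, that is, $K\in\mathcal{C}$. Thus $\mathcal{C}$ is closed under kernels of epimorphisms. Since $R$-Mod has enough projectives, Lemma 3.14 (1), via (d)$\Rightarrow$(a)$\Rightarrow$(b), shows that $(\mathcal{C},\mathcal{D})$ is hereditary.

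The only step with real content is verifying that projective $R\ltimes M$-modules have underlying module in $\mathcal{C}$. If invoking Lemma 4.1 (2) for arbitrary $[Y,\beta]$ is doubtful, a fallback is to note that every projective is a summand of a free module $(R\ltimes M)^{(I)}$. Its underlying $R$-module is $R^{(I)}\oplus M^{(I)}$, and the hypothesis $\mathrm{Ext}^i_R(M,\mathcal{D})=0$ puts $M$ in $\mathcal{C}$. Then closure of $\mathcal{C}$ under direct sums, which holds because $\mathcal{C}$ is a left orthogonal class, and under direct summands, which is part of the definition of a left $n$-cotorsion pair, finishes the argument.
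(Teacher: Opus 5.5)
Your proof is correct. The transfer of closure under extensions and kernels of epimorphisms between $\mathcal{C}$ and $\mathfrak{U}^{\mathcal{C}}$ matches the paper's proof. You do this by hand with $\mathbf{U}'$ and $\mathbf{Z}'$, where the paper cites \cite[Lemma~4.3~(3)]{L2024}. The ``only if'' direction via Lemma 3.14 (1) also matches.

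The difference is in the one substantive step: showing that every projective $R\ltimes M$-module lies in $\mathfrak{U}^{\mathcal{C}}$.
\begin{itemize}
\item The paper uses the structure of projectives from \cite[Corollary~1.6~(c)]{RPI1975}. For a projective $(X,\alpha)$, the module $P=\mathrm{Coker}\alpha$ is projective and there is an exact sequence $0\rightarrow M\otimes_{R}P\rightarrow X\rightarrow P\rightarrow0$. Then $\mathrm{Ext}^{i}_{R}(M\otimes_{R}P,D)\cong\mathrm{Hom}_{R}(P,\mathrm{Ext}^{i}_{R}(M,D))=0$ and extension-closure of $\mathcal{C}$ give $X\in\mathcal{C}$.
\item You instead test a projective against $\mathbf{H}(D)$ through Lemma 4.1 (2), whose hypothesis is exactly $\mathrm{Ext}^{i}_{R}(M,\mathcal{D})=0$ for $1\leqslant i\leqslant n$, and conclude from $\mathcal{C}={^{\bot_{[1,n]}}\mathcal{D}}$. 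In effect you observe that $\mathfrak{U}^{\mathcal{C}}={^{\bot_{[1,n]}}}(\mathbf{H}(\mathcal{D}))$ is a left orthogonal class (Lemma 4.2 (1)), so it contains all projectives. This is the same observation behind the remark ``every projective left $R\ltimes M$-module is contained in $\mathfrak{U}^{\mathcal{C}}$'' in the proof of Theorem 4.4.
\end{itemize}

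Your route is shorter and needs no description of projective $R\ltimes M$-modules. Its cost is that it rests on Lemma 4.1 (2), whose proof the paper omits. The lemma does hold, by the dimension shifting of Lemma 3.5 (2): $\mathbf{H}$ preserves injectives, and $\mathrm{Ext}^{j}_{R}(M,D)=0$ for $1\leqslant j\leqslant n$ keeps $\mathbf{H}$ exact on the first $n$ short exact pieces of an injective resolution of $D$.

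Your fallback through $(R\ltimes M)^{(I)}$, whose underlying module is $R^{(I)}\oplus M^{(I)}$, is also valid and is the most elementary of the three arguments. It uses only $M\in\mathcal{C}$ (the case $P=R$ of the paper's isomorphism) and closure of ${^{\bot_{[1,n]}}\mathcal{D}}$ under direct sums and direct summands. The paper's route, by contrast, also records the explicit extension structure of each projective.
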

\begin{proof} The proof of (1) is given below, and the proof of (2) is similar.

``~$\Rightarrow$~'' Since $\mathfrak{U}^{\mathcal{C}}$ is a projectively resolving class, it follows from \cite[\text{Lemma}~4.3~(3)]{L2024} that $\mathcal{C}$ is closed under kernels of epimorphisms. By Lemma 3.14 (1), we obtain that $(\mathcal{C},\mathcal{D})$ is hereditary.

``~$\Leftarrow$~'' Since $(\mathcal{C},\mathcal{D})$ is a left $n$-cotorsion pair in $R$-$\mathrm{Mod}$, $\mathcal{C}$ is closed under extensions. Because $(\mathcal{C},\mathcal{D})$ is hereditary, we get that $\mathcal{C}$ is closed under kernels of epimorphisms by Lemma~3.14 (1). By \cite[\text{Lemma}~4.3~(3)]{L2024}, $\mathfrak{U}^{\mathcal{C}}$ is closed under both extensions and kernels of epimorphisms. Take an arbitrary projective module $(X,\alpha)$ in $R\ltimes M$-$\mathrm{Mod}$. According to \cite[\text{Corollary}~1.6~(c)]{RPI1975}, $\mathrm{Coker}\alpha$ is a projective left $R$-module and the sequence
$$M\otimes_{R}M\otimes_{R}X\overset{M\otimes_{R}\alpha}\longrightarrow M\otimes_{R}X\overset{\alpha}\longrightarrow X$$
is exact. Naturally, $\mathrm{Coker}\alpha\in\mathcal{C}$. By \cite[\text{Lemma}~3.2~(3)]{L2024}, consider the following short exact sequence
$$0\rightarrow M\otimes_{R}\mathrm{Coker}\alpha\rightarrow X\rightarrow\mathrm{Coker}\alpha\rightarrow0$$
in $R$-$\mathrm{Mod}$. For any $D\in\mathcal{D}$ and each $1\leqslant i\leqslant n$, by \cite[\text{Exercise}~9.20]{JJ2009}, we have
$$\mathrm{Ext}_{R}
^{i}(M\otimes_{R}\mathrm{Coker}\alpha,D)\cong\mathrm{Hom}_{R}(\mathrm{Coker}\alpha,\mathrm{Ext}_{R}
^{i}(M,D))=0.$$
Then $M\otimes_{R}\mathrm{Coker}\alpha\in\bigcap\limits_{i=1}\limits^{n}{^{\bot_{i}}
\mathcal{D}}=\mathcal{C}$. Since $\mathcal{C}$ is closed under extensions, we obtain $X\in\mathcal{C}$. This implies $(X,\alpha)\in\mathfrak{U}^{\mathcal{C}}$. So $\mathfrak{U}^{\mathcal{C}}$ is a projectively resolving class in $R\ltimes M$-$\mathrm{Mod}$.
\end{proof}

\begin{prop}\label{prop:2.4}{\it{$\mathrm{(1)}$ Let $(\mathcal{C},\mathcal{D})$ be a hereditary left $n$-cotorsion pair in $R$-$\mathrm{Mod}$. Assume that $\mathrm{Ext}^{i}_{R}(M,D)=0$ and $\mathrm{Ext}_{R}^{1}(\mathrm{Hom}_{R}(M,F),F)=0$ for each $1\leqslant i\leqslant n$, any $D\in\mathcal{D}$ and any $F\in\mathcal{D}_{n-1}^{\wedge}$, and $(\mathfrak{J}^{\mathcal{D}})_{n-1}^{\wedge}$ is closed under extensions. Then $(\mathfrak{U}^{\mathcal{C}},\mathfrak{J}^{\mathcal{D}})$ is a hereditary left $n$-cotorsion pair in $R\ltimes M$-$\mathrm{Mod}$.

$\mathrm{(2)}$ Let $(\mathcal{C},\mathcal{D})$ be a hereditary right $n$-cotorsion pair in $R$-$\mathrm{Mod}$. Assume that $\mathrm{Tor}_{i}^{R}(M,C)=0$ and $\mathrm{Ext}_{R}^{1}(E,M\otimes_{R}E)=0$ for each $1\leqslant i\leqslant n$, any $C\in\mathcal{C}$ and any $E\in\mathcal{C}_{n-1}^{\vee}$, and $(\mathfrak{B}^{\mathcal{C}})_{n-1}^{\vee}$ is closed under extensions. Then $(\mathfrak{B}^{\mathcal{C}},\mathfrak{U}^{\mathcal{D}})$ is a hereditary right $n$-cotorsion pair in $R\ltimes M$-$\mathrm{Mod}$.
}}
\end{prop}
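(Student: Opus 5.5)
Both parts will be deduced by combining Lemma 4.8 with Theorem 4.4 and Remark 4.5. I treat (1) in detail; (2) is the formal dual.

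For (1), note first that the hypotheses of Theorem 4.4 (1) all hold except possibly the requirement that $\mathfrak{U}^{\mathcal{C}}$ be a resolving class. The Ext-vanishing $\mathrm{Ext}^{i}_{R}(M,D)=0$ for $1\leqslant i\leqslant n$ and $D\in\mathcal{D}$ is assumed. The condition $\mathrm{Ext}_{R}^{1}(\mathrm{Hom}_{R}(M,F),F)=0$ for $F\in\mathcal{D}_{n-1}^{\wedge}$ is assumed. Closedness of $(\mathfrak{J}^{\mathcal{D}})_{n-1}^{\wedge}$ under extensions is assumed.

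Since $(\mathcal{C},\mathcal{D})$ is a hereditary left $n$-cotorsion pair and $\mathrm{Ext}^{i}_{R}(M,D)=0$ for $1\leqslant i\leqslant n$, Lemma 4.8 (1) shows that $\mathfrak{U}^{\mathcal{C}}$ is a projectively resolving class in $R\ltimes M$-Mod. In particular it is resolving. Theorem 4.4 (1) then gives that $(\mathfrak{U}^{\mathcal{C}},\mathfrak{J}^{\mathcal{D}})$ is a left $n$-cotorsion pair in $R\ltimes M$-Mod.

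For heredity, observe that $R\ltimes M$-Mod has enough projective objects, being a module category, and that $\mathfrak{U}^{\mathcal{C}}$ is closed under kernels of epimorphisms. Lemma 3.14 (1), implication $(d)\Rightarrow(a)\Rightarrow(b)$, therefore shows that the pair is hereditary. This is exactly Remark 4.5 (1).

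Part (2) runs the same way. Because $(\mathcal{C},\mathcal{D})$ is hereditary and $\mathrm{Tor}_i^R(M,C)=0$ for $1\leqslant i\leqslant n$ and $C\in\mathcal{C}$, Lemma 4.8 (2) makes $\mathfrak{U}^{\mathcal{D}}$ injectively coresolving. Theorem 4.4 (2) then yields the right $n$-cotorsion pair $(\mathfrak{B}^{\mathcal{C}},\mathfrak{U}^{\mathcal{D}})$. Finally, Lemma 3.14 (2) applies, since $R\ltimes M$-Mod has enough injectives and $\mathfrak{U}^{\mathcal{D}}$ is closed under cokernels of monomorphisms; hence the pair is hereditary.

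The only point needing care is that Lemma 4.8 requires exactly the Ext/Tor vanishing hypotheses that are already present in the statement. Once that is checked, the rest is bookkeeping, and no genuine obstacle is expected.
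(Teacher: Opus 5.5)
Your proposal is correct and matches the paper's proof. The paper likewise gets that $\mathfrak{U}^{\mathcal{C}}$ is resolving (resp.\ $\mathfrak{U}^{\mathcal{D}}$ is coresolving) from Lemma 4.8, applies Theorem 4.4 to obtain the left (resp.\ right) $n$-cotorsion pair, and deduces heredity via Remark 4.5, which is exactly your use of Lemma 3.14 with enough projectives (resp.\ injectives) in $R\ltimes M$-$\mathrm{Mod}$.
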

\begin{proof} $\mathrm{(1)}$ It follows from Theorem 4.4 (1), Lemma 4.8 (1) and Remark 4.5 (1) that $(\mathfrak{U}^{\mathcal{C}},\mathfrak{J}^{\mathcal{D}})$ is a hereditary left $n$-cotorsion pair in $R\ltimes M$-$\mathrm{Mod}$.

$\mathrm{(2)}$ It follows from Theorem 4.4 (2), Lemma 4.8 (2) and Remark 4.5 (2) that $(\mathfrak{B}^{\mathcal{C}},\mathfrak{U}^{\mathcal{D}})$ is a hereditary right $n$-cotorsion pair in $R\ltimes M$-$\mathrm{Mod}$.
\end{proof}

\begin{prop}\label{prop:2.4}{\it{Let $\mathcal{D}$ be a class of left $R$-modules. The following statements hold.

$\mathrm{(1)}$ Let $(\mathfrak{U}^{\mathcal{C}},\mathfrak{J}^{\mathcal{D}})$ be a hereditary left $n$-cotorsion pair in $R\ltimes M$-$\mathrm{Mod}$. Assume that $\mathrm{Ext}^{i}_{R}(M,\\D)=0$, $\mathrm{Ext}_{R}^{1}(\mathrm{Hom}_{R}
(M,D),D)=0$ and $\mathrm{Hom}_{R}(M,D)\in\mathcal{D}$ for each $1\leqslant i\leqslant n$ and any $D\in\mathcal{D}$, and $\mathcal{D}_{n-1}^{\wedge}$ is closed under extensions. Then $(\mathcal{C},\mathcal{D})$ is a hereditary left $n$-cotorsion pair in $R$-$\mathrm{Mod}$.

$\mathrm{(2)}$ Let $(\mathfrak{B}^{\mathcal{C}},\mathfrak{U}^{\mathcal{D}})$ be a hereditary right $n$-cotorsion pair in $R\ltimes M$-$\mathrm{Mod}$. Assume that $\mathrm{Tor}_{i}^{R}(M,C)=0$, $\mathrm{Ext}_{R}^{1}(C,M\otimes_{R}C)=0$ and $M\otimes_{R}C\in\mathcal{C}$ for each $1\leqslant i\leqslant n$ and any $C\in\mathcal{C}$, and $\mathcal{C}_{n-1}^{\vee}$ is closed under extensions. Then $(\mathcal{C},\mathcal{D})$ is a hereditary right $n$-cotorsion pair in $R$-$\mathrm{Mod}$.
}}
\end{prop}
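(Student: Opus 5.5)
The plan follows the pattern of Proposition 3.17. I treat (1); (2) is dual. I first apply Theorem 4.6 (1). Its hypotheses are exactly those assumed here, apart from heredity, which Theorem 4.6 does not need. So $(\mathcal{C},\mathcal{D})$ is a left $n$-cotorsion pair in $R$-$\mathrm{Mod}$, and it only remains to show that it is hereditary.

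For heredity I use Lemma 3.14 (1). Since $R$-$\mathrm{Mod}$ has enough projectives, it suffices to show that $\mathcal{C}$ is closed under kernels of epimorphisms. The pair $(\mathfrak{U}^{\mathcal{C}},\mathfrak{J}^{\mathcal{D}})$ is a hereditary left $n$-cotorsion pair in $R\ltimes M$-$\mathrm{Mod}$, which has enough projectives. Hence, by Lemma 3.14 (1) ((b)$\Rightarrow$(c)), $\mathfrak{U}^{\mathcal{C}}$ is a resolving class, and in particular it is closed under kernels of epimorphisms.

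The remaining step transfers this closure property down to $\mathcal{C}$. Take an epimorphism $C_1\to C_2$ in $R$-$\mathrm{Mod}$ with $C_1,C_2\in\mathcal{C}$, and let $K$ be its kernel. Applying the exact functor $\mathbf{Z}'$ gives a short exact sequence $0\to[K,0]\to[C_1,0]\to[C_2,0]\to0$ in $R\ltimes M$-$\mathrm{Mod}$. The two right-hand terms lie in $\mathfrak{U}^{\mathcal{C}}$, so $[K,0]\in\mathfrak{U}^{\mathcal{C}}$, which means $K\in\mathcal{C}$. (Alternatively, one can cite \cite[Lemma~4.3~(3)]{L2024}, as in the proof of Lemma 4.8.)

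With this closure property, Lemma 3.14 (1) ((d)$\Rightarrow$(a)$\Rightarrow$(b)) shows that $(\mathcal{C},\mathcal{D})$ is hereditary. I expect no real obstacle. The only point to check is that exact sequences in $\Upsilon$ are detected on underlying modules, so that $\mathbf{Z}'$ indeed produces an exact sequence; this holds by the description of $\Upsilon$ in Section 2.5.

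For (2), I would first use Theorem 4.6 (2) to get the right $n$-cotorsion pair. Then Lemma 3.14 (2) applied to $(\mathfrak{B}^{\mathcal{C}},\mathfrak{U}^{\mathcal{D}})$ gives that $\mathfrak{U}^{\mathcal{D}}$ is coresolving. Applying $\mathbf{Z}$ to a monomorphism between objects of $\mathcal{D}$ shows that $\mathcal{D}$ is closed under cokernels of monomorphisms. Lemma 3.14 (2), using enough injectives in $R$-$\mathrm{Mod}$, then gives heredity.
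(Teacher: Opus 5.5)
Your proof is correct and follows the paper's own argument. Theorem 4.6 supplies the left (resp.\ right) $n$-cotorsion pair; Lemma 3.14 applied in $R\ltimes M$-$\mathrm{Mod}$ makes $\mathfrak{U}^{\mathcal{C}}$ resolving (resp.\ $\mathfrak{U}^{\mathcal{D}}$ coresolving); and the forward direction of Lemma 4.8, combined with Lemma 3.14 in $R$-$\mathrm{Mod}$, gives heredity. The only difference is cosmetic: you check the transfer of closure under kernels of epimorphisms (resp.\ cokernels of monomorphisms) by hand via $\mathbf{Z}'$ (resp.\ $\mathbf{Z}$), which is exactly what the paper's appeal to Lemma 4.8, i.e.\ to \cite[Lemma~4.3~(3)]{L2024}, accomplishes.
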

\begin{proof} $\mathrm{(1)}$ It immediately follows from Theorem 4.6 (1), Lemma 4.8 (1) and Lemma 3.14 (1).

$\mathrm{(2)}$ It immediately follows from Theorem 4.6 (2), Lemma 4.8 (2) and Lemma 3.14 (2).
\end{proof}

{\bf 4.2 Left (right) $n$-cotorsion pairs over Morita rings}

Morita rings originated from the theory of equivalences of module categories in \cite{K1958} and were formally introduced by Bass in \cite{H1962}. They are also called Morita context rings, and formal matrix rings. And many authors have carried out related studies on such rings (see \cite{H1962}, \cite{RPI1975}, \cite{EL1982}, \cite{PA2017}, \cite{K1958}). In this subsection, we directly apply the results obtained over trivial ring extensions to Morita rings with zero bimodule homomorphisms, since such rings constitute a special case of trivial ring extensions. Next, we recall some notions and basic facts about Morita rings.

Following \cite{PA2017}, let $A$ and $B$ be rings, $_{A}V_{B}$ an $A$-$B$-bimodule, $_{B}U_{A}$ a $B$-$A$-bimodule, $\phi:U\otimes_{A}V\rightarrow B$ a $B$-$B$-bimodule homomorphism, and $\psi:V\otimes_{B} U\rightarrow A$ an $A$-$A$-bimodule homomorphism. From the Morita context $\mathscr{M}=(A,V,U,B,\phi,\psi)$, we define the Morita ring
$$\Lambda_{(\phi,\psi)}(\mathscr{M})=\left(\begin{matrix}A&_{A}V_{B}\\_{B}U_{A}&B\end{matrix}
\right),$$
where addition of elements of $\Lambda_{(\phi,\psi)}(\mathscr{M})$ is componentwise and multiplication is given by
$$\left(\begin{matrix}a_1&v_1\\u_1&b_1\end{matrix}\right)\left(\begin{matrix}a_2&v_2\\u_2&b_2
\end{matrix}\right)=\left(\begin{matrix}a_{1}a_{2}+\psi(v_{1}\otimes u_2)&a_{1}v_{2}+v_{1}b_{2}
\\u_{1}a_{2}+b_{1}u_{2}&b_{1}b_{2}+\phi(u_{1}\otimes v_2)\end{matrix}\right).$$
We assume that $\phi(u\otimes v)u'=u\psi(v\otimes u')$ and $v\phi(u\otimes v')=\psi(v\otimes u)v'$ for any $u,~u'\in U$ and any $v,~v'\in V$. This condition ensures that $\Lambda_{(\phi,\psi)}(\mathscr{M})$ is an associative ring. For convenience, we use the notation $\Lambda_{(\phi,\psi)}$ instead of $\Lambda_{(\phi,\psi)}
(\mathscr{M})$. When $\phi=0=\psi$, the corresponding Morita ring is denoted by
$\Lambda_{(0,0)}=\left(\begin{smallmatrix}A&_{A}V_{B}\\_{B}U_{A}&B\end{smallmatrix}
\right)$, which is called the Morita ring with zero bimodule homomorphisms.

In \cite[\text{Theorem}~1.5]{EL1982}, Green proved that the category $\Lambda_{(\phi,\psi)}$-$\mathrm{Mod}$ is equivalent to the category $\Omega$, whose objects are tuples $(X,Y,f,g)$, where $X\in A$-$\mathrm{Mod}$, $Y\in B$-$\mathrm{Mod}$, $f\in\mathrm{Hom}_{B}(U\otimes_{A}X,Y)$ and $g\in\mathrm{Hom}_{A}(V\otimes_{B}Y,X)$ such that the following diagrams are commutative:
$$\xymatrix{
  V\otimes_B U\otimes_A X \ar[d]_{\psi\otimes_A X} \ar[r]^{\quad V\otimes_B f} & V\otimes_B Y \ar[d]^{g} \\
  A\otimes_A X \ar[r]^{\quad\cong} &~~~ X~~}
~~~~~~~~\xymatrix{
  U\otimes_A V\otimes_B Y \ar[d]_{\phi\otimes_B Y} \ar[r]^{\quad  U\otimes_A g} & U\otimes_A X \ar[d]^{f} \\
  B\otimes_B Y \ar[r]^{\quad\cong} &~~~ Y.}$$
Obviously, for $\phi=0=\psi$, one has $g(V\otimes_{B}f)=0$, $f(U\otimes_{A}g)=0$.
Morphisms from $(X,Y,f,g)$ to $(X_{1},Y_{1},f_{1},g_{1})$ in $\Omega$ are pairs $(\alpha,\beta)$ such that $\alpha\in\mathrm{Hom}_{A}(X,X_{1})$, $\beta\in\mathrm{Hom}_{B}(Y,Y_{1})$ and the following diagrams are commutative:
$$\small\xymatrix{
      & U\otimes_{A}X \ar[d]_{f} \ar[r]^{U\otimes_{A}\alpha} & U\otimes_{A}X_{1} \ar[d]^{f_{1}}  & \\
      & Y  \ar[r]^{\beta} &  Y_{1}  &  \\   }
\hspace{0.1em}
\small\xymatrix{
      & V\otimes_{B}Y \ar[d]_{g} \ar[r]^{V\otimes_{B}\beta} & V\otimes_{B}Y_{1} \ar[d]^{g_{1}}  & \\
      & X  \ar[r]^{\alpha} &  X_{1}.  &  \\   }\vspace*{2mm}
$$
A sequence
$$0\longrightarrow(X',Y',f',g')\overset{(\alpha^{'},\beta^{'})}\longrightarrow(X,Y,f,g)\overset{(\alpha^{''},
\beta^{''})}\longrightarrow(X'',Y'',f'',g'')\longrightarrow0$$
in $\Lambda_{(\phi,\psi)}$-$\mathrm{Mod}$ is exact if and only if both the sequence
$0\rightarrow X'\overset{\alpha^{'}}\rightarrow X\overset{\alpha^{''}}\rightarrow X''\rightarrow0$ in $A$-$\mathrm{Mod}$ and the sequence
$0\rightarrow Y'\overset{\beta^{'}}\rightarrow Y\overset{\beta^{''}}\rightarrow Y''\rightarrow0$ in $B$-$\mathrm{Mod}$ are exact.

By the adjunction isomorphism, the category $\Lambda_{(\phi,\psi)}$-$\mathrm{Mod}$ is also equivalent to the category $\Gamma$, whose objects are tuples $[X,Y,f,g]$, where $X\in A$-$\mathrm{Mod}$, $Y\in B$-$\mathrm{Mod}$, $f\in\mathrm{Hom}_{A}(X,\mathrm{Hom}_{B}$\\
$(U,Y))$ and $g\in\mathrm{Hom}_{B}(Y,\mathrm{Hom}_{A}(V,X))$ such that
the following diagrams are commutative:
$$\xymatrix@R=2.8em@C=2.8em{
  X \ar[d]_{f} \ar[r]^{\mathrm{Hom}_{A}(\psi,X)h_{A,X}\quad\quad\quad\quad} & \mathrm{Hom}_{A}(V\otimes_{B}U,X) \ar[d]^{\cong} \\
  \mathrm{Hom}_{B}(U,Y) \ar[r]^{\mathrm{Hom}_{B}(U,g)\quad\quad} &\mathrm{Hom}_{B}(U,\mathrm{Hom}_{A}(V,X))}
~~~~~~~~\xymatrix@R=2.8em@C=2.8em{
  Y \ar[d]_{g} \ar[r]^{\mathrm{Hom}_{B}(\phi,Y)h_{B,Y}\quad\quad\quad\quad} & \mathrm{Hom}_{B}(U\otimes_{A}V,Y) \ar[d]^{\cong} \\
  \mathrm{Hom}_{A}(V,X) \ar[r]^{\mathrm{Hom}_{A}(V,f)\quad\quad} &\mathrm{Hom}_{A}(V,\mathrm{Hom}_{B}(U,Y)).}$$
Here, $h_{A,X}:X\rightarrow\mathrm{Hom}_{A}(A,X)$ and $h_{B,Y}:Y\rightarrow\mathrm{Hom}_{B}(B,Y)$ are the canonical isomorphisms.
Obviously, for $\phi=0=\psi$, one has $\mathrm{Hom}_{B}(U,g)f=0$, $\mathrm{Hom}_{A}(V,f)g=0$.
Morphisms from $[X,Y,f,g]$ to $[X_{1},Y_{1},f_{1},g_{1}]$ in $\Gamma$ are pairs $[\alpha,\beta]$ such that $\alpha\in\mathrm{Hom}_{A}(X,X_{1})$, $\beta\in\mathrm{Hom}_{B}(Y,Y_{1})$ and the following diagrams are commutative:
$$\small\xymatrix@R=2.8em@C=2.8em{
      & X \ar[d]_{f} \ar[r]^{\alpha} & X_{1} \ar[d]^{f_{1}}  & \\
      & \mathrm{Hom}_{B}(U,Y)  \ar[r]^{\mathrm{Hom}_{B}(U,\beta)} &  \mathrm{Hom}_{B}(U,Y_{1})  &  \\   }
\hspace{0.1em}
\small\xymatrix@R=2.8em@C=2.8em{
      & Y \ar[d]_{g} \ar[r]^{\beta} & Y_{1} \ar[d]^{g_{1}}  & \\
      & \mathrm{Hom}_{A}(V,X)  \ar[r]^{\mathrm{Hom}_{A}(V,\alpha)} &  \mathrm{Hom}_{A}(V,X_{1}).  &  \\   }\vspace*{2mm}
$$

We can identify the category $\Lambda_{(\phi,\psi)}$-$\mathrm{Mod}$ with the category $\Omega$ and the category $\Gamma$. In the rest of this subsection, we assume that $\phi=0=\psi$.

Note that the ring homomorphisms $A\times B\rightarrow A$ and $A\times B\rightarrow B$ equip $U\oplus V$ with a left $A\times B$-right $A\times B$-bimodule structure. It is known that a left $A\times B$-module is an ordered pair $(X,Y)$ with $X\in A$-$\mathrm{Mod}$ and $Y\in B$-$\mathrm{Mod}$. Similarly, a right $A\times B$-module is an ordered pair $(W_1,W_2)$ with $W_{1}\in\mathrm{Mod}$-$A$ and $W_{2}\in\mathrm{Mod}$-$B$. Therefore, we have two isomorphisms $(U\oplus V)\otimes_{A\times B}(X,Y)\cong(V\otimes_{B}Y,U\otimes_{A}X)$ and $\mathrm{Hom}_{A
\times B}(U\oplus V,(X,Y))\cong(\mathrm{Hom}_{B}(U,Y),\mathrm{Hom}_{A}(V,X))$. By \cite{RPI1975}, the Morita ring $\Lambda_{(0,0)}=\left(\begin{smallmatrix}A&_{A}V_{B}\\_{B}U_{A}&B\end{smallmatrix}\right)$ is isomorphic to the trivial ring extension $(A\times B)\ltimes(U\oplus V)$ under the correspondence $\left(\begin
{smallmatrix}a&v\\u&b\end{smallmatrix}\right)\rightarrow((a,b),(u,v))$. Thus $\Lambda_{(0,0)}$-$\mathrm{Mod}$ is isomorphic to $(A\times B)\ltimes(U\oplus V)$-$\mathrm{Mod}$ by the functor $$\Theta:\Lambda
_{(0,0)}\text{-}\mathrm{Mod}\rightarrow(A\times B)\ltimes(U\oplus V)\text{-}\mathrm{Mod}$$
given by $\Theta(X,Y,f,g)=((X,Y),(g,f))$ or by $\Theta[X,Y,f,g]=[(X,Y),(f,g)]$.

Let $\mathcal{C}$ be a class of left $A$-modules and $\mathcal{D}$ a class of left $B$-modules. Now, we define several special classes in $\Lambda_{(0,0)}$-$\mathrm{Mod}$ as follows.

(1) $\mathbf{T}_{A}(\mathcal{C}):=\{(X,U\otimes_{A}X,1,0)\mid X\in\mathcal{C}\}$.

(2) $\mathbf{T}_{B}(\mathcal{D}):=\{(V\otimes_{B}Y,Y,0,1)\mid Y\in\mathcal{D}\}$.

(3) $\mathbf{H}_{A}(\mathcal{C}):=\{(X,\mathrm{Hom}_{A}(V,X),0,1)\mid X\in\mathcal{C}\}$.

(4) $\mathbf{H}_{B}(\mathcal{D}):=\{(\mathrm{Hom}_{B}(U,Y),Y,1,0)\mid Y\in\mathcal{D}\}$.

(5) $\mathfrak{U}_{\mathcal{D}}^{\mathcal{C}}:=\{(X,Y,f,g)\mid X\in\mathcal{C},~Y\in\mathcal{D}\}=\{[X,Y,f,g]\mid X\in
\mathcal{C},~Y\in\mathcal{D}\}$.

(6) $\mathfrak{B}_{\mathcal{D}}^{\mathcal{C}}:=
\{(X,Y,f,g)\mid\mathrm{Coker}f\in\mathcal{D},~\mathrm{Coker}g\in
\mathcal{C},~\text{the sequences}~V\otimes_{B}U\otimes_{A}X\overset{V\otimes_{B}f}\longrightarrow V\otimes_{B}Y
\overset{g}\longrightarrow X~\text{and}~U\otimes_{A}V\otimes_{B}Y\overset{U\otimes_{A}g}\longrightarrow U\otimes_{A}X\overset{f}\longrightarrow Y~\text{are exact}\}$.

(7) $\mathfrak{J}_{\mathcal{D}}^{\mathcal{C}}:=
\{[X,Y,f,g]\mid\mathrm{Ker}f\in\mathcal{C},~\mathrm{Ker}g\in
\mathcal{D},~\text{the sequences}~X\overset{f}\longrightarrow\mathrm{Hom}_{B}(U,Y)\overset{\mathrm{Hom}_{B}(U,g)}
\longrightarrow\mathrm{Hom}_{B}(U,\mathrm{Hom}_{A}(V,X))~\text{and}~Y\overset{g}\longrightarrow
\mathrm{Hom}_{A}(V,X)\overset{\mathrm{Hom}_{A}(V,f)}\longrightarrow\mathrm{Hom}_{A}(V,
\mathrm{Hom}_{B}(U,Y))~\text{are exact}\}$.

It is easy to see that $\Theta(\mathbf{T}_{A}(\mathcal{C}))=\mathbf{T}(\mathcal{C},0)$, $\Theta(\mathbf{T}_{B}
(\mathcal{D}))=\mathbf{T}(0,\mathcal{D})$, $\Theta(\mathbf{H}_{A}(\mathcal{C}))=\mathbf{H}(\mathcal{C},0)$, $\Theta(\mathbf{H}_{B}(\mathcal{D}))=\mathbf{H}(0,\mathcal{D})$, $\Theta(\mathfrak{U}_{\mathcal{D}}
^{\mathcal{C}})=\mathfrak{U}^{(\mathcal{C},\mathcal{D})}$, $\Theta(\mathfrak{B}_{\mathcal{D}}^{\mathcal{C}})
=\mathfrak{B}^{(\mathcal{C},\mathcal{D})}$, $\Theta(\mathfrak{J}_{\mathcal{D}}^{\mathcal{C}})=\mathfrak{J}^{(
\mathcal{C},\mathcal{D})}$. These identities provide a bridge between $(A\times B)\ltimes(U\oplus V)$-$\mathrm{Mod}$ and $\Lambda_{(0,0)}$-$\mathrm{Mod}$, and play an essential role in the subsequent proofs.

In the following, we turn to the discussion of $n$-cotorsion pairs in $\Lambda_{(0,0)}$-$\mathrm{Mod}$.

\begin{prop}\label{prop:2.4}{\it{$\mathrm{(1)}$ Let $(\mathcal{C}_{1},\mathcal{C}_{2})$ and $(\mathcal{D}_{1},\mathcal{D}_{2})$ be left $n$-cotorsion pairs in $A$-$\mathrm{Mod}$ and $B$-$\mathrm{Mod}$, respectively. Assume that $\mathrm{Ext}^{i}_{A}(V,\mathcal{C}_
{2})=0=\mathrm{Ext}^{i}_{B}(U,\mathcal{D}_{2})$ for each $1\leqslant i\leqslant n$, $\mathrm{Ext}_{A}^{1}(\mathrm{Hom}_{B}(U,(\mathcal{D}_{2}
)_{n-1}^{\wedge}),(\mathcal{C}_{2})_{n-1}^{\wedge})=0=\mathrm{Ext}_{B}^{1}(\mathrm{Hom}_{A}
(V,(\mathcal{C}_{2})_{n-1}^{\wedge}),(\mathcal{D}_{2})_{n-1}^{\wedge})$, $\mathfrak{U}^{\mathcal{C}_{1}}_{\mathcal{D}_{1}}$ is a resolving class, and $(\mathfrak{J}^{\mathcal{C}_{2}}
_{\mathcal{D}_{2}})_{n-1}^{\wedge}$ is closed under extensions. Then $(\mathfrak{U}^{\mathcal{C}_{1}}_{\mathcal{D}
_{1}},\mathfrak{J}^{\mathcal{C}_{2}}_{\mathcal{D}_{2}})$ is a hereditary left $n$-cotorsion pair in $\Lambda_{(0,0)}$-$\mathrm{Mod}$.

$\mathrm{(2)}$ Let $(\mathcal{C}_{1},\mathcal{C}_{2})$ and $(\mathcal{D}_{1},\mathcal{D}_{2})$ be right $n$-cotorsion pairs in $A$-$\mathrm{Mod}$ and $B$-$\mathrm{Mod}$, respectively. Assume that $\mathrm{Tor}_{i}^{A}(U,\mathcal{C}_{1})=0=\mathrm{Tor}_{i}^{B}(V,\mathcal{D}_{1})$ for each $1\leqslant i\leqslant n$, $\mathrm{Ext}_{A}^{1}
((\mathcal{C}_{1})_{n-1}^{\vee},V\otimes_{B}(\mathcal{D}_{1})_{n-1}^{\vee})=0=\mathrm{Ext}_{B}^{1}
((\mathcal{D}_{1})_{n-1}^{\vee},
U \otimes_{A}(\mathcal{C}_{1})_{n-1}^{\vee})$, $\mathfrak{U}^{\mathcal{C}_{2}}_{\mathcal{D}_{2}}$ is a coresolving class, and $(\mathfrak{B}^{\mathcal{C}_{1}}_{\mathcal{D}_{1}})_{n-1}^{\vee}$ is closed under extensions. Then $(\mathfrak{B}^{\mathcal{C}_{1}}_{\mathcal{D}_{1}},\mathfrak{U}^{\mathcal{C}_{2}}_{\mathcal{D}_{2}})$ is a hereditary right $n$-cotorsion pair in $\Lambda_{(0,0)}$-$\mathrm{Mod}$.
}}
\end{prop}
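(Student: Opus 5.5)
We transport the statement to the trivial ring extension $R\ltimes M$ with $R=A\times B$ and $M=U\oplus V$, using the isomorphism $\Theta$, and then apply Theorem 4.4 together with Remark 4.5. We give the argument for (1); (2) is dual, using Theorem 4.4 (2), Remark 4.5 (2), and Tor in place of Ext.

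The first step is to set up the data over $R=A\times B$. Put $\mathcal{C}=(\mathcal{C}_1,\mathcal{D}_1)$ and $\mathcal{D}=(\mathcal{C}_2,\mathcal{D}_2)$, regarded as classes of left $A\times B$-modules. Since $A\times B$-Mod is the product $A$-Mod $\times$ $B$-Mod, Ext splits componentwise: $\mathrm{Ext}^i_{A\times B}((X,Y),(X',Y'))\cong \mathrm{Ext}^i_A(X,X')\oplus\mathrm{Ext}^i_B(Y,Y')$. Resolution dimensions are computed componentwise, so $\mathcal{D}^{\wedge}_{n-1}=((\mathcal{C}_2)^{\wedge}_{n-1},(\mathcal{D}_2)^{\wedge}_{n-1})$. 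Direct summands and the approximation sequences also split componentwise. Hence $(\mathcal{C},\mathcal{D})$ is a left $n$-cotorsion pair in $A\times B$-Mod.

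The second step is to translate the hypotheses, using $\mathrm{Hom}_{A\times B}(U\oplus V,(X,Y))\cong(\mathrm{Hom}_B(U,Y),\mathrm{Hom}_A(V,X))$ and its derived version $\mathrm{Ext}^i_{A\times B}(U\oplus V,(X,Y))\cong(\mathrm{Ext}^i_B(U,Y),\mathrm{Ext}^i_A(V,X))$. With these, the vanishing $\mathrm{Ext}^i_A(V,\mathcal{C}_2)=0=\mathrm{Ext}^i_B(U,\mathcal{D}_2)$ is exactly $\mathrm{Ext}^i_R(M,D)=0$ for all $D\in\mathcal{D}$. Now take $F=(F_1,F_2)\in\mathcal{D}^{\wedge}_{n-1}$. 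Then $\mathrm{Hom}_R(M,F)=(\mathrm{Hom}_B(U,F_2),\mathrm{Hom}_A(V,F_1))$, so
$$\mathrm{Ext}^1_R(\mathrm{Hom}_R(M,F),F)\cong \mathrm{Ext}^1_A(\mathrm{Hom}_B(U,F_2),F_1)\oplus\mathrm{Ext}^1_B(\mathrm{Hom}_A(V,F_1),F_2).$$
This vanishes by the two mixed hypotheses. Note that the components cross over ($U$ pairs with $F_2$ and $V$ with $F_1$); this is precisely why the hypothesis is stated in that form. Finally, since $\Theta$ is an isomorphism of categories, it preserves exactness, Ext, resolution dimensions, extension-closedness and the resolving property. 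Together with the identities $\Theta(\mathfrak{U}^{\mathcal{C}_1}_{\mathcal{D}_1})=\mathfrak{U}^{\mathcal{C}}$ and $\Theta(\mathfrak{J}^{\mathcal{C}_2}_{\mathcal{D}_2})=\mathfrak{J}^{\mathcal{D}}$, this shows that $\mathfrak{U}^{\mathcal{C}}$ is resolving and $(\mathfrak{J}^{\mathcal{D}})^{\wedge}_{n-1}$ is closed under extensions.

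The third step is to conclude. Theorem 4.4 (1) now gives that $(\mathfrak{U}^{\mathcal{C}},\mathfrak{J}^{\mathcal{D}})$ is a left $n$-cotorsion pair in $R\ltimes M$-Mod. Remark 4.5 (1), which rests on Lemma 3.14 (1) and the fact that $R\ltimes M$-Mod has enough projectives, shows that the pair is hereditary. Pulling back along $\Theta^{-1}$ gives the statement for $\Lambda_{(0,0)}$-Mod.

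The main obstacle is the bookkeeping in the second step. We must check that the $A\times B$-bimodule structure on $U\oplus V$ really produces these crossed componentwise formulas for Hom and Ext; a projective resolution of $U\oplus V$ over $A\times B$ splits as resolutions of $U$ over $B$ and $V$ over $A$. We must also check that the $\Theta$-identities carry the kernel and exactness conditions defining $\mathfrak{J}$ across exactly. Once these are verified, the rest is a direct application of Theorem 4.4 and Remark 4.5.
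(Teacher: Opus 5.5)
Your proposal is correct and follows the paper's proof: the paper also notes that $((\mathcal{C}_{1},\mathcal{D}_{1}),(\mathcal{C}_{2},\mathcal{D}_{2}))$ is a left (resp. right) $n$-cotorsion pair in $A\times B$-Mod, applies Theorem 4.4 and Remark 4.5 in $(A\times B)\ltimes(U\oplus V)$-Mod, and transports the result back along $\Theta$. Your componentwise translation of the hypotheses, with $U$ paired against the $B$-component and $V$ against the $A$-component, is the bookkeeping the paper leaves implicit, and it checks out, including the Tor and tensor conditions in part (2).
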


\begin{proof}
Since $(\mathcal{C}_{1},\mathcal{C}_{2})$ and $(\mathcal{D}_{1},\mathcal{D}_{2})$ are left (resp. right) $n$-cotorsion pairs in $A$-$\mathrm{Mod}$ and $B$-$\mathrm{Mod}$, respectively, the pair $((\mathcal{C}_{1},
\mathcal{D}_{1}),(\mathcal{C}_{2},\mathcal{D}_{2}))$ is a left (resp. right) $n$-cotorsion pair in $A\times B$-$\mathrm{Mod}$.

$\mathrm{(1)}$ By Theorem 4.4 (1) and Remark 4.5 (1), $(\mathfrak{U}^{(\mathcal{C}_{1},\mathcal{D}_{1})},\mathfrak{J}
^{(\mathcal{C}_{2},\mathcal{D}_{2})})$ is a hereditary left $n$-cotorsion pair in $(A\times B)\ltimes(U\oplus V)$-$\mathrm{Mod}$. Consequently, $(\mathfrak{U}^{\mathcal{C}_{1}}_{\mathcal{D}_{1}},\mathfrak{J}^{\mathcal{C}
_{2}}_{\mathcal{D}_{2}})$ is a hereditary left $n$-cotorsion pair in $\Lambda_{(0,0)}$-$\mathrm{Mod}$.

$\mathrm{(2)}$ By Theorem 4.4 (2) and Remark 4.5 (2), $(\mathfrak{B}^{(\mathcal{C}_{1},\mathcal{D}_{1})},\mathfrak{U}
^{(\mathcal{C}_{2},\mathcal{D}_{2})})$ is a hereditary right $n$-cotorsion pair in $(A\times B)\ltimes(U\oplus V)$-$\mathrm{Mod}$. Consequently, $(\mathfrak{B}^{\mathcal{C}_{1}}_{\mathcal{D}_{1}},\mathfrak{U}^{\mathcal{C}
_{2}}_{\mathcal{D}_{2}})$ is a hereditary right $n$-cotorsion pair in $\Lambda_{(0,0)}$-$\mathrm{Mod}$.
\end{proof}

\begin{prop}\label{prop:2.4}{\it{Let $\mathcal{C}_{1}$ and $\mathcal{C}_{2}$ be classes of left $A$-modules, and $\mathcal{D}_{1}$ and $\mathcal{D}_{2}$ be classes of left $B$-modules. The following statements hold.

$\mathrm{(1)}$ Let $(\mathfrak{U}^{\mathcal{C}_{1}}_{\mathcal{D}_{1}},\mathfrak{J}^{\mathcal{C}
_{2}}_{\mathcal{D}_{2}})$ be a left $n$-cotorsion pair in $\Lambda_{(0,0)}$-$\mathrm{Mod}$. Assume that $\mathrm{Ext}_{A}^{i}(V,\mathcal{C}_{2})=0=\mathrm{Ext}_{B}
^{i}(U,\mathcal{D}_{2})$ for each $1\leqslant i\leqslant n$, $\mathrm{Ext}_{A}^{1}(\mathrm{Hom}_{B}(U,\mathcal{D}_{2}),\mathcal{C}_{2})=
0=\mathrm{Ext}_{B}^{1}(\mathrm{Hom}
_{A}(V,\mathcal{C}_{2}),\mathcal{D}_{2})$, $\mathrm{Hom}_{A}(V,\mathcal{C}_{2})
\subseteq\mathcal{D}_{2}$, $\mathrm{Hom}_{B}(U,\mathcal{D}_{2})\subseteq\mathcal{C}_{2}$, and $(\mathcal{C}_{2})_{n-1}^{\wedge}$ and $(\mathcal{D}_{2})_{n-1}^{\wedge}$ are closed under extensions. Then $(\mathcal{C}_{1},
\mathcal{C}_{2})$ and $(\mathcal{D}_{1},\mathcal{D}_{2})$ are left $n$-cotorsion pairs in $A$-$\mathrm{Mod}$ and $B$-$\mathrm{Mod}$, respectively.

$\mathrm{(2)}$ Let $(\mathfrak{B}^{\mathcal{C}_{1}}_{\mathcal{D}_{1}},\mathfrak{U}^{\mathcal{C}
_{2}}_{\mathcal{D}_{2}})$ be a right $n$-cotorsion pair in $\Lambda_{(0,0)}$-$\mathrm{Mod}$. Assume that $\mathrm{Tor}_{i}^{A}(U,\mathcal{C}_{1})=0=\mathrm{Tor}_{i}^{B}(V,\mathcal{D}_{1})$ for each $1\leqslant i\leqslant n$, $\mathrm{Ext}
_{A}^{1}(\mathcal{C}_{1},V\otimes_{B}\mathcal{D}_{1})=
0=\mathrm{Ext}_{B}^{1}(\mathcal{D}_{1},U\otimes_{A}
\mathcal{C}_{1})$, $V\otimes_{B}\mathcal{D}_{1}\subseteq\mathcal{C}_{1}$, $U\otimes_{A}\mathcal{C}_{1}
\subseteq\mathcal{D}_{1}$, and $(\mathcal{C}_{1})_{n-1}^{\vee}$ and $(\mathcal{D}_{1})_{n-1}^{\vee}$ are closed under extensions. Then $(\mathcal{C}_{1},\mathcal{C}_{2})$ and $(\mathcal{D}_{1},
\mathcal{D}_{2})$ are right $n$-cotorsion pairs in $A$-$\mathrm{Mod}$ and $B$-$\mathrm{Mod}$, respectively.
}}
\end{prop}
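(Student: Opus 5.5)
The plan is to transfer the statement to the trivial ring extension $(A\times B)\ltimes(U\oplus V)$ via the isomorphism $\Theta$ and then apply Theorem 4.6. I will prove (1); (2) is dual and uses Theorem 4.6 (2).

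First, apply $\Theta$ to the given left $n$-cotorsion pair. Since $\Theta$ is an isomorphism of abelian categories, it preserves Ext groups, direct summands and short exact sequences. Using the identities $\Theta(\mathfrak{U}_{\mathcal{D}_1}^{\mathcal{C}_1})=\mathfrak{U}^{(\mathcal{C}_1,\mathcal{D}_1)}$ and $\Theta(\mathfrak{J}_{\mathcal{D}_2}^{\mathcal{C}_2})=\mathfrak{J}^{(\mathcal{C}_2,\mathcal{D}_2)}$, we see that $(\mathfrak{U}^{(\mathcal{C}_1,\mathcal{D}_1)},\mathfrak{J}^{(\mathcal{C}_2,\mathcal{D}_2)})$ is a left $n$-cotorsion pair in $(A\times B)\ltimes(U\oplus V)$-Mod. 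Here $R=A\times B$, $M=U\oplus V$, and the relevant classes are $\mathcal{C}=(\mathcal{C}_1,\mathcal{D}_1)$ and $\mathcal{D}=(\mathcal{C}_2,\mathcal{D}_2)$.

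Next, translate the hypotheses of Theorem 4.6 (1) componentwise, using $\mathrm{Hom}_{A\times B}(U\oplus V,(X,Y))\cong(\mathrm{Hom}_B(U,Y),\mathrm{Hom}_A(V,X))$ and the fact that Ext over $A\times B$ splits as a product of Ext over $A$ and over $B$.
\begin{itemize}
\item $\mathrm{Ext}^i_{A\times B}(M,(C,D))\cong(\mathrm{Ext}^i_A(V,C),\mathrm{Ext}^i_B(U,D))$, and this vanishes by hypothesis.
\item $\mathrm{Ext}^1_{A\times B}(\mathrm{Hom}(M,(C,D)),(C,D))\cong(\mathrm{Ext}^1_A(\mathrm{Hom}_B(U,D),C),\mathrm{Ext}^1_B(\mathrm{Hom}_A(V,C),D))$, which also vanishes.
\item $\mathrm{Hom}(M,(C,D))=(\mathrm{Hom}_B(U,D),\mathrm{Hom}_A(V,C))\in(\mathcal{C}_2,\mathcal{D}_2)$, by the two inclusion hypotheses.
\item $(\mathcal{C}_2,\mathcal{D}_2)^{\wedge}_{n-1}=((\mathcal{C}_2)^{\wedge}_{n-1},(\mathcal{D}_2)^{\wedge}_{n-1})$, so it is closed under extensions. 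The equality holds because a resolution over $A\times B$ is a pair of resolutions; a shorter one can be padded with zeros, since all classes contain $0$.
\end{itemize}

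The one point that needs care is the Ext hypothesis. Theorem 4.6 (1) requires it for each $D$ in $\mathcal{D}$ with the same $D$ in both slots. The componentwise form pairs $\mathrm{Hom}_B(U,D_2)$ with $C_2$ for arbitrary $D_2\in\mathcal{D}_2$ and $C_2\in\mathcal{C}_2$, which is exactly what the stated hypothesis provides. So all hypotheses of Theorem 4.6 (1) hold, and $((\mathcal{C}_1,\mathcal{D}_1),(\mathcal{C}_2,\mathcal{D}_2))$ is a left $n$-cotorsion pair in $A\times B$-Mod.

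Finally, decompose this pair over $A\times B$ into its two components. Closure under direct summands passes to each factor, since $(X,0)$ is a summand of $(X,Y)$ and objects of the form $(X,0)$ lie in the class exactly when $X\in\mathcal{C}_1$. Ext-vanishing is componentwise. The approximation sequence for $(X,0)$ gives one for $X$, and likewise for $(0,Y)$. Hence $(\mathcal{C}_1,\mathcal{C}_2)$ and $(\mathcal{D}_1,\mathcal{D}_2)$ are left $n$-cotorsion pairs in $A$-Mod and $B$-Mod. This last step is routine; the converse direction of it was already used in the proof of Proposition 4.11.
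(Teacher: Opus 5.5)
Your proposal is correct and follows the same route as the paper: transport the pair along $\Theta$ to $(A\times B)\ltimes(U\oplus V)$-Mod, apply Theorem 4.6 (1) (resp. (2)) with $R=A\times B$ and $M=U\oplus V$, and then split the resulting pair over $A\times B$ into its $A$- and $B$-components. The paper leaves implicit both the componentwise translation of the hypotheses of Theorem 4.6 and the final decomposition step, and your explicit checks of each are accurate.
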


\begin{proof}
$\mathrm{(1)}$ Since $(\mathfrak{U}^{\mathcal{C}_{1}}_{\mathcal{D}_{1}},\mathfrak{J}^{\mathcal{C}_{2}}
_{\mathcal{D}_{2}})$ is a left $n$-cotorsion pair in $\Lambda_{(0,0)}$-$\mathrm{Mod}$, the pair $(\mathfrak{U}^
{(\mathcal{C}_{1},\mathcal{D}_{1})},\mathfrak{J}^{(\mathcal{C}_{2},\mathcal{D}_{2})})$ is a left $n$-cotorsion pair in $(A\times B)\ltimes(U\oplus V)$-$\mathrm{Mod}$. It follows from Theorem 4.6 (1) that $((\mathcal{C}_{1},
\mathcal{D}_{1}),(\mathcal{C}_{2},\mathcal{D}_{2}))$ is a left $n$-cotorsion pair in $A\times B$-$\mathrm{Mod}$. Therefore, $(\mathcal{C}_{1},\mathcal{C}_{2})$ and $(\mathcal{D}_{1},
\mathcal{D}_{2})$ are left $n$-cotorsion pairs in $A$-$\mathrm{Mod}$ and $B$-$\mathrm{Mod}$, respectively.

$\mathrm{(2)}$ Since $(\mathfrak{B}^{\mathcal{C}_{1}}_{\mathcal{D}_{1}},\mathfrak{U}^{\mathcal{C}
_{2}}_{\mathcal{D}_{2}})$ is a right $n$-cotorsion pair in $\Lambda_{(0,0)}$-$\mathrm{Mod}$, the pair $(\mathfrak{B}^{(\mathcal{C}_{1},\mathcal{D}_{1})},\mathfrak{U}^{(\mathcal{C}_{2},\mathcal{D}_{2})})$ is a right $n$-cotorsion pair in $(A\times B)\ltimes(U\oplus V)$-$\mathrm{Mod}$. It follows from Theorem 4.6 (2) that $((\mathcal{C}_{1},\mathcal{D}_{1}),(\mathcal{C}_{2},\mathcal{D}_{2}))$ is a right $n$-cotorsion pair in $A\times B$-$\mathrm{Mod}$. Therefore, $(\mathcal{C}_{1},\mathcal{C}_{2})$ and $(\mathcal{D}_{1},
\mathcal{D}_{2})$ are right $n$-cotorsion pairs in $A$-$\mathrm{Mod}$ and $B$-$\mathrm{Mod}$, respectively.
\end{proof}

The following results are immediate from Proposition 4.11, Proposition 4.12, Lemma 4.8 and Lemma 3.14.

\begin{prop}\label{prop:2.4}{\it{$\mathrm{(1)}$ Let $(\mathcal{C}_{1},\mathcal{C}_{2})$ and $(\mathcal{D}_{1},\mathcal{D}_{2})$ be hereditary left $n$-cotorsion pairs in $A$-$\mathrm{Mod}$ and $B$-$\mathrm{Mod}$, respectively. Assume that $\mathrm{Ext}^{i}_{A}(V,\mathcal{C}_{2})=0=\mathrm{Ext}^{i}_{B}(U,\mathcal{D}_{2})$ for each $1\leqslant i\leqslant n$, $\mathrm{Ext}_{A}^{1}(\mathrm{Hom}_{B}(U,
(\mathcal{D}_{2})_{n-1}^{\wedge}),(\mathcal{C}_{2})_{n-1}^{\wedge})=
0=\mathrm{Ext}_{B}^{1}(\mathrm{Hom}_{A}
(V,(\mathcal{C}_{2})_{n-1}^{\wedge}),(\mathcal{D}_{2})_{n-1}^{\wedge})$, and $(\mathfrak{J}^{\mathcal{C}_{2}}_{\mathcal{D}_{2}})_{n-1}^{\wedge}$ is closed under extensions. Then $(\mathfrak{U}^{\mathcal{C}_{1}}_{\mathcal{D}_{1}},\mathfrak{J}^{\mathcal{C}_{2}}_{\mathcal{D}_{2}})$ is a hereditary left $n$-cotorsion pair in $\Lambda_{(0,0)}$-$\mathrm{Mod}$.

$\mathrm{(2)}$ Let $(\mathcal{C}_{1},\mathcal{C}_{2})$ and $(\mathcal{D}_{1},\mathcal{D}_{2})$ be hereditary right $n$-cotorsion pairs in $A$-$\mathrm{Mod}$ and $B$-$\mathrm{Mod}$, respectively. Assume that $\mathrm{Tor}_{i}^{A}(U,\mathcal{C}_{1})=0=\mathrm{Tor}_{i}^{B}(V,\mathcal{D}_{1})$ for each $1\leqslant i\leqslant n$, $\mathrm{Ext}_{A}^{1}((\mathcal{C}_{1})_{n-1}^{\vee},
V\otimes_{B}(\mathcal{D}_{1})_{n-1}^{\vee})=0=
\mathrm{Ext}_{B}^{1}((\mathcal{D}_{1})_{n-1}^{\vee},U\otimes_{A}(\mathcal{C}_{1})_{n-1}^{\vee})$, and $(\mathfrak{B}^{\mathcal{C}_{1}}_{\mathcal{D}_{1}})_{n-1}^{\vee}$ is closed under extensions. Then $(\mathfrak{B}^{
\mathcal{C}_{1}}_{\mathcal{D}_{1}},\mathfrak{U}^{\mathcal{C}_{2}}_{\mathcal{D}_{2}})$ is a hereditary right $n$-cotorsion pair in $\Lambda_{(0,0)}$-$\mathrm{Mod}$.
}}
\end{prop}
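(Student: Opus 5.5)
The plan is to reduce to the trivial ring extension $(A\times B)\ltimes(U\oplus V)$ via the isomorphism $\Theta$, exactly as in the proof of Proposition 4.11, and then use Proposition 4.9 together with Lemma 4.8 to get heredity without any resolving or coresolving hypothesis.

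For (1), first note that $((\mathcal{C}_{1},\mathcal{D}_{1}),(\mathcal{C}_{2},\mathcal{D}_{2}))$ is a left $n$-cotorsion pair in $A\times B$-$\mathrm{Mod}$. This is because $\mathrm{Ext}$, direct summands and the approximation sequences all split componentwise. Moreover, it is hereditary, since
$$\mathrm{Ext}^{n+1}_{A\times B}((C,D),(C',D'))\cong \mathrm{Ext}^{n+1}_A(C,C')\oplus\mathrm{Ext}^{n+1}_B(D,D').$$
Next, translate the hypotheses on $M=U\oplus V$ over $R=A\times B$:
\begin{itemize}
\item $\mathrm{Ext}^{i}_{A\times B}(U\oplus V,(C_2,D_2))\cong \mathrm{Ext}^i_A(V,C_2)\oplus\mathrm{Ext}^i_B(U,D_2)$, which vanishes for $1\leqslant i\leqslant n$;
\item $\mathrm{Hom}_{A\times B}(U\oplus V,(X,Y))\cong(\mathrm{Hom}_B(U,Y),\mathrm{Hom}_A(V,X))$;
\item $((\mathcal{C}_2,\mathcal{D}_2))^{\wedge}_{n-1}=((\mathcal{C}_2)^{\wedge}_{n-1},(\mathcal{D}_2)^{\wedge}_{n-1})$.
\end{itemize}
Combining these, the hypothesis $\mathrm{Ext}^1_R(\mathrm{Hom}_R(M,F),F)=0$ for $F\in\mathcal{D}^{\wedge}_{n-1}$ becomes the stated pair of $\mathrm{Ext}^1$ vanishings. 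There is one small subtlety: Proposition 4.9 only needs $F$ paired with itself, whereas here we are given the vanishing for all pairs $F,F'$, which is stronger and therefore fine. Also $\Theta$ carries $(\mathfrak{J}^{\mathcal{C}_2}_{\mathcal{D}_2})^{\wedge}_{n-1}$ to $(\mathfrak{J}^{(\mathcal{C}_2,\mathcal{D}_2)})^{\wedge}_{n-1}$, because $\Theta$ is an exact isomorphism of categories, so closure under extensions transfers.

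Then Proposition 4.9 (1) applies and shows that $(\mathfrak{U}^{(\mathcal{C}_1,\mathcal{D}_1)},\mathfrak{J}^{(\mathcal{C}_2,\mathcal{D}_2)})$ is a hereditary left $n$-cotorsion pair. Behind this, Lemma 4.8 (1) supplies the resolving property of $\mathfrak{U}^{(\mathcal{C}_1,\mathcal{D}_1)}$ from heredity, Theorem 4.4 (1) gives the pair, and Remark 4.5 gives heredity. Pulling back along $\Theta$, using $\Theta(\mathfrak{U}^{\mathcal{C}_1}_{\mathcal{D}_1})=\mathfrak{U}^{(\mathcal{C}_1,\mathcal{D}_1)}$ and $\Theta(\mathfrak{J}^{\mathcal{C}_2}_{\mathcal{D}_2})=\mathfrak{J}^{(\mathcal{C}_2,\mathcal{D}_2)}$, yields the claim, since $\Theta$ preserves $\mathrm{Ext}$ groups and resolution dimensions.

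Part (2) is dual. Use $\mathrm{Tor}^{A\times B}_i(U\oplus V,(C,D))\cong(\mathrm{Tor}^B_i(V,D),\mathrm{Tor}^A_i(U,C))$ and $(U\oplus V)\otimes(X,Y)\cong(V\otimes_BY,U\otimes_AX)$, then apply Proposition 4.9 (2) through Lemma 4.8 (2), and transport back along $\Theta$.

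The main obstacle is bookkeeping: checking that every hypothesis of Proposition 4.9 over $A\times B$ is exactly implied by the componentwise hypotheses. In particular, the twisted $\mathrm{Ext}^1$ condition must be read with the component swap induced by $\mathrm{Hom}_R(M,-)$, and it is precisely the mixed-component form of the stated assumptions that makes this work.
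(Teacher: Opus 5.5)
Your proposal is correct and follows essentially the paper's route. The paper obtains this from Proposition 4.11, with Lemma 4.8 supplying the resolving (resp. coresolving) hypothesis. That unwinds to exactly your combination of Theorem 4.4, Lemma 4.8 and Remark 4.5 (i.e. Proposition 4.9) over $(A\times B)\ltimes(U\oplus V)$, followed by transport along $\Theta$.

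One small correction: the componentwise $\mathrm{Ext}^1$ hypotheses are not stronger than what Proposition 4.9 needs. They are equivalent to $\mathrm{Ext}^1_{A\times B}(\mathrm{Hom}_{A\times B}(U\oplus V,F),F)=0$ for all $F=(F_1,F_2)$ with $F_1\in(\mathcal{C}_2)_{n-1}^{\wedge}$ and $F_2\in(\mathcal{D}_2)_{n-1}^{\wedge}$, because $F_1$ and $F_2$ vary independently. The analogous equivalence holds for the $\mathrm{Ext}^1(E,M\otimes E)$ condition in part (2).
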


\begin{prop}\label{prop:2.4}{\it{Let $\mathcal{C}_{1}$ and $\mathcal{C}_{2}$ be classes of left $A$-modules, and $\mathcal{D}_{1}$ and $\mathcal{D}_{2}$ be classes of left $B$-modules. The following statements hold.

$\mathrm{(1)}$ Let $(\mathfrak{U}^{\mathcal{C}_{1}}_{\mathcal{D}_{1}},\mathfrak{J}^{\mathcal{C}
_{2}}_{\mathcal{D}_{2}})$ be a hereditary left $n$-cotorsion pair in $\Lambda_{(0,0)}$-$\mathrm{Mod}$. Assume that $\mathrm{Ext}_{A}^{i}(V,\mathcal{C}_{2})\\=0=\mathrm{Ext}_{B}^{i}
(U,\mathcal{D}_{2})$ for each $1\leqslant i\leqslant n$, $\mathrm{Ext}_{A}^{1}(\mathrm{Hom}_{B}(U,\mathcal{D}_{2}),\mathcal{C}_{2})=
0=\mathrm{Ext}_{B}^{1}(\mathrm{Hom}
_{A}(V,\mathcal{C}_{2}),\mathcal{D}_{2})$, $\mathrm{Hom}_{A}(V,\mathcal{C}_{2})
\subseteq\mathcal{D}_{2}$, $\mathrm{Hom}_{B}(U,\mathcal{D}_{2})\subseteq\mathcal{C}_{2}$, and $(\mathcal{C}_{2})_{n-1}^{\wedge}$ and $(\mathcal{D}_{2})_{n-1}^{\wedge}$ are closed under extensions. Then $(\mathcal{C}
_{1},\mathcal{C}_{2})$ and $(\mathcal{D}_{1},\mathcal{D}_{2})$ are hereditary left $n$-cotorsion pairs in $A$-$\mathrm{Mod}$ and $B$-$\mathrm{Mod}$, respectively.

$\mathrm{(2)}$ Let $(\mathfrak{B}^{\mathcal{C}_{1}}_{\mathcal{D}_{1}},\mathfrak{U}^{\mathcal{C}_{2}}
_{\mathcal{D}_{2}})$ be a hereditary right $n$-cotorsion pair in $\Lambda_{(0,0)}$-$\mathrm{Mod}$. Assume that $\mathrm{Tor}_{i}^{A}(U,\mathcal{C}_{1})=0=\mathrm{Tor}_{i}^{B}(V,
\mathcal{D}_{1})$ for each $1\leqslant i\leqslant n$, $\mathrm{Ext}_{A}^{1}(\mathcal{C}_{1},V\otimes_{B}\mathcal{D}_{1})=0
=\mathrm{Ext}_{B}^{1}(\mathcal{D}_{1},U\otimes_{A}\mathcal{C}_{1})$, $V\otimes_{B}\mathcal{D}_{1}\subseteq\mathcal{C}_{1}$, $U\otimes
_{A}\mathcal{C}_{1}\subseteq\mathcal{D}_{1}$, and $(\mathcal{C}_{1})_{n-1}^{\vee}$ and $(\mathcal{D}_{1})
_{n-1}^{\vee}$ are closed under extensions. Then $(\mathcal{C}_{1},\mathcal{C}_{2})$ and $(\mathcal{D}_{1},
\mathcal{D}_{2})$ are hereditary right $n$-cotorsion pairs in $A$-$\mathrm{Mod}$ and $B$-$\mathrm{Mod}$, respectively.
}}
\end{prop}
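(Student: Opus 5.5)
The plan is to reduce everything to the trivial ring extension $(A\times B)\ltimes(U\oplus V)$ through the isomorphism $\Theta$. We then combine Proposition 4.12 (equivalently Theorem 4.6) with the heredity criteria of Lemma 4.8 and Lemma 3.14, which is how Proposition 4.10 was obtained from Theorem 4.6. We treat (1); part (2) is dual, using $\mathrm{Tor}$ in place of $\mathrm{Ext}(V,-)$, $\mathfrak{B}$ in place of $\mathfrak{J}$, and the (2)-parts of Lemmas 4.8 and 3.14.

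First, the hypotheses of (1) are exactly those of Proposition 4.12 (1), with the additional assumption that $(\mathfrak{U}^{\mathcal{C}_{1}}_{\mathcal{D}_{1}},\mathfrak{J}^{\mathcal{C}_{2}}_{\mathcal{D}_{2}})$ is hereditary. So Proposition 4.12 (1) gives that $(\mathcal{C}_{1},\mathcal{C}_{2})$ and $(\mathcal{D}_{1},\mathcal{D}_{2})$ are left $n$-cotorsion pairs. Equivalently, $((\mathcal{C}_{1},\mathcal{D}_{1}),(\mathcal{C}_{2},\mathcal{D}_{2}))$ is a left $n$-cotorsion pair in $A\times B$-Mod.

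Next we transport heredity. Via $\Theta$, the pair $(\mathfrak{U}^{(\mathcal{C}_{1},\mathcal{D}_{1})},\mathfrak{J}^{(\mathcal{C}_{2},\mathcal{D}_{2})})$ is a hereditary left $n$-cotorsion pair in $(A\times B)\ltimes(U\oplus V)$-Mod, which has enough projectives. By Lemma 3.14 (1), (b)$\Rightarrow$(c), the class $\mathfrak{U}^{(\mathcal{C}_{1},\mathcal{D}_{1})}$ is resolving. It contains all projectives, because a left $n$-cotorsion pair in a category with enough projectives has its left class containing the projectives. Hence it is projectively resolving.

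Now apply Lemma 4.8 (1) to the left $n$-cotorsion pair $((\mathcal{C}_{1},\mathcal{D}_{1}),(\mathcal{C}_{2},\mathcal{D}_{2}))$ over $A\times B$ with bimodule $U\oplus V$. Its hypothesis $\mathrm{Ext}^{i}_{A\times B}(U\oplus V,(C,D))=0$ for $1\leqslant i\leqslant n$ should reduce, componentwise, to $\mathrm{Ext}^{i}_{A}(V,\mathcal{C}_{2})=0=\mathrm{Ext}^{i}_{B}(U,\mathcal{D}_{2})$, which is assumed. Lemma 4.8 (1) then yields that $((\mathcal{C}_{1},\mathcal{D}_{1}),(\mathcal{C}_{2},\mathcal{D}_{2}))$ is hereditary.

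Finally, since Ext over $A\times B$ splits as a product of Ext over $A$ and over $B$, vanishing of $\mathrm{Ext}^{n+1}$ for the product pair gives $\mathrm{Ext}^{n+1}_{A}(\mathcal{C}_{1},\mathcal{C}_{2})=0$ and $\mathrm{Ext}^{n+1}_{B}(\mathcal{D}_{1},\mathcal{D}_{2})=0$. This is the required heredity of each factor.

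The main obstacle is the bookkeeping of the componentwise identifications, where the $A$- and $B$-components interchange. One identification is $\mathrm{Hom}_{A\times B}(U\oplus V,(X,Y))\cong(\mathrm{Hom}_{B}(U,Y),\mathrm{Hom}_{A}(V,X))$, whose derived version must match the stated Ext-vanishing conditions. The other is that heredity and resolvingness of a product class correspond to those of its factors. Both are routine once the product-ring decomposition of Ext is written down.
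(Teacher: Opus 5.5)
Your proposal is correct and matches the paper's route. The paper obtains this result directly from Proposition 4.12 (via $\Theta$ and Theorem 4.6), together with Lemma 3.14 and Lemma 4.8 to transfer heredity, just as Proposition 4.10 follows from Theorem 4.6; your identification $\mathrm{Ext}^i_{A\times B}(U\oplus V,(X,Y))\cong\mathrm{Ext}^i_A(V,X)\oplus\mathrm{Ext}^i_B(U,Y)$ correctly supplies the hypothesis of Lemma 4.8 (1).
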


\section{some applications}

To generalize the concept of trivial extension categories of abelian categories, Marmaridis pioneered introduction of $\eta$-extension categories and $\zeta$-coextension categories of abelian categories in \cite{N1993}, and proved that these two categories are isomorphic. According to \cite{RPI1975}, comma categories are regarded as an example of trivial extension categories of abelian categories. Meanwhile, Marmaridis also defined $\theta$-extension rings in \cite{N1993}, where trivial ring extensions constitute a special case of $\theta$-extension rings. Note that formal triangular matrix rings are special cases of trivial ring extensions by Subsection 4.2. Also, the categories of modules over formal triangular matrix rings are comma categories by Example 2.5. We summarize relations between the aforementioned categories and rings as follows:
\begin{itemize}
    \item $\eta$-extension category of an abelian category \quad $\xrightarrow{\text{special case}}$ \quad trivial extension category of an abelian category \quad $\xrightarrow{\text{special case}}$ \quad comma category \quad $\xrightarrow{\text{special case}}$ \quad category of modules over a formal triangular matrix ring
    \item $\theta$-extension ring \quad $\xrightarrow{\text{special case}}$ \quad trivial ring extension \quad $\xrightarrow{\text{special case}}$ \quad Morita ring with zero bimodule homomorphisms \quad $\xrightarrow{\text{special case}}$ \quad formal triangular matrix ring
\end{itemize}
Naturally, the above ring correspondences induce the following relations among their module categories:
\begin{itemize}
    \item category of modules over a $\theta$-extension ring \quad $\xrightarrow{\text{special case}}$ \quad category of modules over a trivial ring extension \quad $\xrightarrow{\text{special case}}$ \quad category of modules over a Morita ring with zero bimodule homomorphisms \quad $\xrightarrow{\text{special case}}$ \quad category of modules over a formal triangular matrix ring
\end{itemize}
Therefore, whether from the perspective of $\eta$-extension categories or categories of modules over $\theta$-extension rings, or more specifically, whether from the perspective of comma categories or categories of modules over trivial ring extensions, we observe that all can eventually return to categories of modules over formal triangular matrix rings. Hence, this section applies the conclusions derived in Sections 3 and 4 to the framework of formal triangular matrix rings.

First, we review the concept of formal triangular matrix rings.
Let $R$ and $S$ be rings, and $M$ an $R$-$S$-bimodule. Then we obtain the formal triangular matrix ring $\Lambda=\left(\begin{smallmatrix}R&M\\0&S\end{smallmatrix}\right)$ with usual matrix addition and multiplication. By \cite[\text{Theorem}~1.5]{EL1982}, the category $\Lambda$-$\mathrm{Mod}$ is equivalent to the category $\Delta$, whose objects are triples $\left(\begin{smallmatrix}X\\Y\end{smallmatrix}\right)_{\varphi}$, where $X\in R$-$\mathrm{Mod}$, $Y\in S$-$\mathrm{Mod}$, and $\varphi:M\otimes_{S}Y\rightarrow X$ is an $R$-module homomorphism, and whose morphisms from $\left(\begin{smallmatrix}X\\Y\end{smallmatrix}\right)_{\varphi}$ to $\left(\begin{smallmatrix}X_{1}\\Y_{1}\end{smallmatrix}\right)_{\varphi_{1}}$ are morphism pairs $\left(\begin{smallmatrix}f\\g\end{smallmatrix}\right)$ with $f\in\mathrm{Hom}_{R}(X,X_{1})$ and $g\in\mathrm{Hom}_{S}(Y,Y_{1})$ such that the following diagrams are commutative
$$\small\xymatrix{
      & M\otimes_{S}Y \ar[d]_{\varphi} \ar[r]^{M{\otimes_{S}}g} & M\otimes_{S}Y_{1} \ar[d]^{\varphi_{1}}  & \\
      & X  \ar[r]^{f} &  X_{1}.  &  \\   }\vspace*{2mm}
$$
We can identify the category $\Lambda$-$\mathrm{Mod}$ with the category $\Delta$. Given a left $\Lambda$-module $\left(\begin{smallmatrix}X\\Y\end{smallmatrix}\right)_{\varphi}$, we will denote by $\varphi^{M}$ the $S$-morphism from $Y$ to $\mathrm{Hom}_{R}(M,X)$ given by $\varphi^{M}(y)(m)=\varphi(m\otimes y)$ for any $y\in Y$ and $m\in M$.

Throughout this section, let $\Lambda=\left(\begin{smallmatrix}R&M\\0&S\end{smallmatrix}
\right)$ be a formal triangular matrix ring, $\mathcal{X}$ and $\mathcal{M}$ be classes of left $R$-modules, and $\mathcal{Y}$ and $\mathcal{N}$ be classes of left $S$-modules. Now, we define several special classes in $\Lambda$-$\mathrm{Mod}$ as follows.

(1) $\left(\begin{smallmatrix}\mathcal{X}\\ \mathcal{Y}\end{smallmatrix}\right):
=\{\left(\begin{smallmatrix}X\\Y\end{smallmatrix}\right)_{\varphi}\in
\Lambda\text{-}\mathrm{Mod}\mid X\in\mathcal{X},~Y\in\mathcal{Y}\}$.

(2) $\left(\begin{smallmatrix}\mathcal{M}\\ \mathcal{N}\end{smallmatrix}\right):=\{\left(\begin{smallmatrix}Z\\N
\end{smallmatrix}\right)_{\varphi}\in\Lambda\text{-}\mathrm{Mod}\mid Z\in\mathcal{M},~N\in\mathcal{N}\}$.

(3) $\mathfrak{B}_{\mathcal{Y}}^{\mathcal{X}}:=
\{\left(\begin{smallmatrix}X\\Y\end{smallmatrix}\right)_{\varphi}\in
\Lambda\text{-}\mathrm{Mod}\mid Y\in\mathcal{Y},~\mathrm{Ker}\varphi
=0~\text{and}~\mathrm{Coker}\varphi\in\mathcal{X}\}$.

(4) $\mathfrak{D}_{\mathcal{N}}^{\mathcal{M}}:
=\{\left(\begin{smallmatrix}Z\\N\end{smallmatrix}\right)_{\varphi}\in
\Lambda\text{-}\mathrm{Mod}\mid Z\in\mathcal{M},~\mathrm{Ker}\varphi^{M}
\in\mathcal{N}~\text{and}~\mathrm{Coker}\varphi^{M}=0\}$.

We now apply Theorem 3.11, Theorem 3.12, Proposition 3.16 and Proposition 3.17 from Section 3 to formal triangular matrix rings. It is worth noting that we refine the condition ``~$1\leqslant j\leqslant n+1$~'' appearing in \cite[\text{Theorem}~3.7]{TX2025}, \cite[\text{Theorem}~3.8]{TX2025} and \cite[\text{Theorem}~3.13]{TX2025}.

\begin{prop}\label{prop:2.4}{\it{$(1)$ Let $(\mathcal{X},\mathcal{M})$ be a left $n$-cotorsion pair in $R$-$\mathrm{Mod}$ and $(\mathcal{Y},\mathcal{N})$ a left $n$-cotorsion pair in $S$-$\mathrm{Mod}$. Assume that $\mathrm{Ext}_{R}^{j}(M,\mathcal{M})=0$ for each $1\leqslant j\leqslant n$, $\left(\begin{smallmatrix}\mathcal{X}\\ \mathcal{Y}\end{smallmatrix}\right)$ is a resolving class, and $(\mathfrak{D}_{\mathcal{N}}^{\mathcal{M}})_{n-1}^{\wedge}$ is closed under extensions. Then $(\left(\begin{smallmatrix}\mathcal{X}\\\mathcal{Y}\end{smallmatrix}\right),\mathfrak{D}_{\mathcal{N}}
^{\mathcal{M}})$ is a hereditary left $n$-cotorsion pair in $\Lambda$-$\mathrm{Mod}$.

$(2)$ Let $(\mathcal{X},\mathcal{M})$ be a right $n$-cotorsion pair in $R$-$\mathrm{Mod}$ and $(\mathcal{Y},\mathcal{N})$ a right $n$-cotorsion pair in $S$-$\mathrm{Mod}$. Assume that $\mathrm{Tor}_{j}^{S}(M,\mathcal{Y})=0$ for each $1\leqslant j\leqslant n$, $\left(\begin{smallmatrix}\mathcal{M}\\ \mathcal{N}\end{smallmatrix}\right)$ is a coresolving class, and $(\mathfrak{B}_{\mathcal{Y}}^{\mathcal{X}})_{n-1}^{\vee}$ is closed under extensions. Then $(\mathfrak{B}_{\mathcal{Y}}^{\mathcal{X}},\left(\begin{smallmatrix}\mathcal{M}\\ \mathcal{N}\end{smallmatrix}\right))$ is a hereditary right $n$-cotorsion pair in $\Lambda$-$\mathrm{Mod}$.
}}
\end{prop}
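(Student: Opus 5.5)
The plan is to specialize Theorem 3.11 and Remark 3.15 to the comma categories of Example 2.5. For (1), take $\mathcal{A}=R$-Mod, $\mathcal{B}=S$-Mod and $G=\mathrm{Hom}_{R}(M,-)$. Then $\Lambda$-Mod is equivalent to $(S\text{-}\mathrm{Mod}\downarrow G)$. Under this equivalence a module $\left(\begin{smallmatrix}Z\\N\end{smallmatrix}\right)_{\varphi}$ corresponds to the triple with structure map $\varphi^{M}:N\rightarrow\mathrm{Hom}_{R}(M,Z)$. Hence the class $\mathfrak{D}_{\mathcal{N}}^{\mathcal{M}}$ defined in Section 5 is the class $\mathfrak{D}_{\mathcal{N}}^{\mathcal{M}}$ of Section 3: $\mathrm{Coker}\varphi^{M}=0$ says $\varphi^M$ is epic, and $\mathrm{Ker}\varphi^{M}\in\mathcal{N}$ is the kernel condition. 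Likewise $\left(\begin{smallmatrix}\mathcal{X}\\ \mathcal{Y}\end{smallmatrix}\right)$ corresponds to $\left(\begin{smallmatrix}\mathcal{X}\\ \mathcal{Y}\end{smallmatrix}\right)$. Because equivalences preserve Ext, resolutions, extensions and direct summands, left $n$-cotorsion pairs and heredity transfer across the equivalence. We also have $(\mathrm{R}^{j}G)Z=\mathrm{Ext}_{R}^{j}(M,Z)$, since $\mathrm{Hom}_R(M,-)$ applied to an injective resolution computes Ext. So the hypothesis $\mathrm{Ext}_{R}^{j}(M,\mathcal{M})=0$ for $1\leqslant j\leqslant n$ is exactly the condition $(\mathrm{R}^{j}G)M=0$ required in Theorem 3.11(1). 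This is why the range $1\leqslant j\leqslant n$ suffices, in place of the $n+1$ of \cite{TX2025}.

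Next I verify the standing hypothesis of Theorem 3.11(1) that $(\mathcal{B}\downarrow G)$ has enough projectives. This is immediate, because $\Lambda$-Mod is a module category. The remaining hypotheses (that $\left(\begin{smallmatrix}\mathcal{X}\\ \mathcal{Y}\end{smallmatrix}\right)$ is resolving and that $(\mathfrak{D}_{\mathcal{N}}^{\mathcal{M}})_{n-1}^{\wedge}$ is closed under extensions) are assumed verbatim. Theorem 3.11(1) then shows that $(\left(\begin{smallmatrix}\mathcal{X}\\ \mathcal{Y}\end{smallmatrix}\right),\mathfrak{D}_{\mathcal{N}}^{\mathcal{M}})$ is a left $n$-cotorsion pair. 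Heredity comes from Remark 3.15(1), i.e. Lemma 3.14(1) $(d)\Rightarrow(a)$: the left class is resolving and the category has enough projectives.

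For (2), take $\mathcal{A}=R$-Mod, $\mathcal{B}=S$-Mod and $T=M\otimes_{S}-$, so that $\Lambda$-Mod is equivalent to $(T\downarrow R\text{-}\mathrm{Mod})$ and $(\mathrm{L}_{j}T)Y=\mathrm{Tor}_{j}^{S}(M,Y)$. The class $\mathfrak{B}_{\mathcal{Y}}^{\mathcal{X}}$ of Section 5, defined by $\mathrm{Ker}\varphi=0$ and $\mathrm{Coker}\varphi\in\mathcal{X}$, matches the Section 3 definition, in which $\varphi$ is a monomorphism. Since $\Lambda$-Mod has enough injectives, Theorem 3.11(2) applies. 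Heredity then follows from Remark 3.15(2).

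The only step that needs real care is the identification of the classes $\mathfrak{D}$ across the equivalence in (1), namely checking that the adjunction turns $\varphi:M\otimes_S N\to Z$ into $\varphi^M$ compatibly with morphisms. That check is routine.
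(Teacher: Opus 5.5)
Your proposal is correct and follows the paper's route. Proposition 5.1 is obtained by specializing Theorem 3.11, with Remark 3.15 (i.e. Lemma 3.14) for heredity, to the comma-category descriptions of $\Lambda$-$\mathrm{Mod}$ in Example 2.5. There $(\mathrm{R}^{j}G)=\mathrm{Ext}^{j}_{R}(M,-)$ and $(\mathrm{L}_{j}T)=\mathrm{Tor}^{S}_{j}(M,-)$, and the classes $\mathfrak{D}_{\mathcal{N}}^{\mathcal{M}}$ and $\mathfrak{B}_{\mathcal{Y}}^{\mathcal{X}}$ match across the equivalence via $\varphi\leftrightarrow\varphi^{M}$, exactly as you describe.
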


\begin{prop}\label{prop:2.4}{\it{
$(1)$ Assume that $(\left(\begin{smallmatrix}\mathcal{X}\\ \mathcal{Y}\end{smallmatrix}\right),\mathfrak{D}
_{\mathcal{N}}^{\mathcal{M}})$ is a left $n$-cotorsion pair in $\Lambda$-$\mathrm{Mod}$. The following statements hold.

$(a)$ If $\mathrm{Ext}_{R}^{j}(M,\mathcal{M})=0$ for each $1\leqslant j\leqslant n$, then $(\mathcal{X},
\mathcal{M})$ is a left $n$-cotorsion pair in $R$-$\mathrm{Mod}$.

$(b)$ If $\mathrm{Ext}_{R}^{j}(M,\mathcal{M})=0$ for each $1\leqslant j\leqslant n$, $\mathrm{Hom}_{R}(M,
\mathcal{X})\subseteq\mathcal{N}_{n-1}^{\wedge}$, and $\mathcal{N}_{n-1}^{\wedge}$ is closed under extensions, then $(\mathcal{Y},\mathcal{N})$ is a left $n$-cotorsion pair in $S$-$\mathrm{Mod}$.

$(2)$ Assume that $(\mathfrak{B}_{\mathcal{Y}}^{\mathcal{X}},\left(\begin{smallmatrix}\mathcal{M}\\ \mathcal{N}\end{smallmatrix}\right))$ is a right $n$-cotorsion pair in $\Lambda$-$\mathrm{Mod}$. The following statements hold.

$(a)$ If $\mathrm{Tor}_{j}^{S}(M,\mathcal{Y})=0$ for each $1\leqslant j\leqslant n$, then $(\mathcal{Y},
\mathcal{N})$ is a right $n$-cotorsion pair in $S$-$\mathrm{Mod}$.

$(b)$ If $\mathrm{Tor}_{j}^{S}(M,\mathcal{Y})=0$ for each $1\leqslant j\leqslant n$, $M\otimes_{S}\mathcal{N}
\subseteq\mathcal{X}_{n-1}^{\vee}$, and $\mathcal{X}_{n-1}^{\vee}$ is closed under extensions, then $(\mathcal{X},\mathcal{M})$ is a right $n$-cotorsion pair in $R$-$\mathrm{Mod}$.
}}
\end{prop}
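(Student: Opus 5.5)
The plan is to deduce this from Theorem 3.12 by specialising the comma categories to $\Lambda$-$\mathrm{Mod}$ through Example 2.5. For part (1) I would take $\mathcal{A}=R$-$\mathrm{Mod}$, $\mathcal{B}=S$-$\mathrm{Mod}$ and $G=\mathrm{Hom}_{R}(M,-)$. Example 2.5 (2) identifies $(S\text{-}\mathrm{Mod}\downarrow G)$ with $\Lambda$-$\mathrm{Mod}$, via $\left(\begin{smallmatrix}X\\Y\end{smallmatrix}\right)_{\varphi}\mapsto\left(\begin{smallmatrix}X\\Y\end{smallmatrix}\right)_{\varphi^{M}}$. The first thing to check is that this equivalence carries the classes to the right places. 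The class $\left(\begin{smallmatrix}\mathcal{X}\\ \mathcal{Y}\end{smallmatrix}\right)$ clearly goes to $\left(\begin{smallmatrix}\mathcal{X}\\ \mathcal{Y}\end{smallmatrix}\right)$, since the two components are unchanged. The comma-category class $\mathfrak{D}_{\mathcal{N}}^{\mathcal{M}}$ (with $\phi$ epi and $\mathrm{Ker}\phi\in\mathcal{N}$) goes exactly to the Section 5 class $\mathfrak{D}_{\mathcal{N}}^{\mathcal{M}}$ (with $\mathrm{Coker}\varphi^{M}=0$ and $\mathrm{Ker}\varphi^{M}\in\mathcal{N}$), because the structure map in the comma picture is $\varphi^{M}$. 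An equivalence of abelian categories preserves Ext, exact sequences, direct summands and resolution dimensions, so being a left $n$-cotorsion pair transfers both ways.

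Next I would translate the hypothesis. Since $G=\mathrm{Hom}_{R}(M,-)$, we have $(\mathrm{R}^{j}G)N\cong\mathrm{Ext}^{j}_{R}(M,N)$. So $\mathrm{Ext}^{j}_{R}(M,\mathcal{M})=0$ for $1\leqslant j\leqslant n$ is precisely the hypothesis $(\mathrm{R}^{j}G)M'=0$ for all $M'\in\mathcal{M}$ in Theorem 3.12 (1)(a) and (b). For (b), the extra assumption $G(X)\in\mathcal{N}_{n-1}^{\wedge}$ for $X\in\mathcal{X}$ is literally $\mathrm{Hom}_{R}(M,\mathcal{X})\subseteq\mathcal{N}_{n-1}^{\wedge}$, and the condition that $\mathcal{N}_{n-1}^{\wedge}$ is closed under extensions is the same in both statements. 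Theorem 3.12 (1)(a) and (1)(b) then give (1)(a) and (1)(b). The standing assumption of Section 3 that $\mathcal{A},\mathcal{B}$ have enough projectives and injectives holds for module categories.

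For part (2) I would take $T=M\otimes_{S}-$. Example 2.5 (1) gives $(T\downarrow R\text{-}\mathrm{Mod})\simeq\Lambda$-$\mathrm{Mod}$, and under it the classes $\mathfrak{B}_{\mathcal{Y}}^{\mathcal{X}}$ and $\left(\begin{smallmatrix}\mathcal{M}\\ \mathcal{N}\end{smallmatrix}\right)$ are literally the same objects. We have $(\mathrm{L}_{j}T)Y=\mathrm{Tor}^{S}_{j}(M,Y)$, and $T(N)=M\otimes_{S}N$, so the hypotheses of Theorem 3.12 (2)(a) and (2)(b) match those stated here, and those two parts give the result.

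The only point I expect to need real care is the class identification in part (1). One has to confirm that the equivalence $\varphi\leftrightarrow\varphi^{M}$ (tensor–Hom adjunction) sends the comma structure map to $\varphi^{M}$ naturally in morphisms. Once that is in place, $\mathrm{Ker}$ and $\mathrm{Coker}$ of $\varphi^{M}$ are the kernel and cokernel of the comma structure map, and the identification of $\mathfrak{D}_{\mathcal{N}}^{\mathcal{M}}$ follows. Everything else is direct substitution.
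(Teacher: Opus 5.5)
Your proposal is correct and follows the paper's approach. The paper obtains Proposition 5.2 by specializing Theorem 3.12 along the equivalences of Example 2.5, with $G=\mathrm{Hom}_{R}(M,-)$ (so $\mathrm{R}^{j}G=\mathrm{Ext}^{j}_{R}(M,-)$) in part (1), $T=M\otimes_{S}-$ (so $\mathrm{L}_{j}T=\mathrm{Tor}^{S}_{j}(M,-)$) in part (2), and the Section 5 classes $\mathfrak{D}_{\mathcal{N}}^{\mathcal{M}}$ and $\mathfrak{B}_{\mathcal{Y}}^{\mathcal{X}}$ matched to the comma-category classes via $\varphi\leftrightarrow\varphi^{M}$, as you describe.
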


\begin{prop}\label{prop:2.4}{\it{$(1)$ Let $(\mathcal{X},\mathcal{M})$ be a hereditary left $n$-cotorsion pair in $R$-$\mathrm{Mod}$ and $(\mathcal{Y},\mathcal{N})$ a hereditary left $n$-cotorsion pair in $S$-$\mathrm{Mod}$. Assume that $\mathrm{Ext}_{R}^{j}(M,\mathcal{M})=0$ for each $1\leqslant j\leqslant n$, and $(\mathfrak{D}
_{\mathcal{N}}^{\mathcal{M}})_{n-1}^{\wedge}$ is closed under extensions. Then $(\left(\begin{smallmatrix}
\mathcal{X}\\ \mathcal{Y}\end{smallmatrix}\right),\mathfrak{D}_{\mathcal{N}}^{\mathcal{M}})$ is a hereditary left $n$-cotorsion pair in $\Lambda$-$\mathrm{Mod}$.

$(2)$ Let $(\mathcal{X},\mathcal{M})$ be a hereditary right $n$-cotorsion pair in $R$-$\mathrm{Mod}$ and $(\mathcal{Y},\mathcal{N})$ a hereditary right $n$-cotorsion pair in $S$-$\mathrm{Mod}$. Assume that $\mathrm{Tor}_{j}^{S}(M,\mathcal{Y})=0$ for each $1\leqslant j\leqslant n$, and $(\mathfrak{B}_{\mathcal{Y}}
^{\mathcal{X}})_{n-1}^{\vee}$ is closed under extensions. Then $(\mathfrak{B}_{\mathcal{Y}}^{\mathcal{X}},
\left(\begin{smallmatrix}\mathcal{M}\\ \mathcal{N}\end{smallmatrix}\right))$ is a hereditary right $n$-cotorsion pair in $\Lambda$-$\mathrm{Mod}$.
}}
\end{prop}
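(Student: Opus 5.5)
The plan is to specialize Proposition 3.16 to $\Lambda$-Mod through the equivalences of Example 2.5. For part (1), take $\mathcal{A}=R$-Mod, $\mathcal{B}=S$-Mod and $G=\mathrm{Hom}_{R}(M,-)$. Then Example 2.5 (2) identifies $\Lambda$-Mod with $(S\text{-}\mathrm{Mod}\downarrow G)$. Under this identification, $\left(\begin{smallmatrix}Z\\N\end{smallmatrix}\right)_{\varphi}$ corresponds to $\left(\begin{smallmatrix}Z\\N\end{smallmatrix}\right)_{\varphi^{M}}$. Consequently, the class $\mathfrak{D}_{\mathcal{N}}^{\mathcal{M}}$ of Section 5 (with $\mathrm{Ker}\varphi^{M}\in\mathcal{N}$ and $\mathrm{Coker}\varphi^{M}=0$) is carried onto the class $\mathfrak{D}_{\mathcal{N}}^{\mathcal{M}}$ of Section 3, and likewise $\left(\begin{smallmatrix}\mathcal{X}\\ \mathcal{Y}\end{smallmatrix}\right)$ is carried onto its Section 3 counterpart. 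The first step is to check that these identifications are compatible with exact sequences, since exactness is componentwise on both sides. It follows that Ext groups, resolution dimensions, closure under extensions and the resolving property all transfer. In particular, the hypothesis that $(\mathfrak{D}_{\mathcal{N}}^{\mathcal{M}})_{n-1}^{\wedge}$ is closed under extensions means the same thing in both settings.

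Next I would translate the derived-functor hypothesis. Since $R$-Mod has enough injectives, $(\mathrm{R}^{j}G)Z\cong\mathrm{Ext}^{j}_{R}(M,Z)$. Therefore $\mathrm{Ext}_{R}^{j}(M,\mathcal{M})=0$ for $1\leqslant j\leqslant n$ is exactly the condition $(\mathrm{R}^{j}G)M'=0$ for every $M'\in\mathcal{M}$ required in Theorem 3.11 (1) and Proposition 3.16 (1). The categories $R$-Mod and $S$-Mod have enough projectives and injectives, and $(S\text{-}\mathrm{Mod}\downarrow G)\simeq\Lambda$-Mod has enough projectives. Hence all standing assumptions of Section 3 are met. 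The hereditary hypotheses on $(\mathcal{X},\mathcal{M})$ and $(\mathcal{Y},\mathcal{N})$ then let Proposition 3.16 (1) conclude that $(\left(\begin{smallmatrix}\mathcal{X}\\ \mathcal{Y}\end{smallmatrix}\right),\mathfrak{D}_{\mathcal{N}}^{\mathcal{M}})$ is a hereditary left $n$-cotorsion pair. Transporting this back along the equivalence gives the claim.

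The main obstacle is that Proposition 3.16 (1) passes through Theorem 3.11 (1), which needs $\left(\begin{smallmatrix}\mathcal{X}\\ \mathcal{Y}\end{smallmatrix}\right)$ to be resolving, and this is not assumed in the present statement. I would derive it from heredity, as the cited result \cite[Lemma~4.1~(1)]{YJD2024} is used for in the proof of Proposition 3.16. By Lemma 3.14 (1), the classes $\mathcal{X}$ and $\mathcal{Y}$ are resolving. Since exact sequences in $\Lambda$-Mod are exact componentwise, $\left(\begin{smallmatrix}\mathcal{X}\\ \mathcal{Y}\end{smallmatrix}\right)$ is then closed under extensions and kernels of epimorphisms. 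It also contains the projectives, because $\mathcal{X}$ and $\mathcal{Y}$ contain all projective modules.

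Part (2) is dual. Take $T=M\otimes_{S}-$ and use Example 2.5 (1), under which $\mathfrak{B}_{\mathcal{Y}}^{\mathcal{X}}$ corresponds to the Section 3 class with $\varphi$ monic. The hypothesis $(\mathrm{L}_{j}T)Y\cong\mathrm{Tor}_{j}^{S}(M,Y)=0$ is the required one. Applying Proposition 3.16 (2), with $\left(\begin{smallmatrix}\mathcal{M}\\ \mathcal{N}\end{smallmatrix}\right)$ coresolving obtained componentwise via Lemma 3.14 (2), gives the hereditary right $n$-cotorsion pair.
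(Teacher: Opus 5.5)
Your proposal is correct and follows the paper's route. Proposition 5.3 is obtained by specializing Proposition 3.16 through Example 2.5, using $(\mathrm{R}^{j}G)=\mathrm{Ext}^{j}_{R}(M,-)$ and $(\mathrm{L}_{j}T)=\mathrm{Tor}^{S}_{j}(M,-)$. Your componentwise check that $\left(\begin{smallmatrix}\mathcal{X}\\ \mathcal{Y}\end{smallmatrix}\right)$ is resolving is exactly what the paper delegates to \cite[Lemma~4.1~(1)]{YJD2024} in the proof of Proposition 3.16.

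One side remark is mis-justified, though harmless. A projective $\Lambda$-module has the form $\left(\begin{smallmatrix}P\oplus(M\otimes_{S}Q)\\ Q\end{smallmatrix}\right)$, and its $R$-component need not be projective (e.g.\ $R=S=\mathbb{Z}$, $M=\mathbb{Q}$). So ``contains the projectives'' does not follow from $\mathcal{X}$ and $\mathcal{Y}$ containing projective modules. It holds instead because $M\in{}^{\bot_{[1,n]}}\mathcal{M}=\mathcal{X}$ by the hypothesis $\mathrm{Ext}^{j}_{R}(M,\mathcal{M})=0$, which is what Lemma 3.7 (2) supplies inside Theorem 3.11. In any case, ``resolving'' as used in Theorem 3.11 does not require containing projectives, so your argument does not depend on this remark.
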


\begin{prop}\label{prop:2.4}{\it{
$(1)$ Assume that $(\left(\begin{smallmatrix}\mathcal{X}\\ \mathcal{Y}\end{smallmatrix}\right),\mathfrak{D}
_{\mathcal{N}}^{\mathcal{M}})$ is a hereditary left $n$-cotorsion pair in $\Lambda$-$\mathrm{Mod}$. The following statements hold.

$(a)$ If $\mathrm{Ext}_{R}^{j}(M,\mathcal{M})=0$ for each $1\leqslant j\leqslant n$, then $(\mathcal{X},
\mathcal{M})$ is a hereditary left $n$-cotorsion pair in $R$-$\mathrm{Mod}$.

$(b)$ If $\mathrm{Ext}_{R}^{j}(M,\mathcal{M})=0$ for each $1\leqslant j\leqslant n$, $\mathrm{Hom}_{R}(M,
\mathcal{X})\subseteq\mathcal{N}_{n-1}^{\wedge}$, and $\mathcal{N}_{n-1}^{\wedge}$ is closed under extensions, then $(\mathcal{Y},\mathcal{N})$ is a hereditary left $n$-cotorsion pair in $S$-$\mathrm{Mod}$.

$(2)$ Assume that $(\mathfrak{B}_{\mathcal{Y}}^{\mathcal{X}},\left(\begin{smallmatrix}\mathcal{M}\\ \mathcal{N}\end{smallmatrix}\right))$ is a hereditary right $n$-cotorsion pair in $\Lambda$-$\mathrm{Mod}$. The following statements hold.

$(a)$ If $\mathrm{Tor}_{j}^{S}(M,\mathcal{Y})=0$ for each $1\leqslant j\leqslant n$, then $(\mathcal{Y},
\mathcal{N})$ is a hereditary right $n$-cotorsion pair in $S$-$\mathrm{Mod}$.

$(b)$ If $\mathrm{Tor}_{j}^{S}(M,\mathcal{Y})=0$ for each $1\leqslant j\leqslant n$, $M\otimes_{S}\mathcal{N}
\subseteq\mathcal{X}_{n-1}^{\vee}$, and $\mathcal{X}_{n-1}^{\vee}$ is closed under extensions, then $(\mathcal{X},\mathcal{M})$ is a hereditary right $n$-cotorsion pair in $R$-$\mathrm{Mod}$.
}}
\end{prop}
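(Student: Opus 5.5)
The statement in question is Proposition 5.4, and I would deduce it from Proposition 3.17 by specializing the comma categories to $\Lambda$-$\mathrm{Mod}$. By Example 2.5, $\Lambda$-$\mathrm{Mod}$ is equivalent to $(T\downarrow R\text{-}\mathrm{Mod})$ with $T=M\otimes_S-$, and to $(S\text{-}\mathrm{Mod}\downarrow G)$ with $G=\mathrm{Hom}_R(M,-)$. Both module categories have enough projectives and injectives, so the standing hypotheses of Section 3 hold.

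The first step is to check that the classes defined in Section 5 correspond to those of Section 3 under these equivalences. In $(T\downarrow R\text{-}\mathrm{Mod})$ the object $\left(\begin{smallmatrix}X\\Y\end{smallmatrix}\right)_{\varphi}$ with $\varphi:M\otimes_SY\to X$ corresponds directly, so $\mathfrak{B}_{\mathcal{Y}}^{\mathcal{X}}$ of Section 5 is the $\mathfrak{B}_{\mathcal{Y}}^{\mathcal{X}}$ of Section 3. The class $\left(\begin{smallmatrix}\mathcal{M}\\ \mathcal{N}\end{smallmatrix}\right)$ only constrains the components, so it is likewise preserved. For $(S\text{-}\mathrm{Mod}\downarrow G)$ the equivalence sends $\left(\begin{smallmatrix}Z\\N\end{smallmatrix}\right)_{\varphi}$ to $\left(\begin{smallmatrix}Z\\N\end{smallmatrix}\right)_{\varphi^{M}}$ with $\varphi^M:N\to\mathrm{Hom}_R(M,Z)$. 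Hence $\mathfrak{D}_{\mathcal{N}}^{\mathcal{M}}$ of Section 5, defined by $\mathrm{Ker}\varphi^M\in\mathcal{N}$ and $\mathrm{Coker}\varphi^M=0$, is exactly $\mathfrak{D}_{\mathcal{N}}^{\mathcal{M}}$ of Section 3, and $\left(\begin{smallmatrix}\mathcal{X}\\ \mathcal{Y}\end{smallmatrix}\right)$ is again preserved. Since equivalences preserve Ext, direct summands, exact sequences and resolution dimensions, the notions of (hereditary) left and right $n$-cotorsion pair transfer along them.

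Next, the hypotheses must be translated. For any $Z\in R$-$\mathrm{Mod}$ we have $(\mathrm{R}^jG)Z=\mathrm{Ext}^j_R(M,Z)$, computed by applying $\mathrm{Hom}_R(M,-)$ to an injective resolution of $Z$. Thus the condition $\mathrm{Ext}^j_R(M,\mathcal{M})=0$ for $1\leqslant j\leqslant n$ is precisely the hypothesis $(\mathrm{R}^jG)\mathcal{M}=0$ used in Proposition 3.17(1). Dually, $(\mathrm{L}_jT)Y=\mathrm{Tor}^S_j(M,Y)$, so $\mathrm{Tor}_j^S(M,\mathcal{Y})=0$ is the hypothesis of Proposition 3.17(2). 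The remaining conditions are rewritten in the same way: $G(X)\in\mathcal{N}_{n-1}^{\wedge}$ for all $X\in\mathcal{X}$ becomes $\mathrm{Hom}_R(M,\mathcal{X})\subseteq\mathcal{N}_{n-1}^{\wedge}$, and $T(N)\in\mathcal{X}_{n-1}^{\vee}$ for all $N\in\mathcal{N}$ becomes $M\otimes_S\mathcal{N}\subseteq\mathcal{X}_{n-1}^{\vee}$.

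With this dictionary in place, items (1)(a), (1)(b), (2)(a) and (2)(b) follow verbatim from Proposition 3.17(1)(a), (1)(b), (2)(a) and (2)(b).

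The only step needing care is the identification of $\mathfrak{D}_{\mathcal{N}}^{\mathcal{M}}$, since it passes through the adjunction $\varphi\leftrightarrow\varphi^M$. I would check that the adjunction isomorphism is natural, so that morphisms and short exact sequences correspond, and that epi/kernel conditions on $\phi$ are read off from $\varphi^M$. Everything else is routine bookkeeping.
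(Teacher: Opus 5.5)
Your proposal is correct and follows the same route as the paper, which states Proposition 5.4 as a direct application of Proposition 3.17 to $\Lambda$-$\mathrm{Mod}$ via the identifications of Example 2.5. As you do, the hypotheses are translated using $(\mathrm{R}^{j}G)=\mathrm{Ext}^{j}_{R}(M,-)$ and $(\mathrm{L}_{j}T)=\mathrm{Tor}_{j}^{S}(M,-)$, and $\mathfrak{D}_{\mathcal{N}}^{\mathcal{M}}$ is matched to its Section 3 counterpart through $\varphi\mapsto\varphi^{M}$.
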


\begin{rem}\label{prop:2.4}{\rm{For the condition ``~$1\leqslant j\leqslant n+1$~'' appearing in \cite[\text{Theorem}~3.7]{TX2025}, \cite[\text{Theorem}~3.8]{TX2025} and \cite[\text{Theorem}~3.13]{TX2025}, we weaken this condition to ``~$1\leqslant j\leqslant n$~'' in Proposition 5.1, Proposition 5.2 and Proposition 5.3. In addition, we also show that the left (resp. right) $n$-cotorsion pair in $\Lambda$-$\mathrm{Mod}$ obtained in Proposition 5.1 is hereditary, but this point was not mentioned in \cite[\text{Theorem}~3.7]{TX2025}.
}}
\end{rem}

We may also apply Theorem 4.4, Theorem 4.6, Proposition 4.9 and Proposition 4.10 from Section 4 to formal triangular matrix rings. Next, we take Theorem 4.4 or Proposition 4.11 as an illustration and retain the notations introduced for Morita rings in Section 4.

Let $T=\left(\begin{smallmatrix}A&_{A}V_{B}\\0&B\end{smallmatrix}\right)$ be a formal triangular matrix ring, $\mathcal{C}_{1}$ and $\mathcal{C}_{2}$ be classes of left $A$-modules, and $\mathcal{D}_{1}$ and $\mathcal{D}_{2}$ be classes of left $B$-modules. The following statements hold.

$(1)$ Let $(\mathcal{C}_{1},\mathcal{C}_{2})$ and $(\mathcal{D}_{1},\mathcal{D}_{2})$ be left $n$-cotorsion pairs in $A$-$\mathrm{Mod}$ and $B$-$\mathrm{Mod}$, respectively. Assume that $\mathrm{Ext}^{i}_{A}(V,\mathcal{C}_{2})
=0$ for each $1\leqslant i\leqslant n$, $\mathrm{Ext}_{B}^{1}(\mathrm{Hom}_{A}(V,(\mathcal{C}_{2}
)_{n-1}^{\wedge}),(\mathcal{D}_{2})_{n-1}^{\wedge})=0$, $\mathfrak{U}^{\mathcal{C}_{1}}_{\mathcal{D}_{1}}$ is a resolving class, and $(\mathfrak{J}^{\mathcal{C}_{2}}_{\mathcal{D}_{2}})_{n-1}^{\wedge}$ is closed under extensions. Then $(\mathfrak{U}^{\mathcal{C}_{1}}_{\mathcal{D}_{1}},\mathfrak{J}^{\mathcal{C}_{2}}
_{\mathcal{D}_{2}})$ is a hereditary left $n$-cotorsion pair in $T$-$\mathrm{Mod}$.

$(2)$ Let $(\mathcal{C}_{1},\mathcal{C}_{2})$ and $(\mathcal{D}_{1},\mathcal{D}_{2})$ be right $n$-cotorsion pairs in $A$-$\mathrm{Mod}$ and $B$-$\mathrm{Mod}$, respectively. Assume that $\mathrm{Tor}_{i}^{B}(V,\mathcal{D}_{1})
=0$ for each $1\leqslant i\leqslant n$, $\mathrm{Ext}_{A}^{1}((\mathcal{C}_{1})_{n-1}^{\vee},V\otimes_{B}
(\mathcal{D}_{1})_{n-1}^{\vee})=0$, $\mathfrak{U}^{\mathcal{C}_{2}}_{\mathcal{D}_{2}}$ is a coresolving class, and $(\mathfrak{B}^{\mathcal{C}_{1}}_{\mathcal{D}_{1}})_{n-1}^{\vee}$ is closed under extensions. Then $(\mathfrak{B}^{\mathcal{C}_{1}}_{\mathcal{D}_{1}},\mathfrak{U}^{\mathcal{C}_{2}}_{\mathcal{D}_{2}})$ is a hereditary right $n$-cotorsion pair in $T$-$\mathrm{Mod}$.

\begin{rem}\label{prop:2.4}{\rm{From the perspective of categories of modules over trivial ring extensions, compared with \cite[\text{Theorem}~3.7]{TX2025}, we improve the range of the index ``~$i$~'' to ``~$1\leqslant i\leqslant n$~''. Unfortunately, our conclusion requires the extra conditions ``~$\mathrm{Ext}_{B}^{1}(\mathrm{Hom}_{A}(V,(\mathcal{C}_{2})
_{n-1}^{\wedge}),(\mathcal{D}_{2})_{n-1}^{\wedge})=0$ and $\mathrm{Ext}_{A}^{1}((\mathcal{C}_{1})
_{n-1}^{\vee},V\otimes_{B}(\mathcal{D}_{1})_{n-1}^{\vee})=0$~'' which do not appear in
\cite[\text{Theorem}~3.7]{TX2025}. Of course, these same conditions are also additional assumptions compared with Proposition 5.1. Therefore, for the further study of $n$-cotorsion pairs in categories of modules over trivial ring extensions, removing these seemingly redundant conditions would greatly make the results obtained in Section 4 more concise.
}}
\end{rem}

\begin{center}{\bf{Acknowledgments}}\end{center}
The authors sincerely thank the referee for the very helpful suggestions and comments. This work was supported by the National Natural Science Foundation of China (Nos. 11901463, 12361007) and Funds for Innovative Fundamental Research Group Project of Gansu Province (No. 23JRRA684).


\begin{thebibliography}{99}
\small \setlength{\parskip}{1pt}
\bibitem{H1962} Bass, H.: The Morita Theorems. University of Oregon, New York (1962)

\bibitem{WJK2024} Cao, W.Q., Wei, J.Q., Wu, K.L.: Recollements and $n$-cotorsion pairs. J. Algebra Appl. \textbf{26}, 2750224 (2027).


\bibitem{XJ2022} Chen, X.W., Le, J.: Recollements, comma categories and morphic enhancements. Proc. R. Soc. Edinb. A: Math. \textbf{152}(3), 567-591 (2022).

\bibitem{EEE-2000} Enochs, E.E., Jenda, O.M.G.: Relative Homological Algebra. Walter de Gruyter, Berlin (2000)

\bibitem{RPI1975} Fossum, R.M., Griffith, P.A., Reiten, I.: Trivial Extension of Abelian Categories. Springer-Verlag, Berlin (1975)

\bibitem{RJ2012} G\"{o}bel, R., Trlifaj, J.: Approximations and Endomorphism Algebras of Modules. Walter De Gruyter Incorporated, Berlin (2012)

\bibitem{EL1982} Green, E.L.: On the representation theory of rings in matrix form. Pac. J. Math. \textbf{100}(1), 123-138 (1982).

\bibitem{JJ2025} He, J., He, J.: $n$-Cotorsion pairs and recollements of extriangulated categories. J. Algebra Appl. \textbf{25}(11), 2650129 (2026).

\bibitem{JP2022} He, J., Zhou, P.Y.: On the relation between $n$-cotorsion pairs and ($n$+1)-cluster tilting subcategories. J. Algebra Appl. \textbf{21}(1), 2250011 (2022).

\bibitem{HP2019} Holm, H., J{\o}rgensen, P.: Cotorsion pairs in categories of quiver representations. Kyoto J. Math. \textbf{59}(3), 575-606 (2019).

\bibitem{M2002} Hovey, M.: Cotorsion pairs, model category structures, and representation theory. Math. Z. \textbf{241}(3), 553-592 (2002).

\bibitem{JH2022} Hu, J.S., Zhu, H.Y.: Special precovering classes in comma categories. Sci. China Math. \textbf{65}(5), 933-950 (2022).

\bibitem{MOM2021} Huerta, M., Mendoza, O., P\'{e}rez M.A.: $n$-Cotorsion pairs. J. Pure Appl. Algebra \textbf{225}(5), 106556 (2021).

\bibitem{PA2017} Krylov, P., Tuganbaev, A.: Formal Matrices. Springer International Publishing AG, Switzerland (2017)

\bibitem{TX2025} Long, T.L., Zhang, X.X.: $n$-Cotorsion pairs over formal triangular matrix rings. Mediterr. J. Math. \textbf{22}(6), 142 (2025).

\bibitem{YDR2024} Ma, Y.J., Sun, D.D., Zhu, R.M., Hu, J.S.: Recollements induced by left Frobenius pairs. Bull. Malays. Math. Sci. Soc. \textbf{47}(1), 31 (2024).

\bibitem{L2020} Mao, L.X.: Cotorsion pairs and approximation classes over formal triangular matrix rings. J. Pure Appl. Algebra \textbf{224}(6), 106271 (2020).

\bibitem{L2023} Mao, L.X.: Silting and cosilting modules over trivial ring extensions. Commun. Algebra \textbf{51}(4), 1532-1550 (2023).

\bibitem{L2024} Mao, L.X.: Cotorsion pairs and Hovey triples over trivial ring extensions. Commun. Algebra \textbf{52}(1), 148-171 (2024).

\bibitem{N1983} Marmaridis, N.: Comma categories in representation theory.
Commun. Algebra \textbf{11}(17), 1919-1943 (1983).

\bibitem{N1993} Marmaridis, N.: On extensions of abelian categories with applications to ring theory. J. Algebra \textbf{156}(1), 50-64 (1993).

\bibitem{HK2020} Minamoto, H., Yamaura, K.: Homological dimension formulas for trivial extension algebras. J. Pure Appl. Algebra \textbf{224}(8), 106344 (2020).

\bibitem{K1958} Morita, K.: Duality for modules and its applications to the theory of rings with minimum condition. Sci. Rep. Tokyo Kyoiku Diagaku Sect. A \textbf{6}(150), 83-142 (1958).

\bibitem{IJ1973} Palm\'{e}r, I., Roos, J.E.: Explicit formulae for the global homological dimensions of trivial extensions of rings. J. Algebra \textbf{27}(2), 380-413 (1973).

\bibitem{C2014} Psaroudakis, C.: Homological theory of recollements of abelian categories. J. Algebra \textbf{398}, 63-110 (2014).

\bibitem{JJ2009} Rotman, J.J.: An Introduction to Homological Algebra. Springer Science+Business Media LLC, New York (2009)

\bibitem{L1979} Salce, L.: Cotorsion theories for abelian groups.
Symposia Math. \textbf{23}, 11-32 (1979).

\bibitem{WZ2022} Wang, Z.P., Liu, Z.K.: Recollements induced by monomorphism categories. J. Algebra \textbf{594}, 614-635 (2022).

\bibitem{YJD2024} Yuan, Y., He, J., Wu, D.J.: Cotorsion pairs in comma categories.
Czechoslovak Math. J. \textbf{74}(3), 715-734 (2024).

\bibitem{YJDY2025} Yuan, Y., He, J., Wu, D.J., Wang, Y.D.: $n$-Tilting pairs and $n$-cotilting subcategories over comma categories. Georgian Math. J. \textbf{32}(4), 709-720 (2025).

\end{thebibliography}
\end{document}